\tikzstyle{every picture}+=[remember picture]
\definecolor{lightergray}{rgb}{0.65,0.65,0.65}
\definecolor{lightestgray}{rgb}{0.95,0.95,0.95}
\definecolor{refkey}{gray}{.75}
\def\e{{\rm e}}
\def\cic{\boldsymbol}
\def\eps{\varepsilon}
\def\d{{\rm d}}
\def\Calpha{C_{\vec \alpha}}
\def\R {\mathbb{R}}
\def\supp{{\mathrm{supp}}}
\def\G{{\mathcal G}}
\def\C {{\mathcal C}}
\def\D {{\mathcal D}}
\def\J {{\mathbb J}}
\def\I {{\mathcal I}}
\def\F {{\mathcal F}}
\def\S{{\mathbf S}}
\def\T{{\mathbf{T}}}
\def\size{{\mathrm{size}}}
\def\Q {{\mathbf{Q}}}
\def\tr {{\mathsf \Lambda}}
\def\M {{\mathsf M}}
\def \l {\langle}
\def \r {\rangle}
\def \and{\qquad\text{and}\qquad}
\def \no#1#2#3 {{\bf #1} (#3), #2.}
\def \eds#1#2#3 {#1, #2, #3.}
\newtheorem{theorem}{Theorem}
\newtheorem{corollary}[theorem]{Corollary}
\newtheorem{lemma}{Lemma}
\newtheorem{proposition}[lemma]{Proposition}
\newtheorem*{proposition*}{Proposition}
\newtheorem*{theorem1s}{Theorem 1'}
\newtheorem*{theorem3s}{Theorem 3'}
\newtheorem*{theorem2s}{Theorem 2'}
\theoremstyle{definition}
\newtheorem{remark}[lemma]{Remark}
\newtheorem{conjecture}{Conjecture}
\numberwithin{lemma}{section}
\numberwithin{equation}{section}
\title[Endpoint bounds for the bilinear Hilbert transform]{Endpoint bounds for the bilinear Hilbert transform}
\author[F.\ Di Plinio]{Francesco  Di Plinio}
\address{  Dipartimento di Matematica,  Universit\`a degli Studi di Roma ``Tor Vergata'', \newline  \indent Via della Ricerca Scientifica,   00133 Roma,  Italy   \newline \indent \centerline{and}   \vskip1mm \indent 
 The Institute for Scientific Computing and Applied Mathematics,
Indiana University
\newline\indent
831 East Third Street, Bloomington, IN  47405, U.S.A. }
\email{diplinio@mat.uniroma2.it  \textrm{(F.\ Di Plinio)} }
\author[C.\ Thiele]{Christoph Thiele}
\address{Hausdorff Institute for Mathematics,  	Universit\"at Bonn 
 \newline\indent	Endenicher Allee 60,	D - 53115 Bonn, Germany
 \newline \indent \centerline{and} \vskip1mm  \indent
 Department of Mathematics, UCLA, Los Angeles, CA 90095-1555
  }
 \email{thiele@math.uni-bonn.de \textrm{(C.\ Thiele)}}
\subjclass{42B20}
 \keywords{bilinear Hilbert transform, multi-frequency Calder\'on-Zygmund decomposition, endpoint bounds}
\thanks{  The  first author is an INdAM - Cofund Marie Curie Fellow and is  partially
supported by the National Science Foundation under the grant
   NSF-DMS-1206438, and by the Research Fund of Indiana University. The second author is partially supported by
the grant NSF-DMS-1001535}
\begin{document}  
 \setlength{\extrarowheight}{7pt}
 
 \begin{abstract}We study the behavior  of the bilinear Hilbert transform 
  $\mathrm{BHT}$  at the boundary of the known boundedness region $\mathcal H$.
    A sample of our results   is  the estimate 
\begin{equation*}
 \label{ab1} 
|\l \mathrm{BHT}(f_1,f_2),f_3 \r | \leq \textstyle C  |F_1|^{\frac34}|F_2| ^{\frac34} |F_3|^{-\frac12}   \log\log \Big(\e^\e +\textstyle \frac{|F_3|}{\min\{|F_1|,|F_2|\}} \Big) \ ,
\end{equation*}
valid for all  tuples of sets $F_j\subset \R $ of finite measure and functions $f_j$ such that $|f_j| \leq \cic{1}_{F_j}$, $j=1,2,3$, with the additional restriction that $f_3$ be supported on a major subset $F_3'$ of $F_3$ that depends on $\{F_j:j=1,2,3\}$. The use of subindicator functions in this fashion is standard in the given context, see \cite{MTT}. The double logarithmic term improves over the single logarithmic term obtained in \cite{BG}. Whether the double logarithmic term can be removed entirely, as is the case for the quartile operator discussed in \cite{DD2}, remains open.

We employ our endpoint results to describe the blow-up rate of weak-type and strong-type estimates for $\mathrm{BHT}$ as the tuple $\vec \alpha$ approaches the boundary of $\mathcal H$.
 We also discuss   bounds on Lorentz-Orlicz spaces near $L^{\frac23}$, improving on results of \cite{CGMS}.
 The main technical novelty in our article is an enhanced version of the 
multi-frequency Calder\'on-Zygmund decomposition of  \cite{NOT}.
 \end{abstract}

\maketitle

\section{Introduction and Main results}
Recall that the classical Hilbert transform is bounded
in $L^p$ for $1<p<\infty$. At the endpoint $p=1$ one
has several types of estimates such as Hardy space estimates 
or Lorentz-Orlicz space estimates. Most relevant for
our discussion is the classical weak-type bound 
in $L^1$.  The language of generalized restricted type estimates allows
to formulate a corresponding dual estimate at $L^\infty$, and the two 
endpoint estimates suffice to recover $L^p$ bounds by interpolation. 

Somewhat analogously, it was shown  in \cite{KS} that the Coifman-Meyer 
bilinear singular integrals  
$$
T(f_1,f_2) (x) =  \mathrm{p.v.} \int_{\R^2} f_1(x-t_1) f_2(x-  t_2) K(t_1,t_2)  \, \d t_1 \d t_2  \qquad x \in \R\ ,
$$
where  $K$ is a homogeneous Calder\'on-Zygmund  kernel in $\R^2$,
obey the weak endpoint bound $T: L^{1}(\R) \times L^{1}(\R) \to L^{\frac12,\infty}(\R)   $. In the language of generalized restricted type, this is 
one of a triple of symmetric endpoint estimates which one may interpolate  to 
obtain    bounds in the entire allowed region for $L^p$ estimates established 
by Coifman and Meyer in \cite{CM1},  see also \cite{GT} for an extension 
to the non-homogeneous case.

This article is concerned with endpoint bounds for the more singular family of  bilinear operators known as bilinear Hilbert transforms.
Such endpoint bounds have been previously investigated in \cite{BG,BG2,CGMS,DD2}. The region of known $L^p$ estimates for a bilinear Hilbert transform 
constitutes an open hexagon depicted in Figure \ref{hexafig}. The six extremal points
of the hexagon are all symmetric in the language of generalized restricted
type estimates and each of them corresponds to a hypothetic estimate 
$L^1\times L^2\to L^{2/3}$ for some dual of the bilinear Hilbert transform.
The shape of this region already suggests that the bilinear Hilbert transform has a more 
colorful endpoint theory than the bilinear Coifman-Meyer operators discussed above, whose open region of boundedness is the entire open triangle depicted in
the figure.

The additional thresholds provided by the short sides of the hexagon,
which correspond to hypothetic estimates of the bilinear Hilbert transform mapping into $L^p$
with $p=\frac23$, are an important structural  feature of  
modulation-invariant singular integrals. 
While it is not known whether or not bounds for the bilinear Hilbert transform can be extended past this threshold, such an extension does not hold for bilinear operators very closely related to the bilinear Hilbert transform, such as those obtained from allowing bounded coefficients in
model sums representing the bilinear Hilbert transform, as explained in \cite{Lac}. Using the same effect, it is possible to construct a trilinear modulation-invariant multiplier form 
which satisfies no bounds beyond these thresholds 
({C.\ Muscalu, personal communication}).
This motivates the study of endpoint estimates for the bilinear
Hilbert transform at the boundary of the hexagon. 
Similar endpoint questions for Carleson's  operator, the other stalwart of time-frequency analysis,  have enjoyed some popularity as well in recent years \cite{DP,DP2,DL,LIE2,LIE1}.

 Consider the  family of trilinear forms with parameter  $\vec \beta  \in  \R^3$   defined, for   Schwartz functions  $f_1,f_2,f_3:\R \to \mathbb C$, by the principal value integral
\begin{equation} \label{trdef}
\Lambda_{\vec\beta} (f_1,f_2,f_3)  = \int_\R \mathrm{p.v.} \int_{\R} f_1(x-\beta_ 1 t) f_2(x-\beta_2 t) f_3 (x-\beta_3 t) \, \frac{\d t}{t}\; \d x. \qquad 
\end{equation}
 By scaling and translation invariance we can restrict to   vectors $\vec \beta$ of unit length and perpendicular to $(1,1,1)$. In effect this reduces 
$\Lambda_{\vec\beta}$ to a  one-parameter family. The trilinear forms $\Lambda_{\vec\beta}$ arise as   duals to the family of bilinear   operators known as \emph{bilinear Hilbert transforms}, written in singular integral form as
\begin{equation*}
\mathrm{BHT}_{\vec b}(f_1,f_2)(x)=\mathrm{p.v.} \int_{\R} f_1(x-b_ 1 t) f_2(x-b_2 t)  \, \frac{\d t}{t}, \qquad x \in \R.
\end{equation*}
Indeed,  $\Lambda_{\vec \beta}(f_1,f_2,f_3)=\l \mathrm{BHT}_{\vec b}(f_1,f_2), \overline{f_3}\r,$ with $\vec \beta$ and $\vec b$ related by $\beta_1-\beta_3=b_1,\beta_2-\beta_3=b_2 .$ 

In a pair of articles by Lacey and the second author \cite{LT1,LT2},   it is proved that
in the non-degenerate case, meaning no two components of $\vec \beta$ are equal,
\begin{equation}
\label{BHTnoform} \|\mathrm{BHT}_{\vec b}(f_1,f_2)\|_{\frac{p_1p_2}{p_1+p_2}} \leq {C_{\vec\beta}} C_{p_1,p_2} \|f_1\|_{p_1} \|f_2\|_{p_2}
\end{equation}
for all  $1<p_1,p_2\leq \infty$ with $2/3<\frac{p_1p_2}{p_1+p_2}<\infty$. These bounds can be obtained via the   interpolation  procedure described e.\ g.\ in \cite{MTT,ThWp}
as a consequence of 
the family of  \emph{generalized restricted weak-type} (GRWT) estimates 
\begin{equation} \label{rwtintro}
\big|\Lambda_{\vec\beta}(f_1,f_2,f_3)\big| \leq {C_{\vec\beta}} \Calpha  \prod_{j=1}^3 |F_j|^{\alpha_j}, \qquad \forall \vec \alpha \in \mathrm{int}\, \mathcal{H},  \end{equation}
where 
\begin{equation}
\label{tuples}
\mathcal H =\Big\{\vec \alpha = (\alpha_1,\alpha_2,\alpha_3): \alpha_1+ \alpha_2+ \alpha_3=1,\; \max_{j} \alpha_j \leq 1,\; \min_{j} \alpha_j  \geq -\textstyle\frac12\Big\}\end{equation}
is  the shaded hexagon  in Figure \ref{hexafig}.
\setlength{\unitlength}{0.85mm}
 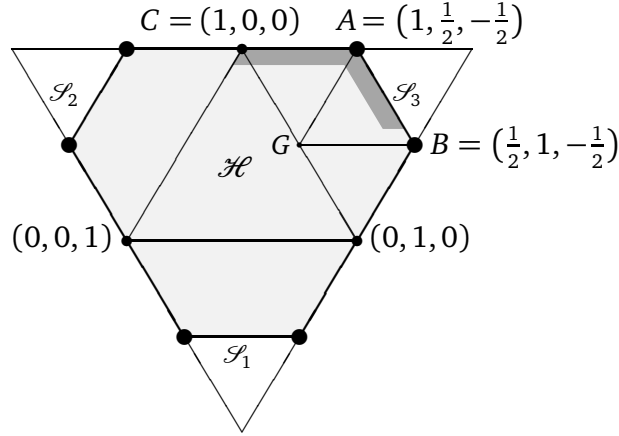
\begin{figure}[!t] 
  \hspace{-4em}
 \begin{picture}(72,78)
\put(28,60){\tikz \coordinate (point1);}
    \put(64,60){\tikz \coordinate (point2);}
    \put(73,45){\tikz \coordinate (point3);}
    \put(55,15){\tikz \coordinate (point4);}
    \put(37,15){\tikz \coordinate (point5);}
    \put(19,45){\tikz \coordinate (point6);}
    \put(0,0){\tikz[overlay] \fill[lightestgray] (point1) -- (point2)%
             -- (point3) -- (point4) -- (point5) -- (point6) -- cycle;}
\put(46,60){\tikz \coordinate (point7);}
    \put(44.5,57.5){\tikz \coordinate (point8);}
    \put(62,57.5){\tikz \coordinate (point9);}
    \put(68,47.5){\tikz \coordinate (point10);}
    \put(71.5,47.5){\tikz \coordinate (point11);}
    \put(64,60){\tikz \coordinate (point12);}
    \put(0,0){\tikz[overlay] \fill[lightergray] (point7) -- (point8)%
             -- (point9) -- (point10) -- (point11) -- (point12) -- cycle;}
\put(46,60){\circle*{1.6}}
\put(28,30){\circle*{1.6}}
\put(64,30){\circle*{1.6}}
\put(42.1,40){$\mathcal H$}
\put(19,45){\circle*{0.8}}
\put(28,60){\circle*{0.8}}
\put(64,60){\circle*{0.8}}
\put(73,45){\circle*{0.8}}
\put(55,45){\circle*{0.8}}
\put(50.5,43.5){\small $ G$}
\put(37,15){\circle*{0.8}}
\put(55,15){\circle*{0.8}}
 

\put(10,29){$(0,0,1)$}
\put(66,29){$(0,1,0)$}
\put(30,63){$C=(1,0,0)$}
\put(61,63){$A=\big(1,\frac 12,-\frac 12\big)$}
\put(75.5,43.5){$ B=\big(\frac 12, 1,-\frac 12\big)$}
\color{black}
\thinlines
\put(10,60){\line(1,0){18}}
\put(82,60){\line(-1,0){18}}
\put(10,60){\line(3,-5){9}}
\put(46,0){\line(-3,5){9}}
\put(46,0){\line(3,5){9}}
\put(82,60){\line(-3,-5){9}}
\thicklines
\put(28,60){\line(1,0){18}}
\put(28,30){\line(-3,5){9}}
\put(28,30){\line(3,-5){9}}
\put(64,30){\line(3,5){9}}
\put(64,30){\line(-3,-5){9}}
\put(46,60){\line(1,0){18}}
 
\put(64,60){\circle*{2.5}}
 
\put(69.5,52){\footnotesize$\mathcal S_3$}
\put(73,45){\circle*{2.5}}
\put(28,60){\circle*{2.5}}
\put(15.8,52){\footnotesize$\mathcal S_2$}
\put(19,45){\circle*{2.5}}
\put(37,15){\circle*{2.5}}
\put(55,15){\circle*{2.5}}
\put(43,11){\footnotesize$\mathcal S_1$}
\put(37,15){\line(1,0){18}}
\put(19,45){\line(3,5){9}}
 
 \put(73,45){\line(-3,5){9}}
 \thinlines
 
 \put(64,60){\line(-3,-5){9}}
 \put(73,45){\line(-1,0){18}}

\put(28.4,30){\line(1,0){35.2}}
\put(46.2,59.67){\line(3,-5){17.6}}
\put(28.2,30.33){\line(3,5){17.6}}

\end{picture}
\caption{The hexagon $\mathcal H$ of \eqref{tuples}. The darker shade indicates the approach region of \eqref{geometry}. \label{hexafig}}
\end{figure}
The diction GRWT stands for \eqref{rwtintro} holding for all tuples of sets $F_j \subset \R, \, j=1,2,3$ of finite measure and for all   functions $|f_j| \leq \cic{1}_{F_j}$ with the additional restriction that,  if $j_*$ is    the maximal index $j$ such that $\alpha_j=\min_k \{\alpha_k \}$,  $|f_{j_*}| \leq \cic{1}_{F_{j_*}'}$, for some subset  $F_{j_*}'\subset F_{j_*}$ which is major in the
sense $|F_{j_*}| \leq 2|F_{j_*}'|$ and which may depend on $F_1,F_2,F_3$.
Allowing the passage to a major subset is crucial if one of the parameters
$\alpha_j$ is less than or equal to $0$. Note that at every point of the open region discussed
at most one parameter is less than or equal to $0$ and this is the one with index $j_*$. Assuming for example that $j_*=3$, estimate \eqref{rwtintro} is 
equivalent to the weak-type  bound $$
\|\mathrm{BHT}_{\vec b}(f_1,f_2)\|_{\frac{1}{1-\alpha_3},\infty} \leq {C_{\vec\beta}} \Calpha \prod_{j=1,2}|F_j|^ {\alpha_j} , \qquad 
 \forall \, |f_j| \leq \cic{1}_{F_j}, \,j=1,2.$$
The symmetric nature of the form $\Lambda_{\vec \beta}$ and the notion of GRWT
shows that specializing $j_*=3$ is no loss of generality.


In the degenerate case, that is if two components of $\vec \beta$ are
equal, questions on bounds for the bilinear Hilbert transform trivialize.
The bilinear Hilbert transform then degenerates to a combination of the
classical Hilbert transform and a pointwise product. There are three
different degenerate cases, depending on which two components of
$\vec \beta$ are equal. In each case, the region of GRWT estimates  
is no longer symmetric under permutation of the three indices, and 
neither contains the above hexagon $\mathcal H$ nor it is contained in the hexagon. 
This leads to an interesting array of questions concerning uniformity of bounds for 
the nondegenerate case in the vicinity of the degenerate case. Such 
questions have been addressed for example in 
\cite{GL,Li,OT,ThUE}. The present paper focuses only on the non-degenerate case
and ignores the above uniformity questions 
by allowing constants $C_{\vec \beta}>0$ that depend on $\vec \beta$ in
an unspecified manner. In general the constants will blow up
as $\Delta_{\vec \beta}$, the distance from $\vec \beta$ to the union of the 
three hyperplanes  $\beta_j=\beta_k$, $k \neq j$, tends to $0$.

%


A folklore conjecture  is that the generalized restricted weak-type estimate
\eqref{rwtintro}
for the non-degenerate bilinear Hilbert transform extends to the region $\mathcal S$
defined by
$$ \mathcal S= \bigcup_{j=1}^3 \mathcal S_j, \qquad 
\mathcal{S}_j=\Big\{\vec \alpha: \alpha_1+\alpha_2+\alpha_3=1, \; \textstyle\alpha_j = -\frac12,\; \max\{ \alpha_k\} < 1\Big\}, \qquad j=1,2,3,
$$
that is the union of the short open segments of the boundary of the hexagon.
\begin{conjecture} \label{conj1} The GRWT estimate \eqref{rwtintro} holds for  all tuples $\vec \alpha \in  {\mathcal S} $. 
 \end{conjecture}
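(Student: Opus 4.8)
The plan is to run the time--frequency argument for $\Lambda_{\vec\beta}$ and to spend all the available oscillation through a Calder\'on--Zygmund decomposition of multi-frequency type, as in \cite{NOT}, in the enhanced form that is the technical heart of this paper; the whole point of the conjecture is that such a decomposition should be available \emph{without any entropy loss}, and that is precisely where the strategy must ultimately succeed or fail. By the permutation symmetry of $\Lambda_{\vec\beta}$ and the notion of GRWT we may assume $\vec\alpha\in\mathcal S_3$, so $\alpha_3=-\tfrac12$, $\alpha_1+\alpha_2=\tfrac32$ and $\alpha_1,\alpha_2\in(\tfrac12,1)$; we must then produce, for every pair $|f_j|\le\cic{1}_{F_j}$, $j=1,2$, a major subset $F_3'\subset F_3$ depending on $F_1,F_2,F_3$ for which $|\langle\mathrm{BHT}_{\vec b}(f_1,f_2),\cic{1}_{F_3'}\rangle|\le C_{\vec\beta}\,\Calpha\,|F_1|^{\alpha_1}|F_2|^{\alpha_2}|F_3|^{-1/2}$ with $\Calpha$ \emph{bounded} as $\vec\alpha$ ranges over compact subsets of $\mathcal S_3$ --- the interior estimate \eqref{rwtintro} being already available, but with $\Calpha$ degenerating at the boundary. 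After the standard discretization this reduces to estimating a model form $\sum_{T}\sum_{P\in T}\langle f_1,\phi_{P_1}\rangle\langle f_2,\phi_{P_2}\rangle\langle\cic{1}_{F_3'},\phi_{P_3}\rangle$ over a rank one collection of tri-tiles $P=(P_1,P_2,P_3)$ organized into trees $T$ with tops $I_T$.

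The next step is to fix a height $\lambda$, discard the exceptional set $E$ on which suitable Hardy--Littlewood maximal functions of $\cic{1}_{F_1}$ and $\cic{1}_{F_2}$, normalized against $|F_3|$, are large, arrange $|E|\le\tfrac12|F_3|$ and set $F_3'=F_3\setminus E$. Grouping the surviving trees by the dyadic value of their top length $|I_T|=2^k$, I would apply the enhanced multi-frequency Calder\'on--Zygmund decomposition to $f_1$, at height $\lambda$, relative to the finite set $\Xi_k$ of frequency intervals $\{\omega_{P_1}:|I_P|=2^k\}$ that remain relevant off $E$: write $f_1=g^{(k)}+b^{(k)}$ with $b^{(k)}=\sum_Q b_Q$ over a Whitney family of cubes $Q$ with $|Q|\sim 2^k$, each $b_Q$ supported on $Q$ and orthogonal to all $|\Xi_k|$ characters attached to $\Xi_k$, and with $\|g^{(k)}\|_\infty\lesssim\lambda$, $\|g^{(k)}\|_2^2\lesssim\lambda\,|F_1|$, and $\|b_Q\|_2^2\lesssim\Theta(|\Xi_k|)\,\lambda\,|Q|$, where $\Theta$ is the entropy factor produced by the decomposition. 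The good parts are then absorbed by the interior estimate \eqref{rwtintro}, applied at interior tuples approaching $\vec\alpha$, together with the distributional smallness $|\{M\cic{1}_{F_1}\gtrsim\lambda\}|\lesssim\lambda^{-1}|F_1|$; a geometric summation over the heights $\lambda=2^m$ gives their total contribution with a constant bounded up to $\mathcal S_3$.

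For the bad parts the oscillation is spent through the orthogonality of each $b_Q$ to the characters of $\Xi_k$: the pairing $\langle b_Q,\phi_{P_1}\rangle$ gains both in the number of scales separating $2^k$ from $|Q|$ and in the distance from $Q$ to the tree top $I_T$, and, combined with the single-tree estimate and the standard almost-orthogonality in the remaining tile parameters, this should deliver a bound of the form $C_{\vec\beta}\,\bigl(\sup_k\Theta(|\Xi_k|)\bigr)^{1/2}\,|F_1|^{\alpha_1}|F_2|^{\alpha_2}|F_3|^{-1/2}$ for the full bad contribution, the cardinality $|\Xi_k|$ being controlled by the number of frequency-separated trees at scale $2^k$ and hence by a ratio of the form $|F_3|/\min\{|F_1|,|F_2|\}$.

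The entire difficulty is thus concentrated in the entropy factor $\Theta(N)$. The trivial bound is $\Theta(N)\asymp N$; the $\mathrm{BMO}$-improved decomposition of \cite{NOT} gives $\Theta(N)\asymp\log N$, which in this setting reproduces the single logarithm of \cite{BG}; the enhanced decomposition of the present paper, through a sharper accounting of the relevant frequencies, brings it down to essentially $\log\log N$, which is exactly the double logarithm of the abstract. Conjecture~\ref{conj1} amounts to being able to take $\Theta(N)=O(1)$ after summation over the stopping-time tree, and I do not expect this to be achievable cube by cube: confining $N$ frequency-separated characters to a single interval while keeping the bad part in $L^2$ genuinely costs a power of $\log N$. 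Removing the last logarithm should instead require exploiting cancellation \emph{between} distinct cubes and distinct trees of comparable scale --- in effect a global multi-frequency decomposition in which the frequencies active at one location are almost orthogonal to those active nearby, so that a single $TT^{*}$ or Bessel argument carried out across the whole collection is entropy-free. Making this precise for the genuine Fourier model is the step at which the present strategy stalls and is, I expect, the real content of the conjecture; the Walsh analogue of \cite{DD2} avoids the issue only because the Walsh phase plane is exactly self-similar --- compactly supported wave packets, no Schwartz tails --- and it is not known how to transport that mechanism. An alternative route would be to bypass the multi-frequency decomposition altogether and prove a sparse-form or a Bourgain-type variational bound for $\Lambda_{\vec\beta}$ directly at the endpoint exponent $\tfrac23$, but no such estimate is presently available either.
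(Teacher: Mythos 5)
The statement you were asked about is Conjecture~\ref{conj1}, and the paper does not prove it: it is stated as an open problem, and the paper's actual contribution at the short sides of the hexagon is Corollary~\ref{rwtTH}, which is the conjecture weakened by a doubly logarithmic factor. Your proposal, correctly, does not prove it either --- it is an account of the strategy, of how far it reaches, and of where it stalls. That account is essentially faithful to the paper: the reduction by symmetry to $\mathcal S_3$, the exceptional-set/major-subset step, the organization into forests by tree-top size, the multi-frequency Calder\'on--Zygmund decomposition applied to $f_1$ with good part fed to Hilbert-space (tree-counting) estimates and bad part killed by oscillation against the selected frequencies, and the identification of the residual loss in the continuous MFCZ as the obstruction that the Walsh model of \cite{DD2} sidesteps via compactly supported wave packets. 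So there is no error of approach; there is simply no proof, because none is known.

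One point where your bookkeeping of the losses drifts from the paper's actual mechanism: the progression ``$\Theta(N)\asymp N$, then $\log N$, then $\log\log N$'' is a heuristic for the final GRWT bounds, not for the MFCZ itself. In Proposition~\ref{corMFCZabs} the good part still costs $(\#\Xi)^{1/p-1/2}$ exactly as in \cite{NOT} (Borwein--Erd\'elyi); the enhancement is not a smaller entropy exponent but the fact that the bad part's pairings are made \emph{exponentially} small in $k$, by enlarging the set of mean-zero frequencies by a factor $\sim u(Ck)\log u(Ck)$ and using Ingham-type wave packets with almost exponential decay. The price is the starred correction $(\cdot)_*^{2\alpha_1-1}$ in Theorem~\ref{prLpL2}, a \emph{single} (bumped) logarithm on the open edge $\overline{AB}$, degenerating only at the corner $A$. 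The double logarithm of Corollary~\ref{rwtTH} then arises not from the decomposition but from the interpolation step in Section~\ref{sectpfthm}: one applies Theorem~\ref{prLpL2} at an auxiliary tuple at distance $\eps$ from the corner, picks up $\eps^{-1}A^{\eps}$ with $A\sim\log(|F_3|/\min\{|F_1|,|F_2|\})$, and optimizes $\eps\sim(\log A)^{-1}$. Your concluding diagnosis --- that removing the last logarithm cube-by-cube is hopeless and that one would need cancellation across cubes and trees, or an entirely different (sparse/variational) route --- is consistent with the authors' own admission that they ``do not see how to do this with present technology.''
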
    

A main theme of the present paper is that additional insight can be obtained by 
lifting the subindicator condition on a careful choice of the functions 
$f_j$, that is to allow for general $L^p$ functions $f_j$ rather than
functions dominated by an indicator function. This is analogous to the 
classical case of the linear Hilbert transform discussed above, where the
crucial boundary estimate is a weak-type estimate which allows
the input function to be a general $L^1$ function, while the test function
that one pairs with to obtain a bilinear form has to be a subindicator function,
supported on a major subset as elaborated in the GRWT definition above.
In the case of the trilinear form $\Lambda_{\vec \beta}$ one has a choice
of three functions on which to lift the subindicator condition, yielding
a relatively more diverse set of possible estimates.
 
At the typical corner $A$ of the hexagon, the second coordinate 
$\alpha_2=1/2$ stands for the Hilbert space $L^2(\R)$. In the vicinity
of that corner it is therefore particularly 
efficient to lift the subindicator condition on the function $f_2$, 
since one has the full Hilbert space technique at hand.
This was already observed in \cite{DD2} for the quartile operator.

A side product of our  investigations is a fairly straightforward adaption of the strategy of \cite{DD2} to the present case of the bilinear Hilbert transform
to obtain the following endpoint estimate at the corner $A$ with a 
logarithmic correction term.
\begin{theorem} \label{propL1L2ag}
Let    $f_2 \in L^{2}(\R)$, and   sets $F_1, F_3 \subset \R$ of finite measure be given. Then, there exists a major subset  $F_3'$ of $F_3$, depending on $f_2, F_1, F_3$, such that      
\begin{equation*} 
\big|\Lambda_{\vec \beta}(f_1,f_2,f_3)\big| \leq C_{\vec \beta}   |F_1|  \|f_2\|_2 |F_3|^{-\frac{1}{2}} \log \big( \e + \textstyle \frac{|F_3|}{|F_1|} \big)\qquad \forall   |f_1| \leq \cic{1}_{F_2},\,
 |f_3| \leq \cic{1}_{F_3'}.
\end{equation*}
\end{theorem}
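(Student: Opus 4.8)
The strategy is to transplant the quartile argument of \cite{DD2} to the Fourier setting via the time--frequency discretization of \cite{LT1,LT2,MTT}; the new feature, responsible for the logarithm, is a summation over scales that is trivial in the Walsh model. First I would reduce by symmetry. The form $\Lambda_{\vec\beta}$ is invariant under translations and under the two-parameter family of simultaneous modulations; under $L^\infty$-normalized dilations $f_j\mapsto f_j(\lambda\cdot)$ both sides of the claimed inequality transform identically, and only the ratio $|F_3|/|F_1|$ enters the right-hand side. Applying a dilation and then rescaling $f_2$ by linearity, I may assume $|F_1|=1$ and $\|f_2\|_2=1$, so that the goal becomes the bound $C_{\vec\beta}\,|F_3|^{-\frac12}\log(\e+|F_3|)$. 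When $|F_3|\le1$ this is immediate from the known estimate \eqref{BHTnoform} with $p_1=p_2=2$: since $\mathrm{BHT}_{\vec b}\colon L^2\times L^2\to L^1$, $\|f_1\|_2\le|F_1|^{1/2}=1$ and $\|f_3\|_\infty\le1$, one has $|\Lambda_{\vec\beta}(f_1,f_2,f_3)|\lesssim_{\vec\beta}1\le|F_3|^{-\frac12}$. So I may assume $|F_3|>1=|F_1|$, and it then suffices to bound each of the finitely many model sums
\begin{equation*}
\sum_{P\in\mathbf P}\frac{1}{|I_P|^{\frac12}}\,\l f_1,\phi_{P_1}\r\,\l f_2,\phi_{P_2}\r\,\l f_3,\phi_{P_3}\r
\end{equation*}
over rank--one families $\mathbf P$ of tri--tiles into which $\Lambda_{\vec\beta}$ decomposes, the $\phi_{P_j}$ being $L^2$--normalized wave packets adapted to $P_j$.

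Since the third exponent equals $-\tfrac12<0$, one must pass to a major subset of $F_3$, and the correct choice is governed by the weak-type bounds for the Hardy--Littlewood maximal operator $M$: set
\begin{equation*}
E=\Big\{x\in\R:\ M\cic 1_{F_1}(x)>\tfrac{K}{|F_3|}\Big\}\ \cup\ \Big\{x\in\R:\ M\big(|f_2|^2\big)(x)>\tfrac{K}{|F_3|}\Big\},\qquad F_3':=F_3\setminus E,
\end{equation*}
with $K$ a large absolute constant; then $|E|\le\tfrac12|F_3|$, so $F_3'$ is major in $F_3$ and depends only on $f_2,F_1,F_3$ as claimed. The point is that any tri--tile $P$ whose spatial interval $I_P$ meets $F_3'$ satisfies $\mathrm{dens}_1(P)\lesssim|F_3|^{-1}$ and lies over a region where the $L^2$--average of $f_2$ is $\lesssim|F_3|^{-1/2}$; the tri--tiles reaching $f_3$ only through the rapidly decaying tails of $\phi_{P_3}$ are separated off at the start, and their total contribution sums geometrically over scales and positions.

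It remains to estimate the truncated model sum. I would organize $\mathbf P$ into trees, bound each tree by the single--tree inequality $|\Lambda_{\mathbf T}|\lesssim|I_{\mathbf T}|\,\mathrm{size}_1(\mathbf T)\,\mathrm{size}_2(\mathbf T)\,\mathrm{size}_3(\mathbf T)$, and sum using the Bessel inequalities $\sum_{\mathbf T}|I_{\mathbf T}|\lesssim\mathrm{energy}_j(f_j)^2\,\mathrm{size}_j(\mathbf T)^{-2}$ together with the density packing $\sum_{\mathbf T:\,\mathrm{dens}_1(\mathbf T)>\delta}|I_{\mathbf T}|\lesssim|F_1|\,\delta^{-1}$. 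Here $f_2$ is handled purely through $\mathrm{energy}_2(f_2)\lesssim\|f_2\|_2=1$ --- this is exactly the place where $f_2$ is allowed to be a general $L^2$ function rather than a subindicator, using the ``full Hilbert space technique'' alluded to in the introduction --- while $f_1$ and $f_3$ are handled through the subindicator densities, so that $\mathrm{energy}_1\lesssim|F_1|^{1/2}=1$, $\mathrm{size}_1(\mathbf T)\lesssim\mathrm{dens}_1(\mathbf T)^{1/2}\lesssim|F_3|^{-1/2}$, $\mathrm{size}_3(\mathbf T)\le1$, $\mathrm{energy}_3\lesssim|F_3'|^{1/2}\le|F_3|^{1/2}$. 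Organizing the surviving trees by the dyadic value of $\mathrm{size}_2(\mathbf T)$ --- equivalently by the spatial scale at which the $L^2$--mass of $f_2$ is still competing with the density ceiling coming from $f_1$ --- the sum is controlled by a fixed multiple of $|F_1|\,\|f_2\|_2\,|F_3|^{-1/2}$ on each dyadic level, the relevant levels range over an interval of length $\asymp\log(\e+|F_3|/|F_1|)$, and outside that interval the contributions form a convergent geometric series; summing yields the claimed bound.

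The main obstacle is precisely this last summation: one must verify that, because at the corner $A$ the $L^1$--type control of $f_1$, the $L^2$--type control of $f_2$ and the $L^\infty$--type control of $f_3$ are saturated simultaneously, the dyadic levels on which the per-level estimate fails to improve form a window of merely logarithmic width in $|F_3|/|F_1|$ rather than a power of it. In the Walsh model of \cite{DD2} the perfect spatial localization of the wave packets collapses this window --- tiles lying in the exceptional set contribute nothing at all --- and no logarithm appears; the tails of the Fourier wave packets are what reinstate it. Granting the balancing and quoting the single--tree estimate, the Bessel inequalities, the density packing lemma and the tail estimates from \cite{LT1,LT2,MTT,DD2}, the argument is complete.
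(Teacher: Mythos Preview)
Your proposal is correct and follows essentially the same route as the paper: reduce to model sums, remove an exceptional set governed by the maximal functions of $\cic 1_{F_1}$ and $|f_2|^2$, and then run the tree/size/energy machinery so that the $L^2$ function $f_2$ is controlled purely through its energy while the two subindicator functions are controlled through densities, with the logarithm emerging from a window of dyadic levels on which the per-level bound stalls at $|F_1|\,\|f_2\|_2\,|F_3|^{-1/2}$.

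The only organizational difference is which variable drives the outer decomposition. The paper normalizes $|F_3|\sim1$, performs the ``$f_3$-decomposition'' into forests $\S_k$ with $\size_3\sim2^{-k}$ and tree count $\lesssim2^{2k}$ (this is Lemma~\ref{sizelemma} applied to $f_3$), and then invokes Proposition~\ref{propL2SK}, which within each $\S_k$ balances the $L^2$ energy of the unrestricted function against $\size$ of the subindicator; the logarithm comes from the $\sim n_0=\tfrac12|\log|F_2||$ values of $k$ for which the bound of Proposition~\ref{propL2SK} equals $2^{-n_0}$. You instead propose organizing primarily by $\size_2$ and reading the tree count from the Bessel inequality for $f_2$ together with the density packing for $f_1$. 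Both orderings work and yield the same logarithmic count; the paper's choice has the minor advantage that the outer loop uses only the subindicator information (the $L^\infty$ bound \eqref{BMOest} and the $L^2$ bound for $f_3$), so that Proposition~\ref{propL2SK} can be stated once and reused. One small slip: with the $\M_1$ convention used here (see \eqref{sizeinfmax}), $\size_1\lesssim\mathrm{dens}_1\lesssim|F_3|^{-1}$ rather than $\mathrm{dens}_1^{1/2}$; this only strengthens your bound and does not affect the argument.
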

This is our only estimate directly at the corner A of the hexagon.
It is a strengthening of a result of \cite{BG,BG2}, where the same estimate is shown to 
hold under the further assumption that $f_2$ be a subindicator function as well. 
One may view the logarithmic correction term as a fallout
of being on the edge $\overline{AC}$, which corresponds to
the space $L^1$ for the first function.

Of course a symmetric estimate holds at the other six corners of the hexagon, 
and specializing again to three subindicator functions as in \cite{BG,BG2}
one obtains by 
interpolation GRWT estimates everywhere in the open hexagon. However, 
the interpolated estimates one obtains in this way are not as efficient 
in the vicinity of the boundary of the hexagon as what one obtains using 
the next two theorems.

To motivate the next theorem, consider the symmetric estimate to Theorem 
\ref{propL1L2ag} at corner $B$, which puts the function $f_1$ in $L^2$.
We would like to prove sharp estimates on the edge $\overline{AB}$.
There the function $f_1$ is in $L^p$ with $p$ between $1$ and $2$.
It is therefore natural to seek a Calder\'on-Zygmund decomposition of
the function $f_1$, using Hilbert space technique on the good portion 
and some additional localization information on the bad portion. The Calder\'on-Zygmund 
decomposition has to respect a number of frequencies as does the multi-frequency Calder\'on-Zygmund decomposition 
(MFCZ) developed in \cite{NOT}. A main point of the present paper is that in order to be successful on the edge of
 the hexagon we need a very sharp form of this MFCZ. Developing this MFCZ and applying it is the main technical 
advance of the present paper. 

\begin{theorem}
  \label{prLpL2}  We write $(t)_{*}=(1+t)(\log(\e+ t))^3$.  
   Let $\vec{\alpha}  \in \mathcal S_3$,  $f_1 \in L^{\frac{1}{\alpha_1}}(\R)$, and   sets $F_2, F_3 \subset \R$ of finite measure be given. Then, there exists a major subset  $F_3'$ of $F_3$,  depending on $f_1, F_2, F_3$, such that  the estimate   
\begin{equation} \label{propest} \textstyle
\big|\Lambda_{\vec \beta}(f_1,f_2,f_3)\big|  \leq     \frac{C_{\vec\beta}}{ (1-\alpha_1)(1-\alpha_2)} \|f_1\|_{\frac{1}{\alpha_1}}|F_2|^{\alpha_2} |F_3|^{-\frac12} \Big(\max\big\{ \textstyle\log\big(\frac{|F_3|}{|F_2|}\big), \frac{1}{1-\alpha_1}\big\}\Big)_*^{2\alpha_1-1 }  \end{equation}
holds for all $|f_2| \leq \cic{1}_{F_2}$, $|f_3| \leq \cic{1}_{F_3'} $. 
\end{theorem}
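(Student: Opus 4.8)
The plan is to discretize $\Lambda_{\vec\beta}$ in the customary time--frequency manner and then run the enhanced multi-frequency Calder\'on--Zygmund (MFCZ) decomposition --- the technical core of the paper --- on the function $f_1$, balancing a Hilbert space estimate for the good part against a support-and-oscillation estimate for the bad part. Using the dilation and modulation symmetries of $\Lambda_{\vec\beta}$ and the homogeneity of \eqref{propest}, I normalize so that $|F_3|=1$ and $\|f_1\|_{1/\alpha_1}=1$, and assume moreover $|F_2|\le1$, the complementary range being subsumed by the interior bounds \eqref{rwtintro}. Writing $L:=\max\{\log(1/|F_2|),(1-\alpha_1)^{-1}\}$, the goal becomes $|\Lambda_{\vec\beta}(f_1,f_2,f_3)|\lesssim_{\vec\beta}(1-\alpha_1)^{-1}(1-\alpha_2)^{-1}|F_2|^{\alpha_2}(L)_{*}^{2\alpha_1-1}$. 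By the discretization of \cite{LT1,LT2,MTT} it suffices to bound, uniformly over finite rank-one collections $\mathbf P$ of tri-tiles, the model sum $\Lambda_{\mathbf P}(f_1,f_2,f_3)=\sum_{P\in\mathbf P}|I_P|^{-1/2}\l f_1,\phi_{P,1}\r\l f_2,\phi_{P,2}\r\l f_3,\phi_{P,3}\r$, the $\phi_{P,j}$ being $L^2$-normalized wave packets adapted to the tiles.

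\emph{Exceptional set and organization.} I take $F_3'=F_3\setminus E$ with $E=\{M\ind_{F_2}>\kappa|F_2|\}$ and $\kappa$ an absolute constant large enough that $|E|<\tfrac12$, so that $F_3'$ is major. Then the energy bounds $\mathrm{energy}_{f_2}\lesssim|F_2|^{1/2}$ and $\mathrm{energy}_{f_3}\lesssim1$ hold, and on trees $T$ feeding the $f_3$-factor one has $\size_{f_2}(T)\lesssim|F_2|^{1/2}$ (thanks to the removal of $E$, and always $\lesssim1$) and $\size_{f_3}(T)\lesssim(\inf_{I_T}M\ind_{F_3'})^{1/2}$. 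Exhausting the tiles by the dyadic order of the $f_2$- and $f_3$-densities reduces matters to $\lesssim L$ subcollections, on each of which the relevant tree tops form a frequency-separated family whose effective cardinality $N$ is controlled by $L$.

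\emph{Decomposition of $f_1$ and the split.} Fixing one such subcollection and a height $\lambda>0$ to be optimized, the enhanced MFCZ applied to $f_1$ at height $\lambda$ with the $N$ tree-top frequencies distinguished gives $f_1=g+b$, $b=\sum_{Q\in\mathcal{Q}}b_Q$, with $\big|\bigcup_{Q}Q\big|\lesssim\lambda^{-1/\alpha_1}$; with $\|g\|_\infty$ and $\|g\|_2$ controlled by $\lambda$ up to a slowly growing factor $\psi(N)=(\log N)^{O(1)}$ --- namely $\|g\|_\infty\lesssim\lambda\,\psi(N)^{1/2}$ and $\|g\|_2^2\lesssim\lambda^{2-1/\alpha_1}\psi(N)$, this sharp form of $\psi$ improving on the $N^{1/2}$ of a naive truncation and being the crucial refinement over \cite{NOT} --- and with each $b_Q$ supported on $Q$, of $L^1$-mean $\lesssim\lambda$, and orthogonal to every wave packet of scale $|Q|$ centred at one of the $N$ distinguished frequencies. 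Splitting $\Lambda_{\mathbf P}=\Lambda_{\mathbf P}(g,f_2,f_3)+\Lambda_{\mathbf P}(b,f_2,f_3)$, the good term is controlled by the Lacey--Thiele size--energy inequalities, using $\size_{g}(T)\lesssim\|g\|_\infty$, $\mathrm{energy}_{g}\lesssim\|g\|_2$ and the $f_2$- and $f_3$-data above, after optimizing the interpolation exponents; the bad term is controlled by noting that the orthogonality of each $b_Q$ annihilates all but the finer scales and the $\lesssim N$ frequencies resonant in a bounded range, which are then absorbed through $\big|\bigcup_Q Q\big|\lesssim\lambda^{-1/\alpha_1}$ and the Bessel and size bounds for $f_2,f_3$. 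Summing over the $\lesssim L$ subcollections and optimizing $\lambda$ against $N\sim L$ produces the correction $(L)_{*}^{2\alpha_1-1}$; the exponent is exactly $2\alpha_1-1=\alpha_1(2-1/\alpha_1)$, i.e.\ $\alpha_1$ times the $L^2$-deficit of $f_1$, so it vanishes at the corner $B$ ($f_1\in L^2$, pure Hilbert space technique, no loss) and equals $1$ at the corner $A$ ($f_1\in L^1$, one full logarithm). Finally, the $L^{1/\alpha_1}$ maximal bound on $f_1$ contributes the factor $(1-\alpha_1)^{-1}$ and the geometric summation over the dyadic $f_2$-sizes the factor $(1-\alpha_2)^{-1}$.

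\emph{Main obstacle.} The hard part is the enhanced MFCZ itself: one must force the good function to lose only a factor $(\log N)^{O(1)}$ in $L^2$, rather than the $N^{1/2}$ of a naive truncation, while the bad pieces simultaneously retain enough oscillation that the wave-packet expansion ``sees'' no more than $O(N)$ frequencies. Securing both at once, and then tracking precisely how the residual polylogarithmic loss interacts with the optimization in $\lambda$ and with the two endpoint summations, is where essentially all of the technical work goes, and is what ultimately produces exactly the factor $(\,\cdot\,)_{*}^{2\alpha_1-1}$ appearing in \eqref{propest}.
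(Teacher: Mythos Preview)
Your sketch has the right skeleton—discretize, remove an exceptional set, organize into forests, apply an MFCZ to $f_1$ on each forest, split good/bad—but the description of the enhanced MFCZ is wrong in a way that breaks the argument.

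You claim the enhancement over \cite{NOT} is that $\|g\|_2^2\lesssim\lambda^{2-1/\alpha_1}\psi(N)$ with $\psi(N)=(\log N)^{O(1)}$, i.e.\ only a polylogarithmic loss in the number of frequencies. That is not what the paper achieves, and it is not achievable: the $L^2$ bound on $g$ still carries the full factor $\big(\sum_{\T}|I_\T|\big)^{\alpha_1-1/2}\sim 2^{(2\alpha_1-1)n}$ (see \eqref{gl2again} and \eqref{gl2fine}), just as in \cite{NOT}. The actual enhancement is on the \emph{bad} side: by enlarging the set of orthogonality frequencies of each $b_Q$ from the $N$ top frequencies to roughly $u(Ck)^2\cdot N$ carefully placed frequencies, and by using Ingham-type wave packets with almost exponential spatial decay (Lemma \ref{HORM}), one forces the Carleson measure of the bad coefficients to be \emph{exponentially small in $k$}, namely $\lesssim 2^{-4k}\lambda$ (see \eqref{megaesti1}–\eqref{megaesti}). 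The polylog factor $(u(Ck)^2\log u(Ck))^{\alpha_1-1/2}$ you see in the final bound is the price paid in $\|g\|_2$ for this enlargement, not a replacement for $N^{1/2}$.

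Relatedly, your treatment of the bad term (``orthogonality of each $b_Q$ annihilates all but the finer scales and the $\lesssim N$ frequencies resonant in a bounded range'') is exactly the naive argument that works for the quartile operator in \cite{DD2} thanks to perfect spatial localization of Walsh packets, but fails here: Fourier wave packets have tails, and the interaction of $b_Q$ with packets spatially away from $Q$ is \emph{not} annihilated by mean-zero against a single frequency. Controlling that interaction is the entire content of Lemmata \ref{lemmasplitmax}–\ref{lemmasphi} and Proposition \ref{corMFCZabs}. A minor further point: your exceptional set involves only $M\cic{1}_{F_2}$, but the paper's $F_3'$ also removes $\{\M_{1/\alpha_1}f_1>C\}$ (see \eqref{esets}); without this you cannot verify hypothesis \eqref{smallmf}, which is what calibrates $\lambda\sim 1$ in the MFCZ.
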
 

This theorem is analogous to  \cite[Proposition 2.1]{DD2} for the quartile operator.
Thanks to perfect localization of Walsh wave packets, an even
sharper but trivial form of MFCZ is true in the discrete setting  and hence \cite{DD2} obtains an 
estimate without the starred correction term, which is in fact a stronger form of Conjecture \ref{conj1} for the quartile operator.
The exponent of the starred term tends to $0$ at the corner $B$
and to $1$ at the corner $A$, showing that the correction term caused by MFCZ 
becomes worse as one moves away from the Hilbert space. 
Theorem \ref{prLpL2} is a phenomenon on the open edge $\overline{AB}$; we do not
see how to obtain the theorem by interpolation from any estimates at 
the corners $A$ and $B$, in particular not by interpolation with Theorem \ref{propL1L2ag}.
The constant in Theorem \ref{prLpL2} blows up as we approach either
corner.

We return to Theorem \ref{propL1L2ag} at the corner $A$ as motivation for
the following theorem. We fix the exponent $\alpha_2=1/2$ and the general function
$f_2\in L^2$, which puts us on the bisecting line $\overline{AG}$. 
This time we lift a second subindicator condition, namely on the function $f_1$,
to obtain two unconstrained functions. On the edge $\overline{AG}$ the function 
$f_1$ is in $L^p$ with $1<p<2$, and one can apply again the MFCZ to this function.


\begin{theorem}\label{thelinfversion}
For all $0\leq \alpha_1<1$, $f_1 \in L^{\frac{1}{\alpha_1}}(\R), f_2 \in L^2(\R), F_3 \subset \R$, there exists a major subset $F_3'$ of $F_3$,   depending on $f_1, f_2, F_3$, such that
$$
\big|\Lambda_{\vec\beta}(f_1,f_2,f_3)\big| \leq C_{\vec \beta} \textstyle\frac{1}{1-\alpha_1}  \Big(\textstyle\frac{1}{1-\alpha_1}\Big)_*^{2\alpha_1-1} \|f_1\|_{\frac{1}{\alpha_1}}\|f_2\|_2 |F_3|^{\frac12-\alpha_1} \qquad \forall |f_3| \leq \cic{1}_{F_3'}.
$$
\end{theorem}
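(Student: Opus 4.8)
The plan is to rerun the time--frequency argument behind Theorem~\ref{prLpL2}, but with $f_2$ kept as a generic $L^2$ function handled by orthogonality rather than as a subindicator function. (A direct reduction to Theorem~\ref{prLpL2} through a level-set decomposition $f_2=\sum_m g_m$ fails: by Theorem~\ref{prLpL2} each piece would contribute a multiple of $\|g_m\|_2\,|\mathrm{supp}\,g_m|^{1-\alpha_1}$ times a logarithm, and $\|f_2\|_2$ controls neither $\sum_m\|g_m\|_2$ nor the supports, so the sum diverges.) By trilinearity of $\Lambda_{\vec\beta}$ and its invariance under the joint dilation $f_j\mapsto f_j(\lambda\,\cdot)$ -- under which both sides of the asserted inequality scale as $\lambda^{-1}$ -- one may normalise $\|f_1\|_{1/\alpha_1}=\|f_2\|_2=|F_3|=1$; the claim becomes $|\Lambda_{\vec\beta}(f_1,f_2,f_3)|\le \frac{C_{\vec\beta}}{1-\alpha_1}\big(\frac{1}{1-\alpha_1}\big)_*^{2\alpha_1-1}$. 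When $0\le\alpha_1\le\frac12$ one takes $F_3'=F_3$: the triple $(\alpha_1,\frac12,\frac12-\alpha_1)$ lies in a fixed compact subset of $\mathrm{int}\,\mathcal H$, so \eqref{BHTnoform} yields $\mathrm{BHT}_{\vec b}\colon L^{1/\alpha_1}\times L^2\to L^q$ with $\frac1q=\alpha_1+\frac12$ and a constant bounded uniformly over that range, and pairing with $f_3$ (using $\|f_3\|_{q'}\le|F_3|^{\frac12-\alpha_1}$) closes the estimate, since the right-hand side above is bounded below for $\alpha_1\in[0,\frac12]$. Henceforth $\frac12<\alpha_1<1$, so $p:=\frac1{\alpha_1}\in(1,2)$ and $\alpha_3:=\frac12-\alpha_1\in(-\frac12,0)$, and a genuine major subset is unavoidable.

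Discretise $\Lambda_{\vec\beta}$ into a finite rank-one model sum over tritiles, exactly as in the proof of Theorem~\ref{prLpL2}; it suffices to bound each such sum uniformly. Let $\Omega\subset F_3$ be the union of $\{M\cic{1}_{F_3}>\kappa\}$ with the level sets on which the maximal and square functions attached to $f_1$ and $f_2$ exceed thresholds fixed, by the weak-type bounds and the normalisations, so that $|\Omega|\le\frac12|F_3|$; put $F_3':=F_3\setminus\Omega$, which depends only on $f_1,f_2,F_3$. The structural point is that $f_2$ is now handled \emph{throughout} by the $L^2$ theory -- global energy $\lesssim\|f_2\|_2$, size $\lesssim\|f_2\|_2$ on every tree -- so it is never split; this is the only genuine change from the proof of Theorem~\ref{prLpL2}, and it is the ``full Hilbert space technique'' mentioned in the introduction. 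The input $f_3$, being subindicator of $F_3'$, has energy $\lesssim|F_3|^{1/2}$ and controlled size on the trees whose top avoids $\Omega$; this, together with the scaling undone at the end, produces the factor $|F_3|^{\frac12-\alpha_1}$.

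Fix a cut height $\lambda>0$ and apply the enhanced multi-frequency Calder\'on--Zygmund decomposition of this paper (the sharp version of the one in \cite{NOT}) to $f_1$, relative to the frequencies that the surviving trees may select -- of which there are $\lesssim\frac1{1-\alpha_1}$, no set $F_2$ being present to force the additional $\log(|F_3|/|F_2|)$ frequencies of Theorem~\ref{prLpL2}. Write $f_1=g+b$. The good part obeys $\|g\|_\infty\lesssim\lambda$, $\|g\|_{1/\alpha_1}\lesssim\|f_1\|_{1/\alpha_1}$, hence $\|g\|_2^2\le\|g\|_\infty^{2-1/\alpha_1}\|g\|_{1/\alpha_1}^{1/\alpha_1}\lesssim\lambda^{\,2-\frac1{\alpha_1}}$ after the normalisation; being in $L^2\cap L^\infty$ it is treated by the standard tree estimates (with the $L^2$ data for $f_2$ and the subindicator data for $f_3$), yielding a bound increasing in $\lambda$. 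The bad part $b=\sum_Q b_Q$ has $\|b_Q\|_1\lesssim\lambda|Q|$ and $\sum_Q|Q|\lesssim\lambda^{-1/\alpha_1}$, and its oscillation against the surviving frequencies is controlled through the frequency-separation estimate of the sharp decomposition, yielding a bound decreasing in $\lambda$ and carrying the factor $\big(\frac1{1-\alpha_1}\big)_*$. Optimising $\lambda$ to balance the two contributions produces, after the same arithmetic of exponents as in Theorem~\ref{prLpL2} (with the argument of $(\cdot)_*$ now simply $\frac1{1-\alpha_1}$), the bound $\frac{C_{\vec\beta}}{1-\alpha_1}\big(\frac1{1-\alpha_1}\big)_*^{2\alpha_1-1}$; undoing the normalisation restores $\|f_1\|_{1/\alpha_1}$, $\|f_2\|_2$, and $|F_3|^{\frac12-\alpha_1}$.

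The main obstacle is, as in Theorem~\ref{prLpL2}, the bad part: one must show that the multi-frequency Calder\'on--Zygmund decomposition of $f_1$ costs no more than the factor $\big(\frac1{1-\alpha_1}\big)_*^{2\alpha_1-1}$ -- i.e.\ that the interaction of the $b_Q$'s with the $\lesssim\frac1{1-\alpha_1}$ surviving frequencies incurs at most the cubic-logarithmic loss built into $(\cdot)_*$ -- and that every estimate remains uniform as $\alpha_1\uparrow1$, where $p=1/\alpha_1\downarrow1$ and the decomposition degenerates. Pinning down the exact power of the logarithm, so that the tree organisation adds no further loss, is the delicate point; the rest is bookkeeping parallel to the proof of Theorem~\ref{prLpL2}.
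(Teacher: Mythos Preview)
Your outline contains a genuine misunderstanding of the multi-frequency Calder\'on--Zygmund decomposition and of where the constant $\frac{1}{1-\alpha_1}\big(\frac{1}{1-\alpha_1}\big)_*^{2\alpha_1-1}$ actually comes from. The MFCZ good part does \emph{not} satisfy $\|g\|_\infty\lesssim\lambda$: in the multi-frequency setting $g$ is built from Riesz projections onto spans of exponentials and is only controlled in $L^2$, via \eqref{gl2again} in Proposition~\ref{corMFCZabs}. Your interpolation $\|g\|_2^2\le\|g\|_\infty^{2-1/\alpha_1}\|g\|_{1/\alpha_1}^{1/\alpha_1}$ therefore cannot be invoked, and the balance-in-$\lambda$ argument collapses. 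Moreover, the number of relevant frequencies is not $\lesssim\frac{1}{1-\alpha_1}$: the frequencies are the tops of the trees produced by the size lemma applied to $f_3$, and at forest level $k$ there are up to $\sim 2^{2k}|F_3|$ of them (see \eqref{topsSK}); nothing here is bounded in terms of $\alpha_1$.

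The paper's argument is structured differently. After removing the exceptional set and performing the $f_3$-decomposition of the remaining tiles into forests $\S_k$ satisfying \eqref{sizebdSK}--\eqref{bmoSK}, one applies Proposition~\ref{proptouse} to each $\S_k$ with $n_0=0$ (here $\size_2(f_2;\S_k)\lesssim\|f_2\|_2$, the only place where unrestricted $f_2\in L^2$ is used). Inside that proposition the MFCZ is applied \emph{separately at each level $k$}, with top data coming from the trees of $\F_k'$ and with $\lambda\sim 1$ fixed by the normalisation; the good part has $\|g\|_2\lesssim 2^{(2\alpha_1-1)k}(k)_*^{2\alpha_1-1}$ and is fed into Lemma~\ref{orglemma}, while the bad part has size $\lesssim 2^{-16k}$ by \eqref{megaesti1}--\eqref{megaesti} and is negligible. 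The final constant arises from the summation
\[
\sum_{k\ge 0}(k)_*^{2\alpha_1-1}\,2^{-2(1-\alpha_1)k}\ \lesssim\ \tfrac{1}{1-\alpha_1}\big(\tfrac{1}{1-\alpha_1}\big)_*^{2\alpha_1-1},
\]
not from any optimisation in $\lambda$ or from counting frequencies.
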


Similar estimates as in Theorem \ref{thelinfversion} with worse growth of the constant
as one approaches the corner $A$ can be obtained by standard interpolation
methods from Theorem \ref{propL1L2ag} and its symmetric counterparts.
Namely, observe that the estimate of Theorem \ref{propL1L2ag} is equivalent to  the bound 
\begin{equation}\label{ctedgeag}
\big|\Lambda_{\vec\beta}(f_1,f_2,f_3)\big| \leq   \frac{C_{\vec \beta}}{1-\alpha_1}    |F_1|^{\alpha_1} \|f_2\|_2 |F_3|^{\frac12-\alpha_1} 
\end{equation}
for functions $f_1,f_3$ restricted as in the statement of the theorem. Marcinkiewicz type interpolation 
as in\ Lemma \ref{interpolemma} 
deduces the same type of estimate as in Theorem \ref{thelinfversion} from \eqref{ctedgeag}, 
albeit with a blowup rate of $(1-\alpha_1)^{-5/2}$ as one approaches the corner $A$.
On the other hand, one notes that \eqref{ctedgeag} is stronger than what is obtained by specializing the 
estimate of Theorem \ref{thelinfversion} to subindicator functions $f_1$. 
Therefore, neither Theorem \ref{propL1L2ag} nor Theorem  \ref{thelinfversion} implies 
the other in full strength by the obvious deduction methods. 
Again, a sharper
analogue of Theorem \ref{thelinfversion} for the quartile operator has been proved
in \cite[Proposition 2.3]{DD2}, lacking the starred correction term thanks to the perfect discrete MFCZ.

Restricting Theorem \ref{prLpL2} to subindicator functions $f_1$
and interpolating with the symmetric version under interchanging 
the corners $A$ and $B$ yields the following punchline result.
\begin{corollary} \label{rwtTH} For all tuples   $\vec{\alpha}  \in \mathcal S_j$, $j=1,2,3$, we have the GRWT estimate
 \begin{equation} \label{propest2}
\big|\Lambda_{\vec \beta}(f_1,f_2,f_3)\big|  \leq   {C_{\vec\beta}}   \Big(   \prod_{k=1  }^3 |F_k|^{\alpha_ k}\Big) \textstyle	\max\bigg\{  \frac{1}{\min_{k\neq j}\{1-\alpha_k\}},  {\log\log}  \Big( \e^\e+    \frac{|F_j|}{\min_{k\neq j}|F_k|} \Big)\bigg\}.  \end{equation}
 \end{corollary}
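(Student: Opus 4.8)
The plan is to read off Corollary \ref{rwtTH} from Theorem \ref{prLpL2} and its reflection under interchanging the corners $A$ and $B$ (equivalently, the indices $1$ and $2$): one restricts the single unconstrained function to a subindicator and then selects, inside the resulting one‑parameter family of estimates along the short edge $\mathcal S_3$, the most favourable point. By the symmetry of $\Lambda_{\vec\beta}$ and of the GRWT formalism invoked in the Introduction it suffices to treat $j=3$, so fix $\vec\alpha^0\in\mathcal S_3$: $\alpha_3^0=-\tfrac12$, $\alpha_1^0+\alpha_2^0=\tfrac32$, $\alpha_1^0,\alpha_2^0\in(\tfrac12,1)$. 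As the claimed bound involves only the sets $F_j$, we may assume $|F_1|\le|F_2|$ — the other case is symmetric, using Theorem \ref{prLpL2} itself in place of its reflection — and set $T:=|F_3|/|F_1|=|F_3|/\min_{k\neq3}|F_k|$.

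In the reflected theorem the free function is $f_2$; restricting $|f_2|\le\cic{1}_{F_2}$ gives $\|f_2\|_{1/\alpha_2}\le|F_2|^{\alpha_2}$, and for any $\alpha_2\in(\tfrac12,\alpha_2^0]$ (so $\alpha_1=\tfrac32-\alpha_2\ge\alpha_1^0$) one obtains, for all subindicator $f_1,f_3$ with $f_3$ on a suitable major subset $F_3'$,
\begin{equation*}
\big|\Lambda_{\vec\beta}(f_1,f_2,f_3)\big|\ \le\ \frac{C_{\vec\beta}}{(1-\alpha_1)(1-\alpha_2)}\Big(\prod_{k=1}^3|F_k|^{\alpha_k^0}\Big)\Big(\max\big\{\log T,\ \tfrac1{1-\alpha_2}\big\}\Big)_{*}^{2\alpha_2-1},
\end{equation*}
where we have used $|F_1|^{\alpha_1}|F_2|^{\alpha_2}|F_3|^{-\frac12}\le\prod_{k}|F_k|^{\alpha_k^0}$, valid precisely because $\alpha_1+\alpha_2=\alpha_1^0+\alpha_2^0$, $|F_1|\le|F_2|$, and $\alpha_1\ge\alpha_1^0$. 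This is the one place the hypothesis $|F_1|\le|F_2|$ is essential: it lets us slide $\alpha_1$ up toward the corner $A$ — equivalently $\eps:=2\alpha_2-1$ down to $0^+$ — without touching the power $\prod_k|F_k|^{\alpha_k^0}$, at the sole cost of the prefactor $(1-\alpha_1)(1-\alpha_2)=\tfrac{\eps(1-\eps)}4$. Writing $M:=\max\{\e,\log(\e+T),\tfrac1{1-\alpha_2^0}\}$, so that the starred argument is $\le M$ and $M$ depends only on $\vec\alpha^0$ and the $|F_k|$, the problem reduces to minimizing $\eps^{-1}(M)_{*}^{\eps}$ over $\eps\in(0,\eps_0]$, where $\eps_0:=2\alpha_2^0-1=2(1-\alpha_1^0)$.

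This minimization is elementary: $\eps\mapsto\eps^{-1}(M)_{*}^{\eps}$ is log‑convex with interior minimum at $\eps^{*}=1/\log(M)_{*}$, and $\eps^{*}<\tfrac12$ always (since $M\ge\tfrac1{1-\alpha_2^0}>2$ forces $\log(M)_{*}>2$). If $\eps^{*}\le\eps_0$, take $\eps=\eps^{*}$: the bound becomes $\lesssim C_{\vec\beta}\prod_k|F_k|^{\alpha_k^0}\log(M)_{*}$, and since $\log(M)_{*}\lesssim\log M\lesssim\max\{\tfrac1{1-\alpha_2^0},\log\log(\e^\e+T)\}\le\max\{\tfrac1{\min_{k\neq3}(1-\alpha_k^0)},\log\log(\e^\e+T)\}$, this is \eqref{propest2}. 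If instead $\eps^{*}>\eps_0$, the function decreases on $(0,\eps_0]$, so take $\eps=\eps_0$ (i.e.\ remain at $\vec\alpha^0$): then $(M)_{*}^{\eps_0}\le e$, the prefactor is $\tfrac{C_{\vec\beta}}{(1-\alpha_1^0)(1-\alpha_2^0)}$, and since $\alpha_1^0+\alpha_2^0=\tfrac32$ forces $\max\{1-\alpha_1^0,1-\alpha_2^0\}\ge\tfrac14$, this is $\le\tfrac{4C_{\vec\beta}}{\min_{k\neq3}(1-\alpha_k^0)}\prod_k|F_k|^{\alpha_k^0}$, again dominated by \eqref{propest2}. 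In either case the exponent $\eps$, and therefore $F_3'$, is a function of $|F_1|,|F_2|,|F_3|$ and $\vec\alpha^0$ only; that the major subset furnished by Theorem \ref{prLpL2} can be taken to depend only on the sets, as GRWT requires, is routine once the free function is a subindicator (it enters the construction of $F_3'$ only through $\|f_2\|_{1/\alpha_2}\le|F_2|^{\alpha_2}$).

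I do not foresee a genuine obstruction: the analytic content is already packaged in Theorem \ref{prLpL2} (hence in the sharp MFCZ behind it), and what remains is a soft, interpolation‑flavoured selection of the exponent along the edge. The two points requiring care are (a) ensuring that sliding $\eps$ never degrades the power $\prod_k|F_k|^{\alpha_k^0}$ — which is exactly why one uses the reflected theorem when $|F_1|\le|F_2|$ and the original when $|F_1|>|F_2|$, so that the exponent always moves toward the corner ($A$, resp.\ $B$) attached to the \emph{smaller} of the two sets — and (b) the bookkeeping in the borderline regime $\eps^{*}>\eps_0$, where the precise shape $\max\{\min_{k\neq j}(1-\alpha_k)^{-1},\log\log(\cdots)\}$ of the constant is recovered only after invoking $\alpha_1^0+\alpha_2^0=\tfrac32$.
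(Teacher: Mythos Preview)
Your proposal is correct and follows essentially the same route as the paper: both arguments specialize Theorem \ref{prLpL2} (and its reflection under $1\leftrightarrow 2$) to subindicator functions, then optimize over an auxiliary exponent $\eps$ along the short edge $\mathcal S_3$, choosing $\eps\sim(\log(M)_*)^{-1}$ when this is admissible and $\eps=\eps_0$ otherwise. The only difference is tactical: the paper applies the estimate at the auxiliary point $\vec a$ \emph{twice} (once in each orientation) and takes a $\vartheta$-geometric mean to recover the target powers $|F_1|^{\alpha_1^0}|F_2|^{\alpha_2^0}$, at the cost of passing to a second major subset $F_3''\subset F_3'$; you instead first order $|F_1|\le|F_2|$, use only the reflected estimate, and exploit the monotonicity $|F_1|^{a_1}|F_2|^{a_2}\le|F_1|^{\alpha_1^0}|F_2|^{\alpha_2^0}$ (valid because you slide the larger exponent onto the smaller set), which saves the interpolation step and the second major subset.
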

The special case of this result at the midpoint of $\overline{AB}$ has been
highlighted in the abstract of this paper. This theorem is a weaker form of 
Conjecture \ref{conj1} by the double logarithmic correction term. 
This estimate cannot be obtained by interpolation of   Theorem \ref{propL1L2ag} 
and its symmetric counterparts, which only
yields the single logarithmic estimate that was observed in
\cite{BG,BG2}. 
This highlights again that Theorem \ref{prLpL2} encodes additional 
information relative to Theorem \ref{propL1L2ag}. Clarifying 
whether the double logarithmic term can be removed in the corollary 
is one of the more intriguing open questions on endpoint bounds for
the bilinear Hilbert transform.
Obviously we do not see how to do this with present technology.

We conclude this discussion with a few remarks on our strengthening of the multi-frequency 
Calder\'on Zygmund decomposition, Proposition \ref{corMFCZabs}. The bad portion of the MFCZ 
is the sum of functions $b_I$ localized to intervals $I$ and having mean zero with respect 
to a number $N$ of bad frequencies relevant on the interval $I$. The main issue lies with 
estimating the interaction of this bad function $b_I$ with wave packets which are frequency 
localized in a compact interval near a bad frequency, and which are spatially localized 
away from but not too far away from $I$. To make this interaction sufficiently small for our needs we work on the one hand
with wave packets which have better than mere Schwartz function decay. We use an optimal almost 
exponential decay following a construction by Ingham. To utilize this decay we have to 
prepare the bad function $b_I$ of the MFCZ to have mean zero not only against the dominant bad 
frequency, but also against approximately $\log(N)$ many equidistant frequencies 
in the vicinity of the dominating bad frequency. The price of all this is the appearance of the
extra terms $(\cdot)_*$ occurring in Theorems \ref{prLpL2} and \ref{thelinfversion}. This is in contrast
to the discrete analogues of \cite{DD2}, where of course one has wave packets which are compactly
supported both in frequency and in space, and the interaction terms in question are simply zero.
We stress that the use of almost exponential type wave packets has no precedent in the context
of time frequency analysis. It is unnecessary for deeply interior estimates in the open hexagon, 
but appears relevant for the sharp estimates at and near the boundary of the hexagon that we investigate.

It is our opinion that   Proposition \ref{corMFCZabs}, or variants thereof, could  be employed as well
in the translation to the continuous case of the arguments of \cite{DP,DP2,DL} on Carleson type operators and of \cite{OT} on uniform estimates for the family of Walsh models of the bilinear Hilbert transforms.

\subsection*{Outline of the article} \label{sstechofpf}

Sections \ref{sect2} and \ref{secMFCZ}   are concerned with the multi-frequency
Calder\'on Zygmund decomposition in general. 
Section \ref{sect2} contains technical preliminaries on functions with compact frequency support 
and almost exponential decay rate. Our sharp version of the multi-frequency Calder\'on-Zygmund 
decomposition is introduced in Section \ref{secMFCZ} and its properties are discussed, most notably in
Proposition \ref{corMFCZabs}. 

We then turn to the bilinear Hilbert transform.   
In Sections \ref{sect3} and \ref{sect4} we rephrase  the construction of
the model sums for $\Lambda_{\vec \beta}$ and some classical results of time-frequency analysis.  
In Section \ref{secforest} we  apply Proposition \ref{corMFCZabs} 
to obtain  an estimate   for the model sums of $\Lambda_{\vec \beta} $ restricted to a single forest with appropriate $L^1$ and $L^\infty$ bounds on the counting function.  The main steps of the proof of Theorems  \ref{propL1L2ag},
 \ref{prLpL2},  and \ref{thelinfversion}, as well as the proof of Corollary \ref{rwtTH}, are   given   in Section \ref{sectpfthm}. 

Finally, in Section \ref{sectintcor}, we present several corollaries of our main results, elaborating on alternative ways of
formulating the behaviour of the bilinear Hilbert transform near the boundary. The first group of corollaries is concerned with 
the blow-up rates of the eight possible 
types of estimates for $\mathrm{BHT}_{\vec b}$, corresponding to different choices of sets of unrestricted functions, as the exponents 
approach the boundary of the hexagon $\mathcal H$ in Figure \ref{hexafig}.
These estimates are summarized in Table \ref{tableside}. The second group of corollaries,  in the spirit of the article  \cite{CGMS},   
is devoted to the boundedness properties of  $\mathrm{BHT}_{\vec b}$ on  Lorentz-Orlicz spaces near H\"older tuples $\vec \alpha$ on the open 
segment $\overline{AB}$ and at the corner $A$. These corollaries are proved in Section \ref{sect2new}.

\subsection*{Notational remarks}   

The vector $\vec\beta$ is always non-degenerate and all explicit and implicit constants in this paper may depend on $\Delta_{\vec\beta} $,  
the distance from $\vec \beta$ to the degenerate case as discussed above.
Let $I\subset \R$ be an interval; $c(I)$ will denote the  midpoint of $I$ and, for $C>0$, by $CI$ we refer to the interval with center $c(I)$ and length $C|I|$; we also write $x + I$ for the interval $\{x+y: y \in I\}$. We set 
$$ 
\|f\|_{L^{p}(I)}:=  \left(\int_I  |f(x)|^p \textstyle \frac{\d x}{|I|} \right)^{\frac1p},\; 1\leq p< \infty, \qquad \|f\|_{L^{\infty}(I)}:= \mathrm{ess}\sup_{x \in I} |f(x)|.
$$
For $1\leq p<\infty $, the $p$-th Hardy-Littlewood maximal function is defined as
$$
\M_p f(x) = \sup_{I \ni x} \|f\|_{L^p(I)}.
$$
With   $\D$, we indicate a generic grid on $\R$, that is a collection of intervals  such that $I\cap I'\in \{I,I',\emptyset\}$ for each $I, I'\in \D$. We write $\D_0$ for the standard dyadic grid on $\R$, while the  notation $  \mathcal{D}(I)$ refers to the standard dyadic grid on an interval $I\subset \R$.
Finally, the  constants   $C>0 $,     as well as the  constants implied by the almost inequality sign $\lesssim $,    may vary at each occurrence without explicit mention, and are meant to be absolute, once  $\vec \beta$ has been fixed, unless otherwise specified.

\section{Rapidly decaying functions with compact frequency support} \label{sect2}

Throughout the article,  $u$  will be  a positive, increasing and convex function on $[0,\infty)$ satisfying the normalized  Osgood condition
\begin{equation}
\label{int1}
\int_{0}^\infty \frac{1}{  {u}(t)} \,\d t =1, \end{equation}
and such that  \begin{equation}
\label{tricco2}
B_u (\tau):=
\sup_{t\geq 0}\Big((1+|u(t)|) \e^{-\tau t}\Big) <\infty
\end{equation}
for all $\tau>0$.
Condition \eqref{tricco2} holds, for instance, when   $ u(t) \leq C(1+t)^C$ for some   $C>1$.  We  will use   the (evenly extended) inverse function of $u$ 
 $$U:\R \to [0,\infty), \qquad U(x)= \begin{cases} u^{-1}(|x|) & |x| \geq u(0), \\ 0 &|x| < u(0),  \end{cases}$$  
 which is increasing on $[0,\infty)$ and satisfies 
\begin{align}
\label{tricco3} & U(x) \leq |x|  \qquad \forall\,x \in \R, 
\\ &
\label{tricco1}
   U(\vartheta x) \geq \textstyle  \vartheta  U(   x) -u(0) \qquad \forall\,  x \in \R, \vartheta \in [0,1]. 
\end{align}  
The first estimate above is a consequence of \eqref{int1}, while \eqref{tricco1} follows from \eqref{tricco3} and concavity of $U$ on $\{x \geq u(0)\}$.

Significant  examples of functions $u$ as such  are given by the family
\begin{equation}
\label{familyu}
u_\lambda(t) =   \frac{1}{\lambda} (t+\e) \big(\log(t+\e)\big)^{1+\lambda}, \qquad \lambda>0.
\end{equation}

The upcoming Lemma \ref{HORM} is  a reformulation of a result of Ingham  \cite{ING}.  
   In words,    given any   $u$ satisfying the above assumptions, one obtains    a smooth function  $\upsilon $ with compact frequency support and  with exponential decay rate given by a constant times $U$.   
   \begin{lemma} \label{HORM} Let $u$ be as above. Then, there exists a   smooth nonnegative function ${{\upsilon}}: \R \to [0,+\infty)$   with the properties that
\begin{align}
& 
\label{averaging}\cic{1}_{[-\frac{1}{6}, \frac{1}{6}]}(\xi) \leq \widehat{\upsilon}(\xi) \leq \cic{1}_{[-\frac12,\frac12]}(\xi), \qquad \forall \xi \in \R, \\
&   \big|{{\upsilon}}^{(j)}(x)|	 \leq C_{D,u}   \e^{-\frac{1}{100}U( x)}    \qquad  \forall x \in \R, \, j \in\{0,\ldots, D\}. \label{decayA} 
\end{align} 
The  positive constants $C_{D,u}$ in \eqref{decayA}   depend   on  $D$ and on  $B_u(2D)$ from \eqref{tricco2}.
\end{lemma}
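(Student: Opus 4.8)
\textbf{Proof plan for Lemma \ref{HORM}.}
The plan is to construct $\upsilon$ as an infinite convolution product of rescaled bump functions, following Ingham's classical idea: given a fixed smooth nonnegative bump $\phi$ supported in $[-\frac14,\frac14]$ with $\int\phi = 1$, one sets $\widehat{\upsilon}$ to be an infinite product $\prod_{k\ge 1} \widehat{\phi}(a_k \xi)$ for a rapidly growing sequence $a_k \to \infty$ chosen so that the partial sums $\sum_k a_k^{-1}$ converge. The upper frequency bound in \eqref{averaging} follows because each factor is supported in $\{|\xi| \le \frac{1}{4a_k}\}$, so choosing $a_1$ large enough forces $\supp\widehat{\upsilon} \subset [-\frac12,\frac12]$; the lower bound $\widehat\upsilon \ge \cic{1}_{[-1/6,1/6]}$ is obtained by taking $\phi$ with $\widehat\phi \equiv 1$ on a neighborhood of $0$ and checking that $|\xi|\le \frac16$ lands in that neighborhood after all the rescalings, using again $\sum a_k^{-1}$ small. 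Positivity of $\upsilon$ itself (not just $\widehat\upsilon$) is arranged by taking $\phi$ to be itself a convolution square $\psi * \widetilde\psi$, so that each factor in the product has a nonnegative inverse Fourier transform and $\upsilon$ is a (limit of) convolutions of nonnegative functions.

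The decay estimate \eqref{decayA} is the heart of the matter and the step I expect to be the main obstacle. The idea is that $\upsilon(x) = \prod_k^{*} (a_k^{-1}\phi(\cdot/a_k))$, and the decay of such an infinite convolution is governed by $|\widehat\upsilon|$: since $\widehat\upsilon$ is compactly supported and smooth, one recovers decay of $\upsilon$ by integration by parts, i.e.\ $|x^m \upsilon(x)| \lesssim \|\widehat\upsilon^{(m)}\|_1$, and one must bound $\|\widehat\upsilon^{(m)}\|_1$ by $C^m u(m)$-type quantities via the Leibniz rule applied to the product. Concretely, one chooses the sequence $a_k$ so that $\log a_k$ grows in a way dictated by $u$ — essentially $a_k$ is comparable to $u$ evaluated at a point proportional to $k$, using the Osgood condition \eqref{int1} to guarantee $\sum a_k^{-1}<\infty$. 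Differentiating the product and estimating term by term then yields $|\upsilon(x)| \le C_u \inf_{m} \big( C^m u(m) |x|^{-m}\big)$, and optimizing over $m$ (taking $m \approx U(x)$) converts this into the stated almost-exponential bound $\e^{-\frac{1}{100}U(x)}$; the factor $\frac{1}{100}$ and the constant $B_u(2D)$ in \eqref{tricco2} enter precisely when one tracks how the constants $C^m$ combine with $u(m)$ and how many derivatives $D$ one wishes to control. The derivative bounds for $j\in\{1,\dots,D\}$ are handled the same way, since $\upsilon^{(j)}$ has Fourier transform $(2\pi i\xi)^j \widehat\upsilon(\xi)$, still smooth and compactly supported, costing only a fixed power of the (bounded) frequency support and shifting $B_u$ from $B_u(2(D{-}j))$ up to $B_u(2D)$.

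For the write-up I would isolate two sublemmas: (i) convergence of the infinite product to a smooth compactly supported $\widehat\upsilon$ satisfying \eqref{averaging}, which is routine given the choice of $a_k$; and (ii) the quantitative bound $\|\widehat{\upsilon^{(j)}}^{(m)}\|_{L^1} \le C_{D,u}\, B_u(2D)\, C^m\, u(\kappa m)$ for an absolute $\kappa$ and all $m$, from which \eqref{decayA} follows by the integration-by-parts inequality and optimization in $m$, invoking the definition \eqref{tricco2} of $B_u$ and the inverse-function relation $U = u^{-1}$ to rewrite the optimum. The only genuinely delicate bookkeeping is calibrating the growth rate of $a_k$ against $u$ so that both $\sum a_k^{-1}$ converges (Osgood, \eqref{int1}) and the Leibniz-rule estimate closes with the claimed constants; everything else is a standard stationary-phase-free integration-by-parts argument. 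I would refer to \cite{ING} for the original construction and present here only the reformulation in the language of $u$ and $U$ needed downstream.
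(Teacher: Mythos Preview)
Your overall strategy --- Ingham's infinite-convolution construction followed by an integration-by-parts estimate optimized over the number of derivatives --- is exactly the right one, and it is what the paper does. But the plan as written has the physical and frequency sides switched, and this is not merely a typo: several of your steps break because of it.

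You take $\phi$ to be a smooth bump supported in $[-\tfrac14,\tfrac14]$ and set $\widehat{\upsilon}(\xi)=\prod_{k}\widehat{\phi}(a_k\xi)$ with $a_k\to\infty$. Since $\phi$ is compactly supported, $\widehat{\phi}$ is entire and \emph{not} compactly supported, so the factors $\widehat{\phi}(a_k\xi)$ are not supported in $\{|\xi|\le 1/(4a_k)\}$ as you claim; in fact for every fixed $\xi\neq0$ one has $\widehat{\phi}(a_k\xi)\to 0$, and the infinite product vanishes identically off the origin, so your $\widehat\upsilon$ degenerates. The lower-bound argument (``$\widehat\phi\equiv 1$ near $0$, hence all factors equal $1$ for $|\xi|\le\tfrac16$'') fails for the same reason: $a_k\xi\to\infty$, not $0$. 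The positivity step is also off: $\phi=\psi*\widetilde\psi$ gives $\widehat\phi=|\widehat\psi|^2\ge 0$, a statement about $\widehat\phi$ rather than $\phi$, and does not by itself make the physical-side convolution $\upsilon$ nonnegative.

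The paper places the infinite convolution on the \emph{frequency} side. One sets $v_0$ equal to the infinite convolution of the normalized indicators $u(k)\cic{1}_{[0,1/u(k)]}$, $k\ge 1$; this $v_0$ is smooth, nonnegative, supported in $[0,\sum_k 1/u(k)]\subset[0,1]$ by the Osgood condition \eqref{int1}, and satisfies $\sup|v_0^{(m)}|\le (2u(m+1))^m$ by distributing one derivative to each of the first $m$ indicator factors --- no Leibniz rule for a product is involved. The plateau $\widehat\upsilon=1$ on $[-\tfrac16,\tfrac16]$ is obtained not from a product structure but by averaging $v_0$ over translates, $v(\xi)=\int_{-1}^{1}v_0(3\xi-t+\tfrac12)\,\d t$, after which one sets $\upsilon=\check v$. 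From there your optimization step is exactly right: integration by parts gives $|x|^m|\upsilon^{(j)}(x)|\lesssim m^D(6u(m+1))^m$, and choosing $m\approx U(x/6\e)$ converts this into $\e^{-\frac{1}{100}U(x)}$, with the constant controlled by $B_u(2D)$ via \eqref{tricco2}. In short: swap the sides, replace ``product'' by ``convolution'' on the frequency side, and use the averaging trick for the plateau; then your plan becomes the paper's proof.
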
  
\begin{proof}  Below, the constant $C_D>0$ depends only on $D$ and may vary from line to line.
 Consider the sequence of functions $v_k:\R \to [0,\infty)$, defined by the recurrence
$$
v_1:= {u}(1)\cic{1}_{[0,({u}(1))^{-1}]} , \qquad v_{k}:= v_{k-1} *\big({u}(k)\cic{1}_{[0,({u}(k)^{-1}]} \big), \quad k >1.
$$
It is easy to see that 
$
\int v_k =1,  
$
that $\sup v_k \leq \sup v_1 = {u}(1)$, that $v_k \in \C^{k-2}(\R)$, $k \geq 2$, and finally that $\mathrm{supp} \,v_k \subset \textstyle  \big[0, ({u}(1))^{-1} + \ldots + ({u}(k))^{-1} ]\subset [0,1] $. The ($\C^{m}$, for each $m$) uniform limit $v_0$ of the $v_k$ is therefore a smooth nonnegative function with $\int v_0 =1$, $\mathrm{supp} \,v_0\subset [0,1]$. Moreover,  $v_0$ is strictly positive in $(0,1)$, and satisfies the bounds (see \cite[Theorem 1.3.5]{HOR} for details)
\begin{equation}
\sup |v_0^{(k)}| \leq 2^k  \big(\textstyle\prod_{j=1}^{k+1} {u}(j)\big) \leq (2 u(k+1))^{k}, \qquad k=0,1,\ldots,
\label{eqvo}
\end{equation}
where the last inequality comes from $u$ being increasing.
We set 
$$
v(\xi):= \int_{-1}^{1} v_0\big(3\xi-t+ \textstyle\frac12\big)\,\d t, \displaystyle \qquad
\upsilon(x):= \int_{\R} v(\xi) \e^{2\pi i x \xi}\, \d  \xi= \int_{-\frac12}^{\frac12} v(\xi) \e^{2\pi i x \xi}\, \d  \xi.
$$  Since $\int v_0=1$, \eqref{averaging} follows by construction. 
For all $x  \in \R$, $k \geq 0$, $ 0 \leq j \leq D$, we have that
\begin{align*}    |  x|^{k}|{{\upsilon}}^{(j)}(x)| & \leq (2\pi)^{j-k}\sup_{|\xi| \leq \frac12} \Big| \big({\textstyle\frac{\d}{\d \xi}}\big)^k \big( \xi^j v(\xi)\big) \Big| \leq    (2\pi)^{D-k}\sum_{n=0}^{j} {\textstyle \binom{k}{n} \frac{j!}{(j-n)!}}  \sup_{|\xi| \leq \frac12}\big(|\xi|^{j-n}|v^{(k-n)}(\xi)|\big)
\\ & \leq C_D  k^D \big(6{u}(k+1)\big)^k.
  \end{align*} 
  In the  last step above, we employed the crude bound $\binom{k}{n} \leq k^n \leq k^D$, and subsequently  \eqref{eqvo} coupled with the obvious fact that $\sup|v^{(k)}| \leq 2 \cdot 3^k \sup |v_0^{(k)}|$.
For each  $|x| \geq 6\e u(1) $, let $k(x)$ be the greatest integer $k\geq 0$ such that $6{u}(k+1)|x|^{-1} \leq \e^{-1} 	 $. Thus   $   k(x)+1 \leq U( x/6\e)\leq  k(x)+2 $, and the above display  for $k=k(x)$  reads
\begin{align*}
|{{\upsilon}}^{(j)}(x)| &\leq C_D \big( k(x) \big)^D {\e}^{-k(x)}  \leq C_D \big(U (x/6\e)\big)^D  \e^{-U(x/6\e)} \leq  C_D \big(1+\textstyle\frac{|x|}{6\e}\big)^D  \e^{-U(x/6\e)}\\ & \leq C_D   \textstyle B_u(2D ) \e^{-\frac12 U(x/6\e) }  \leq C_{D,u} \e^{-\frac{1}{100} U(x) }.
\end{align*}
We have relied on \eqref{tricco3} for the third inequality, on \eqref{tricco2} to pass to the second line  and on \eqref{tricco1}
  for the last step. We have thus obtained   \eqref{decayA}  for $|x|\geq6 \e u(1)$, with $C_{D,u}=C_D   \textstyle B_u(2D)$.
To argue for $|x|\leq 6\e u(1)$, note that  the bound $\sup|{{\upsilon}}^{(j)}| \leq C_Du(1)$  can be  inferred as a particular case of the above discussion. In the range   $|x|\leq 6 \e u(1)$, this entails 
 \eqref{decayA}  with $C_{D,u} := C_D u(1) \e^{100 U(6\e u(1))} \leq C_D u(1) \e^{600\e   u(1)}$, which   depends only on $D$ and $B_u(1)$, and is thus of the required form. This concludes the proof of the lemma. \end{proof}
\begin{remark} 
The existence of an exponentially decaying smooth function   with compactly supported Fourier transform is forbidden by the Paley-Wiener theorem.
In  \cite{ING}, it is pointed out that if   $u$ is such that the integral in \eqref{int1} diverges, 
 there exists no such    function  decaying like \eqref{decayA}.   For instance, there is no smooth function $f$ with $\widehat f$ compactly supported  and decaying like $|  f(x)| \lesssim \exp\big(-c|x|/\log(\e+|x|) \big)$. 
%
\end{remark}
For the remainder of the section, we write $I_0:=[-\frac12,\frac12]$. In the next two lemmata, we devise a splitting of a  smooth function with spatial decay rate $aU$ and frequency supported on $I_0$ into a  part  having   spatial support contained in the $u(K)$-dilate of $I_0$ and       Fourier transform exponentially small in $K$ away from $I_0$,  plus an exponentially small remainder.
\begin{lemma}\label{lemmasplitmax}
Let   $\varphi$ be  a Schwartz function with $ \supp\,\widehat 	\varphi \subset { I_0} $ and satisfying the bound 
\begin{equation} \label{decayb}
 \big|{{\varphi}}^{(j)}(x)|	 \leq  A  \e^{-a  U( x)}    \qquad  \forall x \in \R, \, j=0,1,
\end{equation}
for some constants $A>0,0<a\leq \frac{1}{100}$.
 For each $K\geq 1$, $N>0$  there exists a decomposition
\begin{equation}\label{eqsplit}
{\varphi} := \phi  +  \e^{-\frac{a}{12} K}\psi 
\end{equation}
  with  the following properties:
\begin{align} &
\mathrm{supp}\, \phi    \subset  u(K)I_0, \label{truncsupp} \\ 
& |\widehat{\phi} (\zeta)| \lesssim     \e^{-\frac{a}{12} K}  (1+|\zeta|)^{-N}\qquad \forall |\zeta|\geq 2, \label{fastfd}\\
& \big|	\phi^{(j)}(x) \big| \lesssim         (1+|x|)^{-N} \qquad \forall x \in \R, \, j\in\{0,1\}, \label{adaptphi}\\
& \big|	\psi^{(j)}(x) \big| \lesssim        (1+|x|)^{-N} \qquad \forall x \in \R, \, j\in\{0,1\}.\label{smallwp}
\end{align}
The implicit constants in \eqref{fastfd}-\eqref{smallwp}  depend only on $A,a,N$ and   $u$.
\end{lemma}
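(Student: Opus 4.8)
The plan is to take $\phi$ to be $\varphi$ multiplied by a smooth cutoff to the dilate $u(K)I_0$, and $\psi$ the renormalized complementary piece. Fix once and for all a smooth $\widetilde\chi$ with $\cic{1}_{[-\frac13,\frac13]}\le\widetilde\chi\le\cic{1}_{I_0}$, put $\chi(x):=\widetilde\chi(x/u(K))$, and set $\phi:=\varphi\chi$, $\psi:=\e^{\frac a{12}K}\varphi(1-\chi)$; then \eqref{eqsplit} holds and \eqref{truncsupp} is immediate from $\supp\chi\subset u(K)I_0$. Three elementary facts will be used throughout: $\|\chi^{(m)}\|_\infty\lesssim_{m,u}1$ for $K\ge1$; by \eqref{tricco2} one has $\e^{-bU(x)}\lesssim_{b,N,u}(1+|x|)^{-N}$ for every $b>0$, so in particular $\int_\R\e^{-bU}<\infty$; and on $\supp\psi\cup\supp\psi'\subset\supp(1-\chi)\cup\supp\chi'\subset\{|x|\ge\frac13u(K)\}$ one has, by \eqref{tricco1} and $U(u(K))=K$, the bound $U(x)\ge\frac13K-u(0)$. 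Granting these, \eqref{adaptphi} and \eqref{smallwp} for $j\in\{0,1\}$ are routine: for $\phi$ one estimates $|\phi^{(j)}|\le|\varphi^{(j)}|+|\varphi|\,\|\chi'\|_\infty\lesssim_{A,u}\e^{-aU(x)}$ by \eqref{decayb} and applies the second fact; for $\psi$, on $\supp\psi\cup\supp\psi'$ the third fact gives $\e^{-aU(x)}\le\e^{au(0)}\e^{-\frac{aK}{6}}\e^{-\frac a2U(x)}$, which absorbs the prefactor $\e^{\frac a{12}K}$ and leaves $|\psi^{(j)}(x)|\lesssim_{A,a,u}\e^{-\frac a2U(x)}$, whence \eqref{smallwp} by the second fact.

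The heart of the matter is \eqref{fastfd}. Since $\supp\widehat\varphi\subset I_0$, Fourier transforming \eqref{eqsplit} gives $\widehat\phi(\zeta)=-\e^{-\frac a{12}K}\widehat\psi(\zeta)$ for all $|\zeta|>\frac12$, so it suffices to prove $|\widehat\psi(\zeta)|\lesssim_{A,a,N,u}(1+|\zeta|)^{-N}$ for $|\zeta|\ge2$. As $\psi$ is Schwartz, $N$ integrations by parts give $|\widehat\psi(\zeta)|\le(2\pi|\zeta|)^{-N}\|\psi^{(N)}\|_1$, so the task reduces to bounding $\|\psi^{(N)}\|_1$ by a constant independent of $K$. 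By Leibniz's rule $\psi^{(N)}=\e^{\frac a{12}K}\sum_{k=0}^N\binom{N}{k}\varphi^{(k)}(1-\chi)^{(N-k)}$, and each $(1-\chi)^{(N-k)}$ is bounded (by the first fact) and supported in $\{|x|\ge\frac13u(K)\}$, so $\|\psi^{(N)}\|_1\lesssim_{N,u}\e^{\frac a{12}K}\sum_{k\le N}\int_{|x|\ge\frac13u(K)}|\varphi^{(k)}(x)|\,\d x$.

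At this point one hits the main obstacle: the hypothesis \eqref{decayb} controls $\varphi^{(k)}$ only for $k\in\{0,1\}$, whereas we need $k$ up to $N$. I would bypass this using the band-limitedness of $\varphi$. Let $\upsilon$ be the function of Lemma \ref{HORM} with $D=N$ and put $\eta(x):=3\upsilon(3x)$; then $\widehat\eta\equiv1$ on $I_0$ by \eqref{averaging}, while $|\eta^{(j)}(x)|\lesssim_{N,u}\e^{-aU(x)}$ for $j\le N$, using \eqref{decayA}, the monotonicity $U(3x)\ge U(x)$, and $a\le\frac1{100}$. Since $\widehat\varphi\,\widehat\eta=\widehat\varphi$ we have $\varphi=\varphi*\eta$, hence $\varphi^{(k)}=\varphi*\eta^{(k)}$ for $k\le N$; combining the decay of $\varphi$ and of $\eta^{(k)}$ with the near-superadditivity $U(x-y)+U(y)\ge\tfrac12U(x)-u(0)+\min\{U(x-y),U(y)\}$ (which follows from \eqref{tricco1} and monotonicity of $U$) reduces the convolution to a one-line estimate yielding $|\varphi^{(k)}(x)|\lesssim_{A,a,N,u}\e^{-\frac a2U(x)}$ for all $k\le N$. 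Inserting this and splitting $\e^{-\frac a2U}=\e^{-\frac a4U}\cdot\e^{-\frac a4U}$, the bound $U(x)\ge\frac13K-u(0)$ on $\{|x|\ge\frac13u(K)\}$ gives $\int_{|x|\ge\frac13u(K)}\e^{-\frac a2U(x)}\,\d x\le\e^{\frac{au(0)}4}\e^{-\frac{aK}{12}}\int_\R\e^{-\frac a4U}\,\d x$; the two exponentials in $K$ cancel exactly, so $\|\psi^{(N)}\|_1\lesssim_{A,a,N,u}1$ and \eqref{fastfd} follows. The identity $\frac14\cdot\frac13=\frac1{12}$ is precisely what dictates both the dilation constant in $\widetilde\chi$ and the constant $\frac1{12}$ in \eqref{eqsplit}, and the assumption $a\le\frac1{100}$ enters only to make $\eta$ decay at least as fast as $\varphi$.
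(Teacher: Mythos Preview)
Your argument is correct, and it follows a genuinely different path from the paper's proof of \eqref{fastfd}. The paper chooses the cutoff $v$ itself to come from Lemma \ref{HORM}, so that $\widehat v$ has the almost-exponential decay $|\widehat v(\zeta)|\lesssim u(K)\e^{-\frac1{100}U(u(K)\zeta)}$; then $\widehat\phi=\widehat\varphi*\widehat v$ is estimated directly for $|\zeta|\ge2$ using only $|\widehat\varphi|\lesssim\cic{1}_{I_0}$, and both the $\e^{-\frac{a}{12}K}$ factor and the $(1+|\zeta|)^{-N}$ decay are read off from $U(u(K)(|\zeta|-1))$ via \eqref{tricco2}. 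You instead take a \emph{generic} smooth cutoff $\chi$, pass to the complementary piece via $\widehat\phi(\zeta)=-\e^{-\frac a{12}K}\widehat\psi(\zeta)$ for $|\zeta|>\tfrac12$, and integrate by parts $N$ times; the real work is then bounding $\|\psi^{(N)}\|_1$ uniformly in $K$, for which you bootstrap the missing higher-derivative bounds on $\varphi$ through the reproducing identity $\varphi=\varphi*\eta$, with $\eta$ built from Lemma \ref{HORM}. Both routes ultimately lean on Lemma \ref{HORM}, but in different places: the paper puts it into the cutoff, you put it into a mollifier. The paper's route is shorter for this lemma and avoids the higher-derivative detour; your route has the merit that the cutoff itself needs no special Fourier properties, and the bootstrapping trick $\varphi^{(k)}=\varphi*\eta^{(k)}$ is a useful general device for band-limited functions. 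Your accounting of constants (in particular the match $\tfrac14\cdot\tfrac13=\tfrac1{12}$ and the use of $a\le\tfrac1{100}$ to make $\eta$ decay at rate $a$) is accurate.
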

\begin{proof}   Let  ${v}$  be a smooth function such that
\begin{equation}
\cic{1}_{\frac{u(K)} 3 I_0} (x) \leq v(x) \leq  \cic{1}_{{u(K)} I_0} (x) \quad \forall\,  x \in \R, \qquad 
|\widehat{  {{v}}}
 (\zeta)|   \lesssim {{{u(K)}}} \e^{- \frac{1}{100}  U({{{u(K)}}} \zeta)} \quad \forall\, \zeta \in \R;\label{decaytaumu}
\end{equation}
such a function exists by Lemma \ref{HORM}.
We realize the decomposition \eqref{eqsplit} by setting
$$ \phi =  \varphi v ,\qquad \psi(x):=   \e^{\frac{a}{12} K}     \varphi  (\cic{1}_{\R}-{v}).
$$
Then    \eqref{truncsupp} holds by construction. Furthermore, we obtain \eqref{smallwp} from the bound
\begin{equation} \label{tothenext}
 \Big| \big((\cic{1}_{\R}-{v}) {\varphi}\big)^{(j)}(x)\Big|  \lesssim \textstyle \e^ {-\frac{a}{2} U({{{u(K)}}}/6)}
 \e^{-\frac{a}{2} U(x)}\lesssim  \e^{-\frac{a}{12} K}
 (1+|x|)^{-N},
\end{equation}
for $x \in \R$ and $  j=0,1$, which follows by restricting to $|x| \geq u(K)/ 6$ via  support considerations,  then relying on \eqref{decayA}, and finally using  \eqref{tricco1} and \eqref{tricco2}. Then, \eqref{adaptphi} is  derived  by comparison with \eqref{smallwp}.
 We are   left with proving \eqref{fastfd}, that is, estimating $\widehat{{{\varphi}} {v}}(\zeta)$ for $|\zeta|\geq 2$. To do so, we use 
 $ |\widehat 	\varphi |\lesssim \cic{1}_{ I_0} $ and later \eqref{decaytaumu}, so that  \begin{equation} 
\big|(\widehat {{\varphi}} 
* \widehat{{v} }) (\zeta)\big|   \lesssim \int_{I_0} |{v}(\zeta-\eta)| \, \d \eta \lesssim  \sup_{\eta \in {I_0}} |{v}(\zeta-\eta)|   \lesssim {{{u(K)}}} \e^{-\frac{1}{100}U({{{u(K)}}}(|\zeta|-1))}, \nonumber 
\label{killing}
\end{equation}
By repeatedly making use of \eqref{tricco2}, it is easy to see that, when   $|\zeta|\geq 2$, the last right hand side is bounded by $\exp(-aK/12)(1+|\zeta|)^{-N}$ times a constant depending on $u$ and $N$ only. This concludes the proof of the lemma.
\end{proof}
   The next decomposition, which is similar to the one of  Lemma \ref{lemmasplitmax}, but preserves mean zero with respect to a fixed frequency outside $I_0$, was partly inspired by \cite[Lemma 3.1]{MPTT}.
\begin{lemma}\label{lemmasplit} 
Let   $\varphi$, $ K,N$, be  as in Lemma \ref{lemmasplitmax}, $R>1$, $\xi_0 \in R I_0 \backslash I_0 $. There exists a decomposition
\begin{equation}\label{eqsplit2}
{\varphi} := \phi  +  \e^{-\frac{a}{12} K}\psi 
\end{equation}
depending on $\xi_0$, such that \eqref{truncsupp}-\eqref{adaptphi} hold for $\phi$, \eqref{smallwp} holds for $\psi$, and in addition
\begin{equation}
  \int_{\R} \phi   (x) \,\e^{-2\pi i \xi_0  x}\, \d x = \int_{\R} \psi  (x) \,\e^{-2\pi i \xi_0 x}\, \d x =0,  \label{meanzeroinlemma}  
\end{equation}
The implicit constants in \eqref{fastfd}-\eqref{smallwp}  depend only on $A,a,R,N$ and   $u$.
\end{lemma}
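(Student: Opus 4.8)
The plan is to reduce Lemma~\ref{lemmasplit} to Lemma~\ref{lemmasplitmax} by a simple correction that restores the required mean-zero conditions. First I would apply Lemma~\ref{lemmasplitmax} to $\varphi$ with the \emph{same} $K$ and $N$ (actually with a slightly larger $N$, to absorb the corrections below), obtaining a preliminary decomposition $\varphi = \phi_0 + \e^{-\frac{a}{12}K}\psi_0$ with all of \eqref{truncsupp}--\eqref{smallwp} in force. The defect is that $\int \phi_0(x)\e^{-2\pi i \xi_0 x}\,\d x$ and $\int \psi_0(x)\e^{-2\pi i \xi_0 x}\,\d x$ need not vanish. However, since $\supp\widehat\varphi\subset I_0$ and $\xi_0\notin I_0$, we have $\widehat\varphi(\xi_0)=0$, hence $\int\varphi(x)\e^{-2\pi i\xi_0 x}\,\d x=0$; therefore the two Fourier coefficients at $\xi_0$ of $\phi_0$ and of $\e^{-\frac{a}{12}K}\psi_0$ are \emph{opposite}, and by \eqref{fastfd} applied at $\zeta=\xi_0$ (note $|\xi_0|\geq 1$ but we can take it $\geq 2$ after a harmless rescaling, or just use the analogous estimate at $|\zeta|\geq 1$ which follows from the same computation), the common value
$$
m:=\int_{\R}\phi_0(x)\,\e^{-2\pi i\xi_0 x}\,\d x = -\e^{-\frac{a}{12}K}\int_{\R}\psi_0(x)\,\e^{-2\pi i\xi_0 x}\,\d x
$$
satisfies $|m|\lesssim \e^{-\frac{a}{12}K}(1+|\xi_0|)^{-N}\lesssim \e^{-\frac{a}{12}K}R^{-N}$.

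Next I would build a single \emph{bump corrector}. Fix once and for all a Schwartz function $g$ with $\supp\widehat g\subset \frac13 I_0$, $\supp g\subset I_0$, $\int g\,\e^{-2\pi i\xi_0 x}\,\d x=\widehat g(\xi_0)=0$ automatically (again since $\xi_0\notin\frac13 I_0$) --- so that would \emph{not} fix the mean. Instead the right object is a function $h_{\xi_0}(x):=\e^{2\pi i\xi_0 x}g(x)$ with $g$ as above a fixed real bump: then $\widehat{h_{\xi_0}}$ is supported near $\xi_0$, so $h_{\xi_0}$ has the \emph{wrong} frequency support. To reconcile both constraints (spatial support in $u(K)I_0$, and a prescribed nonzero $\xi_0$-Fourier coefficient) I would instead take the corrector to be $h(x)=\e^{2\pi i\xi_0 x}\eta(x)$ where $\eta$ is a fixed smooth bump supported in $I_0$ with $\int\eta=1$; then $\int h(x)\e^{-2\pi i\xi_0 x}\,\d x=1$, $\supp h\subset I_0\subset u(K)I_0$, and $h$ inherits from $\eta$ the bounds $|h^{(j)}(x)|\lesssim(1+|x|)^{-N}$ (indeed $(1+|\xi_0|)^j$ times a constant, but $|\xi_0|\leq R$ so this is absorbed into the $R$-dependence of the implicit constant). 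Then set
$$
\phi := \phi_0 - m\,h, \qquad \psi := \psi_0 + \e^{\frac{a}{12}K}m\,h = \psi_0 - \e^{\frac{a}{12}K}\Big(\e^{-\frac{a}{12}K}\!\!\int\psi_0(x)\e^{-2\pi i\xi_0 x}\,\d x\Big)h.
$$
By construction $\varphi=\phi+\e^{-\frac{a}{12}K}\psi$ still holds and \eqref{meanzeroinlemma} is satisfied, since we have subtracted off exactly the offending $\xi_0$-coefficients.

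Finally I would verify that $\phi$ and $\psi$ retain properties \eqref{truncsupp}--\eqref{smallwp}. For \eqref{truncsupp}: $\supp h\subset I_0\subset u(K)I_0$ (using $u(K)\geq u(0)$, and $K\geq 1$), so $\supp\phi\subset u(K)I_0$. For the pointwise bounds \eqref{adaptphi}, \eqref{smallwp}: we add $|m|\cdot|h^{(j)}(x)|\lesssim \e^{-\frac{a}{12}K}R^{-N}(1+|x|)^{-N}$ for $\phi$, which is dominated by the right-hand side of \eqref{adaptphi}; and for $\psi$ the correction is $\big|\e^{\frac{a}{12}K}m\big|\,|h^{(j)}(x)|\lesssim R^{-N}(1+|x|)^{-N}$, again admissible. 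For \eqref{fastfd}: $\widehat{m h}(\zeta)=m\,\widehat\eta(\zeta-\xi_0)$ and $|\widehat\eta|$ is rapidly decreasing, so for $|\zeta|\geq 2$ (hence $|\zeta-\xi_0|$ comparable to $|\zeta|$ once $|\zeta|\geq 2R$, with the bounded range $2\leq|\zeta|\leq 2R$ handled by the prefactor $|m|\lesssim\e^{-\frac{a}{12}K}$ and $R$-dependence of the constant) we get $|\widehat{\phi}(\zeta)|\leq|\widehat{\phi_0}(\zeta)|+|m||\widehat\eta(\zeta-\xi_0)|\lesssim\e^{-\frac{a}{12}K}(1+|\zeta|)^{-N}$ as required. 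All implicit constants now depend on $A,a,R,N$ and $u$, as claimed. The main obstacle --- which is in fact mild --- is the bookkeeping needed to see that attaching the frequency factor $\e^{2\pi i\xi_0 x}$ to a fixed bump costs only powers of $|\xi_0|\leq R$, and hence only an $R$-dependent (not $K$-dependent) worsening of the constants; this is exactly the point of allowing $R$-dependence in the statement and is the reason one cannot simply quote Lemma~\ref{lemmasplitmax} verbatim. A slightly more careful variant, closer to \cite[Lemma 3.1]{MPTT}, modulates the cutoff $v$ itself rather than adding an external bump; I would mention this as an alternative but the bump-correction above is cleaner to write.
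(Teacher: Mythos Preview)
Your approach is correct and is a genuine (if close) variant of the paper's argument. Two remarks.

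First, a small gap in the justification of $|m|\lesssim\e^{-\frac{a}{12}K}$: you claim $|\xi_0|\geq 1$, but $\xi_0\in RI_0\setminus I_0$ only guarantees $|\xi_0|>\tfrac12$, so \eqref{fastfd} need not apply at $\zeta=\xi_0$, and the ``same computation'' behind \eqref{fastfd} yields only a bound involving $U\big(u(K)(|\xi_0|-\tfrac12)\big)$, which degenerates as $|\xi_0|\to\tfrac12^+$. The fix is immediate and already implicit in your displayed identity: from $m=-\e^{-\frac{a}{12}K}\widehat{\psi_0}(\xi_0)$ and $\|\psi_0\|_1\lesssim 1$ (by \eqref{smallwp} with $N\geq 2$) you get $|m|\lesssim\e^{-\frac{a}{12}K}$ directly, uniformly in $\xi_0$. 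With this correction all of your verifications of \eqref{truncsupp}--\eqref{smallwp} go through.

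Second, on the comparison: the paper takes precisely the route you describe at the end as ``the alternative closer to \cite[Lemma 3.1]{MPTT}''. It sets $w=\varphi\,\e^{-2\pi i\xi_0\cdot}$ (so $\int w=0$), uses the same cutoff $v$ as in Lemma~\ref{lemmasplitmax}, and defines $\phi=\big(wv-\tfrac{\int wv}{\int v}\,v\big)\e^{2\pi i\xi_0\cdot}$, i.e.\ the corrector is a multiple of $v\,\e^{2\pi i\xi_0\cdot}$ (supported in $u(K)I_0$) rather than your fixed bump $\eta\,\e^{2\pi i\xi_0\cdot}$ (supported in $I_0$). The paper bounds the coefficient $\int wv=\int w(\cic{1}_\R-v)$ directly via the spatial decay \eqref{decayb} of $\varphi$ on $\{|x|\geq u(K)/6\}$, thereby avoiding any Fourier-side estimate at $\xi_0$; this is why its bound is automatically uniform near $|\xi_0|=\tfrac12$. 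Your black-box use of Lemma~\ref{lemmasplitmax} followed by a compactly supported bump correction is more modular and arguably cleaner; the paper's version keeps everything in terms of the single cutoff $v$ but at the cost of redoing the estimates of Lemma~\ref{lemmasplitmax}.
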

\begin{proof}     Set  $w (\cdot):=\varphi(\cdot) \e^{-2\pi i \xi_0 \cdot}$. Then $0 \not\in\mathrm{supp} \, \widehat {w}= I_0-\xi_0$, that is $\int  {{w}}=0$.
Let  ${v}$  be as in \eqref{average}
In view of the support condition on $\cic{1}_\R-v$ and later relying on  \eqref{decayb},  we preliminarily observe that 
\begin{align}\label{average} \quad &  \Big|\int_{\R}w v \, \d x\Big|   =   \Big|\int_{\R}w  (\cic{1}_\R -{v} ) \,\d x\Big|\lesssim   \int_{ |x| \geq \frac{u(K)}{6} }|w(x)|\, \d x \\ & \leq  A  \int_{ |x| \geq \frac{ u(K) }{6} }\e^{- aU(x)} \d x 
\lesssim  \e^{-\frac{a}{2} U( u(K) /6)}   \int_\R \e^{- \frac{a}{2} U(x)} \d x 
 \lesssim  \e^{-\frac{a}{12} K}.\nonumber  
 \end{align}
For the next to last inequality above, we used that $U$ is increasing. Then, in  the last step, we    employed \eqref{tricco1} for the first factor and \eqref{tricco2} to estimate the integral. 
 Since  $w(\cdot)=\varphi (\cdot) \e^{-2\pi i \xi_0 \cdot}$, \eqref{eqsplit2} is fulfilled if we set\begin{align*}
&
 \phi(x):=\left( w(x)v(x) - \frac{\int wv}{ \int v} v(x)  \right) \e^{2\pi i \xi_0 x},\\ &
 \psi(x):=   \e^{\frac{a}{12} K}    \left( \frac{\int wv}{  \int v} {v}(x) + (\cic{1}_{\R}-{v}) (x){{w}}(x) \right) \e^{2\pi i \xi_0 x}.\end{align*}
With these definitions,
the mean zero condition \eqref{meanzeroinlemma} for $\phi$  holds by construction. Then, \eqref{meanzeroinlemma} for $\psi$ follows by difference,  again in view of $\int w=0$. 
By construction as well, $\mathrm{supp}\, \phi   \subset {{{u(K)}}} I_0$, and we have earned \eqref{truncsupp}.
 
 Next, we prove \eqref{adaptphi} and \eqref{smallwp}. Recalling that the  implicit constants are allowed to depend on  $R$, and that $|\xi_0| \leq R$, we can ignore the modulation factor of $\psi$, and   \eqref{smallwp} is a consequence of the bounds
\begin{align*}
 &\left|  \frac{\int_{\R}({{w}} {v})}{ \int_\R {v}} {v}^{(j)}(x) \right| \lesssim \textstyle  \frac{\e^{-\frac{a}{12} K}}{{{{u(K)}}}} (1+{{{u(K)}}})^{N}(1+|x|)^{-N} \lesssim  \e^{-\frac{a}{12} K}  (1+|x|)^{-N},
 \\ 
 &\Big| \big((\cic{1}_{\R}-{v}) {{w}}\big)^{(j)}(x)\Big| \lesssim \textstyle \e^ {-\frac{a}{2} U({{{u(K)}}}/6)}
 \e^{-\frac{a}{2} U(x)}\lesssim  \e^{-\frac{a}{12} K}
 (1+|x|)^{-N},
\end{align*}
for $x \in \R$ and $  j=0,1$. For the first line of the last display, we have used inequality \eqref{average} and  that $v^{(j)}$ is supported on $|x| \leq  u(K)/ 2$,  and subsequently \eqref{tricco2}. The second line follows via the same argument we used for \eqref{tothenext}.

 We now turn to \eqref{fastfd}.   The term involving $\widehat{{v}}$ is easily bounded, taking \eqref{average}, \eqref{decaytaumu} and \eqref{tricco2} into account, by
$$
 \e^{-\frac{a}{12} K} |\widehat{ {v}}(\zeta -\xi_0)| \lesssim   \e^{-\frac{a}{12} K}
 \e^{-\frac{1}{100}U({{{u(K)}}} (\zeta-\xi_0)) }    \lesssim  \e^{-\frac{a}{12} K} (1+{{{u(K)}}}|\zeta-\xi_0|)^{-N}
  \lesssim  \e^{-\frac{a}{12} K}  (1+|\zeta|)^{-N}.$$
The  above estimate  actually holds for all $\zeta \in \R$, and  the last almost inequality sign hides the constant  $(1+|\xi_0|)^N\leq (1+R)^N$, which we ignore.
Finally, the term $(\widehat{{w}} * \widehat{v})(\zeta -\xi_0)$ is handled in exactly the same fashion of \eqref{killing}. The proof is complete.\end{proof}

 \section{A multi-frequency Calder\'on-Zygmund decomposition} \label{secMFCZ}

  Throughout this section,  our definitions   depend on a fixed choice of the function $u$, and of its extended inverse $U$, as in   Section \ref{sect2}, and of   parameters $R\geq 1,0<\eps \leq 2^{-8}$.     The almost inequality signs appearing in the sequel hide implicit constants which are allowed to possibly depend on $u$ and  $R$  only.

\subsection{Top data and adapted functions with fast spatial decay}

We call \emph{top datum} a pair $(I,\xi)$, where $I\subset \R$ is a spatial interval  and $\xi \in \R$ is a frequency. 
We say that a smooth function $\varphi $ is  $u$-adapted\footnote{Here, and in the remainder of the article, we adopt an $L^2$ normalization for our adapted functions.}, with adaptation rate $a>0$, to the top datum $(I,\xi)$  if 
\begin{equation}
  \Big|  \big(\e^{-2\pi i  \xi\cdot }	\varphi (\cdot) \big)^{(j)} (x) \Big| \lesssim    |I|^{-\frac12-j}  \exp  \Big(- a U\big(\textstyle  \frac{|x-c(I)|}{|I|}\big)\Big) \qquad \forall x \in \R, \, j\in\{0,1\}. \label{adaptbandlim}
\end{equation}
Note that, by virtue of property \eqref{tricco2} of $u$, \eqref{adaptbandlim} is stronger than the usual notion of adaptation of e.\ g.\ \eqref{polyadapt} below.
  In what follows, the adaptation rate $a$  will   be   an absolute constant, which may be different at each occurrence.
When we speak about collections of $u$-adapted  functions, we  assume, without explicit mention,   uniformity of the adaptation rate $a$  and of the implicit constants in the almost inequality sign of \eqref{adaptbandlim}.      
Using the  property of $U$ \eqref{tricco1},  we see that  for any pair of top data $(I,\xi),(J,\zeta)$   such that $|J|\leq |I|\leq 2|J|$, $|c(I)-c(J)| \leq |J|$ and $|\zeta-\xi|\leq R|I|^{-1}$, any $\varphi$ 
 $u$-adapted to $(I,\xi)$  is also $u$-adapted to $(J,\zeta)$, with  adaptation rate  $a'\geq a/2$. 
  Hence,  there is  no   loss in generality with assuming that the spatial intervals $I$ of our top data belong to the standard dyadic grid $\D_0$.

\begin{remark} \label{theexample}
For each $0<\eps\leq 1$, examples of  $u$-adapted functions to $(I,\xi)$,   with adaptation rate $a=\eps/100$, are   given by  
$$
\upsilon_{(I,\xi),\eps}:= \mathrm{Mod}_{\xi}\mathrm{Dil}_{\eps^{-1}|I|}^2 \mathrm{Tr}_{c(I)}
\upsilon,$$
where $\upsilon$  is the   output of Lemma \ref{HORM} corresponding to $u$.
In view of \eqref{averaging}, $\widehat{\upsilon_{(I,\xi),\eps}}$ is supported on the interval of length $\eps|I|^{-1}$ centered at $\xi$.      \end{remark}

 \subsection{A multi-frequency Calder\'on-Zygmund decomposition with respect to top data}
  Let $(I,\xi)$ be a top datum. A set of  functions    $  \cic{\varphi}_{(I,\xi)}=\{\varphi_J : J \in \D(I)\}$, indexed by the dyadic subintervals of  $I$,  is a collection of \emph{$u$-adapted   wave packets} if  \begin{itemize}
\item[$\cdot$]   each $\varphi_J $ is smooth and  $u$-adapted  to $(J,\xi)$,
\item[$\cdot$]  the support of each $\widehat{\varphi_J}$ is  contained on an  interval of length $\eps|J|^{-1}$ centered at $\xi_J \in \R$, and $  |\xi_J-\xi| \leq R |J|^{-1}$.
\end{itemize}
 If furthermore, for each $\varphi_J\in \cic{\varphi}_{(I,\xi)} $ 
 we also have that  $  |\xi_J-\xi| \geq  |J|^{-1}$, so that  consequently $\xi \not \in \supp\, \widehat{\varphi_J}$,   and in particular $\widehat{\varphi_J}(\xi)=0$, 
we say that $\cic{\varphi}_{(I,\xi)}$ is a collection of $u$-adapted   wave packets \emph{with mean zero} with respect to the top datum $(I,\xi)$.

In Proposition \ref{corMFCZabs},   we devise  a  multi-frequency  Calder\'on-Zygmund decomposition $f=g+b$ of  $f \in L^p(\R), 1\leq p<2$, adapted to a set of top data $(I,\xi) \in \mathcal T$. The $L^2$ norm of the \emph{good part} $g$   will depend  on the  $L^1$ norm  of the counting function  associated to the spatial intervals of $ \mathcal T$ and, via $u$, on a parameter $k$ related to the $L^\infty$ norms of the counting function. The bad part $b$   is such that    
the Carleson measure norms of  the coefficients $|\l b, \varphi_J \r|^2$, associated to collections $  \cic{\varphi}_{(I,\xi)}$ of $u$-adapted wave packets with mean zero   are  \emph{simultaneously} exponentially small in $k$. 
 The constant $C$ appearing in the statement can be taken equal to $10^3 a^{-1}$, where $a$ is the uniform adaptation rate of the $\cic{\varphi}_{(I,\xi)}$'s.   \begin{proposition} \label{corMFCZabs}
  Let  $k\geq 1$ and   ${\mathcal T}=\{(I, \xi)\}$ be a collection of top data satisfying
 \begin{equation} \label{flinftbd}
 \Big\| \sum_{ (I, \xi) \in {\mathcal T}} \cic{1}_{3u(Ck) I}\Big\|_{\infty} \leq 2^k.
 \end{equation}  Let   $f \in L^p(\R)$, $1\leq p <2$, be given. 
For each  $\lambda>0$, denote by  
$  E_\lambda =\{x \in \R: \M_p f(x) >  \lambda\}.  $  Then, there exists a decomposition $f=g+b$ such that
 \begin{equation}
 \|g\|_2 \lesssim \lambda^{2-p} \|f\|_p^{p-1}    \Big( u(Ck)^2 \log u(Ck) \Big\|\sum_{(I,\xi) \in \mathcal T} \cic{1}_{I} \Big\|_1 \Big)^{\frac1p-\frac12}
 \label{gl2again}
 \end{equation}
 and such that, for each $(I,\xi) \in \mathcal T$, 
 \begin{equation} 
   \sup_{J \not\subset {E_\lambda}} \frac{|\l b,\varphi_J\r|}{|J|^{\frac 12}} \lesssim    2^{-4k} \lambda,     
   \label{megaesti1}
   \end{equation}
 whenever   $\cic{\varphi}_{(I,\xi)}$ is a collection of $u$-adapted wave packets with top datum $(I,\xi)$, and 
  \begin{equation} 
    \Bigg(  \frac{1}{|J_0| }\sum_{ J\not\subset {E_\lambda}, J \subset J_0        } |\l b, \varphi_J\r|^2 \Bigg)^{\frac12} 
\lesssim   2^{-4k}\lambda
   \label{megaesti}
   \end{equation}
  whenever   $\cic{\varphi}_{(I,\xi)}$ is a collection of $u$-adapted wave packets with mean zero with respect to  $(I,\xi)$, and  $J_0 \in \D(I),  J_0 \not\subset {E_\lambda}$.

 \end{proposition}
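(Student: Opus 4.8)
The plan is to follow the classical Calder\'on-Zygmund scheme, but with two twists: a Whitney-type stopping decomposition based on $\M_p f$ at level $\lambda$, and a careful preparation of the bad part so that each $b_I$ has mean zero against not merely the dominant frequency of each top datum relevant on $I$, but against $O(\log u(Ck))$ equidistant frequencies near each such dominant frequency. First I would perform the usual stopping time: let $\{Q\}$ be the maximal dyadic cubes contained in $E_\lambda$ (a Whitney/Calder\'on-Zygmund decomposition of the open set $E_\lambda$), write $g = f \ind_{\R\setminus E_\lambda} + \sum_Q (\text{averaged pieces})$ and $b = \sum_Q b_Q$. On $\R\setminus E_\lambda$ one has $|f|\le \M_p f \le \lambda$ pointwise (a.e.), which, combined with $|E_\lambda|\lesssim \lambda^{-p}\|f\|_p^p$, gives the $\lambda^{2-p}\|f\|_p^{p-1}$ factor on the ``easy'' part of $\|g\|_2^2$; but here one must incorporate the constraint \eqref{flinftbd}: only the intervals $Q$ that are not too much larger than the smallest relevant scale contribute, and the overlap of the enlarged top intervals $3u(Ck)I$ is at most $2^k$, which is what forces the factor $u(Ck)^2\log u(Ck)\|\sum \ind_I\|_1$ in \eqref{gl2again}.

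The core of the construction is $b_Q$. On each Whitney cube $Q$ I would let $\Xi_Q = \{\xi: (I,\xi)\in\mathcal T,\ I\supset Q \text{ (or } I \text{ relevant to } Q)\}$ be the set of bad frequencies to neutralize, and, following the idea behind Lemmata \ref{lemmasplit} and \ref{HORM}, build for each $\xi\in\Xi_Q$ a family of $\sim\log u(Ck)$ equidistant frequencies in the $R|Q|^{-1}$-neighborhood of $\xi$. Then $b_Q := (f - \text{projection})\ind_Q$, where the projection is onto the span of the modulated bump functions $\{e^{2\pi i \eta \cdot}\upsilon_{(Q,\cdot),\eps}\}$ over all these enlarged frequency sets; this is a finite-rank orthogonal (or near-orthogonal) projection, so $\|b_Q\|_2 \lesssim \|f\ind_Q\|_2$, and by the $L^p\to L^2$ size estimate on $Q$ it contributes the same type of bound as the easy part once summed, using \eqref{flinftbd} to control the number of frequencies per cube (which is $\lesssim 2^k\cdot\log u(Ck)$ after accounting for overlap). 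Crucially $b_Q$ has mean zero against \emph{each} of the enlarged frequencies, in particular against $\xi$ itself.

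With $b_Q$ so prepared, estimates \eqref{megaesti1} and \eqref{megaesti} follow by testing against $u$-adapted wave packets $\varphi_J$. For \eqref{megaesti1}: if $J\not\subset E_\lambda$ then $J$ is not contained in any Whitney cube, so I split the interaction $\l b,\varphi_J\r = \sum_Q \l b_Q,\varphi_J\r$ into cubes $Q$ with $|Q|\le |J|$ (use the $L^p$-average bound of $f$ near $J$ and the $L^2$ normalization of $\varphi_J$, plus almost-exponential tails of $\varphi_J$ away from $J$) and cubes $Q$ with $|Q|>|J|$ (rare because of the Whitney structure and \eqref{flinftbd}); the almost-exponential decay \eqref{decayA}/\eqref{adaptbandlim} makes the geometric sums in $\dist(J,Q)/|Q|$ converge with a gain $2^{-4k}$ coming from the truncation scale $u(Ck)$ built into \eqref{flinftbd}. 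For \eqref{megaesti}, the wave packets have mean zero against $\xi$, so I subtract off the value $\widehat{\varphi_J}(\xi)=0$ and expand $\widehat{\varphi_J}$ near $\xi$: the mean-zero-against-many-frequencies property of $b_Q$ lets me subtract a near-optimal trigonometric approximation of $\varphi_J$ on the frequency interval of length $\eps|J|^{-1}$, turning the frequency error into something of size $\sim (\eps|J|^{-1}/ u(Ck)^{-1}|Q|^{-1})^{\log u(Ck)}$ — here is where $\log u(Ck)$ many frequencies exactly cancels the loss of working at scale $u(Ck)$ — and then sum $\ell^2$ over $J\subset J_0$ using almost-orthogonality. \textbf{The main obstacle} I anticipate is precisely this last point: making the frequency-localization gain from the $\log u(Ck)$ extra moments interact correctly with the spatial almost-exponential decay of $\varphi_J$, so that the two truncation scales ($u(Ck)$ in space, $\eps|J|^{-1}$ in frequency) conspire to produce the claimed clean bound $2^{-4k}\lambda$ with only $u$- and $R$-dependent constants; this is the technical heart where the Ingham-type construction of Section \ref{sect2} and the preparation lemma \ref{lemmasplit} must be combined quantitatively, and it is what distinguishes this sharp MFCZ from the one in \cite{NOT}.
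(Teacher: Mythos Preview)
Your outline captures the overall shape of the argument --- Whitney stopping on $E_\lambda$, projection of each $f\cic{1}_Q$ onto a frequency subspace determined by the relevant top data, then testing $b_Q$ against the wave packets --- but there are two genuine gaps where the mechanism you propose differs from what is actually needed and would not produce the claimed bounds.

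\textbf{The $L^p\to L^2$ step.} You write ``$\|b_Q\|_2\lesssim \|f\cic{1}_Q\|_2$'' and then invoke an ``$L^p\to L^2$ size estimate on $Q$''. But $f$ is only in $L^p$ with $p<2$; there is no uniform control of $\|f\cic{1}_Q\|_2$. In the paper the projection is \emph{not} onto modulated bumps but onto the span of pure exponentials $\{\e^{2\pi i\zeta x}:\zeta\in\Xi_Q\}$ in $L^2(3Q)$, and the crucial input is the Borwein--Erdelyi inequality (as in \cite{NOT}): for the Riesz projection $g_Q$ of $f_Q$ onto this $N$-dimensional space of exponentials one has $\|g_Q\|_{L^2(3Q)}+\|b_Q\|_{L^p(3Q)}\lesssim N^{\frac1p-\frac12}\|f_Q\|_{L^p(Q)}$. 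This is what converts $L^p$ control of $f_Q$ into $L^2$ control of $g_Q$ with the factor $N^{\frac1p-\frac12}$ that produces \eqref{gl2again}. A projection onto localized bump functions does not come with such a bound.

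\textbf{The decay mechanism and the number of frequencies.} Your proposed gain, of type $(\eps|J|^{-1}/(u(Ck)^{-1}|Q|^{-1}))^{\log u(Ck)}$ from $\sim\log u(Ck)$ vanishing moments, does not yield $2^{-ck}$: since $u(Ck)\sim Ck(\log Ck)^2$ one has $\log u(Ck)\sim \log k$, and any reasonable base raised to power $\log k$ falls far short of $\e^{-ck}$. The paper uses a different mechanism. First one splits each $\varphi_J=\phi_J+\e^{-aK/12}\psi_J$ (Lemmata~\ref{lemmasplitmax}--\ref{lemmasplit}) so that $\phi_J$ is \emph{compactly supported} in $u(K)J$. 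Then $\phi_J$ has a Fourier series on the torus $3u(K)J$, with lattice spacing $(3u(K)|J|)^{-1}$; the coefficients outside the band $\Xi(J)=5\omega_J\cap (3u(K)|J|)^{-1}\mathbb Z$ are $\lesssim \e^{-aK/12}$ by \eqref{fastfdtreat} (Lemma~\ref{lemmadecay}). Thus $b_Q$ must be made orthogonal to \emph{all} $\sim u(K)$ lattice frequencies in $\Xi(J)$, for each relevant $J$ --- not $\log u(K)$ many --- and this, summed over scales, gives $\#\Xi_Q\lesssim u(K)^2 2^k$, which is exactly the origin of the $u(Ck)^2\log u(Ck)$ in \eqref{gl2again}. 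The exponential gain $2^{-4k}$ in \eqref{megaesti1}--\eqref{megaesti} then comes from the splitting error $\e^{-aK/12}$ (with $K=Ck$ and $C$ large), not from a polynomial-moment argument.
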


The proof is given in Subsection \ref{ssproofcormfcz}. In the next subsection, we  develop some technical preliminaries involving $u$-adapted functions. 
\begin{remark}[Dyadic structure of $ \supp\, \widehat{\varphi_J}$] 
It will be   useful to give some sort of dyadic structure to the frequency supports of $\varphi_J \in \cic{\varphi}_{(I,\xi)} $ as well, working with the standard dyadic grid $\D_0$ and its translates $\D_j:=\{\omega+  j   |\omega|/3:\omega\in \D_0\}$, $j=1,2$. We do so by selecting  for each $\varphi_J$ the   unique interval ${\omega} \in \D_0 \cup \D_1 \cup \D_2$ of length $|J|^{-1}$ with minimal $c(\omega)$ such that $(\xi_J-2\eps|J|^{-1}, \xi_J+2\eps|J|^{-1}) \subset \omega $, which   we denote   by $\omega_J$.   Noting that $\xi \in R\omega_J  $ by definition, we realize that the collection $\{\omega_J:   |J|=2^\ell\}$ has at most $3R$ elements. By   pigeonholing   $\cic{\varphi}_{(I,\xi)},$ at the cost of an additional $\lesssim R$ factor in our estimates,   we can  assume   that all $\omega_J$'s come from the same dyadic grid $\D_0$, and that $\omega_J=\omega_{J'}$ whenever $|J|=|J'|$. \label{remdyadfreq}
\end{remark}

\subsection{A splitting of the $u$-adapted wave packets} \label{sssplit}
Let throughout $\varphi_J \in \cic{\varphi}_{(I,\xi)}$ be a collection of $u$-adapted wave packets. The functions
$$
\varphi_J^{(0)}:=  \Big( \mathrm{Tr}_{-c(J)}   \mathrm{Dil}_{|J|^{-1}}^2  \mathrm{Mod}_{-\xi_J} \varphi\Big), \qquad  \varphi_J^{(0)}(x)=\varphi \big( c(J) + |J| x  \big)  \e^{-2\pi i\xi_J (   c(J) + |J| x)},
$$ 
whose frequency support lies in $I_0=[-\frac12,\frac12]$, satisfy the decay assumption \eqref{decayb} of Lemma \ref{lemmasplitmax} with a uniform choice of constants $A,a$. Applying Lemma \ref{lemmasplitmax} to each $\varphi_J^{(0)}$, for a fixed parameter $K\geq 1$,  we split $\varphi_J^{(0)}=\phi + \exp(-aK/12) \psi$. The resulting decomposition\begin{equation}
\label{eqsplitconcrete}
\varphi_J = \phi_J + \e^{-\frac{a}{12}K} \psi_J, \qquad \phi_J:=  \mathrm{Mod}_{\xi_J}\mathrm{Dil}_{|J|}^2 \mathrm{Tr}_{c(J)} \phi, \quad \psi_J:=  \mathrm{Mod}_{\xi_J}\mathrm{Dil}_{|J|}^2 \mathrm{Tr}_{c(J)}  \psi\end{equation}
inherits, from   \eqref{truncsupp}, \eqref{fastfd} and   \eqref{adaptphi}-\eqref{smallwp}  respectively, the properties $(J,\zeta)$,   \begin{align}
 &\mathrm{supp}\, \phi_J   \subset  u(K)J, \label{truncsupptreat} \\ 
& |\widehat{\phi_J} (\zeta)| \lesssim     \e^{-\frac{a}{12}K}  |J|^{\frac12} \big( |J||\zeta-\xi_J|\big)^{-N}\qquad \forall |\zeta-\xi_J| \geq 2|J|^{-1}, \label{fastfdtreat} \\
&
  \Big|  \big(\e^{-2\pi i  \zeta\cdot }	\gamma_J (\cdot) \big)^{(j)} (x) \Big| \lesssim    |J|^{-\frac12-j}   \Big(1+\textstyle  \frac{|x-c(J)|}{|J|} \Big)^{-N} \quad \forall x \in \R, \, j\in\{0,1\} \label{polyadapt}
 \end{align}
the last property holding for either $\gamma_J \in \{\phi_J,\psi_J\}$
and for all $|\zeta-\xi_J| \lesssim R|J|^{-1}$.

If, in addition, $ \cic{\varphi}_{(I,\xi)}$ are $u$-adapted wave packets \emph{with mean zero} with respect to $(I,\xi)$, we can apply Lemma \ref{lemmasplit} to $\varphi_J^{(0)}$ instead, with  choice of frequency $\xi_0:= |J|^{-1}(\xi-\xi_J) \in RI_0 \backslash I_0$, and obtain a decomposition \eqref{eqsplitconcrete} such that, together with the above properties, there holds
\begin{equation} \widehat{\phi_J} (\xi)= \widehat{\psi_J} (\xi)=0. \label{meanzerointreat} 
\end{equation}

In the remainder of this subsection, we present several results involving the  functions $\phi_J,\psi_J$ arising from the above decompositions of $\varphi_J \in \cic{\varphi}_{(I,\xi)}$. We  start with the  following well-known observation, whose  proof relies on integration by parts; see   \cite[Lemma 2.3]{ThWp}.
\begin{lemma} \label{lemmaibp} Let $\gamma_J$ be a smooth function satisfying \eqref{polyadapt} for $\zeta=\xi \in \R  $. Let $H \subset \R$ be an interval,   $h  \in L^{1}(\R) $ with $ \mathrm{supp}\, h \subset H$ and such that 
\begin{equation} \label{meanzerosibp}  
\int_{\R} h(x) \e^{ -  2\pi i   \zeta    x} \, \d x = 0  \end{equation}
for some $\zeta \in \R$ with $|\zeta -\xi |\lesssim R|J|^{-1}$. 
Then, there holds
$$
|\l h, |J|^{\frac12} \gamma_J \r |\lesssim  \min\Big\{\textstyle\frac{|H|}{|J|},1 \Big\} \Big( 1+ \frac{|c(H)-c(I)|}{|I|}\Big)^{-N} \|h\|_{1}.
$$
\end{lemma}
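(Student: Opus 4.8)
The plan is to exploit the mean-zero condition \eqref{meanzerosibp} by subtracting the value of the oscillatory factor at a convenient basepoint, then integrate by parts once, using the single derivative bound from \eqref{polyadapt}. First I would normalize: write $\gamma_J(x) = \e^{2\pi i \zeta x} g(x)$ where, by \eqref{polyadapt} with the given $\zeta$ (legitimate since $|\zeta - \xi| \lesssim R|J|^{-1}$), we have $|g^{(j)}(x)| \lesssim |J|^{-\frac12 - j}(1 + |x - c(J)|/|J|)^{-N}$ for $j \in \{0,1\}$. Then
$$
\l h, |J|^{\frac12}\gamma_J\r = |J|^{\frac12}\int_{H} h(x)\, \e^{2\pi i \zeta x}\, g(x)\, \d x.
$$
Using \eqref{meanzerosibp} in the form $\int h(x)\e^{2\pi i \zeta x}\e^{-2\pi i \zeta x}\cdots$ — more precisely, since $\int h(x)\e^{-2\pi i\zeta x}\,\d x = 0$, for any constant $c$ we may replace $g(x)$ by $g(x) - c$ only if we reinsert the oscillation correctly. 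The clean way: set $\widetilde h(x) := h(x)\e^{2\pi i\zeta x}$, which still satisfies $\int \widetilde h = 0$ by \eqref{meanzerosibp}, and estimate $\int_H \widetilde h(x) g(x)\,\d x = \int_H \widetilde h(x)(g(x) - g(c(H)))\,\d x$; then by the mean value theorem and the derivative bound, $|g(x) - g(c(H))| \lesssim |H| \sup_{x\in H}|J|^{-\frac32}(1 + |x - c(J)|/|J|)^{-N}$, giving the factor $|H|/|J|$ after multiplying by $|J|^{\frac12}$ and integrating $\|h\|_1$.

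For the competing bound (the minimum with $1$), I would simply drop the cancellation: $|\l h, |J|^{\frac12}\gamma_J\r| \le |J|^{\frac12}\|h\|_1 \sup_{x\in H}|g(x)| \lesssim \|h\|_1 \sup_{x\in H}(1+|x-c(J)|/|J|)^{-N}$, which handles the case $|H| \geq |J|$. Taking the minimum of the two estimates yields the factor $\min\{|H|/|J|, 1\}$. The spatial decay factor $(1 + |c(H)-c(I)|/|I|)^{-N}$ comes out of the $\sup_{x\in H}(1 + |x-c(J)|/|J|)^{-N}$ term: here I would use $|J| \lesssim |I|$, $|c(J) - c(I)| \lesssim |I|$ (implicit in the wave-packet setup, since $J \in \D(I)$), together with the triangle inequality to convert $(1 + |x - c(J)|/|J|)^{-N}$ on $H$ into a comparable multiple of $(1 + |c(H) - c(I)|/|I|)^{-N}$, absorbing constants into the implicit constant; one should take $N$ slightly larger in the intermediate step and quote the final $N$, which is harmless since $N$ is arbitrary.

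\textbf{Main obstacle.} The only real subtlety is the bookkeeping of the spatial decay: converting a bound localized around $c(J)$ at scale $|J|$, valid only for $x \in H$, into the stated bound localized around $c(I)$ at scale $|I|$. When $H$ is far from $I$ this requires that the polynomial weight be comparable after recentering and rescaling, which is standard but needs the geometric relations $|J| \lesssim |I|$ and $\mathrm{dist}(c(J), c(I)) \lesssim |I|$ to be invoked explicitly; when $H$ is comparable to or larger than $J$ but close to $I$, one instead just uses boundedness of the weight. I expect no genuine difficulty here, only care in tracking which of the two regimes ($|H| \le |J|$ versus $|H| > |J|$) controls the estimate and ensuring the decay exponent survives both; this is exactly the content of \cite[Lemma 2.3]{ThWp}, so I would follow that argument.
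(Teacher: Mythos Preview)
Your proposal is correct and is precisely the standard integration-by-parts (mean-value) argument that the paper has in mind: the paper does not supply its own proof but simply refers to \cite[Lemma~2.3]{ThWp}, and your sketch reproduces that argument faithfully, including the correct observation that \eqref{polyadapt} at $\xi$ transfers to the nearby $\zeta$ with $|\zeta-\xi|\lesssim R|J|^{-1}$, and the correct treatment of the recentering from $c(J)$ to $c(I)$ via $J\in\D(I)$.
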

Using Lemma \ref{lemmaibp} above,  as well as the classical Calder\'on-Zygmund decomposition of $h$, one obtains the  Carleson measure type estimate of the next lemma, which, aside from notation,  is the same as  e.\ g.\ \cite[Proposition 2.4.1]{ThWp}.
  \begin{lemma} \label{thelemmapsi} Let $\{\gamma_J: J \in \D(I)\}$ be such that  each  $\gamma_J$ satisfies  \eqref{polyadapt}  with $\zeta=\xi$, and in addition  $\widehat{\gamma_J}(\xi)=0$. For all  $h \in L^1_\mathrm{loc}(\R)$, $\lambda >0$,  $J_0 \in \mathcal{D}(I)$, $J_0 \not\subset \{\M_1 h >\lambda\}$
\begin{equation} \label{sqfh}
\Bigg(\frac{1}{|J_0|}  \sum_{  J \not\subset \{\M_1 h >\lambda\}, J \subset J_0 } |\l h, \gamma_J\r|^2  \Bigg)^{\frac12}\lesssim \lambda.
\end{equation}
\end{lemma}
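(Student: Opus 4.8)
The plan is to combine the pointwise integration-by-parts bound of Lemma \ref{lemmaibp} with a classical Calder\'on--Zygmund decomposition of $h$ at height $\lambda$, exactly as in \cite[Proposition 2.4.1]{ThWp}. First I would fix $J_0 \in \D(I)$ with $J_0 \not\subset \{\M_1 h > \lambda\}$, and perform the Calder\'on--Zygmund decomposition $h = g_h + \sum_{Q} b_Q$ relative to the grid $\D(I)$ (or $\D_0$), where the $Q$ are the maximal dyadic cubes with $\langle |h| \rangle_Q > \lambda$; then $\|g_h\|_\infty \lesssim \lambda$, each $b_Q$ is supported on $Q$, has mean zero, and $\|b_Q\|_1 \lesssim \lambda |Q|$, and $\sum_Q |Q| \lesssim \lambda^{-1}\|h\|_{L^1(\text{relevant region})}$ with the cubes $Q$ pairwise disjoint. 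Crucially, since $J_0 \not\subset \{\M_1 h > \lambda\}$, every bad cube $Q$ meeting $J_0$ satisfies $Q \subsetneq J_0$ is false in general — rather the relevant $Q$ are either contained in $J_0$ or contain $J_0$; one splits into $Q \subset J_0$ and $Q \supset J_0$, and for the latter there are only boundedly many and the restriction $J \not\subset \{\M_1 h > \lambda\}$ forces the interaction to be controlled as in the good part.

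For the good part $g_h$, I would use that $\widehat{\gamma_J}(\xi) = 0$ means $\gamma_J$ has mean zero against the modulation $\e^{2\pi i \xi \cdot}$; writing $\langle g_h, \gamma_J \rangle = \langle g_h \e^{-2\pi i \xi \cdot}, \e^{-2\pi i \xi\cdot}\gamma_J\rangle$ and invoking the square-function/almost-orthogonality estimate for $L^2$-normalized wave packets adapted via \eqref{polyadapt}, one gets $\sum_{J \subset J_0} |\langle g_h, \gamma_J\rangle|^2 \lesssim \|g_h\|_{L^2(\text{enlargement of } J_0)}^2 \lesssim \lambda^2 |J_0|$, using $\|g_h\|_\infty \lesssim \lambda$ and the decay in \eqref{polyadapt} to localize. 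For the bad part, I would estimate $\sum_J |\langle b_Q, \gamma_J\rangle|^2$ for each $Q \subset J_0$ using Lemma \ref{lemmaibp} with $H = Q$, $h \rightsquigarrow b_Q$, $\zeta = \xi$: this yields $|\langle b_Q, \gamma_J\rangle| \lesssim |J|^{-1/2}\min\{|Q|/|J|,1\}(1 + |c(Q)-c(J)|/|J|)^{-N}\|b_Q\|_1$. Summing in $J \subset J_0$ (separately over scales $|J| \geq |Q|$ and $|J| < |Q|$) gives $\sum_{J\subset J_0}|\langle b_Q,\gamma_J\rangle|^2 \lesssim |Q| \lambda^2 |Q| / |Q| \cdot (\dots)$ — more precisely $\lesssim \lambda^2 |Q|$ after using $\|b_Q\|_1 \lesssim \lambda|Q|$ and the $N$-decay to make the sum over translates and scales converge; then summing over the disjoint $Q \subset J_0$ gives $\lesssim \lambda^2 \sum_{Q \subset J_0}|Q| \lesssim \lambda^2 |J_0|$. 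Combining, $\sum_{J \subset J_0, J \not\subset\{\M_1 h > \lambda\}}|\langle h, \gamma_J\rangle|^2 \lesssim \lambda^2 |J_0|$, which is \eqref{sqfh} after dividing by $|J_0|$ and taking square roots.

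The main obstacle is the bookkeeping around the restriction $J \not\subset \{\M_1 h > \lambda\}$ and the bad cubes $Q$ that are comparable to or larger than $J_0$: one must argue that for such $J$ the portion of $h$ living on large bad cubes behaves like an $L^\infty$ function of size $\lesssim \lambda$ on a neighborhood of $J$ (since $\M_1 h \leq \lambda$ somewhere on $J$ controls averages of $h$ over intervals containing $J$), so those pieces can be absorbed into the good-part estimate; this is the only genuinely delicate point and is handled exactly as in the cited reference. Everything else is a routine interchange of the finite/infinite sums over dyadic scales and positions, made convergent by the polynomial decay exponent $N$ in \eqref{polyadapt}, which we are free to take large.
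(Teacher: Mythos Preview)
Your overall strategy --- classical Calder\'on--Zygmund decomposition of $h$ combined with Lemma \ref{lemmaibp} --- is exactly what the paper indicates and what \cite[Proposition 2.4.1]{ThWp} does, and your treatment of the good part is correct. However, the bad-part argument as written has a genuine gap: from
\[
\sum_{J\subset J_0,\,J\not\subset E_\lambda}|\langle b_Q,\gamma_J\rangle|^2 \lesssim \lambda^2|Q|
\]
for each $Q$, you cannot pass to $\sum_J|\langle b,\gamma_J\rangle|^2\lesssim\lambda^2\sum_Q|Q|$, because $b=\sum_Q b_Q$ and the square of a sum is not the sum of squares. Minkowski in $\ell^2$ gives only $(\sum_J|\langle b,\gamma_J\rangle|^2)^{1/2}\le\sum_Q\lambda|Q|^{1/2}$, which picks up an uncontrolled factor $(\#\{Q\})^{1/2}$; and a direct computation from Lemma \ref{lemmaibp} shows that for each admissible $J$ one only gets $|\langle b,\gamma_J\rangle|\lesssim\lambda|J|^{1/2}$, so that $\sum_{J\subset J_0}|\langle b,\gamma_J\rangle|^2$ is dominated by the divergent sum $\lambda^2\sum_{J\subset J_0}|J|$.

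The remedy, and this is how \cite[Proposition 2.4.1]{ThWp} actually proceeds (the paper itself uses the same device later, see \eqref{prfmgest}--\eqref{thefinaline}), is to first establish the \emph{$L^1$} Carleson-measure bound
\[
\frac{1}{|J_0|}\Bigg\|\Bigg(\sum_{\substack{J\subset J_0\\J\not\subset E_\lambda}}|\langle h,\gamma_J\rangle|^2\,\frac{\cic{1}_J}{|J|}\Bigg)^{1/2}\Bigg\|_{L^1(J_0)}\lesssim\lambda
\]
uniformly over $J_0\not\subset E_\lambda$. At the $L^1$ level the triangle inequality over $Q$ is available, and the contribution of a single $b_Q$ is handled by $\ell^2\hookrightarrow\ell^1$ together with Lemma \ref{lemmaibp}. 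One then invokes a John--Nirenberg type argument to upgrade this uniform $L^1$ bound to the $\ell^2$ Carleson bound \eqref{sqfh}. The ``main obstacle'' you flagged (bad cubes $Q$ comparable to or containing $J_0$) is real but minor; the missing John--Nirenberg step is the essential one.
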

Of course, one can take $\gamma_J=\varphi_J,\phi_J, \psi_J$  in  the above lemmata. Below, we rely on    
$\phi_J$ being  a smooth function on the torus $  3u(K) J$, with exponentially small Fourier coefficients  outside the frequency  band
\begin{equation}
\label{XiJ}  
 \Xi(J):=5\omega_J \cap \displaystyle\frac{\mathbb Z}{3u(K)|  J|},
\end{equation} 
to show that  if  an integrable function $h$ is supported on $3u(K) J$ and has mean zero with respect to each $\zeta \in \Xi(J)$, then  its integral against $|J|^{\frac12}\phi_J$ is exponentially   small.

 \begin{lemma} \label{lemmadecay}Let $\phi_J$ be a smooth function satisfying \eqref{truncsupptreat} and \eqref{fastfdtreat}. Let $h  \in L^{1}(\R) $ with $ \mathrm{supp}\, h \subset 3 u(K) J$ and such that 
\begin{equation} \label{meanzeros}  
\int_{\R} h(x) \e^{ -  2\pi i   \zeta    x} \, \d x = 0 \qquad \forall\,  \zeta 	\in  \Xi(J).
\end{equation}
Then, there holds
\begin{equation} \label{lemmadecayeq}
|\l h, |J|^{\frac12} \phi_J \r |\lesssim  \e^{-\frac{a}{12} K}  \|h\|_{1}.
\end{equation}
\end{lemma}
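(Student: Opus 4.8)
The plan is to use the compact spatial support of $\phi_J$ to convert the pairing into a periodic computation, in which the cancellation conditions \eqref{meanzeros} become the vanishing of honest Fourier coefficients, and then to feed in the frequency decay \eqref{fastfdtreat}. Set $L:=3u(K)|J|$ and regard $\phi_J$ as a function on the torus $\R/L\Z$, identified with the interval $3u(K)J$ of length $L$. By \eqref{truncsupptreat} the set $\supp\,\phi_J\subset u(K)J$ occupies only the central third of $3u(K)J$, so (since $\tfrac52 u(K)|J|>\tfrac32 u(K)|J|$) the $L\Z$-translates of $u(K)J$ avoid $3u(K)J$; hence the $L$-periodization $\widetilde{\phi_J}$ of $\phi_J$ coincides with $\phi_J$ on $3u(K)J\supset\supp\,h$, whence $\l h,|J|^{\frac12}\phi_J\r=|J|^{\frac12}\l h,\widetilde{\phi_J}\r$. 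The dual lattice of $\R/L\Z$ is precisely the set $L^{-1}\Z$ appearing in \eqref{XiJ}, and the Fourier coefficients of $\widetilde{\phi_J}$ are $c_\zeta=L^{-1}\widehat{\phi_J}(\zeta)$ for $\zeta\in L^{-1}\Z$, i.e.\ the line Fourier transform of $\phi_J$ sampled along the lattice. Once absolute convergence of $\sum_\zeta|c_\zeta|$ is verified, expanding and interchanging gives $\l h,\widetilde{\phi_J}\r=\sum_{\zeta\in L^{-1}\Z}\overline{c_\zeta}\,\widehat h(\zeta)$ with $\widehat h(\zeta):=\int h(x)\e^{-2\pi i\zeta x}\,\d x$, and \eqref{meanzeros} kills every term with $\zeta\in\Xi(J)=5\omega_J\cap L^{-1}\Z$, leaving only the ``far'' frequencies $\zeta\in L^{-1}\Z\setminus 5\omega_J$.

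For these I would first record the elementary geometric fact that $\zeta\notin 5\omega_J$ forces $|\zeta-\xi_J|\ge 2|J|^{-1}$: by Remark \ref{remdyadfreq} the interval $\omega_J$ has length $|J|^{-1}$ and contains $(\xi_J-2\eps|J|^{-1},\xi_J+2\eps|J|^{-1})$, so $|\xi_J-c(\omega_J)|<\tfrac12|J|^{-1}$, while $\zeta\notin 5\omega_J$ gives $|\zeta-c(\omega_J)|\ge\tfrac52|J|^{-1}$. This places $\zeta$ exactly in the range where \eqref{fastfdtreat} applies, so $|c_\zeta|=L^{-1}|\widehat{\phi_J}(\zeta)|\lesssim \e^{-\frac{a}{12}K}\,(u(K)|J|)^{-1}|J|^{\frac12}\,(|J||\zeta-\xi_J|)^{-N}$. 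Summing over the lattice is then routine: grouping the $\zeta$ by the integer $r\ge 2$ with $r|J|^{-1}\le|\zeta-\xi_J|<(r+1)|J|^{-1}$, each such annulus contains $\lesssim u(K)$ lattice points of $L^{-1}\Z$ and contributes $\le r^{-N}$ per point, so, taking $N=2$ (legitimate since $N$ is a free parameter in \eqref{fastfdtreat}), one gets $\sum_{\zeta\in L^{-1}\Z\setminus 5\omega_J}|c_\zeta|\lesssim \e^{-\frac{a}{12}K}\,(u(K)|J|)^{-1}|J|^{\frac12}\cdot u(K)\sum_{r\ge 2}r^{-2}\lesssim \e^{-\frac{a}{12}K}|J|^{-\frac12}$. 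Combining with $|\widehat h(\zeta)|\le\|h\|_1$ yields $|\l h,|J|^{\frac12}\phi_J\r|\le|J|^{\frac12}\|h\|_1\sum_{\zeta\notin\Xi(J)}|c_\zeta|\lesssim \e^{-\frac{a}{12}K}\|h\|_1$, which is \eqref{lemmadecayeq}.

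The only point requiring genuine care, and really the only obstacle, is the passage to the torus together with the absolute convergence that justifies interchanging sum and integral. One must check that the periodization indeed restricts to $\phi_J$ on $3u(K)J$ (the displayed inequality $\tfrac52>\tfrac32$), and that $\sum_{\zeta\in L^{-1}\Z}|c_\zeta|<\infty$: the finitely many ``near'' coefficients satisfy $|c_\zeta|\le L^{-1}\|\phi_J\|_1\lesssim|J|^{-\frac12}$ using $\|\phi_J\|_1\lesssim u(K)|J|\cdot|J|^{-\frac12}$ from \eqref{truncsupptreat} and \eqref{polyadapt}, while the ``far'' ones were just shown summable, so $\int|h(x)|\sum_\zeta|c_\zeta|\,\d x=\|h\|_1\sum_\zeta|c_\zeta|<\infty$ and Fubini applies. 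Everything else is bookkeeping: this lemma is simply the almost-exponential-decay analogue of the familiar fact that a band-limited bump pairs harmlessly with a function subject to many local mean-zero conditions.
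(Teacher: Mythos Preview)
Your proof is correct and follows essentially the same approach as the paper's: expand $|J|^{1/2}\phi_J$ as a Fourier series on the torus $3u(K)J$, use \eqref{meanzeros} to kill the coefficients in $\Xi(J)$, and bound the remaining ones via \eqref{fastfdtreat} summed as a Riemann sum. You are in fact more explicit than the paper on two points it leaves implicit --- the verification that $\zeta\notin 5\omega_J$ forces $|\zeta-\xi_J|\ge 2|J|^{-1}$ so that \eqref{fastfdtreat} applies, and the Fubini justification via absolute summability of the $c_\zeta$.
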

\begin{proof} We write $\mu=3 u(K) |J|$ for brevity.
In view of the support condition \eqref{truncsupptreat} and of the decay \eqref{fastfdtreat}, $|J|^{\frac12}\phi_J$  has  Fourier coefficients on the torus $3 u(K) J$ given by
$$
c_\zeta = \frac{1}{{  \mu}} \int_{ 3 u(K)  J } |J|^{\frac12}\phi_J(x) \e^{-2\pi i  \zeta   x}= \frac{|J|^{\frac12}}{ \mu} \widehat{\phi_J}(\zeta), \qquad \zeta \in   \mu^{-1}{\mathbb Z}. 
$$
From \eqref{fastfdtreat}, we learn that
$$
|c_\zeta| \lesssim  \e^{-\frac{a}{12} K} \frac{|J|}{\mu}     \big(|J||\zeta-\xi_J|\big)^{-N}, \quad \forall\, \zeta  \not \in \Xi(J).
$$
  Using assumption \eqref{meanzeros} for the second equality, and later the last display, 
\begin{align*}
\l h, |J|^{\frac12}\phi_J \r & = \int_\R h(x) \left( \sum_{\zeta \in  \mu^{-1} \mathbb{Z}  } \overline{c_\zeta} \e^{-2\pi i \zeta   x}\right)\, \d x = \sum_{\substack{\zeta \in  \mu^{-1}  \mathbb{Z}\\ |J||\zeta-\xi_J|>2}  }\overline{c_\zeta}\l h, \e^{2\pi i \zeta \cdot} \r \leq \|h\|_{1} 
\sum_{\substack{ \zeta \in  \mu^{-1}  \mathbb{Z}\\ |J||\zeta-\xi_J|>2} } |c_\zeta |
  \\ &  \lesssim     \e^{-\frac{a}{12} K}  \|h\|_{1}  \sum_{\substack{ \zeta \in  \mu^{-1}  \mathbb{Z}\\ |J||\zeta-\xi_J|>2} }  {\mu}^{-1} |J| \big(|J||\zeta-\xi_J|\big)^{-N} \lesssim	    \e^{-\frac{a}{12} K}  \|h\|_{1}   
\end{align*}
 as claimed. The final inequality is obtained by interpreting the last  summation over $\zeta$ as a Riemann sum. \end{proof}  
With the above lemmata in hand, we  devise an exponentially small estimate for the discrete square function involving the $\phi_J$'s associated to the top datum $(I,\xi)$, when $h \in L^1(\R)$ is supported on $H$ and has zero average against a  set of   $\lesssim Ku(K)$ frequencies near $\xi$, defined in \eqref{equationxiI}.
\begin{lemma} \label{lemmasphi}  
For a top datum $(I,\xi)$ and an interval $H \subset \R$    with $ I \not\subset 3H, \,H \cap 3 u(K) I \neq \emptyset$, define 
\begin{equation} 
\label{equationxiI} \Xi_H(I,\xi):= 
\{ \xi \}
\cup \bigcup\big\{ \Xi(J): J \in \D(I), J \not\subset 3H,\,H \subset 3u(K) J,  |J| \leq 2^{10K} |H|   \big\}.
\end{equation}
 Let  $h  \in L^{1}(\R) $ with $ \mathrm{supp}\, h \subset H$ and such that 
\begin{equation} \label{lemmasphiass2}  
\int_{\R} h(x) \e^{ -  2\pi i   \zeta    x} \, \d x = 0 \qquad \forall \zeta \in\Xi_H(I,\xi).  \end{equation} 
Let $\{\phi_J: J \in \D(I)\}$ be a collection of functions each satisfying \eqref{truncsupptreat} and \eqref{fastfdtreat}. Then
\begin{equation} \label{sqfphi}
\Bigg\|\Bigg(  \sum_{ {  J \not\subset 3H}} |\l h, \phi_J\r|^2 \frac{\cic{1}_{J}}{|J|}\Bigg)^{\frac12}\Bigg\|_1 \lesssim \e^{-\frac{a}{24} K}   \|h\|_1.
\end{equation}
\end{lemma}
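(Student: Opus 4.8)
\textbf{Proof plan for Lemma \ref{lemmasphi}.}
The plan is to split the sum over dyadic $J \in \D(I)$ with $J \not\subset 3H$ into three regimes according to the size of $J$ relative to $H$, and to treat each regime by a different mechanism; crucially, the total $L^1$ norm of the square function over each regime must come out exponentially small in $K$, with a geometric decay in the scale parameter so that the sum over dyadic scales converges.

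\emph{First regime: small scales, $|J| \leq 2^{-10K}|H|$ (or more precisely $J$ so small that $H \not\subset 3u(K)J$, or $3u(K)J$ misses the relevant part of $H$).} Here we cannot use the mean-zero condition of $h$ against $\Xi(J)$ because $H$ is not contained in $3u(K)J$, so $\Xi(J)$ is not among the frequencies in $\Xi_H(I,\xi)$. Instead we exploit that $\phi_J$ is spatially localized to $u(K)J$ by \eqref{truncsupptreat}, which for $J$ this small is far from $H$ relative to $|J|$, or at least has small overlap with $H$; combined with the polynomial adaptation \eqref{polyadapt} for $\phi_J$ (with $N$ large) and the fact that $h$ has mean zero against the single frequency $\xi \in \Xi_H(I,\xi)$, Lemma \ref{lemmaibp} applied with $\gamma_J = \phi_J$, $\zeta = \xi$ gives $|\l h, \phi_J\r| \lesssim \min\{|H|/|J|,1\}(1 + |c(H)-c(I)|/|I|)^{-N}\|h\|_1$. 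Summing $|\l h,\phi_J\r|^2 \cic{1}_J/|J|$ over $J$ in this range — these $J$ have bounded overlap at each scale and their union has measure $\lesssim |H|$ up to the geometric factors — and taking the $L^1$ norm, one sees the gain: the restriction that $3u(K)J$ nevertheless must reach $H$ (part of the hypothesis) forces $|c(H) - c(I)|/|I|$ or $u(K)$ to be large, producing the decay; alternatively one simply absorbs these terms by summing the rapidly convergent series in $|H|/|J| \to 0$.

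\emph{Second regime: intermediate scales, $|J| \leq 2^{10K}|H|$ with $H \subset 3u(K)J$ and $J \not\subset 3H$.} This is where Lemma \ref{lemmadecay} applies directly: for such $J$ the frequency set $\Xi(J)$ is a subset of $\Xi_H(I,\xi)$ by the very definition \eqref{equationxiI}, and $\mathrm{supp}\,h \subset H \subset 3u(K)J$, so the hypothesis \eqref{lemmasphiass2} gives that $h$ has mean zero against every $\zeta \in \Xi(J)$. Hence $|\l h, |J|^{\frac12}\phi_J\r| \lesssim \e^{-\frac{a}{12}K}\|h\|_1$, i.e. $|\l h, \phi_J\r| \lesssim \e^{-\frac{a}{12}K}|J|^{-\frac12}\|h\|_1$. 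Then $|\l h,\phi_J\r|^2 \cic{1}_J/|J| \lesssim \e^{-\frac{a}{6}K}\|h\|_1^2 |J|^{-2}\cic{1}_J$; summing over dyadic scales $|J| \geq |H|/3$ (forced by $H \subset 3u(K)J$ only after dividing by $u(K)$, but in any case $|J|$ is bounded below by a fixed multiple of $|H|$ once one also uses $J\not\subset 3H$ to control the number of $J$ at each scale, which is $O(u(K))$), one gets a geometric series summing to $\lesssim \e^{-\frac{a}{6}K}u(K)^{O(1)}\|h\|_1^2 |H|^{-1}$; taking the square root and integrating over $\bigcup J$ of measure $\lesssim u(K)|H|$ yields $\lesssim \e^{-\frac{a}{12}K}u(K)^{O(1)}\|h\|_1$, which is $\lesssim \e^{-\frac{a}{24}K}\|h\|_1$ after absorbing the polynomial $u(K)$ factors into the exponential (using that $u$ grows at most polynomially, \eqref{tricco2}, so $u(K)^{O(1)} \lesssim \e^{\frac{a}{24}K}$ for $K$ large, and the small-$K$ case is trivial).

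\emph{Third regime: large scales, $|J| > 2^{10K}|H|$.} Here $J$ is so large compared to $H$ that $H$ occupies a tiny portion of $J$; we return to Lemma \ref{lemmaibp} with $\zeta = \xi \in \Xi_H(I,\xi)$ (the single frequency always available), getting $|\l h, |J|^{\frac12}\phi_J\r| \lesssim (|H|/|J|)(1+|c(H)-c(I)|/|I|)^{-N}\|h\|_1$, the $\min$ being realized by $|H|/|J| \ll 2^{-10K}$. Squaring, multiplying by $\cic{1}_J/|J|$, summing the geometric series over $|J| > 2^{10K}|H|$, and integrating, the factor $(|H|/|J|)^2$ beats the measure $|J|$ and leaves a bound $\lesssim 2^{-10K}\|h\|_1$ times the $(1+\cdots)^{-N}$ decay and summable factors, comfortably $\lesssim \e^{-\frac{a}{24}K}\|h\|_1$.

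\emph{Main obstacle.} The delicate point is the bookkeeping in the first (small-scale) regime: there the mean-zero-against-$\Xi(J)$ mechanism is unavailable by design, so one must extract all the decay from a combination of the spatial separation encoded in $\mathrm{supp}\,\phi_J\subset u(K)J$, the geometric hypotheses $I\not\subset 3H$ and $H\cap 3u(K)I\neq\emptyset$ that pin down the relative position of $H$ and $I$, and the single mean-zero condition against $\xi$; one has to verify carefully that the union of the relevant small $J$'s (those with $J\not\subset 3H$ but $3u(K)J$ still interacting with $H$) has controlled measure and overlap, and that the resulting square-function $L^1$-norm genuinely carries a factor exponentially small in $K$ rather than merely polynomially small in $u(K)$. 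Everything else is a routine summation of geometric series in the dyadic scale, together with the standard trick of absorbing powers of $u(K)$ into $\e^{-\frac{a}{24}K}$ via the polynomial growth bound \eqref{tricco2} (handling small $K$ separately, where the estimate is trivial).
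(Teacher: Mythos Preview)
Your second and third regimes are exactly right and match the paper's argument. The real gap is that your \emph{first regime is empty}, so what you call ``the main obstacle'' does not exist. Here is why: since $\supp\,\phi_J\subset u(K)J$ by \eqref{truncsupptreat} and $\supp\,h\subset H$, the pairing $\langle h,\phi_J\rangle$ vanishes unless $u(K)J\cap H\neq\emptyset$. But $u(K)J\cap H\neq\emptyset$ together with $J\not\subset 3H$ forces $|J|\geq |H|/u(K)$ (otherwise $u(K)|J|<|H|$, and a short triangle-inequality computation shows $J\subset 3H$), and then automatically $H\subset 3u(K)J$. So every $J$ contributing to the sum already lies in your ``intermediate or large'' range; there is no small-scale case to handle. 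The paper records this as the identity
\[
\sum_{\substack{J\not\subset 3H\\ u(K)J\cap H\neq\emptyset}}\big|\langle h,|J|^{1/2}\phi_J\rangle\big|
=\sum_{\substack{J\not\subset 3H\\ H\subset 3u(K)J}}\big|\langle h,|J|^{1/2}\phi_J\rangle\big|,
\]
and the scale parameter $\ell$ with $|J|\sim 2^\ell|H|$ then satisfies $\ell\gtrsim -\log u(K)$.

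A second, smaller point: your bookkeeping in regime two is more involved than needed. Rather than squaring and summing $|\langle h,\phi_J\rangle|^2\cic{1}_J/|J|$, use the pointwise triangle inequality $(\sum a_J^2)^{1/2}\leq\sum|a_J|$ to bound the $L^1$-norm of the square function directly by $\sum_J|\langle h,|J|^{1/2}\phi_J\rangle|$. Then you only need to count: at each dyadic scale there are $\lesssim u(K)$ relevant $J$'s, there are $\lesssim K+\log u(K)$ scales in the intermediate range, and each term is $\lesssim \e^{-aK/12}\|h\|_1$ by Lemma~\ref{lemmadecay}; in the large range $\ell>10K$ each term is $\lesssim 2^{-\ell}\|h\|_1$ by Lemma~\ref{lemmaibp}. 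Summing and absorbing the polynomial-in-$u(K)$ prefactor via \eqref{tricco2} gives \eqref{sqfphi}.
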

\begin{remark} \label{choiceoffreq}
Before entering the proof, observe  that, if $H \cap 3 u(K) I= \emptyset$ or   $I \subset 3H$, then  the square function in \eqref{sqfphi} is identically zero.
Furthermore,   as a consequence of the definition and of the discussion in Remark \ref{remdyadfreq},  $\Xi(J)=\Xi(J')$ whenever $J,J'\in \D(I), |J|=|J'|=2^{-\ell} |I|$. Below, we refer to this common discrete interval, depending only on $\ell $ and $(I,\xi)$, by $\Xi  (I,\xi,\ell)$, and  we record the following observations.
\begin{itemize} \item[$\cdot$] If $H \cap 3 u(K) I \neq \emptyset, H\cap 9I=\emptyset$, then $\#\Xi_H(I,\xi) \lesssim u(K) \log u(K)$. Indeed, $3u(K) J \cap H \neq \emptyset$ only if $|I| \lesssim u(K) |J|$, thus $\Xi_H(I,\xi)$ is contained in the union over $0\leq \ell \lesssim \log u(K)$ of the  $\Xi(I,\xi,\ell)$,  each of which has $\lesssim u(K)$ elements.
\item[$\cdot$] Otherwise, within the assumptions of the Lemma,  it must be that $H \subset 9 I$. In this case, reasoning as above yields that $\Xi_H(I,\xi) \lesssim Ku(K)$.
\end{itemize}
\end{remark}
\begin{proof}[Proof of Lemma \ref{lemmasphi}]
By virtue of the support condition \eqref{truncsupptreat}, the left hand side of \eqref{sqfphi} is bounded by 
\begin{equation*} 
  \sum_{ \substack{J \not\subset 3H \\ u(K) J \cap H \neq \emptyset} } |\l h, |J|^{\frac12} \phi_J \r | = \sum_{ \substack{J \not\subset 3H \\  H\subset 3u(K) J } }  |\l h, |J|^{\frac12} \phi_J \r | = \sum_{\ell \gtrsim -\log u(K)}  \sum_{  J \in \cic{J}_\ell }|\l h, |J|^{\frac12} \phi_J \r | 
\end{equation*}
where $	\cic{J}_\ell := \{ J \not\subset 3H,    H\subset 3u(K) J, |J|\sim 2^{\ell}|H|\}$. Note that $\#\cic{J}_\ell \lesssim u(K) \log u(K) $. Now, for the $\lesssim K$ values $-\log u(K) \lesssim  \ell \leq 10K$, in view of  assumption \eqref{lemmasphiass2}, we apply Lemma \ref{lemmadecay} and estimate $|\l h, |J|^{\frac12} \phi_J \r | \lesssim \e^{-\frac{a}{12} K} \|h\|_1$. On the other hand, when $\ell \geq 10 K$, we use Lemma \ref{lemmaibp} and estimate $|\l h, |J|^{\frac12} \phi_J \r | \lesssim  |H| \|h\|_1/|J| \lesssim 2^{-\ell}\|h\|_1$. Summarizing, the last display is bounded by
$$
 u(K) \log u(K) \Big(K  \e^{-\frac{a}{12} K} + \sum_{\ell \geq 10 K} 2^{-\ell}\Big)\|h  \|_1 \lesssim \e^{-\frac {a}{24} K}\|h\|_1,$$
 where the last inequality follows from \eqref{tricco2}. The proof is complete.
\end{proof}

 \subsection{Proof of Proposition \ref{corMFCZabs}} \label{ssproofcormfcz} By linearity and dilation invariance of assumptions and conclusions, we can assume $\|f\|_p=1=\lambda$.   In the proof, we  write $K:= C k$, with $C=10^3 a^{-1}$.   
Let $ Q \in  \cic{Q} $ be the collection of   maximal dyadic intervals  such that
 $9Q\subset {E_\lambda}$. Then
 \begin{equation} \label{propoftheQs}
 \sup_{Q \in \cic{Q}} \|f\|_{L^p(Q)} \lesssim 1    \qquad \sup_{x \not \in \bigcup_{Q \in \cic Q} Q} f(x) \lesssim  1, \qquad \sum_{Q \in \cic Q} |Q| \leq |{E_\lambda}|  \lesssim  1
 \end{equation}
 and
the intervals $\{3Q: Q \in \cic{Q}\}$ have finite overlap. By virtue  of the second property in \eqref{propoftheQs}, there is no loss in generality with  assuming that $f$ is supported on $\cup_{Q \in \cic{ Q}} Q$. Also,  we  erase from $\mathcal T$ those $(I,\xi)$ with $I \subset 9Q$ for some $Q \in \cic{Q}$, since     \eqref{megaesti} is zero for such an $(I,\xi)$. \begin{proof}[Construction of $g$ and $b$] \let\qed\relax
For each $Q \in \cic{Q}$, referring to the notation in \eqref{equationxiI}    with the choice of $H=3Q$, define
\begin{equation} \label{thexiq}
\Xi_Q = \bigcup_{ 3Q \cap 3 u(K) I \neq \emptyset} \Xi_{3Q}(I,\xi).
\end{equation}
and denote \begin{align} \nonumber &
{\mathcal T}_{\mathrm{far}}(Q):= \big\{(I,\xi) \in {\mathcal T}:    3Q \cap 9I =\emptyset,  3Q \subset  3u(K) I \big\},\\ \nonumber
& {\mathcal T}_{\mathrm{near}}(Q):= \big\{(I,\xi) \in {\mathcal T}:    3Q \subset 9I \big\}.
\end{align}
In view of the discussion in Remark \ref{choiceoffreq}, and using assumption \eqref{flinftbd}, we have that \begin{align}
\label{MQ}  \# \Xi_Q &\lesssim u(K) \log( u(K)) \big( \#{\mathcal T}_{\mathrm{far}}(Q)\big) + (u(K))^2 \big(\#{\mathcal T}_{\mathrm{near}}(Q)\big) & \\ &\lesssim u(K)^2 \inf_{ 3Q}  \Big( \sum_{(I,\xi)} \cic{1}_{3u(K) I}\Big)\lesssim u(K)^2 2^{k}.   \nonumber
\end{align}
As in  \cite[Theorem 1.1]{NOT},  for each $Q \in \cic{Q}$
we define $g_Q$ to be the Riesz projection of $f_Q:=f\cic{1}_Q$ on the finite-dimensional subspace of $L^2(3Q)$ spanned by 
$
\big\{\exp(2\pi i \zeta x) : \zeta \in \Xi_Q\big\}
$
and set $b_Q:=f_Q-g_Q$, so that each $g_Q,b_Q$ is supported inside $3Q$ and 
\begin{equation}
\label{NOT2-fourieravs}  \int_{\R} b_Q(x) \e^{-2\pi i \zeta x}\, \d x = 0 \qquad \forall \zeta \in \Xi_Q.
\end{equation}
The elegant argument by Borwein and Erdelyi \cite{BE}, see \cite{NOT} for a proof, gives the estimate \begin{equation}
  \label{NOT2-fouriera}
\|g_Q \|_{L^2(3Q)}+ \|b_Q\|_{L^p(3Q)}    \lesssim    (\# \Xi_Q)^{\frac1p-\frac12} \|f_Q\|_{L^p(Q)}\lesssim   (\# \Xi_Q)^{\frac1p-\frac12}.
\end{equation} 
We finally set $g=\sum_{Q \in  \cic{Q}} g_Q,b=\sum_{Q \in  \cic{Q}} b_Q.$
\end{proof}
\begin{proof}[Consequences of the construction] \let\qed\relax
A preliminary observation is that \begin{equation} \label{lambda2k}\{\M_1 b \gtrsim  2^{k}\} \subset {E_\lambda} .
\end{equation}
 Indeed,   for each    interval $Z \not \subset {E_\lambda}$, 
$$
\|b\|_{L^1(Z)} \leq \|b\|_{L^p(Z)} \lesssim \Big(  \sum_{ \substack{Q \in \cic{Q} \\ 3Q \subset 3Z}} \textstyle\frac{|Q|}{|Z|} \|b_Q\|_{L^p(3Q)}^p\Big)^{\frac1p} \lesssim  \big(2^k u(K)^2 \big)^{\frac1p-\frac12} \lesssim   2^{k},
$$
whence the claim. We took into account that $b$ is supported on the union of $\{3Q \in \cic{Q}\}$, having finite overlap, and later \eqref{NOT2-fouriera} coupled with \eqref{MQ}.
 
The next estimate explains  the choice of the mean zero  frequencies $\Xi_Q$ for $b_Q$ which was done in \eqref{thexiq}. Indeed, we claim that whenever $\{\phi_J: J \in \D(I)\}$ is a collection of functions each satisfying \eqref{truncsupptreat} and \eqref{fastfdtreat},
\begin{equation} \label{BQ}
\Bigg\|\Bigg(  \sum_{ {  J \not\subset 9Q}} |\l b_Q, \phi_J\r|^2 \frac{\cic{1}_{J}}{|J|}\Bigg)^{\frac12}\Bigg\|_1 \lesssim \e^{-\frac{a}{24} K}   \|b_Q\|_1 \lesssim   2^{-10k } |Q|, 
\end{equation}
for all $Q \in \cic{Q}$. The last step simply follows from \eqref{NOT2-fouriera} and \eqref{MQ}. To obtain the first inequality, we have used that, if the left hand side of \eqref{BQ} is nontrivial, $b_Q$ has mean zero with respect to all the frequencies $\Xi_{3Q}(I,\xi) \subset \Xi_Q$ appearing in \eqref{lemmasphiass2}, and consequently appealed to Lemma \ref{lemmasphi}.
\end{proof}
\begin{proof}[Proof of \eqref{gl2again}] The function $$
F:=   u(K)\log (u(K))\sum_{(I,\xi)\in \mathcal T} \cic{1}_{3u(K)I} + u(K)^2\sum_{(I,\xi)\in \mathcal T} \cic{1}_{9I} 
$$
satisfies
$$
\|F\|_1 \lesssim u(K)^2\log (u(K))\Big\| \sum_{(I,\xi)\in \mathcal T} \cic{1}_{I} \Big\|_1, \qquad \Xi_Q \leq \inf_{x \in 3Q } F(x),
$$ 
the second inequality coming from the first line of \eqref{MQ}.
Making use of the finite overlap of the $3Q$'s and relying on \eqref{propoftheQs}, 
\begin{align*}&\quad
\|g\|_2^2  \lesssim   \sum_{Q \in \Q} \|g_Q\|_2^2 \lesssim      \sum_{Q \in \Q} |Q| (\#\Xi_Q \big)^{\frac2p-1 } \leq   \Big(\sum_{Q \in \Q} |Q| \Big)^{ 2- \frac2p} \|F\|_1^{ \frac2p-1 }\\  & \lesssim   \Big(u(K)^2\log (u(K))\Big\| \sum_{(I,\xi)\in \mathcal T} \cic{1}_{I} \Big\|_1\Big)^{   \frac2p-1},
\end{align*}
as claimed.  \end{proof}

\begin{proof}[Proof of \eqref{megaesti1}]
  Fix   $(I,\xi) \in \mathcal T$, a collection of  $u$-adapted wave packets  $\cic{\varphi}_{(I,\xi)}$ and  $\varphi_J \in \cic{\varphi}_{(I,\xi)}$ with $J \not \subset {E_\lambda}$.  To bound $|J|^{-\frac12} |\l b, \varphi_J\r|$, we rely on the decomposition  \eqref{eqsplitconcrete} of $\varphi_J$, and on properties \eqref{truncsupptreat}-\eqref{polyadapt}  of $\phi_J$, $\psi_J$.
It is easy to see that
\begin{equation}
\label{megaesti1pf1}
\e^{-\frac{a}{12} K}\frac{|\l b,\psi_J\r|}{|J|^{\frac 12}} \lesssim  \e^{-\frac{a}{12} K} \inf_{x \in J} \M_1 b(x) \lesssim   2^{-4k},
\end{equation}
where the last inequality follows from \eqref{lambda2k} and $J \not \subset {E_\lambda}. $
We then use \eqref{BQ} to estimate  
\begin{equation}
\label{megaesti1pf2}
\frac{|\l b_Q,\phi_J\r|}{|J|^{\frac 12}} \leq  \frac{|Q|}{|J|} \Bigg\|\Bigg(  \sum_{ {  J \not\subset 9Q}} |\l b_Q, \phi_J\r|^2 \frac{\cic{1}_{J}}{|J|}\Bigg)^{\frac12}\Bigg\|_1 \lesssim  \frac{|Q|}{|J|}  2^{-10k}.
\end{equation}
With this in hand, by virtue of the support condition \eqref{truncsupptreat}, 
$$
\frac{|\l b ,\phi_J\r|}{|J|^{\frac 12}} \leq \sum_{3Q \subset 3 u(K) J}  \frac{|\l b_Q,\phi_J\r|}{|J|^{\frac 12}} \lesssim    2^{-10k} \sum_{3Q \subset 3 u(K) J} \frac{|Q|}{|J|} \lesssim   2^{-4k},
$$
and \eqref{megaesti1} follows by combining the last display with \eqref{megaesti1pf1}, in view of \eqref{eqsplitconcrete}. \end{proof}
\begin{proof}[Proof of \eqref{megaesti}]
Let $\cic{\varphi}_{(I,\xi)}$ be a collection of $u$-adapted wave packets \emph{with mean zero} with respect to $(I,\xi) \in \mathcal T$.  Here, we rely on the   decomposition  \eqref{eqsplitconcrete} of $\varphi_J$, with $\phi_J$, $\psi_J$ satisfying, in addition to  \eqref{truncsupptreat}-\eqref{polyadapt}, property \eqref{meanzerointreat}, so that in particular $\widehat{\psi_J}(\xi)=0$ for all $J$. 
In view of this  decomposition, \eqref{megaesti} will follow from estimating, for all $J_0 \in \D(I), J_0 \not \subset {E_\lambda}$,
\begin{equation} \label{prfmgest}
\Bigg(\frac{1}{|J_0|}  \sum_{  J \not\subset {E_\lambda}, J \subset J_0 } |\l b, \phi_J\r|^2  \Bigg)^{\frac12} \lesssim   2^{-4k}, \qquad\e^{-\frac{a}{12}K} \Bigg(\frac{1}{|J_0|}  \sum_{  J \not\subset {E_\lambda}, J \subset J_0 } |\l b, \psi_J\r|^2  \Bigg)^{\frac12} \lesssim    2^{-4k},
\end{equation}

We first prove the second estimate, which is easier. We have
\begin{align*}
 \e^{-\frac{a}{12}K} \bigg(  \sum_{  J \not\subset {E_\lambda}, J \subset J_0 } |\l b, \psi_J\r|^2  \bigg)^{\frac12} \leq \e^{-\frac{a}{12}K}  \bigg(  \sum_{  J \not\subset \{\M_1 b \gtrsim \lambda 2^{k}\}, J \subset J_0 } |\l b, \psi_J\r|^2  \Bigg)^{\frac12}  \lesssim \e^{-\frac{a}{12}K}   2^{k} |J_0|^\frac12    2^{-4k} |J_0|^\frac12 ,
\end{align*}
 employing \eqref{lambda2k} for the first step and then applying Lemma \ref{thelemmapsi} for the second inequality, in view of \eqref{polyadapt} and of $\widehat{\psi_J}(\xi)=0$ for all $J$.

Once we establish the inequality
\begin{equation} \label{thefinaline}
\Bigg\| \Bigg(  \sum_{  J \not\subset {E_\lambda}, J \subset J_0 } |\l b, \phi_J\r|^2 \frac{\cic{1}_{J}}{|J|}\Bigg)^{\frac12} \Bigg\|_{L^1(J_0)}  \lesssim   2^{-4k}   \qquad \forall J_0 \in \D(I), J_0 \not \subset {E_\lambda},
\end{equation}
the first estimate in \eqref{prfmgest}, which is what is left to show in order  to be done with \eqref{megaesti}, can   be  reached via   the same John-Nirenberg type argument used in the proof of \cite[Proposition 2.4.1]{ThWp}. 
We prove \eqref{thefinaline} via the chain of inequalities
\begin{align*}
& \quad \Bigg\| \Bigg(  \sum_{  J \not\subset {E_\lambda}, J \subset J_0 } |\l b, \phi_J\r|^2 \frac{\cic{1}_{J}}{|J|}\Bigg)^{\frac12} \Bigg\|_{1}    = 
\Bigg\| \Bigg(  \sum_{  J \not\subset {E_\lambda}, J \subset J_0 } \Big|\Big\l  \sum_{Q: 3Q \subset 3u(K) J_0} b_Q, \phi_J\Big\r\Big|^2 \frac{\cic{1}_{J}}{|J|}\Bigg)^{\frac12} \Bigg\|_{1}
 \\ & \leq \sum_{Q: 3Q \subset 3u(K) J_0} \Bigg\| \Bigg(  \sum_{  \substack{ J \not\subset 9Q\\ J \subset J_0} } |\l b_Q, \phi_J\r|^2 \frac{\cic{1}_{J}}{|J|}\Bigg)^{\frac12} \Bigg\|_{1}
\lesssim   2^{-10k} \sum_{Q: 3Q \subset 3u(K) J_0} |Q| \lesssim   2^{-10k} u(K)|J_0| \lesssim 
  2^{-4k} |J_0|, \end{align*}
where \eqref{BQ} has been used, for each $b_Q$, for the third step. This concludes the proof of \eqref{megaesti}, and in turn of Proposition \ref{corMFCZabs}.
\end{proof}

  \section{The model sums for $\Lambda_{\vec{\beta}}$} \label{sect3} This  section is dedicated to the discretization of  the trilinear forms $\Lambda_{\vec \beta}$ into the model sums of  \eqref{modelsums} below.    As in Section \ref{secMFCZ}, to which we refer, our definitions depend on a choice of   function $u$ complying with the assumptions of   Section \ref{sect2}. We keep this dependence implicit in the notation. A concrete choice of $u$ will be made in Section \ref{sectpfthm}.

\subsection{Tiles and $u$-wave packets} \label{sstiles} We call \emph{tile} $t=I_t \times \omega_t$  the cartesian product  of two intervals $I_t,   \omega_t \subset \R $ with $|I_t||\omega_t|=1$.  
We  say that  a Schwartz  function $\varphi_t$ is a  $u$-wave packet   adapted to the tile $t$  if $\varphi_t$ is $u$-adapted to $(I_t,c(\omega_t))$ in the sense of  \eqref{adaptbandlim}   and  in addition   $\mathrm{supp} \, \widehat \varphi_t \subset \omega_t$. 

Referring to  Remark \ref{theexample},    a  $u$-wave packet adapted to the tile $t$ is given by  
\begin{equation} \label{thetileada}
\upsilon_{t}:= \upsilon_{(I_t,c(\omega_t)),\eps}=  \mathrm{Mod}_{\xi}\mathrm{Dil}_{\eps^{-1}|I|}^2 \mathrm{Tr}_{c(I)}
\upsilon.\end{equation}
The $u$-wave packets $\upsilon_t$ will be employed in the constructions of our model sums, with a suitable choice of $\eps$.  Again from Remark \ref{theexample}, we infer that the implicit constants for $\upsilon_t$ in \eqref{adaptbandlim} will depend only on our choice of $u$, and that the adaptation rate $a$ will be a  positive   absolute constant.

\subsection{Tritiles and  model sums} A \emph{tritile} $s$ is a triplet of tiles $s_j=I_{s_j} \times \omega_{s_j}$, $j=1,2,3$  with $I_{s_1}=I_{s_2}=I_{s_3}=:I_s$. A collection of tritiles $s \in \S$ is well-discretized (with constant $R>10$) if the following  properties are satisfied:
\begin{itemize}
\item[$\cdot$] the collections $\I_\S=\{I_s:s \in \S\}$, and  $\Omega_{j,\S}:=\{10 \omega_{s_j}: s \in \S \}$ for $j \in \{1,2,3\}$ form  grids;
\item[$\cdot$] if  $s,s' \in \S$ with $|I_s| < |I_{s'}|$, then $|I_s| \leq R^{-10}|I_{s'}|$ (separation of scales);
\item[$\cdot$] if $s\neq s' \in \S$ are such that  $I_s=I_{s'}$ then  $\omega_{s_j}\cap\omega_{s'_j}=\emptyset$ for each  $j \in\{1,2,3\}$;
\item[$\cdot$] if $s,s' \in \S$  with $|I_{s'}| \leq |I_{s}| $ and $2\omega_{s_j} \cap 2 \omega_{s'_j} \neq \emptyset$ for some $j \in\{1,2,3\}$, there holds \begin{equation} \label{welldisc}
   10 \omega_{s_k} \cap 10 \omega_{s'_k}=\emptyset\; \forall k \neq j, \qquad \;  R \omega_{s_k} \subset R{\omega_{s'_k}}, \; \forall k \in\{1,2,3\}.
\end{equation}
\end{itemize}
For a tritile $s$ and $j=1,2,3$,  we denote by $\upsilon_{s_j}$ the  $s_j$-adapted function   obtained from \eqref{thetileada} with $t=s_j$. 
  Consider the model sums  
\begin{equation} \label{modelsums}
\tr_\S(f_1,f_2,f_3):=
  \sum_{s \in \S} \eps_{s}|I_s|^{-\frac12} \prod_{j=1}^3\l f_j, \upsilon_{s_j} \r, \qquad  |\eps_s| \leq   1,
\end{equation}
where   $\S$ is  an arbitrary finite collection of well-discretized tritiles. 
In the next subsection, we show how   estimates for the trilinear form $\Lambda_{\vec\beta}$ are obtained from the corresponding, uniform over all finite collections of    well-discretized tritiles  $\S$,   bounds for the model sums \eqref{modelsums}. Note that we are allowing for bounded coefficients $\eps_s$, so that the finiteness assumption on  $\S$ can be removed by a standard limiting argument.\label{sswd}

\subsection{Discretization of the trilinear forms $\Lambda_{\vec \beta}$} 
To each triple $(\tau, x,\xi) \in (0,\infty)\times \R^2$, we associate the tile $$t(\tau,x,\xi):=\textstyle \big( x-\frac\tau 2,x+\frac\tau 2\big) \times \big( \xi-\frac{1}{2\tau},\xi+\frac{1}{2\tau}\big).$$
For  a unit vector $\vec \beta  \in \R^3$     with $\vec\beta \cdot (1,1,1)=0$, let $\vec \gamma  \in \R^3$ be the unique unit vector  such that $\vec \gamma,\vec \beta, (1,1,1) $ form a positively oriented orthogonal  basis of $\R^3$.  
Choosing $\eps=2^{-16}$,  writing $\lambda=1+\eps$ for brevity and recalling that $\widehat{\upsilon} (0)$ is positive, 
we quote from \cite[Section 6]{DT} the equality
\begin{align} \label{modelsumseq1}
& \quad \;  \Lambda_{\beta}(f_1,f_2,f_3)   \\ & =
c_1 \int_{\R^3} \lambda^{-\frac\sigma2 }\Big( \prod_{j=1}^3 \big\l f_j, \upsilon_{t(\lambda^\sigma, x,\gamma_j \xi + \beta_j \lambda^{-\sigma})}   \big\r\Big) \, \d \sigma \d x \d \xi +c_2 \int_{\R} \Big(\prod_{j=1}^3 f_j(y)\Big)\,\d y   \nonumber
\end{align}
holding for any three Schwartz functions $f_j$, where $c_1,c_2$ are nonzero constants depending on $\upsilon,\eps$ only. The second summand on the right hand side of \eqref{modelsumseq1} is bounded by H\"older's inequality, so that  estimates on $\Lambda_{\beta}$ can be deduced from   corresponding bounds  on the triple integral. For  triples  $ \cic{m}=(m_\sigma,m_x,m_\xi) \in \mathbb Z^3, \cic{\vartheta}:=(\vartheta_\sigma,\vartheta_x,\vartheta_\xi) \in [0,1)^3$, we define  the tritile $s^{(\cic{m},\cic{\vartheta})}$ by
$$
 s^{(\cic{m},\cic{\vartheta})}_j= \Big(\lambda^{m_\sigma+\vartheta_\sigma}(m_x+\vartheta_x)  \pm \frac{\lambda^{m_\sigma+\vartheta_\sigma}}{2}\Big) \times \Big(\frac{\gamma_j (m_\xi+\vartheta_\xi) +\beta_j} {\lambda^{m_\sigma+\vartheta_\sigma}}  \pm \frac{\lambda^{-(m_\sigma+\vartheta_\sigma)}}{2}\Big), \;\,j=1,2,3.
$$ 
By suitably splitting the integration regions, the integral over $\R^3$ in \eqref{modelsumseq1} is equal to 
\begin{equation}  \label{modelsumseq2}
 \int_{[0,1)^3}  \Bigg(  \sum_{\cic{m } \in \mathbb Z^3} |I_{s^{(\cic{m},\cic{\vartheta})}}|^{-\frac12} \prod_{j=1}^3 \l f_j, \upsilon_{s_j^{(\cic{m},\cic{\vartheta})}} \r \Bigg) \, \d \cic{\vartheta}
\end{equation}
Arguing exactly as in \cite[pp.\ 50-51]{ThWp},  for each fixed $\cic{\vartheta}$,  $\{s^{(\cic{m},\cic{\vartheta})}: \cic{m} \in \mathbb Z^3\} $ can be decomposed into a finite union of well-discretized collections of tritiles, provided that the constant $R>10$ appearing in the definition is chosen large enough, depending on  the distance $\Delta_{\vec \beta}$ from the degenerate case. Therefore, $L^p$ bounds for the triple integral in \eqref{modelsumseq2}, and in turn, for $\Lambda_{\vec\beta}$ follow by averaging the corresponding inequalities for the model sums \eqref{modelsums}.
\begin{remark} \label{modelsumrestr}
By the same token, estimates of the type
$$
\Lambda_{\vec \beta} (f_1,f_2,f_3) \leq Q(\|f_1\|_{\frac{1}{\alpha_1}}, \|f_2\|_{\frac{1}{\alpha_2}}, |F_3|^{\alpha_3})
$$
where $Q$ is a certain positive function of its arguments, holding for possibly restricted $f_1,f_2$, and for all sets $F_3 \subset \R$,  with $f_3 $ restricted to a suitable major subset $F_3' $ of $F_3$, are obtained by averaging the bound  of the same type for \eqref{modelsums}, \emph{provided that $F_3'$ can be chosen independently of the model sum}. This will be the case in the proofs of our theorems.
\end{remark}  

\begin{remark}[Scale invariance of the model sums] \label{sinvariance} Let $\vec \alpha $ be a H\"older triplet. For each $\mu>0$, we have the equality
$$
\tr_{\S}(f_1,f_2,f_3) = \sum_{s \in \S}\eps_s |I_{s^\mu}|^{-\frac12}    \prod_{j=1}^3\l \mathrm{Dil}_{\mu}^{\frac{1}{\alpha_j}}f_j, \upsilon_{(s^\mu)_j}\r= \tr_{\S^\mu}(\mathrm{Dil}_{\mu}^{\frac{1}{\alpha_1}}f_1,\mathrm{Dil}_{\mu}^{\frac{1}{\alpha_2}}f_2,\mathrm{Dil}_{\mu}^{\frac{1}{\alpha_3}}f_3)
$$
where $\S^\mu:=\{s^\mu:s\in \S \}$ and each  tritile $s^\mu$ is given by $I_{s^\mu}= T_\mu I_s$,   $\omega_{(s^\mu)_j}= (T_\mu)^{-1} \omega_{s_j}$. Here,  $T_\mu$ is the linear transformation $x \to \mu x$; note that, in general, $\mu I_s$ and $T_\mu I_s$ are not the same. When $\mu$ is a power of $R$,  the collection $\S^\mu$ is again a well-discretized collection of tritiles according to the definition  of Subsection \ref{sswd}, so that the family of trilinear forms \eqref{modelsums} is invariant under dyadic H\"older-type scaling.   \end{remark}

\section{Trees, size, and single tree estimates} \label{sect4}
We summarize the main definitions and technical tools needed to manufacture bounds for the model sums \eqref{modelsums} in the framework of \cite{LT1}.  Our treatment deviates from the classical one in that we work with model sums involving uniformly  $u$-adapted wave packets $\{\varphi_{s_j}: j=1,2,3\}$, indexed over a generic well-discretized collection of tritiles $s \in \cic{S}$.     
  
\subsection{Trees and size} \label{sstrees}
In our context, a \emph{tree of tiles}
 $\cic{t}$ with \emph{top datum} $(I_{\cic{t}}, \xi_{\cic{t}})$ 
 is a finite collection of  tiles   such that $I_t \subset I_{\cic{t}} $, and   $\xi_{\cic{t}} \in R \omega_t$   for each $t \in \cic{t}$. For our scopes, the technical requirement that $\{I_t: t \in \cic{t}\}$ is a grid will be always satisfied.
   If $ \xi_{\cic{t}} \in R\omega_{t} \backslash 2\omega_{t}$ for all $t \in {\cic{t}}$, the tree $\cic{t}$ is called \emph{lacunary}. 
We associate to each lacunary tree $\cic{t}$ and each $f \in L^1 (\R)+L^\infty(\R)$   the  quantity $$
 \size  (f; \cic{t}):= \sup_{\{\varphi_t: t \in \cic{t}\}}\displaystyle\Big(  \frac{1}{|I_{\cic{t}}|} \sum_{t \in  {\cic{t}}}|\l f, \varphi_{t} \r|^2 \Big)^{\frac12},$$
 the supremum being taken over all collections $\{\varphi_t: t \in \cic{t}\}$ of uniformly adapted $u$-wave packets. 
Arguing along the lines of Remark \ref{remdyadfreq},   each such  collection can be written as a union of at most $\lesssim R $ collections of $u$-adapted wave packets with top datum $(I,\xi)$, as defined in Section \ref{secMFCZ}. We can thus reformulate the conclusion of  Lemma \ref{thelemmapsi} into the estimate
\begin{equation} \label{infmax}   \size (f; \cic{t})\lesssim  	\sup_{t \in {\cic{t}}}\inf_{x \in I_t} \M_1 f(x). 
\end{equation} 

We give  related definitions for tritiles. A \emph{tree}  of tritiles   $\T$ \emph{of type} $j \in\{1,2,3\}$  \emph{with top datum} $(I_\T,\xi_\T)$ (in short, $j$-tree) is a  finite  well-discretized collection of tritiles such that  $I_s \subset I_\T$,  $\xi_\T \in 2\omega_{s_j}$ for each  $s \in \T$.
A   consequence of \eqref{welldisc} is that 
if   $\T$ is a $j$-tree, for $k\neq j$ the intervals $\{10 \omega_{s_k}: s \in \T\}$ are pairwise disjoint while   $\{R \omega_{s_k}: s \in \T\}$ are nested. It follows that there exists a frequency $\xi_{\T,k}$ such that $\xi_{\T,k} \in R \omega_{s_k} \backslash 2\omega_{s_k}$ for all $s \in \T$; in other words, for $k \neq j$ the collection $\T^{(k)}:=\{s_k: s \in \T\}$ is a lacunary tree of tiles with top datum $(I_\T, \xi_{\T,k}).$ 

For each finite, well-discretized collection of tritiles $\S$, each $f \in L^1 (\R)+L^\infty(\R)$, and each $j=1,2,3$  we define \begin{equation} \label{sizedef}
\size_j(f;\S) :=    \sup_{\substack{\T \subset \S \textrm{ is a tree} \\ \T^{(j)} \textrm{ is a lacunary tree}  }  } \size (f; \T^{(j)}),
\end{equation}
inheriting from  \eqref{infmax}        the bound
\begin{equation}\label{sizeinfmax}
\size_j(f; \S ) \lesssim \sup_{s \in \S} \inf_{x \in I_s} \M_1 f(x).
\end{equation} 
The quantities $\size_j$ enter    the following  \emph{single tree estimate} for model sums of the type \eqref{modelsums}.
\begin{lemma} \label{singletreeest}  Let $\T  $   be a tree of tritiles.   Then
$$  \sum_{s \in \T} |I_s|^{-\frac12} \prod_{k=1}^3|\l f_k, \varphi_{s_k}\r| \lesssim |I_\T| \prod_{k=1}^ 3 \size_k(f; \T ). $$
\end{lemma}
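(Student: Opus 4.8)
The statement is the standard single tree estimate adapted to our $u$-wave packets, so I would follow the classical template (as in \cite{LT1,ThWp}) while keeping track of which of the three tiles in each tritile behaves lacunarily. The plan is to first reduce to the case where $\T$ is a $j$-tree for a fixed $j\in\{1,2,3\}$: since any tree of tritiles can be split into boundedly many subtrees, each of which is a $j$-tree for some $j$, and each summand is nonnegative, it suffices to prove the bound for a single $j$-tree. Fix, say, $j=3$; then for $k\in\{1,2\}$, the collections $\T^{(k)}=\{s_k:s\in\T\}$ are lacunary trees of tiles with top data $(I_\T,\xi_{\T,k})$, by the consequence of \eqref{welldisc} recorded just before the lemma. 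The third family $\T^{(3)}=\{s_3:s\in\T\}$ is the ``non-lacunary'' (overlapping) direction, and there $\xi_\T\in 2\omega_{s_3}$, so the wave packets $\upsilon_{s_3}$ are essentially modulations of a fixed bump at the common frequency $\xi_\T$.

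The heart of the argument is then a Bessel/tree-sum estimate in the two lacunary directions combined with a pointwise/maximal-function bound in the non-lacunary direction. Concretely, I would write
$$
\sum_{s\in\T}|I_s|^{-\frac12}\prod_{k=1}^3|\l f_k,\upsilon_{s_k}\r|
\le \Big(\sup_{s\in\T}\frac{|\l f_3,\upsilon_{s_3}\r|}{|I_s|^{\frac12}}\Big)\cdot
\Big(\sum_{s\in\T}|\l f_1,\upsilon_{s_1}\r|^2\Big)^{\frac12}\Big(\sum_{s\in\T}|\l f_2,\upsilon_{s_2}\r|^2\Big)^{\frac12}
$$
by Cauchy--Schwarz (splitting $|I_s|^{-\frac12}=|I_s|^{-\frac14}|I_s|^{-\frac14}$ and distributing). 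The first factor is bounded by $\sup_{s\in\T}\inf_{x\in I_s}\M_1 f_3(x)$, hence by $\size_3(f;\T)$, because $\upsilon_{s_3}$ has $L^1$-normalized mass $\lesssim|I_s|^{\frac12}$ times an $L^\infty$-dilate of a rapidly decaying bump, so $|\l f_3,\upsilon_{s_3}\r|\lesssim|I_s|^{\frac12}\inf_{x\in I_s}\M_1 f_3(x)$ (this is exactly the mechanism behind \eqref{infmax}/\eqref{sizeinfmax}). For the two square-function factors, I would invoke the definition of $\size$ for the lacunary trees $\T^{(k)}$: by \eqref{sizedef}, $\big(\frac{1}{|I_\T|}\sum_{s\in\T}|\l f_k,\upsilon_{s_k}\r|^2\big)^{1/2}\le\size_k(f;\T)$ for $k=1,2$, giving each square-function factor $\le|I_\T|^{1/2}\size_k(f;\T)$. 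Multiplying the three contributions yields $|I_\T|\prod_{k=1}^3\size_k(f;\T)$, as desired.

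There is one subtlety that I expect to be the main obstacle: the naive Cauchy--Schwarz split above is not quite enough, because in the non-lacunary direction the intervals $I_s$ are nested (not disjoint) but the frequency intervals $\omega_{s_3}$ overlap, so pulling the sup out of the $f_3$-factor loses the orthogonality one needs to sum the other two factors cleanly. The standard fix is to organize the tritiles of $\T$ by scale: for each dyadic value of $|I_s|$ the tritiles with that scale have disjoint spatial intervals (by the grid property of $\I_\T$ together with the separation of scales $|I_s|\le R^{-10}|I_{s'}|$ when scales differ), so within a scale one genuinely has an $\ell^2$ orthogonality for the $f_1$- and $f_2$-packets. One then sums over scales; the geometric decay coming from the lacunary structure (the $R\omega_{s_1},R\omega_{s_2}$ being nested as one moves up in scale, so the frequency supports are essentially disjoint across scales) makes the scale sum converge. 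In practice this is handled exactly as in \cite[Chapter 6]{ThWp} or \cite{LT1}; the only thing to check is that the $u$-adapted wave packets $\upsilon_{s_k}$, which decay like $\exp(-aU(\cdot))$ rather than merely polynomially, satisfy all the inequalities used there --- and they do, since \eqref{adaptbandlim} is by construction \emph{stronger} than the polynomial adaptation \eqref{polyadapt} used in the classical proof, and property \eqref{tricco2} of $u$ guarantees all the relevant sums over scales and over translates still converge with constants depending only on $u$ and $R$. I would therefore present the proof by reducing to a $j$-tree, invoking the Cauchy--Schwarz split with the scale-by-scale orthogonality, and then citing the classical single tree estimate machinery, noting that the faster decay of $u$-wave packets only improves the estimates.
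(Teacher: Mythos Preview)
Your overall strategy---reduce to a $j$-tree and apply the $\ell^\infty\times\ell^2\times\ell^2$ H\"older inequality---is exactly the paper's approach, and your bound on each $\ell^2$ factor by $|I_\T|^{1/2}\size_k(f;\T)$ via the definition of size for the lacunary trees $\T^{(k)}$, $k\neq j$, is correct as stated.

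However, your justification for the $\ell^\infty$ factor contains a logical slip. You bound $\sup_{s\in\T}|I_s|^{-1/2}|\l f_3,\upsilon_{s_3}\r|$ by $\sup_{s\in\T}\inf_{x\in I_s}\M_1 f_3(x)$ and then claim this is in turn bounded by $\size_3(f;\T)$, invoking \eqref{infmax}/\eqref{sizeinfmax}. But those inequalities go the \emph{other way}: they say $\size_j\lesssim\sup_s\inf_{I_s}\M_1 f$, not the reverse. The correct (and simpler) argument, which the paper uses, is that each singleton $\{s\}\subset\T$ is itself a tree whose $j$-component $\{s_j\}$ can be viewed as a lacunary tree of tiles (choose any top frequency in $R\omega_{s_j}\setminus 2\omega_{s_j}$). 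Then $|I_s|^{-1/2}|\l f_j,\varphi_{s_j}\r|\le\size_j(f;\T)$ follows directly from the definition \eqref{sizedef}.

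Second, the ``subtlety'' you anticipate is a phantom. Once the sup is pulled out, the two remaining $\ell^2$ sums are over the full tree $\T$, and since $\T^{(1)},\T^{(2)}$ are lacunary trees with top interval $I_\T$, the definition of size bounds each square root by $|I_\T|^{1/2}\size_k(f;\T)$ in one stroke---no scale-by-scale decomposition, no additional orthogonality argument, no appeal to the classical single tree machinery is required. The paper's proof is literally three lines after the H\"older step. Your worry about ``losing the orthogonality one needs to sum the other two factors'' is misplaced: that orthogonality is already packaged inside the definition of $\size_k$ (this is precisely what the Carleson-measure normalization $|I_\T|^{-1}$ in the definition of $\size$ accomplishes).
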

\begin{proof} If $\T$ is a $j$-tree, the $\ell^\infty\times \ell^2\times \ell^2$ H\"older inequality yields
$$
\sum_{s \in \T} |I_s|^{-\frac12} \prod_{k=1}^3|\l f_k, \varphi_{s_k}\r| \leq \Big(\sup_{s \in \T} \frac{| \l f_j, \varphi_{s_j}\r|}{|I_s|^{\frac12}}\Big)\prod_{k \neq j} \Big(\sum_{s \in \T} | \l f_k, \varphi_{s_k}\r|^2 \Big)^{\frac12}.$$
Viewing $\{s_j\}$ as a lacunary tree, the first factor is   $\lesssim\size_j(f; \T )$.  The second and third factor are each  $\lesssim  \size_k(f; \T )|I_\T|^{\frac12}$, since $\T^{(k)}$ is a lacunary tree for $k\neq j$. This concludes the proof of the lemma. 
\end{proof}

\subsection{Size lemma and a forest estimate}
The following lemma, known as \emph{size lemma}, is used to iteratively decompose a collection $\S$ into subcollections which are unions of trees of definite size (also known as \emph{forests}). There is   abundance of analogous results in the literature: for the proof, we   refer to \cite[Lemma 5.3]{ThWp} and  \cite[Lemma 7.7]{MTT}.
\begin{lemma}
\label{sizelemma}  Let  $\S$ be a finite, well-discretized collection of tritiles and  $f \in L^2 (\R)$ such that
$
\size_j(f ; {\S} 	)\leq \sigma.
$
Then $\S=\S_\mathrm{lo} \cup \S_\mathrm{hi}, $ where
\begin{align} &
\size_j(f;{\S_\mathrm{lo}} 	)\leq \textstyle\frac\sigma2,  \\ 
&  \S_\mathrm{hi} \textrm{\emph{ is a disjoint union of trees }} \T \in \F   \textrm{\emph{ with }}
  \displaystyle\Big(  
\sum_{\T \in \F}|I_\T| \Big)^{\frac 12} \lesssim \frac{  \|f\|_2}{\sigma}.
\end{align}
If in addition $f \in L^\infty(\R)$, for each interval $I \subset \R$ we have
\begin{equation}
\label{BMOest}\frac{1}{|I|} \sum_{\T \in \F: I_\T \subset I} |I_\T| \lesssim   \frac{  \|f\|_\infty^2}{\sigma^2}  .
\end{equation} 
\end{lemma}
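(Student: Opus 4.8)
The statement to prove is the size lemma, Lemma~\ref{sizelemma}. Here is my plan.

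\textbf{Setup and the selection algorithm.} The plan is to run a greedy stopping-time selection over trees of large size. Start with $\S' = \S$ and an empty family $\F$. At each stage, if there is a subtree $\T \subset \S'$ whose associated lacunary tree $\T^{(j)}$ has $\size(f;\T^{(j)}) > \sigma/2$, then among all such candidates select one whose top datum $(I_\T,\xi_\T)$ is \emph{extremal}, in the sense that, say, $\xi_\T$ is as far to one side as possible (the standard ``furthest to the right'' versus ``furthest to the left'' dichotomy used to split into $1$-trees and $2$-trees — here with an extra index since we have $j$-trees for $j=1,2,3$; one runs the argument separately for each value of $j$ and for each choice of orientation). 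Having selected $\T$, replace it by its \emph{maximal} tree in $\S'$ with the same top datum, remove all its tritiles from $\S'$, add $\T$ to $\F$, and iterate. Since $\S$ is finite this terminates, and we set $\S_\mathrm{hi} = \bigcup_{\T\in\F}\T$, $\S_\mathrm{lo} = \S\setminus\S_\mathrm{hi} = \S'$ at termination. By construction $\size_j(f;\S_\mathrm{lo}) \leq \sigma/2$, which is the first conclusion, and $\S_\mathrm{hi}$ is a disjoint union of the trees $\T\in\F$.

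\textbf{The $L^2$ counting bound via orthogonality.} The heart of the matter is $\big(\sum_{\T\in\F}|I_\T|\big)^{1/2} \lesssim \|f\|_2/\sigma$. For each selected $\T$ with top datum $(I_\T,\xi_\T)$, by definition of $\size$ exceeding $\sigma/2$ there is a collection of uniformly $u$-adapted wave packets $\{\varphi_t : t\in\T^{(j)}\}$ with
\[
\sum_{t\in\T^{(j)}} |\l f,\varphi_t\r|^2 \geq \tfrac{\sigma^2}{4}|I_\T|.
\]
The extremality of the selection guarantees a \emph{disjointness} property: the tiles picked up across distinct selected trees are pairwise disjoint as subsets of the time-frequency plane (this is precisely where ``furthest to the right'' is used — a tile selected at a later stage cannot have its frequency interval overlap the top frequency of an earlier tree of comparable or larger scale, and the spatial nesting forces genuine disjointness). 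Consequently the wave packets $\varphi_t$ ranging over all $t$ in all selected $\T^{(j)}$ form an almost-orthogonal family (their frequency supports $\omega_t$ are contained in disjoint dyadic intervals after the grid reductions, and the spatial localizations are controlled by the $u$-adaptation). A standard Bessel / almost-orthogonality argument — the same TT$^*$ estimate as in \cite[Section 6]{MTT} or \cite[Prop. 2.12]{ThWp}, now using the almost-exponential decay \eqref{adaptbandlim} in place of Schwartz decay, which only makes the off-diagonal terms more summable — yields
\[
\sum_{\T\in\F}\sum_{t\in\T^{(j)}}|\l f,\varphi_t\r|^2 \lesssim \|f\|_2^2.
\]
Combining the two displays gives $\frac{\sigma^2}{4}\sum_{\T\in\F}|I_\T| \lesssim \|f\|_2^2$, which is the claim. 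Summing the contributions over the finitely many values of $j$ and the two orientations costs only an absolute constant.

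\textbf{The BMO-type bound.} For \eqref{BMOest}, fix an interval $I$ and consider only those selected trees with $I_\T\subset I$. The spatial intervals $I_\T$ of trees selected at the same ``frequency scale'' are disjoint (again by maximality and the extremal selection), and grouping by scale one sees that $\sum_{\T\in\F:\,I_\T\subset I}|I_\T|$ counts, with bounded multiplicity, a covering of $I$; quantitatively, restricting the already-proved $L^2$ bound to the localized collection and using $|\l f,\varphi_t\r| \lesssim \|f\|_\infty |I_t|^{1/2}$ pointwise (valid because $\varphi_t$ is $L^2$-normalized and $u$-adapted, hence $\|\varphi_t\|_1\lesssim |I_t|^{1/2}$), one gets $\frac{\sigma^2}{4}\sum_{\T\in\F:\,I_\T\subset I}|I_\T| \lesssim \|f\|_\infty^2 |I|$, which is \eqref{BMOest}. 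Alternatively this follows directly from \eqref{sizeinfmax} and the John–Nirenberg structure of the selected tops.

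\textbf{Main obstacle.} The one genuinely delicate point is establishing the disjointness/almost-orthogonality of the wave packets across selected trees with the correct bookkeeping of the three tree-types and the grid reductions of Remark~\ref{remdyadfreq}; this is the standard but fiddly ``$TT^*$ plus tree selection'' argument of time-frequency analysis, and the only novelty here is checking that the almost-exponential decay of the $u$-adapted wave packets is at least as good as Schwartz decay for summing the off-diagonal kernel — which it is, trivially, since \eqref{tricco3} gives $U(x)\leq|x|$ is the wrong direction but \eqref{adaptbandlim} combined with \eqref{tricco2} still dominates any polynomial. Everything else is routine.
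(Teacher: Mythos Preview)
The paper does not supply a proof of this lemma; it simply refers to \cite[Lemma 5.3]{ThWp} and \cite[Lemma 7.7]{MTT}. Your outline is exactly the standard selection-plus-Bessel argument from those references, so in that sense your approach matches the paper's (deferred) one. The $L^2$ counting estimate is correctly sketched; one should only be precise that what the extremal selection buys is \emph{strong disjointness} of the selected lacunary trees (if $t\in\T^{(j)}$ and $t'\in(\T')^{(j)}$ lie in distinct selected trees with $\omega_{t'}\subsetneq\omega_t$, then $I_t\cap I_{\T'}=\emptyset$, not merely $I_t\cap I_{t'}=\emptyset$), since this stronger separation is what the $TT^*$ Bessel estimate actually consumes.

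Your first argument for \eqref{BMOest} has a genuine gap. The pointwise bound $|\l f,\varphi_t\r|\lesssim\|f\|_\infty|I_t|^{1/2}$ is correct but cannot be summed to yield \eqref{BMOest}: you would need $\sum_{t\in\T^{(j)}}|I_t|\lesssim|I_\T|$, which is false in general (the left-hand side is of order $|I_\T|$ times the number of scales present in $\T$). The standard route is to localize the function rather than bound the coefficients trivially: the subfamily $\{\T\in\F:I_\T\subset I\}$ is still strongly disjoint, so the Bessel inequality applied with $f\cic{1}_{CI}$ in place of $f$ gives $\sum_{I_\T\subset I}\sum_{t}|\l f\cic{1}_{CI},\varphi_t\r|^2\lesssim\|f\cic{1}_{CI}\|_2^2\lesssim\|f\|_\infty^2|I|$; one then checks separately, via the spatial decay of the $\varphi_t$ (all of whose $I_t$ sit inside $I$), that the tail $f\cic{1}_{\R\setminus CI}$ contributes at most $C\|f\|_\infty^2|I|$ to the same double sum. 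Combining the two pieces with the lower bound $\sum_t|\l f,\varphi_t\r|^2>(\sigma/2)^2|I_\T|$ for each selected tree gives $(\sigma^2/4)\sum_{I_\T\subset I}|I_\T|\lesssim\|f\|_\infty^2|I|$, which is \eqref{BMOest}.
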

It is convenient to combine Lemmata \ref{singletreeest} and \ref{sizelemma} into an estimate for model sums restricted to a union of trees satisfying a certain relation between size and counting functions. This result is analogous to \cite[Lemma 4.4]{DD2}, but we include the proof for convenience.  \begin{lemma} \label{orglemma} Let $f_3 \in L^2(\R)$ be given and    $\S$ be a finite, well-discretized collection of tritiles. Assume that  $\S$ can be written as a disjoint union of     trees $\T \in \F$    satisfying, for some $A>0$,  
\begin{equation} \label{sumtopspop}
\Big(\sum_{\T \in \F} |I_\T|\Big)^{\frac12} \leq\frac{A}{\size_3(f_3;\S)}.
\end{equation}
Then, for all $f_1 \in L^2(\R), f_2 \in L^{1} (\R) + L^{\infty} (\R)  $,
\begin{equation}
\label{forestest}
 \sum_{s \in \S} |I_s|^{-\frac12} \prod_{j=1}^3|\l f_j, \varphi_{s_j}\r| \lesssim A   \|f_{ 1}\|_2
 \size_{2} (f_{2}; {\S} ).
\end{equation}
\end{lemma}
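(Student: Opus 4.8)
The plan is to combine the single tree estimate of Lemma~\ref{singletreeest} with the summability of tops provided by hypothesis \eqref{sumtopspop}, using the Cauchy--Schwarz inequality to convert the sum over trees into the $L^2$ norm of $f_1$. First I would apply Lemma~\ref{singletreeest} to each individual tree $\T \in \F$, which yields
$$
\sum_{s \in \T} |I_s|^{-\frac12} \prod_{j=1}^3|\l f_j, \varphi_{s_j}\r| \lesssim |I_\T| \prod_{k=1}^3 \size_k(f_k; \T) \leq |I_\T|^{\frac12} \Big( |I_\T|^{\frac12}\size_1(f_1;\T)\Big) \size_2(f_2;\S)\size_3(f_3;\S),
$$
where in the last step I used the trivial monotonicity $\size_k(f_k;\T)\leq \size_k(f_k;\S)$ for $k=2,3$, keeping the $\size_1$ factor attached to $|I_\T|^{1/2}$ and pulling out a single power of $|I_\T|^{1/2}$.

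The next step is to sum over $\T \in \F$, using that $\F$ is a disjoint union of trees. Cauchy--Schwarz in the tree index $\T$ gives
$$
\sum_{\T \in \F} |I_\T|^{\frac12}\Big(|I_\T|^{\frac12}\size_1(f_1;\T)\Big) \leq \Big(\sum_{\T \in \F}|I_\T|\Big)^{\frac12}\Big(\sum_{\T \in \F}|I_\T|\,\size_1(f_1;\T)^2\Big)^{\frac12}.
$$
The first factor is controlled by $A/\size_3(f_3;\S)$ thanks to \eqref{sumtopspop}. For the second factor one uses the standard energy-type bound $\sum_{\T \in \F}|I_\T|\,\size_1(f_1;\T)^2 \lesssim \|f_1\|_2^2$, which follows from the definition of $\size_1$ as a supremum over collections of adapted wave packets together with the Bessel/almost-orthogonality estimate for $u$-adapted wave packets belonging to a disjoint union of trees (this is the classical tree-orthogonality / energy inequality, available in the framework of \cite{LT1,ThWp} and compatible with our $u$-adapted wave packets since \eqref{adaptbandlim} is stronger than polynomial adaptation). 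Multiplying the three contributions yields
$$
\sum_{s \in \S} |I_s|^{-\frac12} \prod_{j=1}^3|\l f_j, \varphi_{s_j}\r| \lesssim \frac{A}{\size_3(f_3;\S)}\,\|f_1\|_2\,\size_2(f_2;\S)\,\size_3(f_3;\S) = A\,\|f_1\|_2\,\size_2(f_2;\S),
$$
which is exactly \eqref{forestest}.

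The main obstacle I anticipate is justifying the energy inequality $\sum_{\T \in \F}|I_\T|\,\size_1(f_1;\T)^2 \lesssim \|f_1\|_2^2$ in the present setting of $u$-adapted (almost exponentially decaying) wave packets. In the classical Lacey--Thiele framework this is a consequence of the tree selection algorithm producing a TT$^*$-almost-orthogonal family; here one must check that the enhanced decay \eqref{adaptbandlim} does not spoil — and in fact only improves — the off-diagonal decay used in the almost-orthogonality argument. Since \eqref{adaptbandlim} dominates the polynomial adaptation \eqref{polyadapt} used in \cite{ThWp}, the standard proof goes through verbatim, with the supremum over collections of uniformly $u$-adapted wave packets in the definition of $\size$ handled by the pigeonholing observation recorded after \eqref{infmax} (reducing to $\lesssim R$ collections of $u$-adapted wave packets with a fixed top datum). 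Modulo invoking this known ingredient, the proof is the short Cauchy--Schwarz computation above, as in \cite[Lemma 4.4]{DD2}.
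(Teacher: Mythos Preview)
Your Cauchy--Schwarz approach has a genuine gap: the inequality $\sum_{\T \in \F}|I_\T|\,\size_1(f_1;\T)^2 \lesssim \|f_1\|_2^2$ is \emph{false} for an arbitrary disjoint forest $\F$. The Bessel/energy bound you cite from \cite{LT1,ThWp} holds only for forests produced by the size selection algorithm applied to $f_1$, which yields trees that are strongly $1$-disjoint; the forest $\F$ handed to you in the hypothesis is selected with respect to $f_3$ and carries no such orthogonality in component $1$. A concrete obstruction: take $\S$ to be a single overlapping $1$-tree with top $I_0$ and $M$ distinct scales (all component-$1$ tiles contain a fixed frequency $\xi$), and let $\F$ be the partition of $\S$ into singleton trees. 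With $f_1=\e^{2\pi i\xi\cdot}\cic{1}_{I_0}$ one has $\size_1(f_1;\{s\})\sim 1$ for every $s$, so $\sum_{\T}|I_\T|\size_1(f_1;\T)^2\sim M$, while $\|f_1\|_2^2=|I_0|$ is fixed. Thus your second factor after Cauchy--Schwarz is not controlled, and the argument collapses.

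The paper circumvents this by never appealing to an energy bound for the given forest $\F$. Instead it runs the size Lemma~\ref{sizelemma} iteratively on $f_1$, producing a \emph{new} nested decomposition $\S=\bigcup_{n=0}^{n_0}\S_n$ with $\size_1(f_1;\S_n)\le 2^{-n}\sigma_1$ and forests $\F_n$ (adapted to $f_1$, hence satisfying $\sum_{\T\in\F_n}|I_\T|\lesssim 2^{2n}\sigma_1^{-2}$), stopping at $n_0\sim\log(A\sigma_1/\sigma_3)$ where the leftover piece inherits the original forest $\F$ and the bound \eqref{sumtopspop}. Applying the single tree estimate on each $\F_n$ and summing the resulting geometric series gives \eqref{forestest}. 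The point is that the $L^2$ information about $f_1$ enters through the size lemma (which \emph{does} encode the correct Bessel inequality), not through an almost-orthogonality claim about $\F$.
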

\begin{proof}Denote $\sigma_j=\size_{j} (f_{j}; {\S} ) $ for $j=1,2,3$. By linearity in $f_1$, $f_2$ we can assume $\|f_{ 1}\|_2=1,\sigma_2=1$.  Let $n_0= \lceil\log \sigma_1 -\log \sigma_3 + \log A \rceil$. There are two cases: if $n_0 \leq 0$, in other words $A\sigma_1 \leq \sigma_3$, the left hand side of \eqref{forestest} is bounded, using Lemma \ref{singletreeest}, by
 $$
 \sum_{\T \in \F} \sum_{s \in \T} |I_s|^{-\frac12} \prod_{j=1}^3|\l f_j, \varphi_{s_j}\r| \lesssim  \prod_{j=1}^3 \size_{j} (f_{j}; {\S} )   \sum_{\T \in \F} |I_\T| \lesssim A^2\sigma_1   \sigma_3^{-1}  \leq A,
 $$ 
 which is what we had to prove. Otherwise, we   decompose $\S$ into collections $\S_n, n=0, \ldots, n_0$ each being a disjoint union of trees $\T \in \F_{n}$ such that
 $$
 \sum_{\T \in \F_n} |I_\T| \lesssim 2^{2n} (\sigma_1)^{-2} \qquad  \size_{1} (f_{1}; {\S_n} ) \leq 2^{-n} \sigma_1, \,
 \size_{2} (f_{2}; {\S_n} ) \leq 1, \,   \size_{3} (f_{3}; {\S_n} ) \leq \sigma_3.
 $$
 by iteratively applying the size lemma with $f=f_1$ for $n=0, \ldots, n_0-1$, and by organizing the leftover collection $\S_{n_0}$ into a disjoint union of trees $\F_{n_0}:=\{\T\cap \S_{n_0}: \T \in \F\}$. For this last collection, the first bound in the last display is   inherited from \eqref{sumtopspop}.
 Applying again the single tree estimate for each $\T \in \F_n$, the left hand side of \eqref{forestest} is bounded by
 $$
 \sum_{n=0}^{n_0}\sum_{\T \in \F_n} \sum_{s \in \T} |I_s|^{-\frac12} \prod_{j=1}^3|\l f_j, \varphi_{s_j}\r| \lesssim \sum_{n=0}^{n_0} \Big( \prod_{j=1}^3 \size_{j} (f_{j}; {\S_n} )   \sum_{\T \in \F_n} |I_\T| \Big)\lesssim \sum_{n=0}^{n_0} 2^{n} \textstyle\frac{\sigma_3}{ \sigma_1} \lesssim A,
 $$
which finishes the proof. 
\end{proof}

\section{Forest estimates} \label{secforest}
  
Let $\S$ be a finite, well-discretized collection of tritiles and   $\{\varphi_{s_j}:  s \in \S, j=1,2,3 \}$ be a collection of uniformly  $u$-adapted wave packets. 
We will provide several estimates on $$\tr_\S (f_1,f_2,f_3)= \sum_{s \in \S} \eps_s|I_s|^{-\frac12} \prod_{j=1}^3 \l f,\varphi_{s_j} \r$$  when $f_3$ is bounded by $1$ and supported on   $F_3 \subset \R $ of finite measure, or on a suitable major subset of $F_3$. Note that the model sum \eqref{modelsums} is a particular instance of the above display. In what follows, for any $\S'\subset \S$, we are indicating with  $\tr_{\S'}$ that we are summing over $s \in \S'$.

 \subsection{Estimates inside exceptional sets} The first estimate deals with the case of $\S$ being localized inside the superlevel sets of the maximal functions of ${h_1},{h_2}$.  
\begin{proposition} \label{excsetprop} Let $\vec \alpha =(\alpha_1,\alpha_2,\alpha_3)$ be a H\"older tuple with $0\leq \alpha_1,\alpha_2 \leq 1, \alpha_3 \geq -\frac12$.
For  functions $h_j \in L^{\frac{1}{\alpha_j}}(\R)$, $j=1,2$,  and a set $F_3\subset \R$ of finite measure, define
\begin{align} \label{esets} &
E^{\vec \alpha}_{{h_1},{h_2},F_3} := \bigcup_{j=1,2} \Big\{\M_{\frac{1}{\alpha_j}} {h_j}  > \textstyle C \frac{\| {h_j}\|_{1/\alpha_j}}{|F_3|^{\alpha_j}}   \Big\}, \; {\bar E}^{\vec \alpha}_{{h_1},{h_2},F_3}= \cup\big\{3Q: Q \textrm{\emph{ max.\ dyad.\ int.\ }} \subset E^{\vec \alpha}_{{h_1},{h_2},F_3}  \big\} \\ &  F_3(\vec\alpha,{h_1},{h_2}):= F_3 \backslash {\bar E}^{\vec \alpha}_{{h_1},{h_2},F_3}, \label {excset}
\end{align}
with     $C$   chosen large enough so that $|F_3|\leq 4|F_3(\vec\alpha,{h_1},{h_2})| $. Assume  that 
$I_s\subset {  E}^{\vec \alpha}_{{h_1},{h_2},F_3} $ for all $s \in  \S$. Then, for all functions $|f_1|\leq |h_1|, |f_2|\leq |h_2|$
$$
\tr_\S({f_1},{f_2},f_3) \lesssim   \| {h_{ 1}}\|_{ \frac{1}{\alpha_1}} \|{{h_2}}\|_{ \frac{1}{\alpha_2}}
 |F_3|^{\alpha_3} \qquad \forall\, |f_3| \leq \cic{1}_{F_3(\vec\alpha,{h_1},{h_2})}. 
$$
\end{proposition}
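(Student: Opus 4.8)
The idea is to exploit the localization of all spatial intervals $I_s$ inside $E := E^{\vec\alpha}_{h_1,h_2,F_3}$ together with the size estimates of Section~\ref{sect4}. First I would invoke the size lemma (Lemma~\ref{sizelemma}) with $f=f_3$ to decompose $\S$ into forests. Since $f_3$ is supported on $F_3(\vec\alpha,h_1,h_2) = F_3 \setminus \bar E^{\vec\alpha}_{h_1,h_2,F_3}$, and by construction $\bar E$ contains the triples of all maximal dyadic intervals inside $E$, any tree top interval $I_\T$ with $I_\T \subset E$ that actually carries mass of $f_3$ must be small compared with the maximal $Q$'s; more precisely, by \eqref{sizeinfmax} we have $\size_3(f_3;\S) \lesssim \sup_{s}\inf_{x\in I_s}\M_1 f_3(x)$, and since each $I_s \subset E$ while $f_3$ vanishes on $\bar E \supset \cup 3Q$, one gets a quantitative smallness of $\size_3$ in terms of $|F_3|$. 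The cleanest route, though, is to run Lemma~\ref{orglemma}: apply the size lemma to $f_3$, obtaining a decomposition of $\S$ into trees $\T \in \F$ with $\big(\sum_\T |I_\T|\big)^{1/2} \lesssim \|f_3\|_2/\size_3(f_3;\S) \le |F_3|^{1/2}/\size_3(f_3;\S)$, so that \eqref{sumtopspop} holds with $A \sim |F_3|^{1/2}$.

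Next I would control $\size_1$ and $\size_2$. By \eqref{sizeinfmax}, $\size_j(f_j;\S) \lesssim \sup_{s\in\S}\inf_{x\in I_s}\M_1 f_j(x) \le \sup_{s\in\S}\inf_{x\in I_s}\M_{1/\alpha_j} h_j(x)$ since $|f_j|\le|h_j|$ and $1/\alpha_j \ge 1$. Because every $I_s \subset E^{\vec\alpha}_{h_1,h_2,F_3}$, and $E$ is the union over $j=1,2$ of $\{\M_{1/\alpha_j} h_j > C\|h_j\|_{1/\alpha_j}/|F_3|^{\alpha_j}\}$, I need the standard fact that on such a superlevel set the \emph{other} maximal function is still essentially bounded: for $j$ not the one defining the level set near $I_s$, one has $\inf_{I_s}\M_{1/\alpha_j} h_j \lesssim \|h_j\|_{1/\alpha_j}/|F_3|^{\alpha_j}$ by the weak-$(1/\alpha_j,1/\alpha_j)$ boundedness of $\M_{1/\alpha_j}$ — indeed the measure of $\{\M_{1/\alpha_j} h_j > \lambda\}$ is $\lesssim (\|h_j\|_{1/\alpha_j}/\lambda)^{1/\alpha_j}$, which for $\lambda$ a large multiple of $\|h_j\|_{1/\alpha_j}/|F_3|^{\alpha_j}$ is strictly smaller than $|F_3|$, hence cannot cover an interval $I_s$ that also meets the complement. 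Thus $\size_j(f_j;\S) \lesssim \|h_j\|_{1/\alpha_j}|F_3|^{-\alpha_j}$ for $j=1,2$.

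Putting these together: if $\alpha_2 \ge \alpha_1$ (say), apply Lemma~\ref{orglemma} with the roles so that $f_1$ sits in the $L^2$ slot via $\|f_1\|_2$ — but here neither $h_1$ nor $h_2$ need be in $L^2$, so instead I would use the pure single-tree/size bound: by Lemma~\ref{singletreeest} summed over the forest,
\[
\tr_\S(f_1,f_2,f_3) \lesssim \Big(\sum_{\T\in\F}|I_\T|\Big)\prod_{j=1}^3 \size_j(f_j;\S) \lesssim \frac{A^2}{\size_3(f_3;\S)^2}\,\size_3(f_3;\S)\,\prod_{j=1,2}\frac{\|h_j\|_{1/\alpha_j}}{|F_3|^{\alpha_j}},
\]
and since $\size_3(f_3;\S) \lesssim 1$ (as $|f_3|\le 1$) while $A \sim |F_3|^{1/2}$, the power of $|F_3|$ works out to $|F_3|^{1-\alpha_1-\alpha_2} = |F_3|^{\alpha_3}$, giving exactly $\tr_\S(f_1,f_2,f_3) \lesssim \|h_1\|_{1/\alpha_1}\|h_2\|_{1/\alpha_2}|F_3|^{\alpha_3}$. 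One has to be a little careful about whether $\sum_\T|I_\T|$ or its square root is the right quantity; the correct bookkeeping is to use $\sum_\T |I_\T| \le |E| \lesssim |F_3|$ directly from the finite-overlap / maximal-function bound on $E$, rather than going through $\size_3$, which is cleaner and avoids any circularity.

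**Main obstacle.** The delicate point is the interplay between the two maximal superlevel sets: on the part of $E$ coming from $h_1$ the function $\M_{1/\alpha_2}h_2$ need not be small pointwise, and vice versa, so one cannot simply bound both $\size_1$ and $\size_2$ by the threshold value at a single tile. The resolution — that $E$ has measure $\lesssim |F_3|$ so any interval contained in it that also supports $f_3$ is forced to have length comparable to or smaller than $|F_3|$, pinning down $\sum_\T|I_\T| \lesssim |F_3|$ — together with the observation that $\size_3(f_3;\S)\lesssim 1$ trivially, is what makes the three sizes combine to the right power; getting the exponent bookkeeping exactly to $\alpha_3 = 1-\alpha_1-\alpha_2$ is the step that requires the most care.
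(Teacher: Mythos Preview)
Your proposed argument has a genuine gap, and the paper takes a different route that avoids it.

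The central difficulty you correctly identify is that from $I_s\subset E=E^{\vec\alpha}_{h_1,h_2,F_3}$ alone you \emph{cannot} conclude $\size_j(f_j;\S)\lesssim \|h_j\|_{1/\alpha_j}|F_3|^{-\alpha_j}$ for both $j$: an interval $I_s$ can sit in the $h_1$-superlevel set while $\M_{1/\alpha_2}h_2$ is arbitrarily large there. Your proposed ``resolution'' does not repair this. First, the arithmetic is in the wrong direction: from the size lemma one has $\sum_{\T}|I_\T|\lesssim \|f_3\|_2^2/\size_3^2$, so $\big(\sum_{\T}|I_\T|\big)\cdot\size_3\lesssim |F_3|/\size_3$, and since you only know $\size_3\lesssim 1$ (not $\gtrsim 1$) this is an upper bound that blows up, not one that closes to $|F_3|$. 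Second, the alternative claim $\sum_\T|I_\T|\le |E|$ ``by finite overlap'' is false: tree tops arising from the size lemma need not have bounded overlap, and indeed $\sum_\T|I_\T|$ can be much larger than the measure of their union. Either way, you never get both size factors under control, so the power-counting to $|F_3|^{\alpha_3}$ does not go through.

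The paper's proof bypasses the tree machinery entirely and instead exploits the spatial separation between the tiles and $\supp f_3$. One groups tiles by their common spatial interval $J=I_s$ and, for each such $J\subset E$, lets $k=k(J)$ be minimal with $2^{k+1}J\not\subset E$. Since $f_3$ is supported off $\bar E\supset 3Q$ for every maximal dyadic $Q\subset E$, one has $\supp f_3\cap 2^k J=\emptyset$, and the rapid decay of the wave packets $\upsilon_{s_3}$ yields the single-scale bound
\[
\tr_{\S(J)}(f_1,f_2,f_3)\lesssim 2^{-100k}\,|J|\,\Big(\inf_{J}\M_{1/\alpha_1}h_1\Big)\Big(\inf_{J}\M_{1/\alpha_2}h_2\Big).
\]
The key point your argument was missing is that the existence of $x_0\in 2^{k+1}J\setminus E$ controls \emph{both} maximal functions simultaneously at $x_0$ by the threshold $\|h_j\|_{1/\alpha_j}|F_3|^{-\alpha_j}$, which in turn bounds $\inf_J\M_{1/\alpha_j}h_j$ up to a factor $2^{Ck}$. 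Since the intervals $\{J:k(J)=k\}$ are contained in $E$ with overlap $\lesssim 2^k$ and $|E|\lesssim |F_3|$, summing gives a geometric series in $k$ and the desired bound $\|h_1\|_{1/\alpha_1}\|h_2\|_{1/\alpha_2}|F_3|^{\alpha_3}$.
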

\begin{proof} We will rely on the following estimate,
which is proved in the same way as, for instance,  \cite[Lemma 3.1]{BG}.  For an interval $J$,  $A>1$,     $|f_3|\leq \cic{1}_{\R \backslash AJ}$  there holds
$$
\tr_{\S(J)}({f_1},{f_2},f_3)\lesssim A^{-100} |J| \Big(\inf_{x \in J} \M_{\frac{1}{\alpha_1}}{f_1} (x)\Big) \Big(\inf_{x \in J} \M_{\frac{1}{\alpha_2}}{f_2} (x) \Big).
$$
where $\S(J)=\{s \in \S: I_s=J\}$.
Now, for each    interval  $J$, let $k(J)$ be the minimal  integer such that $2^{k+1} J \not\subset E_{{h_1},{h_2},F_3}$. Then one can take $A=2^k$ in the above estimate, and, since $|f_j|\leq |h_j|$,  \begin{align*}
&\quad \sum_{J:k(J)=k } \tr_{\S(J)}({f_1},{f_2},f_3) \lesssim 2^{-100k}\sum_{J:k(J)=k } |J| \Big(\inf_{x \in J} \M_{\frac{1}{\alpha_1}}{h_1} (x)\Big) \Big(\inf_{x \in J} \M_{\frac{1}{\alpha_2}}{h_2} (x) \Big) \\ & \lesssim 2^{-98k} \|h_{ 1}\|_{ \frac{1}{\alpha_1}} \|{h_2}\|_{ \frac{1}{\alpha_2}} |F_3|^{-(\alpha_1+\alpha_2)} \sum_{J:k(J)=k } |J| \lesssim 2^{-97k} \|h_{ 1}\|_{ \frac{1}{\alpha_1}} \|{h_2}\|_{ \frac{1}{\alpha_2}} |F_3|^{\alpha_3}
\end{align*}
since the intervals $\{J:k(J)=k\}$ have at most $2^{k+1}$ overlap and are contained in ${  E}^{\vec \alpha}_{{h_1},{h_2},F_3}$. The proof of the lemma is then finished by summing up over $k$.\end{proof}
\subsection{The $f_3$-decomposition of $\S$ and forest estimates}  \label{f3dec} Throughout this subsection, fix a  set  $F_3 \subset \R$  and $|f_3| \leq \cic{1}_{F_3}$. In view on the dyadic scaling invariance of the family of model sums, see Remark \ref{sinvariance}, we lose no generality by working with $|F_3|\sim 1$ in what follows.
 Note that  any finite collection  $\S$ admits the decomposition 
\begin{align}
 & \S = \bigcup_{k =0,1,\ldots} \S_k,  
\\ & \size_3(f_3;\S_k) \lesssim 2^{-k},  \label{sizebdSK}
\\ & \S_k= \bigcup_{\T \in \F_k} \T, \; \textrm{ each } \T \textrm{ is a tree}, \qquad 
 \sum_{\T \in \F} |I_\T|  \lesssim 2^{2k}  |F_3| \sim 2^{2k} , \label{topsSK}
 \\ &  \sup_{I\subset \R}\frac{1}{|I|} \sum_{\T \in \F_k: I_\T \subset I} |I_\T| \lesssim   2^{2k}, \label{bmoSK}
\end{align} 
which we call the $f_3$-decomposition of $\S$ into forests (unions of trees) $\S_k$. The decomposition is obtained by iteratively applying the size  lemma with $f=f_3$, starting from  $\sigma= \size_3(f_3;\S) \lesssim 1 $. Since  $\S$ is finite, the iteration terminates in   finitely many steps.

The next two propositions provide estimates for the model sums restricted to $\S_k$, when either $f_2 \in L^2(\R)$ or $|{f}_2|\leq\cic{1}_{F_2}$ for  some $F_2\subset\R$ of finite measure. 
 To unify notation, we write ${h_2}=f_2$ if $f_2 \in L^2(\R)$ is unrestricted and ${h_2}=\cic{1}_{F_2}$ if $f_2$ is restricted  to $F_2$. \begin{proposition} \label{propL2SK}
Let $f_1 \in L^{2}(\R)$ be given  and $f_2,F_3,f_3$ as above.  Assume that $\S_k$ satisfies \eqref{sizebdSK}-\eqref{topsSK} and that in addition
$$
\size_{2} (f_2; \S_k) \lesssim 2^{-n_0}  \|{h_2}\|_2 , 
$$
for some $n_0 \geq 0$. Then
$$
|\tr_{\S_k}(f_1,f_2,f_3) |\lesssim 2^{-n_0} \min\Big\{1,(k-n_0)2^{-(k-n_0)}\Big\} \|f_1\|_2  \|{h_2}\|_2.
$$
 \end{proposition}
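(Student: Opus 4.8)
The plan is to obtain the two bounds inside the minimum separately and then keep the better one. The factor--$1$ bound is immediate from Lemma \ref{orglemma}: by \eqref{topsSK} and \eqref{sizebdSK} the forest $\F_k$ underlying $\S_k$ satisfies $\big(\sum_{\T\in\F_k}|I_\T|\big)^{1/2}\lesssim 2^{k}\lesssim \big(\size_3(f_3;\S_k)\big)^{-1}$, so applying Lemma \ref{orglemma} with $A\sim 1$ --- which is legitimate since $f_3\in L^2(\R)$ and $f_2\in L^1(\R)+L^\infty(\R)$ in both the restricted and the unrestricted case --- gives $|\tr_{\S_k}(f_1,f_2,f_3)|\lesssim \|f_1\|_2\,\size_2(f_2;\S_k)\lesssim 2^{-n_0}\|f_1\|_2\|h_2\|_2$. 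This already settles the claim when $k\le n_0$, so from here on I would assume $k>n_0$ and prove the refined estimate $|\tr_{\S_k}(f_1,f_2,f_3)|\lesssim 2^{-n_0}(k-n_0)2^{-(k-n_0)}\|f_1\|_2\|h_2\|_2$, which for the integer $k-n_0\ge 1$ coincides with the stated bound.

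Normalising $\|f_1\|_2=1$, the idea is to decompose $\S_k$ further according to the size of $f_2$, trading smallness of $\size_2$ against a smaller total mass on each piece. Since $f_2\in L^2(\R)$ with $\|f_2\|_2\le\|h_2\|_2$ (equality in the unrestricted case, and $\|f_2\|_2\le\|\cic{1}_{F_2}\|_2=\|h_2\|_2$ in the restricted one), I would iterate the size lemma, Lemma \ref{sizelemma}, applied to $f=f_2$, starting from the threshold $\sigma_0\sim 2^{-n_0}\|h_2\|_2$, which by hypothesis dominates $\size_2(f_2;\S_k)$. Since $\S$ is finite this produces, after finitely many nonempty steps, a decomposition $\S_k=\bigcup_{j\ge 0}\S_{k,j}$, each $\S_{k,j}$ being a disjoint union of trees $\T\in\F_{k,j}$ with
\[
\size_2(f_2;\S_{k,j})\lesssim 2^{-j-n_0}\|h_2\|_2,\qquad \Big(\sum_{\T\in\F_{k,j}}|I_\T|\Big)^{\frac12}\lesssim 2^{j+n_0}\frac{\|f_2\|_2}{\|h_2\|_2}\lesssim 2^{j+n_0}.
\]
Re-organising each $\S_{k,j}$ by intersecting its trees with those of $\F_k$ --- the same bookkeeping used in the proof of Lemma \ref{orglemma}, and which does not spoil the two displayed properties --- I would in addition arrange $\big(\sum_{\T\in\F_{k,j}}|I_\T|\big)^{1/2}\lesssim 2^{k}$, so that altogether $\big(\sum_{\T\in\F_{k,j}}|I_\T|\big)^{1/2}\lesssim\min\{2^{j+n_0},2^{k}\}$. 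Monotonicity of \eqref{sizedef} under shrinking the collection also keeps $\size_3(f_3;\S_{k,j})\le\size_3(f_3;\S_k)\lesssim 2^{-k}$.

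Next I would run Lemma \ref{orglemma} on each $\S_{k,j}$ with the mass constraint carried by $\size_3$: choosing $A_j\sim\min\{2^{j+n_0-k},1\}$ makes $\big(\sum_{\T\in\F_{k,j}}|I_\T|\big)^{1/2}\lesssim\min\{2^{j+n_0},2^{k}\}\le A_j/\size_3(f_3;\S_{k,j})$, so the lemma yields
\[
|\tr_{\S_{k,j}}(f_1,f_2,f_3)|\lesssim A_j\,\|f_1\|_2\,\size_2(f_2;\S_{k,j})\lesssim\min\{2^{j+n_0-k},1\}\,2^{-j-n_0}\|h_2\|_2.
\]
Summing in $j$: the $k-n_0+1$ indices with $0\le j\le k-n_0$ each contribute $\lesssim 2^{-k}\|h_2\|_2$, while the tail $j>k-n_0$ contributes a convergent geometric series $\lesssim\sum_{j>k-n_0}2^{-j-n_0}\|h_2\|_2\lesssim 2^{-k}\|h_2\|_2$; in total $|\tr_{\S_k}(f_1,f_2,f_3)|\lesssim (k-n_0)2^{-k}\|h_2\|_2=2^{-n_0}(k-n_0)2^{-(k-n_0)}\|f_1\|_2\|h_2\|_2$ after undoing the normalisation. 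Combining with the factor--$1$ bound finishes the proof.

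I expect the only genuinely delicate point to be the re-organisation step producing the simultaneous mass bounds $\min\{2^{j+n_0},2^{k}\}$ for the forests $\F_{k,j}$: one has two candidate tree decompositions of $\S_{k,j}$ --- the one coming from the size lemma applied to $f_2$ and the restriction of the original forest $\F_k$ --- and must merge them into a single forest retaining the better total-mass estimate, which is routine but needs some care (and is exactly the manoeuvre used inside the proof of Lemma \ref{orglemma}). Everything else is a mechanical combination of Lemmata \ref{singletreeest}, \ref{sizelemma} and \ref{orglemma}, together with the trivial monotonicity of the quantities $\size_j(\,\cdot\,;\,\cdot\,)$ in their second argument and the fact that in the unrestricted case $f_2\in L^2(\R)$ still lies in $L^1(\R)+L^\infty(\R)$, as required by Lemma \ref{orglemma}.
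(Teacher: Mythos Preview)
Your proof is correct and follows essentially the same approach as the paper: a direct application of Lemma \ref{orglemma} for $k\le n_0$, and for $k>n_0$ an iteration of the size lemma on $f_2$ followed by Lemma \ref{orglemma} on each piece. The paper's only organisational difference is that it invokes Lemma \ref{orglemma} on the pieces with the roles of $f_2,f_3$ swapped and $A=\|h_2\|_2$, stopping the iteration after $k-n_0$ steps and handling the leftover via the $\F_k$-intersection; in particular the ``re-organisation'' you flag is unnecessary, since for each $j$ you may simply choose whichever of the two available tree decompositions of $\S_{k,j}$ (the one from the size lemma or the one from intersecting with $\F_k$) has the smaller total top mass---no merging into a single forest is required.
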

 \begin{proof} By linearity of assumptions and conclusions in $f_1$ we can assume $\|f_1\|_2=1$.
We split the proof into two cases, the first being  when $k \leq n_0$. In this case, we straightforwardly apply Lemma \ref{orglemma}, whose assumptions are satisfied in view of \eqref{sizebdSK}-\eqref{topsSK}, to the triple $(f_1,f_2,f_3)$ with $A=|F_3|^{\frac12}\sim 1$. This yields
\begin{equation} \label{L2SK-1}
|\tr_{\S_k}(f_1,f_2,f_3) | \lesssim  |F_3|^{\frac12}\size_{2} (f_2; \S_k)  \lesssim 2^{-n_0} \|{h_2}\|_2\end{equation}
which is what we had to prove.
Let us deal with the case $k >n_0$. We begin by   decomposing $\S$ into collections $\S_{k,n}, n=  n_0,\ldots, k$ each being a disjoint union of trees $\T \in \F_{k,n}$ such that
 \begin{equation}\label{L2SK-2}
 \sum_{\T \in \F_{k,n}} |I_\T| \lesssim 2^{2n}  \qquad  \size_{2} (f_{2}; {\S_{k,n}} ) \leq 2^{-n}  \|{h_2}\|_2    ,  \,
   \size_{3} (f_{3}; {\S_{k,n}} ) \leq 2^{-k},
 \end{equation}
 by iteratively applying the size Lemma \ref{sizelemma} to $f=f_2$ and by organizing the leftover collection $\S_{k,k}$ into a disjoint union of trees $\F_{k,k}:=\{\T\cap \S_{k,k}: \T \in \F_k\}$.
We are now allowed to apply Lemma \ref{orglemma} to the collections $\S_{k,n}$ with the roles of $f_2,f_3$ interchanged and with $A=\|{h_2}\|_2$, so that
$$
|\tr_{\S_{k}}(f_1,f_2,f_3) | \leq \sum_{n=n_0}^k
|\tr_{\S_{k,n}}(f_1,f_2,f_3) | \lesssim (k-n_0) |\|{h_2}\|_2 \size_{3} (f_3; \S_{k,n})  \lesssim (k-n_0)2^{-k}\|{h_2}\|_2.
$$
Putting together \eqref{L2SK-1} with the last display, the proposition is proved.
 \end{proof}  
 This proposition is a version of the previous one,  differing in  that  the function  $f_1$ is not locally $L^2$. The proof makes use of the multi-frequency decomposition of Proposition \ref{corMFCZabs}.
\begin{proposition} \label{proptouse}  
Let $f_1\in L^{\frac{1}{\alpha_1}}(\R)$,  $1/2<\alpha_1\leq 1$, be given and $f_2,F_3,f_3\subset \R$ as above.  Assume that $\S_k$ satisfies \eqref{sizebdSK}-\eqref{bmoSK} and that in addition
   \begin{align} &I_s \not \subset \big\{\M_{\frac{1}{\alpha_1}} f_1 (x) \gtrsim      \|f_1\|_{\frac{1}{\alpha_1}}\big\}\qquad \forall  s \in \S_k, 
\label{smallmf}
\\ &\size_{2} (f_2; \S_k) \lesssim 2^{-n_0}  \|{h_2}\|_2 \label{size2LpSk}
\end{align}
for some $n_0\geq 0$. Then, 
$$
|\tr_{\S_{k}}(f_1,f_2,f_3)| \lesssim   \|f_1\|_{\frac{1}{\alpha_1}}\|{h_2}\|_2   \big(  u(Ck)^2 \log u(Ck)  \big)^{\alpha_1-\frac12}   \begin{cases}2^{-n_0}2^{ (2\alpha_1-1) k} & k \leq n_0, \\  \textstyle\frac{1}{2\alpha_1-1} 2^{- 2(1-\alpha_1) k}  & k>n_0.\end{cases}$$
\end{proposition}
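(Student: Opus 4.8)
The plan is to split $f_1$ using the multi-frequency Calder\'on--Zygmund decomposition of Proposition \ref{corMFCZabs}, applied at the level $\lambda\sim\|f_1\|_{1/\alpha_1}$ dictated by \eqref{smallmf}, to the collection of top data $\mathcal T$ consisting of the top data $(I_\T,\xi_{\T,1})$ of the lacunary trees $\T^{(1)}$ making up the forest $\S_k$. Since $\S_k=\bigcup_{\T\in\F_k}\T$ is a forest of size $\size_3(f_3;\S_k)\lesssim 2^{-k}$ with $\sum_{\T\in\F_k}|I_\T|\lesssim 2^{2k}$ and the finite-overlap/BMO bound \eqref{bmoSK}, the counting-function hypotheses of Proposition \ref{corMFCZabs} are met with the $L^\infty$ parameter $2^k$, i.e.\ with $k$ playing the role of $k$ there and $K=Ck$; moreover $\|\sum_{(I,\xi)\in\mathcal T}\cic{1}_I\|_1\lesssim 2^{2k}$ by \eqref{topsSK}. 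This produces $f_1=g+b$ with $\|g\|_2\lesssim \lambda^{2-1/\alpha_1}\|f_1\|_{1/\alpha_1}^{1/\alpha_1-1}(u(Ck)^2\log u(Ck)\cdot 2^{2k})^{\alpha_1-1/2}$, which after substituting $\lambda\sim\|f_1\|_{1/\alpha_1}$ simplifies to $\|g\|_2\lesssim \|f_1\|_{1/\alpha_1}(u(Ck)^2\log u(Ck))^{\alpha_1-1/2}2^{(2\alpha_1-1)k}$, together with the two exponentially-small Carleson estimates \eqref{megaesti1}, \eqref{megaesti} on $b$, valid because the wave packets $\upsilon_{s_1}$ attached to each lacunary tree $\T^{(1)}$ are $u$-adapted wave packets (with mean zero, after restricting to the lacunary part) with respect to the corresponding top datum.

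For the \emph{good part} $g$, I would estimate $\tr_{\S_k}(g,f_2,f_3)$ by the forest estimate Lemma \ref{orglemma}. The hypothesis \eqref{sumtopspop} holds with $A=\size_3(f_3;\S_k)\big(\sum_{\T\in\F_k}|I_\T|\big)^{1/2}\lesssim 2^{-k}\cdot 2^k=1$ — more precisely one uses $A\sim(\sum|I_\T|)^{1/2}\size_3(f_3;\S_k)$; then Lemma \ref{orglemma} gives $|\tr_{\S_k}(g,f_2,f_3)|\lesssim A\,\|g\|_2\,\size_2(f_2;\S_k)\lesssim \|g\|_2\cdot 2^{-n_0}\|h_2\|_2$. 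Plugging in the bound for $\|g\|_2$ yields exactly $\lesssim \|f_1\|_{1/\alpha_1}\|h_2\|_2(u(Ck)^2\log u(Ck))^{\alpha_1-1/2}2^{-n_0}2^{(2\alpha_1-1)k}$, which is the claimed bound in the case $k\le n_0$ and is also acceptable when $k>n_0$ provided one is slightly more careful — in the regime $k>n_0$ one should instead run the $n$-iteration of Lemma \ref{orglemma}/the size lemma in $f_2$ exactly as in the proof of Proposition \ref{propL2SK}, decomposing $\S_k$ into $\S_{k,n}$, $n_0\le n\le k$, applying Lemma \ref{orglemma} with the roles of $f_2,f_3$ swapped and $A=\|h_2\|_2$, summing the geometric/linear-in-$(k-n_0)$ series to land a factor $\tfrac{1}{2\alpha_1-1}2^{-2(1-\alpha_1)k}$ in place of $2^{-n_0}2^{(2\alpha_1-1)k}$; combined with $\|g\|_2$'s $2^{(2\alpha_1-1)k}$ this matches the stated $k>n_0$ branch after noting $2^{(2\alpha_1-1)k}\cdot 2^{-k}= 2^{-2(1-\alpha_1)k}$.

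For the \emph{bad part} $b=\sum_Q b_Q$, I would expand $\tr_{\S_k}(b,f_2,f_3)=\sum_{\T\in\F_k}\tr_{\T}(b,f_2,f_3)$ and, within each tree $\T$, separate its lacunary part $\T^{(1)}$ from the single overlapping tile; on the lacunary part the single-tree estimate Lemma \ref{singletreeest} reduces matters to bounding $\size(b;\T^{(1)})$, i.e.\ the square-function expression $\big(|I_\T|^{-1}\sum_{t\in\T^{(1)}}|\langle b,\varphi_t\rangle|^2\big)^{1/2}$. This is precisely controlled by \eqref{megaesti1} and \eqref{megaesti} of Proposition \ref{corMFCZabs} (the mean-zero estimate \eqref{megaesti} applies because a lacunary tree's wave packets satisfy $\widehat{\varphi_t}(\xi_{\T,1})=0$), giving $\size(b;\T^{(1)})\lesssim 2^{-4k}\lambda\sim 2^{-4k}\|f_1\|_{1/\alpha_1}$, while the dominating tile only costs \eqref{megaesti1}. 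Therefore $|\tr_{\T}(b,f_2,f_3)|\lesssim 2^{-4k}\|f_1\|_{1/\alpha_1}\,\size_2(f_2;\T)\,\size_3(f_3;\T)\,|I_\T|\lesssim 2^{-4k}\|f_1\|_{1/\alpha_1}\cdot 2^{-n_0}\|h_2\|_2\cdot 2^{-k}|I_\T|$, and summing over $\T\in\F_k$ with $\sum|I_\T|\lesssim 2^{2k}$ produces $\lesssim 2^{-3k}2^{-n_0}\|f_1\|_{1/\alpha_1}\|h_2\|_2$, which is far smaller than the good-part contribution (the $2^{-3k}$ gain dwarfs the at-most-polynomial $u(Ck)$ losses and, in the $k>n_0$ regime, one again runs the $f_2$-iteration absorbing a $(k-n_0)$ factor harmlessly). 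Adding the good and bad contributions and taking $C=10^3a^{-1}$ as in Proposition \ref{corMFCZabs} gives the proposition. The main obstacle is purely bookkeeping: verifying that the hypotheses of Proposition \ref{corMFCZabs} — in particular the $\ell^\infty$ bound \eqref{flinftbd} on $\sum_{(I,\xi)\in\mathcal T}\cic{1}_{3u(Ck)I}$ — really follow from the forest structure \eqref{bmoSK} with the \emph{dilated} intervals $3u(Ck)I_\T$, which needs the BMO-type estimate at the enlarged scale, and ensuring that the $u(Ck)^2\log u(Ck)$ factor from $\|g\|_2$ is the only super-polynomial loss, the $b$-part contributing nothing comparable.
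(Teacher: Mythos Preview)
Your overall architecture is right, but there are two genuine gaps, both of which the paper addresses with nontrivial extra work.

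\textbf{The $L^\infty$ counting bound is not free.} You flag the verification of \eqref{flinftbd} as ``the main obstacle'' but leave it unresolved. It is: the BMO bound \eqref{bmoSK} on $\sum_{I_\T\subset I}|I_\T|/|I|$ does \emph{not} directly yield an $L^\infty$ bound on $\sum_\T\cic{1}_{3u(Ck)I_\T}$, since nothing stops many dilated tops from stacking over a thin set. The paper resolves this by a John--Nirenberg argument: it splits $\F_k=\F_k'\cup\F_k''$, where $\F_k'$ satisfies $\|\sum_{\T\in\F_k'}\cic{1}_{3u(Ck)I_\T}\|_\infty\lesssim 2^{5k}$ and $\F_k''$ is a tiny error with $\sum_{\T\in\F_k''}|I_\T|\lesssim 2^{-10k}$ handled by the crude tree estimate. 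This costs half a page and is not purely bookkeeping.

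\textbf{A single MFCZ application loses the geometric sum when $k>n_0$.} With one decomposition $f_1=g+b$ at level $\F_k$, you get $\|g\|_2\lesssim (A_k)^{\alpha_1-1/2}2^{(2\alpha_1-1)k}$. After decomposing $\S_k$ into $\S_{k,n}$, $n_0\le n\le k$, via the $f_2$-size lemma and applying Lemma \ref{orglemma} (in either orientation), each term contributes the \emph{same} quantity $\|g\|_2\|h_2\|_2 2^{-k}$, so the sum over $n$ is linear: you obtain $(k-n_0)\,2^{-2(1-\alpha_1)k}$, not $\tfrac{1}{2\alpha_1-1}2^{-2(1-\alpha_1)k}$. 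Since $k-n_0$ is unbounded for fixed $\alpha_1$, this does not prove the stated proposition. The paper fixes this by applying Proposition \ref{corMFCZabs} \emph{separately at each level $n$}, using the top data of $\F_{k,n}'$ (which has the smaller top count $\sum|I_\T|\lesssim 2^{2n}$), so that $\|g_n\|_2\lesssim (A_k)^{\alpha_1-1/2}2^{(2\alpha_1-1)n}$; now the $n$-th term is $2^{(2\alpha_1-1)n}2^{-k}$ and the sum over $n$ is genuinely geometric, producing the factor $\tfrac{1}{2\alpha_1-1}$. To make this work one must also check that each $\F_{k,n}'$ inherits the $L^\infty$ counting bound from $\F_k'$, which the paper does via a reshuffling of tree tops.
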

 \begin{proof}
 By linearity, we can assume $\|f_1\|_{1/\alpha_1}=1$. 
 We claim  that $\S_k=\S_k' \cup \S_k'' $, respectively written as a disjoint union of trees from the collections $\F_k',\F_k''$ satisfying
 \begin{align}\label{effesec}
 & \sum_{\T \in \F_k''} |I_\T| \lesssim 2^{-10k}  ,
 \\ \label{effepri}
 & \Big\| \sum_{\T \in \F_k'} \cic{1}_{3u(Ck) I_\T} \Big\|_\infty \lesssim 2^{5k}. 
 \end{align}
The proof of the claim is standard but technical, and we postpone it to the end of the section. Accordingly,   we split $\tr_{\S_{k}}= \tr_{\S_{k}'}+ \tr_{\S_{k}''}$ and estimate each summand separately. 

The summand involving  $\S_k''$ is an error term. Relying on the tree estimate of Lemma \ref{singletreeest}, we   estimate
\begin{equation}
\label{bmolemmapf1}
\prod_{j=1}^3 \size_{j} (f_{j}; {\S_k''} )   \sum_{\T \in \F_k''} |I_\T| \lesssim   2^{-n_0} 2^{-10k} \|{h_2}\|_2  
 \end{equation}
We have relied on \eqref{smallmf} and   inequality \eqref{sizeinfmax} to obtain that $\size_1(f_1;\S) \lesssim 1$, on assumption \eqref{size2LpSk}, and later employed \eqref{effesec}.

We turn to the $\tr_{\S_{k}'}$ summand, and first deal with the case $k>n_0$, which is the  harder one.
The first step consists again of   decomposing $\S_k'$ into collections $\S_{k,n}', n=  n_0,\ldots, k$ each being a disjoint union of trees $\T \in \F_{k,n}'$ such that
 \begin{equation} \label{LpSkn}
 \sum_{\T \in \F_{k,n}'} |I_\T| \lesssim 2^{2n}  ,  \qquad \size_{2} (f_{2}; {\S_{k,n}'} ) \leq 2^{-n}  \|{h_2}\|_2  ,  \,
   \size_{3} (f_{3}; {\S_{k,n}'} ) \leq 2^{-k},
\end{equation}
and in addition
\begin{equation} \label{LpSknbdinfty}
 \Big\| \sum_{\T \in \F_{k,n}'} \cic{1}_{3u(Ck) I_\T} \Big\|_\infty \lesssim 2^{5k}.
\end{equation}
The three properties of \eqref{LpSkn} are obtained by the same argument used in Proposition \ref{propL2SK} for \eqref{L2SK-2}, while \eqref{LpSknbdinfty} is carried over from \eqref{effepri} for  $\F_{k}'$ by means of a reshuffling argument. Details are given at the end of the section.
    The next step is the definition of a  set of top data $\mathcal T$ which is suitable for Proposition \ref{corMFCZabs}.  Noting that, for each 1-tree [resp.\ $j$-tree, $j \neq 1$] $\T \in \F'_{k,n}$,   $\{\varphi_{s_1}: s \in \T\}$ is  a collection of $u$-adapted wave packets [resp.\  $u$-adapted wave packets with mean zero] with respect to the top datum $(I,\xi_\T)$ [resp. $(I,\xi_{\T,1})$], according to the terminology of Section \ref{secMFCZ}, we are led to define
$$
\mathcal T:=\{(I_\T, \xi_\T): \T \in \F_{k,n}' \textrm{ is a 1-tree} \} \cup \{(I_\T, \xi_{\T,1}): \T \in \F_{k,n}' \textrm{ is a } j \textrm{-tree, } j \neq 1 \}. 
$$
With this definition,   by virtue of \eqref{LpSknbdinfty}, we may appeal to
  Proposition \ref{corMFCZabs}  with $p=1/\alpha_1$, $f=f_1$, $\lambda \sim 1=\|f_1\|_{1/\alpha_1}$  and $k$ replaced by $5k$. Writing $ A_k= u(Ck)^2 \log u(Ck)$ for brevity,  we obtain the decomposition
$f_1=g_n+b_n$, with
\begin{align} \label{gl2fine}   &\|g_n\|_2\lesssim   \Big(  A_k \sum_{\T \in \F_{k,n}'} |I_\T|\Big)^{\alpha_1-\frac12} \lesssim  2^{(2\alpha_1-1)n}\big(  A_k \big)^{\alpha_1-\frac12}   , 
\\ \label{expoverlac}&
\sup_{\T \in \F_{k,n}' \textrm{1-tree}} \sup_{s \in \T} \frac{|\l b_n, \varphi_{s_1}\r|}{|I_s|^{\frac12}} \lesssim    2^{-16k},\quad 
  \sup_{\T \in  \F_{k,n}' \textrm{j-tree}}  \size (b_n; \T^{(1)}) \lesssim     2^{-16k} 
\end{align}  
We used conclusion \eqref{gl2again} of Proposition \ref{corMFCZabs} for the first bound of \eqref{gl2fine}, and \eqref{LpSkn} for the second step, while
the inequalities of \eqref{expoverlac} follow respectively from conclusions \eqref{megaesti1} and \eqref{megaesti}.
Repeating the proof of the tree Lemma \ref{singletreeest} and using \eqref{expoverlac} yields the estimate
\begin{align}
\label{boundforb} & \quad| \tr_{\S_{k,n}'}(b_n,f_2,f_3) |\leq  
\sum_{ \T \in \F_{k,n}'}\sum_{s \in \T} |I_s|^{-\frac12} |\l b, \varphi_{s_1}\r|\prod_{j=2}^3|\l f_j, \varphi_{s_j}\r| \\ &\lesssim     2^{-16k }  \prod_{j=2,3} \size_j(f_j; \S_{k,n}) \Big( \sum_{\T \in  \F_{k,n}'} |I_T| \Big)    \lesssim 2^{-n_0} 2^{-14k} \| {h_2}\|_2      .  \nonumber\end{align}
Now, in view of \eqref{LpSkn}, we can  apply Lemma \ref{orglemma} to   $\S_{k,n}'$, with tuple $(g_n,f_3,f_2)$ and $A=\|{h_2}\|_2$. Note that the roles of $f_3$ and $f_2$ are interchanged. This leads to
\begin{align}
\label{boundforg} &  \quad
| \tr_{\S_{k,n}'}(g_n,f_2,f_3)| \lesssim \|g_n\|_2 \| {h_2}\|_2  \size_{3}(f_3;\S_{k,n})\lesssim   2^{(2\alpha_1-1)n} \big(  A_k \big)^{\alpha_1-\frac12 } \| {h_2}\| 2^{-k}    .  \end{align}
Note that the last right hand side of \eqref{boundforb} is always smaller than  the second member of \eqref{boundforg}. Therefore, we estimate
 \begin{align*}| \tr_{\S_{k}'}(f_1,f_2,f_3) | & \leq \sum_{n=n_0}^k | \tr_{\S_{k}'}(f_1,f_2,f_3) | \leq \sum_{n=n_0}^k\big( | \tr_{\S_{k}'}(b_n,f_2,f_3) | +   | \tr_{\S_{k}'}(g_n,f_2,f_3) |\big) \\ & \lesssim \| {h_2}\|_2  \big(  A_k \big)^{\alpha_1-\frac12}    2^{-k} \sum_{n=n_0}^k 2^{(2\alpha_1-1)n} \lesssim      \| {h_2}\|_2    \big(  A_k \big)^{\alpha_1-\frac12}  \textstyle \frac{ 1}{{2\alpha_1-1}} 2^{-2k(1-\alpha_1)}.
 \end{align*}
Collecting \eqref{bmolemmapf1} and the last display, we   have proved the required  estimate when $k>n_0$.

In the case  $k<n_0$, there is no need for the additional decomposition of $\S_k'$. We appeal directly to Proposition \ref{corMFCZabs} along the same lines as above, this time using the trees of $ \F_k'$ as our top data, and obtain a  decomposition   
$f_1=g+b$, with
\begin{align*}   &\|g\|_2\lesssim    \Big(  A_k \sum_{\T \in \F_{k,}'} |I_\T|\Big)^{\alpha_1-\frac12} \lesssim  2^{(2\alpha_1-1)k}\big(  A_k \big)^{\alpha_1-\frac12}    , 
\\  &
\sup_{\T \in \F_{k}' \textrm{1-tree}} \sup_{s \in \T} \frac{|\l b , \varphi_{s_1}\r|}{|I_s|^{\frac12}} \lesssim   2^{-16k},\quad 
  \sup_{\T \in  \F_{k}' \textrm{j-tree}}  \size (b ; \T^{(1)}) \lesssim     2^{-16k}. 
\end{align*} 
We then apply Lemma \ref{orglemma} to $\S_{k}'$ with  tuple $(g,f_2,f_3)$ and  $A=|F_3|^{\frac12}\sim 1$, yielding 
$$
| \tr_{\S_{k }'}(g,f_2,f_3)| \lesssim \|g\|_2  \size_{2}(f_2;\S_{k}') \lesssim  2^{-n_0} 2^{(2\alpha_1-1)k}  \big(  A_k \big)^{\alpha_1-\frac12}   \| {h_2}\|_2 .  $$
Arguing exactly as in the previous case, one sees that the $\tr_{\S_k'}(b,f_2,f_3)$ summand and the error term \eqref{bmolemmapf1} are again smaller than the right hand side of the above estimate. This completes  the proof of the proposition. 
 \end{proof}
 \begin{proof}[Proof of the decomposition \eqref{effesec}-\eqref{effepri}] We  begin with some notation and preliminaries. We write
 $
 \mu:= 3u(Ck)$ and $   J_\T:= \mu I_\T
 $
 for $\T \in \F_k$. Note that $\mu \lesssim 2^k$. 
Furthermore, for any $\G \subset \F_k$, and for each interval $J \subset \R$, we denote
 $$
 N_{\G} (x)= \sum_{\T \in \G} \cic{1}_{J_\T} (x), \qquad N_{\G',J}(x) = \sum_{\substack{\T \in \G: J \subsetneq  J_\T}}  \cic{1}_{J_\T} (x).
 $$
 
We claim that \eqref{effesec}-\eqref{effepri} will follow    once we show that any    $\G \subset \F_k$ such that the dilated intervals $\{J= J_\T : \T \in \G \}$  belong to a fixed   grid $\D$ 
admits the decomposition
 \begin{equation} \label{claimpf1}
 \G=\G' \cup  \G'', \qquad \|N_{\G'}\|_{\infty} \lesssim 2^{4k}, \qquad \|N_{\G''}\|_{1} \lesssim 2^{-100 k} . \end{equation} 
The claim simply follows by   decomposing $\F_k$   into $\lesssim \mu \log \mu \lesssim 2^k $ such   $\G$'s, which is possible since the intervals $\{I= I_\T : \T \in \F_k\}$ belong to a finite union of dyadic grids.

 We begin the proof of \eqref{claimpf1}, fixing one such $\G$.
 Let $J \in \cic{J}^0$ be the collection of maximal intervals of $\{J= J_\T : \T \in \G \} \subset \D$. We inherit from  \eqref{topsSK} the inequality
\begin{equation} \label{claimpf0}
\sum_{J \in \cic{J}_0} |J| \leq \mu \sum_{\T \in \F_k} |I_\T| \lesssim \mu 2^{2k}  \lesssim 2^{3k}  .
\end{equation} 
  Moreover, a consequence of \eqref{bmoSK} is that
 $$
  \sum_{\T \in \G: J_\T \subset J} |J_\T| \lesssim   \mu 2^{2k}|J| \lesssim 2^{3k}|J|, \qquad \forall J \subset \R. $$  Observing that for each $J \in \D$ $N_{\G,J}(x)=N_{\G,J}$ is constant on $J$, the last display implies that    
 \begin{equation}
 \label{claimpf2}
 \big|\{x \in J: N_{\G}(x)- N_{\G,J}  > C \lambda 2^{3k}\}\big| \leq  2^{-\lambda} |J|, \qquad \forall \lambda>0,
 \end{equation}
if $J \in \D$ and  the constant $C$ is chosen large enough; this is John-Nirenberg's inequality.

 We now construct $\G',\G''$. The set $\J^1=\{N_\G >C k2^{3k} \}$ is  the union of its  maximal intervals $J \in  \D$. We call $\cic{J}^1$ the collection of such intervals. Setting $\G':=\{\T \in \G: J_\T \not \subset \J^1\}$, $\G'':= \G \backslash \G'$, it is easy to see that
\begin{equation}
\|N_{\G'}\|_\infty \lesssim k 2^{3k} \lesssim 2^{4k}, \qquad \supp\ N_{\G''} \subset \J^1.  \label{claimpf3}
\end{equation}
Furthermore, using \eqref{claimpf2} in the second step and \eqref{claimpf0} for the final inequality, we have the  estimate 
\begin{equation} \label{claimpf6}
|\J^1| = \sum_{J \in \cic{J}^0} |\{x \in J: N_\G (x) >Ck2^{3k}\}| \leq  \sum_{J \in \cic{J}^0} 2^{-400k} |J| \lesssim 2^{-200 k}  .
\end{equation} 
We will show that $\G''$ satisfies \eqref{claimpf1} by means of an iterative procedure. Assume that, at the $j$-th step, we have written $\G''= \G''_{\textrm{now}} \cup \G_{\textrm{stock}},$ where 
$
\|N_{ \G''_{\textrm{now}} }\|_1 \lesssim 2^{-100k} ,$
and $N_{\G_{\textrm{stock}}}$ is supported on the set $\J^j$, which is a union of disjoint intervals $J \in \cic{J}^j \subset \D$  such that $|\J^{j}| \lesssim  2^{-200k j }  $. In \eqref{claimpf6}, we have  the base case $j=1$, with $\G''_{\textrm{now}}=\emptyset$, $\G_{\textrm{stock}}=\G''$. The $(j+1)$-th inductive step is as follows. We  
define $\J^{j+1}:=\{N_{\G_{\textrm{stock}}} > C k 2^{3k}\}$, which is a union of  maximal   intervals $J' \in \cic{J}^{j+1}\subset\D$. Setting $\G_*:=\{\T \in \G_{\textrm{stock}}: J_\T \not \subset \J^{j+1}\}$, we observe that  
$$
\|N_{\G_*}\|_\infty \lesssim k 2^{3k} \lesssim 2^{4k}\label{claimpf5}
$$
so that
$$
\|N_{\G_*}\|_1  \leq \|N_{\G_*}\|_\infty |\supp \, N_{\G_*} | \lesssim 2^{4k}   |\J^{j}| \lesssim 2^{-100k j } ,
$$
by the inductive assumption on $\J^j$. Also, relying on  \eqref{claimpf2} to pass to the second line,
\begin{align*} 
|\J^{j+1}| &\leq \sum_{J \in \J^j} \big|\{ x \in J: N_{\G_{\textrm{stock}}}(x)   > C k 2^{3k}\}\big|  \leq   \sum_{J \in \J^j} \big|\{ x \in J: N_{\G}(x) - N_{\G,J}   > C k 2^{3k}\}\big| \\ & \lesssim 2^{-400k} \sum_{J \in \J^j} |J| \lesssim 2^{-200k(j+1)} .
\end{align*}
The inductive step is completed by updating $\G''_{\textrm{now}}:=\G''_{\textrm{now}} \cup \G_*, \G_{\textrm{stock}}:=\G_{\textrm{stock}}\backslash \G_*$. We iterate  until $\G_{\textrm{stock}}$ is empty, which   happens after finitely many steps, since $\G$ is a finite collection. At this point, $\G''= \G''_{\textrm{now}} $ satisfies \eqref{claimpf1}. This completes the proof of the claim.
 \end{proof}
 \begin{proof}[Details of the construction \eqref{LpSkn}-\eqref{LpSknbdinfty}]
The same argument employed in Proposition \ref{propL2SK} for \eqref{L2SK-2} yields the decomposition of $\S'_k$ into subcollections $\S'_{k,n}$, $n=n_0,\ldots, k-1$,  each partitioned into a union of trees $\T \in \G_{k,n}$ satisfying   \eqref{LpSkn} with $\G_{k,n}$ in place of $\F'_{k,n}$.
The remaining collection $\S'_{k,k}:=\S'_k \backslash (\S'_{k,n_0} \cup \ldots \cup \S'_{k,k-1})$, which has $\size_{2}(f_2;\S'_{k,k})\lesssim 2^{-k}\|{h_2}\|_2 |F_3|^{-\frac12}$,  is partitioned into trees by  $\F'_{k,k}=\{\T':=\T \cap \S'_{k,k}: \T \in \F'_k\}$,  and the remaining claims of  \eqref{LpSkn}-\eqref{LpSknbdinfty} are immediately inherited from \eqref{sizebdSK}, \eqref{topsSK}, and \eqref{effepri}.

 We now show how to construct a   new partition $\F'_{k,n}$ of $\S'_{k,n}$ inheriting \eqref{LpSknbdinfty} from $\F'_k$ as well as retaining \eqref{LpSkn}. 
By   partitioning $\S'_{k,n}$,  $\G_{k,n}$ into three subcollections, we can reduce to the case where   all trees of $\G_{k,n}$ are $1$-trees. 
Let $ \mathbf{tops} $ be the collection of maximal tritiles in $\S_{k,n}'$ with respect to the following order relation:  $
s \ll_1 s'
$ 
when $I_s \subset I_{s'}$ and $2\omega_{s'_1} \subset 2\omega_{s_1}$.
Note that the boxes $\{I_s \times 2\omega_{s_1}: s \in \mathbf{tops}\}$ are pairwise disjoint. For each $s \in \mathbf{tops}$, form the tree $\T(s)=\{s\}$ with top data $(I_\T, \xi_T)=(I_s,c(\omega_{s_1}))$. Now,   each $s' \in \S'_{k,n}$  is added to $\T(\bar s)$ where $\bar s$ is the tritile     with minimal $c (\omega_{s_1})$  among those $s \in \mathbf{tops}$  with $s'\ll_1 s$. We call $\F'_{k,n}:=\{\T=\T(s): s \in \mathbf{tops}\}$ the resulting partition of $\S_{k,n}'$.

To prove  \eqref{LpSkn}  for $\F_{k,n}'$,  recall that each tritile $s \in \mathbf{tops}$ belonged to a unique  tree   $\tilde \T(s) \in \G_{k,n}$. Observing that $\{\omega_{s_1}: s  \in \mathbf{tops}, \tilde \T(s)=\tilde \T \}$ have nonempty intersection, the intervals  $\{I_s: s  \in \mathbf{tops}, \tilde \T(s)=\tilde \T \}$, all contained in $I_{\tilde \T}$, must be pairwise disjoint. Hence,  
$$
\sum_{\T \in \F'_{k,n}} |I_\T| =\sum_{s  \in \mathbf{tops} } |I_s| = \sum_{\tilde \T \in\G_{k,n} } \sum_{s  \in \mathbf{tops} : \tilde \T=\tilde \T (s)} |I_s| \leq  \sum_{\tilde \T \in \G_{k,n}} |I_{\tilde\T}| \lesssim  2^{2n} , 
$$
and we have verified \eqref{LpSkn} for $\F_{k,n}'$. The argument for \eqref{LpSknbdinfty} is similar, with trees from the forest $\F_{k}'$ playing the role of the $\tilde \T$'s above.  This concludes our decomposition.
 \end{proof}

\section{Proofs  of the   main results} \label{sectpfthm}
\subsection{Proofs of Theorems \ref{propL1L2ag} to     \ref{thelinfversion}}
We will obtain our restricted type  estimates on $\Lambda_{\vec \beta}$ via the reduction to the  model sums \eqref{modelsums}, in particular, relying on Remark \ref{modelsumrestr}.
At this time, we make our choice of generating function $u$, and, consequently, of our mother function $\upsilon$ in the definition \eqref{thetileada} of $\upsilon_{s_j}$, taking $u:=u_1$ from the family \eqref{familyu}. Any other choice of the parameter $\lambda>0$  in \eqref{familyu} is  legal throughout our arguments. We invite the willing reader to check that alternative choices of $\lambda$ (or of $u$ altogether) do not bring  essential improvements to the estimate of Theorems \ref{prLpL2} and \ref{thelinfversion}, and bring no improvements at all to Corollary \ref{rwtTH}.

   Therefore, Theorems   \ref{propL1L2ag} to \ref{thelinfversion} will respectively follow  from the corresponding versions for the model sums $\tr_\S$ below.   We stress that the implicit constants appearing in the statements are  uniform  over all finite well-discretized collections of tritiles $\S$,  and the major set  $F_3'$ is explicitly chosen independently of $\S$. 
   \begin{theorem1s}    
Let $\vec\alpha=(\frac12,1,-\frac12)$. For\footnote{Note that, to unify notations in the proofs below, we have switched herein the role of the first and second argument with respect to the statement of Theorem \ref{propL1L2ag}.} $f_1 \in L^2(\R)$,  $|f_2|\leq \cic{1}_{F_2} $,   and $F_3 \subset \R $ of finite measure,  let $F_3'$ be the major subset of $F_3$ defined via \eqref{excset} by
$
F_3':= F_3(\vec \alpha, f_1,\cic{1}_{F_2})
$. Then, for all $   |f_3|\leq \cic{1}_{F_3'}$,
$$|\tr_\S(f_1,f_2,f_3)|\lesssim     \|f_1\|_{2}|F_2| |F_3|^{-\frac12}  \log \Big( \e+\textstyle \frac{|F_3|}{|F_2|} \Big). $$ 
\end{theorem1s}
\begin{theorem2s}    
Let $\vec\alpha=(\alpha_1,\alpha_2,-\frac12) \in \mathcal S_3 $. For $f_1 \in L^{\frac{1}{\alpha_1}}(\R)$, $|f_2|\leq \cic{1}_{F_2} $,   and $F_3 \subset \R $ of finite measure, 
let $F_3'$ be the major subset of $F_3$ defined via \eqref{excset} by
$
F_3':= F_3(\vec \alpha, f_1,\cic{1}_{F_2})
$. Then for all $  |f_3|\leq \cic{1}_{F_3'}$, we have the estimate  
$$|\tr_\S(f_1,f_2,f_3)|\lesssim \textstyle \frac{1}{(1-\alpha_1)(1-\alpha_2)}   \|f_1\|_{\frac{1}{\alpha_1}}|F_2|^{\alpha_2} |F_3|^{-\frac12}  \Big(\max\big\{ (1-\alpha_1)^{-1},  \log \big(  \textstyle \frac{|F_3|}{|F_2|} \big)\big\}\Big)^{2(1-\alpha_2) }_*.$$
\end{theorem2s}

 \begin{theorem3s}    
Let $0\leq \alpha_1 < 1$, $\vec\alpha=(\alpha_1,\frac12,\frac12-\alpha_1)$. For $f_1 \in L^{\frac{1}{\alpha_1}}(\R) $, $f_2 \in L^2(\R)$,  and $F_3 \subset \R $ of finite measure,  let  
$F_3'$ be the  major subset of $F_3$ defined via  \eqref{excset} by  $
F_3':= F_3(\vec \alpha, f_1,f_2).
$ Then, for all $  |f_3|\leq \cic{1}_{F_3'}$,
$$ |\tr_\S(f_1,f_2,f_3 )| \lesssim \textstyle\frac{1}{1-\alpha_1}\Big( \textstyle\frac{1}{1-\alpha_1}\Big)_*^{2\alpha_1-1} \|f_1\|_{\frac{1}{\alpha_1}} \|f_2\|_{2} |F_3|^{\frac12-\alpha_1}. $$ 
\end{theorem3s}
\begin{remark}
By the dyadic H\"older scaling invariance of the family $\tr_\S$ pointed out in Remark \ref{sinvariance}, we may assume that $|F_3|\sim 1$ in our proofs. 
Also, linearity in $f_1$ of  assumptions and conclusions for Theorem 1', 2', and in both $f_1,f_2$ for Theorem 3' allows us to work, in these cases,  with $f_1,f_2$ of unit norm in the respective spaces.
We will work in the range $\alpha_1>3/4$ (say) in our proof of Theorem 3', since the bounds in the complementary region are well-known (and uniform in $\alpha_1$) from \eqref{BHTnoform}. Noting   that the estimate of Theorem 3' is stronger than the one of Theorem 2' when $|F_2| \geq |F_3|$,   we may conveniently restrict to $|F_2| \leq |F_3|\sim 1$ when proving Theorem 2'.
Finally, to unify notation, we write ${h_2}=f_2$ if $f_2$ is unrestricted and ${h_2}=\cic{1}_{F_2}$ if $f_2$ is restricted to $F_2$.
\end{remark}
The first two steps  of the proof are shared among the  three theorems.   Recalling from   \eqref{esets} the definition
$$
E^{\vec \alpha}_{f_1,{h_2},F_3}=\Big\{\M_{\frac{1}{\alpha_1}} f_1 \gtrsim \frac{1}{|F_3|^{\alpha_1}}\Big\} \cup \Big\{\M_{\frac{1}{\alpha_2}} {h_2} \gtrsim \frac{\|{h_2}\|_{1/\alpha_2}}{|F_3|^{\alpha_2}}\Big\},$$
 we decompose 
\begin{equation} \label{sbigl1l2} \S=\S^{\mathrm{bad}} \cup \S^1, \qquad 
\S^{\mathrm{bad}}=\big\{s \in \S: I_s \subset   E^\alpha_{f_1,{h_2},F_3} \big\}, \quad \S^{1}=\S\backslash\S^{\mathrm{bad}}.\end{equation}
Clearly $|\tr_\S| \leq |\tr_{\S^{\mathrm{bad}}}| + |\tr_{\S^{1}}|$. We handle the $\tr_{\S^{\mathrm{bad}}}$ term by a straightforward application of Proposition \ref{excsetprop}, which gives
\begin{equation} \label{theexcsetfine}
|\tr_{\S^{\mathrm{bad}}}(f_1,f_2,f_3 )| \lesssim \|f_1\|_{\frac{1}{\alpha_1}} \|{h_2}\|_{\frac{1}{\alpha_2}} |F_3|^{\alpha_3}, \qquad \forall |f_3| \leq \cic{1}_{F_3'}. \end{equation}
Note that \eqref{theexcsetfine} complies with the required estimate for $\tr_\S$ in all three cases.

We now fix $|f_3| \leq \cic{1}_{F_3}$, and perform the $f_3$-decomposition of $\S^1$ of Subsection \ref{f3dec} into collections $\S_k$ complying with  \eqref{sizebdSK} to \eqref{bmoSK}, and in addition inheriting from $\S^1$ the property
\begin{equation}
\label{notinexcfine}
I_s \not \subset   E^{\vec\alpha}_{f_1,{h_2},F_3} \qquad \forall s \in \S_k.
\end{equation} 
The remaining part of the proof, consisting in the estimation of the right hand side of $$
|\tr_{\S^{1}}(f_1,f_2,f_3 )| \leq \sum_{k \geq 0} |\tr_{\S_k}(f_1,f_2,f_3 )|
$$
 is specific to each theorem.

\begin{proof}[Conclusion of the proof of Theorem 1'] Recall that $f_2$ is restricted, thus ${h_2}=\cic{1}_{F_2}$, and that we are assuming $\|f_1\|_2=1$, $|F_3|\sim 1$.
A consequence of \eqref{infmax} and \eqref{notinexcfine}  is that 
\begin{align*}
\size_2(f_2; \S_k) & \lesssim \sup_{s \in \S^1} \inf_{x \in I_s} \M_1 f_2 (x)\leq  \sup_{s \in \S^1} \inf_{x \in I_s} \M_1 {{h_2}} (x)  \lesssim \min\big\{1,     |F_2|  \big\}  \lesssim  2^{-n_0}  |F_2|  ^{\frac12},
\end{align*}
where we  have set $n_0 = \textstyle\frac12\big|      \log |F_2|  \big|$. 
The first bound after the second almost inequality sign is actually due  to $|{h_2}|\leq 1$.
At this point, we apply Proposition \ref{propL2SK} to each $\S_k$, and bound
\begin{align*} &\quad  
 \sum_{k \geq 0} |\tr_{\S_k}(f_1,f_2,f_3 )|  \lesssim 2^{-n_0}  \|f_1\|_2 \|{h_2}\|_2 \sum_{k \geq 0}\min\{1, (k-n_0)2^{-(k-n_0)}\}  \lesssim   \min\Big\{ 1, \log \big(\textstyle\frac{1}{|F_2|}\big)\Big\}   |F_2|   
\end{align*}
which, combined with \eqref{theexcsetfine}, finishes the proof of Theorem 1'.
\end{proof}

  \begin{proof}[Conclusion of the proof of Theorem 3'] For this theorem, $f_2$ is unrestricted, so ${h_2}=f_2$, and we  are assuming $\|f_1\|_{\frac{1}{\alpha_1}}=\|f_2\|_2=1$.
  Again, from \eqref{notinexcfine} and  Lemma \ref{infmax}, we learn that 
\begin{align*}
\size_2(f_2; \S_k) \leq \size_2(f_2; \S^1) \lesssim \sup_{s \in \S^1} \inf_{x \in I_s} \M_2 f_2 (x) \lesssim   |F_3|^{-\frac12} \sim 1
\end{align*}
Also in view of \eqref{notinexcfine}, the assumption \eqref{smallmf} of Proposition \ref{proptouse} is satisfied. Applying the proposition to each $\S_k$, with $n_0=0$,  observing that $2\alpha_1-1$ is bounded away from zero in our range $\alpha_1 >3/4$, and recalling $u(t) \lesssim t(\log(\e+t))^2$ and the notation $t_*=t(\log(\e+t))^3$, we find
\begin{align*}
& \quad \sum_{k \geq 0} |\tr_{\S_k}(f_1,f_2,f_3 )| \lesssim  \|f_1\|_{\frac{1}{\alpha_1}}\|f_2\|_2    \sum_{k \geq 0}  \big(  u(Ck)^2 \log u(Ck)  \big)^{\alpha_1-\frac12}  2^{- 2(1-\alpha_1) k}\\ & \lesssim       \sum_{k \geq 0}  \big(  k \big)_*^{2\alpha_1-1}  2^{- 2(1-\alpha_1) k}    \lesssim \textstyle\frac{1}{1-\alpha_1}\Big( \textstyle\frac{1}{1-\alpha_1}\Big)_*^{2\alpha_1-1}   .
 \end{align*}
  The proof of Theorem 3' is finished by combining the last display with \eqref{theexcsetfine}.
 \end{proof}
 
  \begin{proof}[Conclusion of the proof of Theorem 2'] Here $f_2$ is restricted, thus ${h_2}=\cic{1}_{F_2}$, and we are assuming $\|f_1\|_{\frac{1}{\alpha_1}}=1$.
  Also,  we only need to treat the case $|F_2| \leq |F_3| \sim 1$. As in the previous proofs, we take advantage of \eqref{notinexcfine} and of Lemma \ref{infmax} to obtain the inequality
\begin{align*}
\size_2(f_2; \S_k)& \lesssim \sup_{s \in \S^1} \inf_{x \in I_s} \M_{1} f_2 (x) \leq  \sup_{s \in \S^1} \inf_{x \in I_s} \M_{1} {h_2} (x)   = \Big(\sup_{s \in \S^1} \inf_{x \in I_s} \M_{\frac{1}{\alpha_2}} \cic{1}_{F_2} (x)\Big)^{\frac{1}{\alpha_2}} \lesssim     2^{-n_0}  |F_2|^{\frac12} ,
\end{align*}
where we  have set $n_0 = \textstyle - \frac12 \log |F_2|\geq 0  $.
We make use of \eqref{notinexcfine} to verify the remaining  assumption \eqref{smallmf} of Proposition \ref{proptouse}, and apply the proposition to each $\S_k$, 
estimating 
\begin{align*} & \quad 
  \frac{\sum_{k \geq 0} |\tr_{\S_k}(f_1,f_2,f_3 )|}{|F_2|^{\alpha_2}}\lesssim   \sum_{k =0}^{n_0}  \big(  k \big)_*^{2\alpha_1-1} 2^{ (2\alpha_1-1) (k-n_0)} + {\textstyle\frac{1}{2\alpha_1-1}}  \sum_{k >n_0}  \big(  k \big)_*^{2\alpha_1-1} 2^{ -2(1-\alpha_1) (k-n_0)} \\ &  \lesssim  {\textstyle\frac{1}{1-\alpha_2}} \big(  n_0 \big)_*^{2\alpha_1-1}  + {\textstyle\frac{1}{(1-\alpha_1)(1-\alpha_2)}} \Big( \max\Big\{ \textstyle\frac{1}{1-\alpha_1}, n_0\Big\}\Big)^{2\alpha_1-1}_* .
\end{align*}
The bound of the last display, together with \eqref{theexcsetfine}, yields Theorem 2'.
 \end{proof}
  
 \subsection{Proof of Corollary \ref{rwtTH}}  Using symmetry, we can   work  with tuples $\vec \alpha \in \mathcal S_3$ and  treat the case $\alpha_2\geq \alpha_1$.  
For tuples $\vec \alpha$ as such, specializing \eqref{propest} to  $|f_1|\leq \cic{1}_{F_1}$    yields the GRWT estimate   
 \begin{equation} \label{propestrest}
\big|\Lambda_{\vec\beta}(f_1,f_2,f_3)\big|  \lesssim    (1-\alpha_2)^{-1 } |F_1|^{\alpha_1}|F_2|^{\alpha_2} |F_3|^{-\frac12} \Big( \log \big(   \textstyle \frac{|F_3|}{|F_2|} \big) \Big)_*^{2(1-\alpha_2)} \qquad \forall \vec\alpha \in \mathcal S_3.
\end{equation}
 Fix an $\vec\alpha $ as above and a triple of sets $F_1,F_2,F_3$, and
let $0<\eps\leq 2(1-\alpha_2)$ to be chosen later. Let  $\vec a =(a_1,a_2,-\frac12) \in \mathcal S_3$ be the tuple with $a_2=1-\eps/2$:  given    $f_1,f_2$ restricted respectively to $F_1,F_2$, we may   apply \eqref{propestrest} with tuple $\vec a$ to bound
 \begin{equation} \label{propestrest2}
\big|\Lambda_{\vec\beta}(f_1,f_2,f_3)\big|   \lesssim   \eps^{-1} |F_1|^{a_1}|F_2|^{a_2} |F_3|^{-\frac12} \Big({\log} \big(   \textstyle \frac{|F_3|}{|F_2|} \big)\Big)_*^{\eps}\end{equation}
for all functions $f_3$ restricted to a major subset $F_3' \subset F_3$.     Switching the order of $F_1,F_2$ and replacing $F_3$ with  $F_3'$, we apply \eqref{propestrest}, again with tuple  $\vec a$, to  $\Lambda_{\sigma_{12}(\vec\beta)}$ instead, yielding
 \begin{align} \label{propestrest3}
\big|\Lambda_{\vec\beta}(f_1,f_2,f_3)\big|&=  \big|\Lambda_{\sigma_{12}(\vec\beta)}(f_2,f_1,f_3)\big|  \lesssim  \eps^{-1}  |F_2|^{a_1} |F_1|^{a_2} |F_3'|^{-\frac12} \Big({\log} \big(   \textstyle \frac{|F_3'|}{|F_1|} \big)\Big)_*^{\eps}\\ & \lesssim  \eps^{-1} |F_1|^{a_2}|F_2|^{a_1} |F_3|^{-\frac12} \Big({\log} \big(   \textstyle \frac{|F_3|}{|F_1|} \big)\Big)_*^{\eps}\nonumber 
\end{align}
for all functions $f_3$ restricted to a major subset $F_3'' \subset F_3'$, which (with different constant) is also a major subset of $F_3$.  Taking the $\vartheta$-geometric mean of \eqref{propestrest2} and \eqref{propestrest3},  for  $\frac12\leq \vartheta \leq 1 $ such that
$
	\alpha_1=\vartheta a_1 + (1-\vartheta) a_2, \alpha_2=\vartheta a_2 + (1-\vartheta) a_1,$  we obtain that for all $|f_3| \leq\cic{1}_{ F_3''}$  
 \begin{align*}  
|\Lambda_{\vec\beta}(f_1,f_2,f_3)| &\lesssim \eps^{-1} A^{\eps} |F_1|^{\alpha}|F_2|^{\alpha_\star} |F_3|^{-\frac12}, \qquad A:=  \Big({\log} \big(   \textstyle \frac{|F_3|}{\min{\{|F_1|,|F_2|\}}} \big) \Big)_*; \end{align*}
 estimate \eqref{propest2} then follows by taking $\eps= \min\{2(1-\alpha_2),(\log A)^{-1}\}$.

 \section{Interior estimates and Lorentz-Orlicz bounds for $\mathrm{BHT}_{\vec{b}}$} \label{sectintcor}
In this section, we list a number of corollaries following from our main theorems.    The proofs are given in the forthcoming Section \ref{sect2new}.
\subsection{Blowup rates of interior estimates} 
The endpoint bounds of our main results can be  equivalently reformulated as estimates, of the appropriate type,  for tuples $\vec{\alpha} \in \mathrm{int}\, \mathcal H$  with controlled dependence of the constants on  the  distances from $\vec{\alpha}$  to each side of $\partial \mathcal H$. We parametrize our tuples  by
\begin{align} \label{geometry} &
\vec \alpha (\varrho, \delta) =\textstyle \big(1-\varrho, \frac12+\varrho-\delta, -\frac12 +\ \delta\big), \quad q(\delta)= \big(\frac32-\delta\big)^{-1},
\\ &\textstyle \nonumber 0< \varrho \leq \frac14+2^{-5},\quad 0<\delta \leq \frac12+\varrho, \quad\min\{\varrho, \delta\} \leq 2^{-10}.
\end{align}  
The restrictions on $\varrho,\delta$ correspond  to approaching $\partial \mathcal H$ within the darker shaded region in Figure \ref{hexafig}. Estimates for other tuples near $\partial\mathcal H$ can be   recovered by symmetry considerations.
  
The first corollary is devoted to $L^{q(\delta),\infty}$ estimates. 
\begin{corollary} Let $\vec \alpha=\vec \alpha(\varrho, \delta), q=q(\delta)$ be as in \eqref{geometry}. \label{interiorzero} 
We   have the estimates
\begin{align}
& \label{introapproach}   \|\mathrm{BHT}_{\vec b}(f_1,f_2)\|_{q,\infty} \leq {C_{\vec\beta}} \max\big\{\varrho^{-1}, |\log \delta|\big\}|F_1|^{\alpha_1} |F_2|^{\alpha_2},   \qquad \forall\, |f_1| \leq \cic{1}_{F_1}, \, |f_2| \leq \cic{1}_{F_2};\\
&
\|\mathrm{BHT}_{\vec b}(f_1,f_2)\|_{q,\infty} \leq C_{\vec\beta}  \varrho^{-1}\big(\max\big\{\varrho^{-1},\delta^{-1}\}\big)_*^{1-2\varrho} \|f_1\|_{\frac{1}{\alpha_1} } |F_2|^{\alpha_2},   \quad \forall \, |f_2| \leq \cic{1}_{F_2}; \label{introapproachone}\\
&  \|\mathrm{BHT}_{\vec b}(f_1,f_2)\|_{q,\infty} \leq C_{\vec\beta}  \varrho^{-1}\max\Big\{1, \textstyle\Big(\frac{\varrho}{\delta}\Big)_*^{2\varrho}\Big\} |F_1|^{\alpha_1}\|f_2\|_{\frac{1}{\alpha_2} }   \quad \forall \, |f_1| \leq \cic{1}_{F_1}; \label{introapproachtwo} \\
& \|\mathrm{BHT}_{\vec b}(f_1,f_2)\|_{q,\infty} \leq C_{\vec\beta} \|f_1\|_{\frac{1}{\alpha_1} }\|f_2\|_{\frac{1}{\alpha_2} }  \varrho^{-1} \begin{cases}(\varrho^{-1})_*^{1-2\varrho}  & \varrho \leq \delta, \\ \delta^{-\frac{1}{q}} \textstyle\Big(\frac{\varrho}{\delta}\Big)^{2\varrho} & \varrho> \delta.\end{cases} \label{introapproachthree}
\end{align}
 \end{corollary}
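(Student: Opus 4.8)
\textbf{Plan of proof for Corollary \ref{interiorzero}.} The strategy is to derive each of the four weak-type bounds \eqref{introapproach}--\eqref{introapproachthree} from the corresponding restricted-type estimate on the trilinear form (Theorem \ref{propL1L2ag}/Theorem \ref{prLpL2}/Theorem \ref{thelinfversion}/Corollary \ref{rwtTH}) together with a standard off-diagonal Marcinkiewicz-type interpolation, and then to track the blowup of the constants explicitly in the geometry parameters $\varrho,\delta$. The starting observation is that the tuple $\vec\alpha(\varrho,\delta)$ in \eqref{geometry} has $1-\alpha_1=\varrho$, $1-\alpha_2=\varrho-\delta$ up to the harmless normalizations, so that the distance to the long sides of $\partial\mathcal H$ is comparable to $\varrho$ and $\delta$, while $\alpha_3=-\frac12+\delta$, whence $q(\delta)=(1-\alpha_3)^{-1}$ and the space $L^{q(\delta),\infty}$ is exactly the weak-type target. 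Writing \eqref{rwtintro} in the equivalent dual form $\|\mathrm{BHT}_{\vec b}(f_1,f_2)\|_{q,\infty}\leq C\prod_{j=1,2}\|f_j\|_{\text{(appropriate)}}$ is the routine passage recorded in the introduction after \eqref{tuples}.

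The four cases are handled in increasing order of difficulty. For \eqref{introapproach}, both $f_1,f_2$ are subindicators: here one invokes Corollary \ref{rwtTH} in the region $\mathcal S_3$ after first letting $\delta\to 0$ along the short side — but since we want an \emph{interior} estimate one instead interpolates \eqref{propest2} at a boundary tuple on $\mathcal S_3$ against a deeply interior estimate from \eqref{BHTnoform}, which produces the $\max\{\varrho^{-1},\log\log(\cdots)\}$; one then absorbs the set-ratio into the geometry by noting that the worst case of $|F_3|/\min|F_j|$ over the restricted-type reformulation contributes $|\log\delta|$ (this is the classical mechanism by which a $\log\log$ in the corollary becomes a $|\log\delta|$ after the Marcinkiewicz summation over level sets, exactly as in \cite{BG2}). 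For \eqref{introapproachone} we lift the subindicator on $f_1$ and quote Theorem \ref{prLpL2}: the exponent $2\alpha_1-1=1-2\varrho$ of the starred term and the prefactor $\frac{1}{(1-\alpha_1)(1-\alpha_2)}=\frac{1}{\varrho(\varrho-\delta)}$ match the claimed $\varrho^{-1}(\max\{\varrho^{-1},\delta^{-1}\})_*^{1-2\varrho}$ once one checks that the $\max\{\log(|F_3|/|F_2|),\frac{1}{1-\alpha_1}\}$ inside $(\cdot)_*$ becomes, after summing the weak-type level sets, $\max\{\varrho^{-1},\delta^{-1}\}$. Estimate \eqref{introapproachtwo} is the symmetric statement lifting $f_2$ instead of $f_1$ — one uses the version of Theorem \ref{prLpL2} with the roles of $A$ and $B$ interchanged — and here the $\max\{1,(\varrho/\delta)_*^{2\varrho}\}$ reflects that when $\delta$ is comparable to or larger than $\varrho$ the starred correction is harmless. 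Finally \eqref{introapproachthree} lifts both subindicators and is the weak-type reformulation of Theorem \ref{thelinfversion} in the regime $\varrho\le\delta$ (giving $(\varrho^{-1})_*^{1-2\varrho}$) glued, via Marcinkiewicz interpolation as in Lemma \ref{interpolemma}, with the symmetric estimate \eqref{introapproachtwo} in the regime $\varrho>\delta$ (giving the $\delta^{-1/q}(\varrho/\delta)^{2\varrho}$ branch).

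The main technical obstacle is the passage from restricted-type or strong-type trilinear estimates, which carry the \emph{logarithmic} (resp.\ doubly-logarithmic, resp.\ starred) dependence on the set-measure ratio $|F_3|/\min\{|F_1|,|F_2|\}$, to genuine $L^{q,\infty}$ bounds on $\mathrm{BHT}_{\vec b}$ with an \emph{explicit} constant in $\varrho,\delta$. One must run the standard argument that produces a weak-$L^q$ bound from a family of restricted-type bounds — decomposing into level sets of $\mathrm{BHT}_{\vec b}(f_1,f_2)$ and summing the restricted-type estimate over dyadic scales of $|F_3|$ — while keeping careful bookkeeping of how the summation interacts with the slowly-varying factor $(\cdot)_*$ and with the prefactor $(1-\alpha_1)^{-1}(1-\alpha_2)^{-1}$; the delicate point is that the geometric sum over scales converges with a constant comparable to $(1-\alpha_1)^{-1}$ only because $2\alpha_1-1>0$ in the relevant range, and one must verify that the extra $\varrho^{-1}$ thereby generated does not double-count the one already present. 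I expect this bookkeeping, together with correctly identifying which of the two competing logarithmic terms ($|\log\delta|$ versus $\varrho^{-1}$) dominates in each sub-regime, to be the only real work; everything else is a direct citation of the theorems already proved.
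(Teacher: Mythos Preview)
Your plan has a genuine gap: you treat the passage from the boundary GRWT bounds (at $\alpha_3=-\tfrac12$, or at $\alpha_2=\tfrac12$ for Theorem~\ref{thelinfversion}) to an \emph{interior} weak-type bound at $\vec\alpha(\varrho,\delta)$ as ``Marcinkiewicz summation over level sets of $|F_3|$''. That is not the mechanism, and in fact there is no summation over $|F_3|$ at all --- the weak-type norm is a supremum over $F_3$ via the equivalence \eqref{rwtDEF}. What the paper actually does for, say, \eqref{introapproach} when $\delta\le\varrho/2$, is apply \eqref{propest2} at the \emph{shifted} boundary tuple $\vec a=(1-\varrho+\delta,\alpha_2,-\tfrac12)\in\mathcal S_3$; this produces an extra factor $(|F_1|/|F_3|)^{\delta}$ multiplying the $\log\log$, and the pointwise optimization of $t\mapsto t^{\delta}\log\log(\e^\e+t^{-1})$ over $t\in(0,1]$ is what converts $\log\log$ into $|\log\delta|$. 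Your narrative never identifies this shift-and-optimize step.

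More seriously, that shift is only available when $\delta<\varrho$ (otherwise $a_1\geq 1$ and $\vec a\notin\mathcal S_3$). For the complementary regime the paper relies on a separate duality lemma, Lemma~\ref{uptuple}, which uses the essential self-adjointness of $\mathrm{BHT}_{\vec b}$ to slide the weak-type bound along the segment from $\alpha_3=-\tfrac12+\tfrac\varrho2$ to $\alpha_3=-\tfrac12+\delta$ at the cost of a harmless factor $|\alpha_3|^{-1}$. This lemma (or some substitute) is indispensable for \eqref{introapproach} and \eqref{introapproachone} when $\delta>\varrho$, and you have nothing in its place. Similarly, for \eqref{introapproachtwo} and \eqref{introapproachthree} in the regime $\varrho\le\delta$ the paper interpolates (via Lemma~\ref{weakintlemma}) between the estimate at $\alpha_2=\tfrac12$ and the estimate at $\alpha_2=0$ already obtained; your proposal to ``quote Theorem~\ref{thelinfversion}'' directly does not cover these tuples, since that theorem lives only on the bisector $\alpha_2=\tfrac12$. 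Your treatment of \eqref{introapproachthree} in the regime $\varrho>\delta$ via Lemma~\ref{interpolemma} applied to \eqref{introapproachtwo} is correct and matches the paper.
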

The second  deals with strong-type estimates. \begin{corollary} \label{weakstrong}  Let $\vec \alpha=\vec \alpha(\varrho,\delta), q=q$ be as in \eqref{geometry}. Then \begin{align}  
&
\|\mathrm{BHT}_{\vec b}(f_1,f_2)\|_{q } \leq {C_{\vec\beta}}   \frac{ \max\{\varrho^{-1}, |\log \delta|\}} {(\min\{\varrho, \delta\})^{\max\{1,\frac{1}{q }\}}}|F_1|^{\alpha_1} |F_2|^{\alpha_2},   \qquad \forall |f_1| \leq \cic{1}_{F_1},\,|f_2| \leq \cic{1}_{F_2};  \label{rstronga}
\\
& \|\mathrm{BHT}_{\vec b}(f_1,f_2)\|_{q} \leq  {C_{\vec\beta}}     \frac{ \max\{\varrho^{-1}, |\log \delta|\}} {(\min\{\varrho, \delta\})^{2\max\{1,\frac{1}{q}\}}}  \|f_1\|_{\frac{1}{\alpha_ 1}} \|f_2\|_{\frac{1}{\alpha_ 2}}.\label{stronga}
\end{align}
\end{corollary}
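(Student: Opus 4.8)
Both displays will be deduced from the weak-type bounds of Corollary \ref{interiorzero} by a Marcinkiewicz-type real interpolation, of the kind packaged in Lemma \ref{interpolemma}, with all constants tracked explicitly. The structural point is that \eqref{introapproach} (needed for \eqref{rstronga}), and, once \eqref{rstronga} is known, the restricted strong-type bound \eqref{rstronga} itself (needed for \eqref{stronga}), are available not only at the fixed tuple $\vec\alpha(\varrho,\delta)$ but along the one-parameter families obtained by perturbing $\delta$ — and, for \eqref{stronga}, also $\varrho$ — while staying inside $\mathcal H$, with target exponent $q(\delta')=(\tfrac32-\delta')^{-1}$ sweeping a neighbourhood of $q(\delta)$ and the scaling relation $\tfrac1{q(\delta')}=\alpha_1(\varrho,\delta')+\alpha_2(\varrho,\delta')$ preserved throughout, so that distinct members of the family are compatible for interpolation. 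The distance of $q(\delta)$ to the endpoints $\tfrac23$ and $(1-\varrho)^{-1}$ of the admissible range of target exponents is comparable to $\min\{\varrho,\delta\}$, and this is the quantity that will govern the interpolation loss; in the complementary regime (target exponent bounded away from $\tfrac23$) the estimate is essentially interior and uniform, and may be read off from \eqref{BHTnoform}.

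\textbf{Step 1: proof of \eqref{rstronga}.} Put $g=\mathrm{BHT}_{\vec b}(f_1,f_2)$ with $|f_j|\leq\cic{1}_{F_j}$, and use the layer-cake identity $\|g\|_q^q=q\int_0^\infty \lambda^{q-1}|\{|g|>\lambda\}|\,\d\lambda$. Set $\lambda_\star:=C_{\vec\beta}\max\{\varrho^{-1},|\log\delta|\}\,|F_1|^{\alpha_1}|F_2|^{\alpha_2}$, split the integral into dyadic shells $\lambda\in[2^m\lambda_\star,2^{m+1}\lambda_\star)$, and on each shell estimate $|\{|g|>\lambda\}|$ by \eqref{introapproach} at a parameter $\delta'=\delta'(m)$ chosen so as to minimise $\big(C_{\vec\beta}\max\{\varrho^{-1},|\log\delta'|\}\,|F_1|^{\alpha_1(\varrho,\delta')}|F_2|^{\alpha_2(\varrho,\delta')}\lambda^{-1}\big)^{q(\delta')}$ over the admissible range of $\delta'$; concretely $\delta'>\delta$ for $m\geq 0$ and $\delta'<\delta$ for $m<0$, with $|q(\delta')-q(\delta)|\asymp\min\{\varrho,\delta\}$ once $|m|$ exceeds an absolute constant. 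The shells with $|m|$ large then sum to a geometric-type series with ratio $2^{-c\min\{\varrho,\delta\}}$, while the $O(\min\{\varrho,\delta\}^{-1})$ central shells are bounded using only $\max\{\varrho^{-1},|\log\delta'|\}\lesssim\max\{\varrho^{-1},|\log\delta|\}$ on that range. Collecting the contributions gives $\|g\|_q^q\lesssim\big(\max\{\varrho^{-1},|\log\delta|\}\big)^q(\min\{\varrho,\delta\})^{-\max\{q,1\}}\big(|F_1|^{\alpha_1}|F_2|^{\alpha_2}\big)^q$; since $\max\{q,1\}/q=\max\{1,1/q\}$, taking $q$-th roots yields \eqref{rstronga}.

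\textbf{Step 2: proof of \eqref{stronga}.} Now $f_1\in L^{1/\alpha_1}$ and $f_2\in L^{1/\alpha_2}$ are arbitrary. Decompose $f_1=\sum_i f_1^{\,i}$, $f_2=\sum_\ell f_2^{\,\ell}$ into $L^\infty$-layers with $|f_1^{\,i}|\sim 2^i\cic{1}_{F_1^i}$, $|f_2^{\,\ell}|\sim 2^\ell\cic{1}_{F_2^\ell}$, apply to each pair $(f_1^{\,i},f_2^{\,\ell})$ the restricted strong-type inequality \eqref{rstronga} at a tuple $\vec\alpha(\varrho',\delta')$ obtained from $\vec\alpha(\varrho,\delta)$ by a displacement of size $\lesssim\min\{\varrho,\delta\}$ inside $\mathcal H$ — which trades a small change of $\alpha_1,\alpha_2$ against a small change of the target exponent and keeps $\max\{\varrho'^{-1},|\log\delta'|\}\lesssim\max\{\varrho^{-1},|\log\delta|\}$ — and sum the resulting double geometric series over $i,\ell$. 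The room available inside $\mathcal H$ is again $\asymp\min\{\varrho,\delta\}$, so the two summations together cost exactly one extra factor $(\min\{\varrho,\delta\})^{-\max\{1,1/q\}}$ relative to \eqref{rstronga}, turning the exponent $\max\{1,1/q\}$ of \eqref{rstronga} into the $2\max\{1,1/q\}$ of \eqref{stronga}, with the scalar $\max\{\varrho^{-1},|\log\delta|\}$ carried over unchanged. This is \eqref{stronga}.

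\textbf{Main obstacle.} The non-routine part is the quantitative bookkeeping: one must verify that the per-scale optimal parameter $\delta'(m)$ of Step 1 and the displaced tuple $\vec\alpha(\varrho',\delta')$ of Step 2 always remain in the admissible region $\min\{\varrho',\delta'\}\leq 2^{-10}$, $0<\delta'\leq\tfrac12+\varrho'$, with weak-type constant still $\lesssim\max\{\varrho^{-1},|\log\delta|\}$, and that the geometric series genuinely sum to the asserted power $\max\{1,1/q\}$ of $\min\{\varrho,\delta\}$, neither better nor worse. In particular the crossover between the regimes $\varrho\leq\delta$ and $\varrho>\delta$ — deciding whether it is the room in the $\varrho$-direction or in the $\delta$-direction that throttles the interpolation — has to be handled as two separate cases, which is where the two-case structure already visible in Corollary \ref{interiorzero} re-enters the analysis.
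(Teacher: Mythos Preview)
Your strategy is the same as the paper's at the top level: both deduce Corollary \ref{weakstrong} from the restricted weak-type bound \eqref{introapproach} of Corollary \ref{interiorzero} by quantitative real interpolation. The paper, however, does not split into your two steps. For $q\leq 1$ it invokes Lemma \ref{interp}, which perturbs the tuple in the full two-dimensional neighborhood $O_\varepsilon(\vec\alpha)$ with $\varepsilon=\tfrac12\min\{\varrho,\delta\}$ and, via a single level-set decomposition of $(f_1,f_2)$ combined with a per-piece layer-cake, outputs both \eqref{rstronga} and \eqref{stronga} at once with constants $\varepsilon^{-1/q}K$ and $\varepsilon^{-2/q}K$. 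For $q>1$ the hypothesis $\alpha_3\leq 0$ of that lemma fails, and the paper instead writes $\vec\alpha$ as a convex combination of a tuple $\vec\gamma$ on the line $\alpha_3=0$ (where the $q\leq 1$ case applies) and the interior point $\vec\omega=(\tfrac12,\tfrac16,\tfrac13)$, and concludes by complex interpolation.

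Your two-step organization runs into a difficulty in Step 2 that is not just bookkeeping. You propose to apply \eqref{rstronga} to each piece $(f_1^{\,i},f_2^{\,\ell})$ at a displaced tuple $\vec\alpha(\varrho',\delta')$, allowing a ``small change of the target exponent''. But if $\delta'\neq\delta$ the output norm is $\|\cdot\|_{q(\delta')}$, not $\|\cdot\|_q$, and these quantities cannot be combined by any form of the $L^q$ triangle inequality. If instead you keep $\delta'=\delta$ and vary only $\varrho'$, then all pieces land in $L^q$ and, for $q\leq 1$, the summation closes via H\"older with exponents $(q\alpha_1)^{-1},(q\alpha_2)^{-1}$ --- conjugate precisely because $q(\alpha_1+\alpha_2)=1$ --- recovering the extra factor $(\min\{\varrho,\delta\})^{-1/q}$ you claim. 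For $q>1$, however, $q(\alpha_1+\alpha_2)<1$, no conjugate pair is available, and the double sum is controlled only by the Lorentz norms $\|f_j\|_{1/\alpha_j,q}$, which are strictly larger than $\|f_j\|_{1/\alpha_j}$. Your one-parameter Step 1 has a parallel issue when $q>1$ and $\alpha_2$ is small: the room to \emph{increase} $\delta$ is only $\alpha_2=\tfrac12+\varrho-\delta$, which can be much smaller than $\min\{\varrho,\delta\}=\varrho$, so the claimed power of $\min\{\varrho,\delta\}$ is not attained. This is exactly why the paper treats $q>1$ by a separate device; your sketch would need to do the same, or replace the one-parameter perturbations by genuinely two-dimensional ones as in Lemma \ref{interp}.
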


\begin{figure}[!ht] 
    \centering
\begin{tabular}[b]{|l|cc|ccc|}   \hline \hline
 Est. @ $\vec \alpha(\varrho,\delta)$, $q=q(\delta)$, see \eqref{geometry}     &  \multicolumn{2}{c|}{   $\varrho =\textrm{dist}(\vec \alpha, \overline{CA}) \to 0$ }  &   \multicolumn{3}{c|}{   $\delta= \textrm{dist}(\vec \alpha, \overline{AB})\to 0$  } 
   \\ \hline & $\mathrm{BHT}$     & $\mathsf Q$\cite{DD2} & $\mathrm{BHT}$  & $\mathrm{BHT}$\cite{BG}  & $\mathsf Q$\cite{DD2}\\  \hline
 $L^{\frac{1}{\alpha_1},\mathrm{rst}} \times   L^{\frac{1}{\alpha_2},\mathrm{rst}} \to  L^{q,\infty}$, \eqref{introapproach} &    $\varrho^{-1}$   & $\varrho^{-1}$ &    $|\log \delta|$ &  $\delta^{-1}$ & 1\\ 

\hline
  {$L^{\frac{1}{\alpha_1}} \times   L^{\frac{1}{\alpha_2},\mathrm{rst}} \to  L^{q,\infty}$, \eqref{introapproachone}} & {$\varrho^{-1} (\varrho^{-1})_*^{1-2\varrho}$}  &
 {$\varrho^{-1}$} &   $(\delta^{-1})_*^{1-2\varrho}$ &  {N/A} &   {1} \\  
  
 \hline

  $L^{\frac{1}{\alpha_1},\mathrm{rst}} \times   L^{\frac{1}{\alpha_2}} \to  L^{r,\infty}$, \eqref{introapproachtwo}  & \multicolumn{2}{c|}{  $\varrho^{-1}$  } &   
   $\textstyle(\delta^{-1})_*^{2\varrho}$ & N/A & 1 \\
 \hline
 $L^{\frac{1}{\alpha_1}}\times L^{\frac{1}{\alpha_2}} \to L^{q,\infty}$, \eqref{introapproachthree}   &  $\varrho^{-1}  (\varrho^{-1})_*^{1-2\varrho} $  &$\varrho^{-1}$  &$\delta^{-\frac{1}{q}}|\log \delta|$ & $\delta^{-(1+\frac{1}{q})}$ & $\delta^{-\frac{1}{q}}$ \\

\hline 
 $L^{\frac{1}{\alpha_1},\mathrm{rst}}\times L^{\frac{1}{\alpha_2},\mathrm{rst}} \to L^{q}$, \eqref{rstronga}  & \multicolumn{2}{c|}{ $\varrho ^{-(1+\frac1q)}$ } & $\delta^{-\frac1q}|\log \delta|$ & $\delta^{-(1+\frac1q)}$ & $\delta^{-\frac1q}$
 \\ \hline
 
$L^{\frac{1}{\alpha_1}}\times L^{\frac{1}{\alpha_2},\mathrm{rst}}  \to L^{q }$  & \multicolumn{2}{c|}{ $\varrho^{-(1+\frac2q)}$ } & $\delta^{-\frac2q}|\log \delta|$ & $\delta^{-(1+\frac2q)}$ & $\delta^{-\frac2q}$
 \\ \hline
$ L^{\frac{1}{\alpha_1},\mathrm{rst}}\times L^{\frac{1}{\alpha_2}}  \to L^{q }$   & \multicolumn{2}{c|}{ $\varrho^{-(1+\frac2q)}$ } & $\delta^{-\frac2q}|\log \delta|$ & $\delta^{-(1+\frac2q)}$ & $\delta^{-\frac2q}$ \\ \hline 
 
$L^{\frac{1}{\alpha_1}}\times L^{\frac{1}{\alpha_2}} \to L^{q }$, \eqref{stronga}    & \multicolumn{2}{c|}{ $\varrho^{-(1+\frac2q)}$ } & $\delta^{-\frac2q}|\log \delta|$ & $\delta^{-(1+\frac2q)}$ & $\delta^{-\frac2q}$  \\ \hline

    \end{tabular}
    \captionlistentry[table]{Summary of interior estimates for $\mathrm{BHT}$ near $\partial \mathcal H$}
    \captionsetup{labelformat=table}
   \caption{ Blowup rates near the  $\overline{CA}$ and $\overline{AB}$ sides of $\mathcal H $ away from the endpoint $A$.
   We recall that $(t)_*=(1+t)(\log(\e+ t))^3$.  The first four rows come from Corollary \ref{interiorzero}. Rows five and eight    are obtained from Corollary \ref{weakstrong}. The sixth and seventh rows are  obtained by specializing the corresponding estimate of  line eight.  \label{tableside}}
  \end{figure}

In Table \ref{tableside}, we  summarize the blow-up rates of    eight possible types of interior estimates as the tuple $\vec \alpha$ approaches the segments $\overline{CA}$, $\overline{AB}$ away from the endpoint $A$  on the boundary of the shaded  hexagon  $\mathcal H$   in Figure \ref{hexafig}. We use  the results of Corollaries \ref{interiorzero} and \ref{weakstrong}, including for comparison the corresponding estimates following, with the same methods, from the endpoint results of   \cite{BG},  and \cite{DD2} for the Walsh case, mentioned in the introduction. 
Note that the behavior of the estimates of Corollaries \ref{interiorzero} and  \ref{weakstrong} near the corners $A$ (where both parameters $\varrho, \delta$ can go to zero at the same time) and $C$ (where $\delta$ is away from zero) can be read  directly from the corollaries.

 \subsection{Lorentz-Orlicz space estimates}
 In the   same spirit of the article  \cite{CGMS},    we detail several Lorentz-Orlicz spaces bounds near   H\"older tuples $(p_1,p_2, \frac23)$. 
 The first one   is obtained from Corollary \ref{rwtTH}, improving the logarithmic bumps in   \cite[Section 4.1]{CGMS} to doubly logarithmic ones.
\begin{corollary} \label{correstr}
Define the Lorentz-Orlicz quasinorms
\begin{align*}
&\|f\|_{L^{\frac23,\infty}(\log\log L)^{-1}(\R)}:= \sup_{t>0} \frac{t^{\frac32}f^\star (t)}{	\log\log(\e^\e+t)},  \\ & 
 \|f\|_{L^{p,\frac23} (\log\log L)^{\frac23}(\R)}:= \Big\|t^{\frac1p} \textstyle  \log\log\big(\e+\frac1t\big)f^\star(t) \Big\|_{L^{\frac23}(\R; \frac{\d t}{t})},\qquad 0<p <\infty.
\end{align*}
 Let $1<p_1,p_2<2, \frac{1}{p_1}+\frac{1}{p_2} = \frac32$. We have the estimate 
  \begin{equation*} 
\|\mathrm{BHT}_{\vec b}(f_1,f_2)\|_{L^{\frac23,\infty}(\log\log L)^{-1}(\R)} \leq {C_{\vec\beta}}     \prod_{j=1}^2 (p_j)'\|f_j\|_{L^{p_j,\frac23} (\log\log L)^\frac23 (\R)}. 
 \end{equation*} 
   \end{corollary}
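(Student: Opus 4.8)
The plan is to obtain Corollary \ref{correstr} by interpolating the doubly logarithmic generalized restricted weak-type bound of Corollary \ref{rwtTH} against the test function $f_3$ and summing up layer by layer. First I would reduce to the bilinear form: it suffices to prove that for every measurable $F_3 \subset \R$ of finite measure there is a major subset $F_3' \subset F_3$ such that
\begin{equation*}
\big| \l \mathrm{BHT}_{\vec b}(f_1,f_2), f_3 \r \big| \leq C_{\vec \beta} \prod_{j=1}^2 (p_j)' \|f_j\|_{L^{p_j,\frac23}(\log\log L)^{\frac23}(\R)}\, |F_3|^{-\frac12} \log\log\Big(\e^\e + \textstyle\frac{|F_3|}{\min\{|F_1|,|F_2|\}}\Big)^{-1} \cdot (\ast)
\end{equation*}
for all $|f_3| \leq \cic{1}_{F_3'}$, where the factor $(\ast)$ is whatever makes the Lorentz-Orlicz quasinorm on the left side reproduce itself; concretely, since the weak $L^{2/3}$ Lorentz-Orlicz quasinorm is (equivalent to) the supremum over $F_3$ of $|F_3|^{-1/2}(\log\log(\e^\e+|F_3|))^{-1}|\l \mathrm{BHT}(f_1,f_2),\cic{1}_{F_3'}\r|$ by the standard duality for Lorentz spaces — see the discussion around \cite{MTT} — the problem becomes bounding $|\l \mathrm{BHT}(f_1,f_2),\cic{1}_{F_3'}\r|$ by $C_{\vec\beta}\prod (p_j)'\|f_j\|_{L^{p_j,\frac23}(\log\log L)^{2/3}} |F_3|^{1/2}\log\log(\e^\e+|F_3|)$.

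The second step is the Calder\'on-Zygmund-type layer cake decomposition of $f_1$ and $f_2$. Write $f_j = \sum_{m \in \Z} f_j \cic{1}_{\{2^{m}< |f_j| \leq 2^{m+1}\}} =: \sum_m f_{j,m}$, and set $F_{j,m} := \{|f_j| > 2^m\}$, so $|f_{j,m}| \leq 2^{m+1}\cic{1}_{F_{j,m}}$. Split the trilinear form as $\sum_{m_1,m_2} \l \mathrm{BHT}(f_{1,m_1},f_{2,m_2}),\cic{1}_{F_3'}\r$. To each pair $(m_1,m_2)$ apply Corollary \ref{rwtTH} with the tuple $\vec\alpha = (1-\frac{1}{p_1}, 1-\frac{1}{p_2}, -\frac12) \in \mathcal S_3$ (so $\alpha_3 = -\frac12$ and $j=3$; note $1-\alpha_1 = \frac{1}{p_1}$, $1-\alpha_2 = \frac1{p_2}$, so $(1-\alpha_k)^{-1} = p_k$ and the constant there is $\max\{\max\{p_1,p_2\}, \log\log(\e^\e + |F_3|/\min\{|F_{1,m_1}|,|F_{2,m_2}|\})\}$ — which is at most $\max\{p_1,p_2\} + \log\log(\e^\e+|F_3|)$ after using $|F_{j,m}|\le |F_3|$ when the term contributes and discarding the rest, bounded by a constant times $(p_1)'(p_2)'\log\log(\e^\e+|F_3|)$ once we also absorb $(p_j)'\geq p_j$ up to constants). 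The key technical point is that the major subset $F_3'$ provided by Corollary \ref{rwtTH} \emph{depends on the pair $(F_{1,m_1},F_{2,m_2},F_3)$}, so one must intersect over all relevant pairs; this is legitimate precisely because, as remarked in Remark \ref{modelsumrestr} and the GRWT formalism, one can take the intersection of countably many major subsets as long as the series of measures of the complements converges geometrically, which will be arranged by the summability of the layers below. Thus we obtain
\begin{equation*}
\big| \l \mathrm{BHT}(f_1,f_2),\cic{1}_{F_3'} \r \big| \lesssim (p_1)'(p_2)' \log\log(\e^\e + |F_3|) \, |F_3|^{-\frac12} \sum_{m_1,m_2} 2^{m_1+m_2} |F_{1,m_1}|^{1-\frac{1}{p_1}} |F_{2,m_2}|^{1-\frac1{p_2}} .
\end{equation*}

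The third and final step is to recognize the double sum as (a constant multiple of) $\prod_{j=1}^2 \|f_j\|_{L^{p_j,\frac23}(\log\log L)^{2/3}}$ — or rather the slightly weaker Lorentz quasinorm without the doubly-logarithmic bump, which is dominated by it — by separating variables: $\sum_{m_1,m_2} = \big(\sum_{m_1} 2^{m_1}|F_{1,m_1}|^{1-1/p_1}\big)\big(\sum_{m_2} 2^{m_2}|F_{2,m_2}|^{1-1/p_2}\big)$, and each single sum is comparable, by the standard comparison between the distribution function and the decreasing rearrangement, to $\|f_j\|_{L^{p_j,1}}$, which embeds in $L^{p_j,2/3}(\log\log L)^{2/3}$; here the factor $(p_j)'$ comes out of the normalization constant in the equivalence between the $\ell^1$-type layer sum and the integral $\int_0^\infty t^{1/p_j} f_j^\star(t)\,\frac{\d t}{t}$ when $p_j \to 1$ or $p_j\to 2$, which is where $(p_j)'$ blows up — this is the same phenomenon as in \cite[Section 4.1]{CGMS}. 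I expect the main obstacle to be the bookkeeping of the major subset $F_3'$: one has to arrange the double layer sum so that the complements $|F_3 \setminus F_{3}'(m_1,m_2)|$ are summable with a geometric factor, which typically requires throwing away at each layer a set of measure $\leq 2^{-|m_1|-|m_2|-10}|F_3|$ and re-running Corollary \ref{rwtTH} with a slightly enlarged constant; verifying that this does not destroy the doubly-logarithmic gain and that the resulting $F_3'$ is still major (measure $\geq |F_3|/2$) is the delicate part, but it is routine once one is careful, exactly as in \cite{MTT}.
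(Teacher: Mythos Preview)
Your overall strategy---reduce to the trilinear form, layer-cake $f_1,f_2$, apply Corollary~\ref{rwtTH} to each layer, and sum---is exactly what the paper does (it refers to \cite[Section 4.1]{CGMS} for the summation step). However there are two genuine errors in your execution.

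\textbf{(i) The tuple is miswritten.} You take $\vec\alpha=(1-\tfrac{1}{p_1},1-\tfrac{1}{p_2},-\tfrac12)$, but then $\alpha_1+\alpha_2+\alpha_3=2-\tfrac32-\tfrac12=0\neq 1$, so this is not a H\"older tuple and does not lie in $\mathcal S_3$. The correct choice is $\alpha_j=\tfrac{1}{p_j}$, which gives $1-\alpha_k=\tfrac{1}{(p_k)'}$ and hence $\min_{k\neq 3}(1-\alpha_k)^{-1}=\max\{(p_1)',(p_2)'\}\leq (p_1)'(p_2)'$; this is the actual source of the $(p_j)'$ factors. Correspondingly the exponent in your layer sum should be $|F_{j,m_j}|^{1/p_j}$, not $|F_{j,m_j}|^{1-1/p_j}$.

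\textbf{(ii) The $\log\log$ factor cannot be bounded uniformly in the layers.} You write that
\[
\log\log\Big(\e^\e+\frac{|F_3|}{\min\{|F_{1,m_1}|,|F_{2,m_2}|\}}\Big)\lesssim \log\log(\e^\e+|F_3|)
\]
``after using $|F_{j,m}|\le|F_3|$ when the term contributes''. This is false: as $m_j\to+\infty$ the level sets $|F_{j,m_j}|\to 0$ and the left side blows up. If your bound were correct you would end up with $\prod_j\|f_j\|_{L^{p_j,1}}$ on the right, a strictly stronger conclusion than the corollary, which is not known. The correct step---and the only substantive content of the paper's proof---is to use the elementary subadditivity $\log\log(\e^\e+ab)\lesssim \log\log(\e^\e+a)\cdot\log\log(\e^\e+b)$ to factor
\[
\log\log\Big(\e^\e+\frac{|F_3|}{\min_j|F_j|}\Big)\lesssim \log\log(\e^\e+|F_3|)\prod_{j=1,2}\log\log\Big(\e^\e+\frac{1}{|F_j|}\Big),
\]
so that Corollary~\ref{rwtTH} becomes the restricted-type bound
\[
\|\mathrm{BHT}_{\vec b}(\cic{1}_{F_1},\cic{1}_{F_2})\|_{L^{\frac23,\infty}(\log\log L)^{-1}}\lesssim \prod_{j=1}^2 (p_j)'|F_j|^{1/p_j}\log\log\big(\e^\e+|F_j|^{-1}\big).
\]
The factors $\log\log(\e^\e+1/|F_{j,m_j}|)$ are exactly what the $(\log\log L)^{\frac23}$ bump in the target norm is designed to absorb when you sum the layers. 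Once you write the estimate this way, the major-subset bookkeeping you worry about disappears: the display above is already a genuine weak-type bound on indicator inputs (one $F_3$ at a time), and the subsequent layer-cake in $f_1,f_2$ requires no further exceptional sets.
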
 
 In the second corollary, which stems from Theorem \ref{prLpL2}, the  first functional argument has no Lorentz-Orlicz bumps. This is also an improvement over \cite[Section 4.1]{CGMS}, which,  unlike the results therein, does not rely on extrapolation theory.  \begin{corollary} \label{corfull} For $\eps\geq 0$,  define the Lorentz-Orlicz quasinorms
\begin{align*}
&\|f\|_{L^{\frac23,\infty}(\log L)^{-\eps}(\R)}:= \sup_{t>0} \frac{t^{\frac32}f^\star (t)}{	\big(\log(\e+t)\big)^{\eps}}\\
& \|f\|_{L^{p,\frac23} (\log L)^{\eps}(\R)}:= \Big\|t^{\frac1p} \textstyle \big(\log\big(\e+\frac1t\big)\big)^{	\frac{3\eps}{2}} f^\star(t) \Big\|_{L^{\frac23}(\R; \frac{\d t}{t})}, \qquad 0<p<\infty.
\end{align*}
 Let $1<p_1,p_2<2, \frac{1}{p_1}+\frac{1}{p_2} = \frac32$. Then, for each $\eps>\frac{2}{(p_2)'}$,  there exists  $C_{p_1,\eps}>0$ such that  \begin{equation*} 
\|\mathrm{BHT}_{\vec b}(f_1,f_2)\|_{L^{\frac23,\infty}(\log L)^{-\eps}(\R)} \leq {C_{\vec\beta}} C_{p_1,\eps} \textstyle  \|f_1\|_{p_1} \|f_2\|_{L^{p_2,\frac23} (\log L)^\eps (\R)}.
 \end{equation*}    
 \end{corollary}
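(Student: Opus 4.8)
The plan is to deduce Corollary~\ref{corfull} from Theorem~\ref{prLpL2} by a real-interpolation-type argument: we decompose $f_2$ along its level sets, apply the endpoint estimate on each piece, and sum the resulting geometric-type series, with the weight $\big(\log(\e+1/t)\big)^{3\eps/2}$ on the Lorentz-Orlicz side of $f_2$ exactly absorbing the $\big(\log\big)_*$-type blow-up coming from the $(\cdot)_*$-correction and from the $(1-\alpha_2)^{-1}$ factor in \eqref{propest} as $\alpha_2\uparrow 1$ (equivalently $\delta\downarrow 0$). First I would normalize $\|f_1\|_{p_1}=1$ and, by the duality $\big(L^{2/3,\infty}(\log L)^{-\eps}\big)^* $, reduce the claimed bound to proving
\[
\big|\l \mathrm{BHT}_{\vec b}(f_1,f_2), f_3\r\big| \lesssim C_{p_1,\eps}\, \|f_2\|_{L^{p_2,2/3}(\log L)^\eps}\, |F_3|^{-1/2}\big(\log(\e+|F_3|)\big)^{\eps}
\]
for test functions $f_3$ subordinate to a major subset $F_3'$ of a set $F_3$ of finite measure, using that Theorem~\ref{prLpL2} (via Remark~\ref{modelsumrestr}) supplies $F_3'$ independently of the decomposition of $f_2$.

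Next I would run the level-set decomposition $f_2 = \sum_{m\in\Z} f_2^{(m)}$ where $f_2^{(m)} = f_2 \cic{1}_{\{2^m \le |f_2| < 2^{m+1}\}}$, so that $f_2^{(m)}$ is essentially $2^m \cic{1}_{G_m}$ with $|G_m| = $ the measure of the corresponding level set. For each $m$ we apply Theorem~\ref{prLpL2} with a tuple $\vec\alpha = \vec\alpha(m)\in\mathcal S_3$ to be chosen, writing $f_1\in L^{1/\alpha_1}$ with $1/\alpha_1 = p_1$ (so $\alpha_1$ is fixed by $p_1$, hence $\alpha_2 = 3/2 - \alpha_1 - \delta$ is parametrized by $\delta = \delta(m)$), obtaining
\[
\big|\l \mathrm{BHT}(f_1,f_2^{(m)}),f_3\r\big| \lesssim \frac{C_{\vec\beta}}{(1-\alpha_1)(1-\alpha_2(m))}\, 2^m |G_m|^{\alpha_2(m)} |F_3|^{-1/2}\Big(\max\big\{\log\tfrac{|F_3|}{|G_m|}, \tfrac{1}{1-\alpha_1}\big\}\Big)_*^{2\alpha_1-1}.
\]
The standard move is to optimize $\delta(m)$ against $\log(1/|G_m|)$ — roughly $\delta(m)\sim \big(1+\log_+ \tfrac{|F_3|}{|G_m|}\big)^{-1}$ — turning $|G_m|^{\alpha_2(m)}$ into $|G_m|^{a_2}$ for a fixed reference exponent $a_2 = 1 - \eps/(2 p_2')$-ish together with a genuinely small power of $|G_m|$ and a factor polynomial in $\log(1/|G_m|)$. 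Summing over $m$ then becomes a sum of the form $\sum_m 2^m |G_m|^{1/p_2}\big(\log(\e+1/|G_m|)\big)^{3\eps/2}\cdot(\text{summable geometric factor})$, which is comparable, by the layer-cake/Hölder characterization of the Lorentz-Orlicz quasinorm, to $\|f_2\|_{L^{p_2,2/3}(\log L)^\eps}$; the constraint $\eps > 2/p_2'$ is precisely what is needed for the exponent arithmetic to close (it guarantees enough room between the reference exponent $a_2$ and the Hölder exponent $1 - 1/p_2$ so that the logarithmic losses from $(\cdot)_*$ and from $(1-\alpha_2)^{-1}$ are dominated).

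The main obstacle I anticipate is the bookkeeping in the optimization over $\delta(m)$: one must simultaneously track (i) the factor $(1-\alpha_2(m))^{-1}\sim \delta(m)^{-1}\sim 1+\log_+(|F_3|/|G_m|)$, (ii) the $(\cdot)_*^{2\alpha_1-1}$ term, whose argument is the same logarithm, and (iii) the gain/loss $|G_m|^{\alpha_2(m)-a_2}$ from moving the exponent, and verify that the $\log(\e+|F_3|)$ that appears on the right is really of order $\eps$ and not larger — this requires being careful about which $|G_m|$ are $\gtrsim |F_3|$ versus $\ll|F_3|$, splitting the sum over $m$ accordingly. A secondary technical point is confirming that the major subset $F_3'$ produced by Theorem~\ref{prLpL2} can be taken uniform in $m$: since the exceptional set in \eqref{excset} depends on $\vec\alpha$, $f_1$, and $|F_3|$ but we are varying $\vec\alpha$ with $m$, one should instead fix a single $\vec\alpha$ at the reference exponent $a_2$ for the purpose of constructing $F_3'$, and only afterwards invoke the $m$-dependent estimates on the already-restricted form — I would verify this is legitimate by inspecting the proof of Theorem~2$'$, where the exceptional set construction and the $\S_k$-decomposition are cleanly separated. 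Once these points are handled, the extrapolation-free nature of the argument (as advertised in the statement) is automatic, since we only ever invoke the single fixed endpoint inequality \eqref{propest}.
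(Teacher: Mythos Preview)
Your overall skeleton (level-set decompose $f_2$, apply a restricted estimate on each piece, sum using the $\ell^{2/3}$ quasi-triangle inequality for the target space) matches the paper's route, but the central step you propose --- optimizing a parameter $\delta(m)$ in Theorem~\ref{prLpL2} --- does not exist. In $\mathcal S_3$ one has $\alpha_3=-\tfrac12$ exactly, hence $\alpha_1+\alpha_2=\tfrac32$; once you fix $\alpha_1=1/p_1$ (which you must, since $f_1\in L^{p_1}$), the value $\alpha_2=1/p_2$ is forced and there is no $\delta$ to vary. You have conflated the boundary estimate \eqref{propest} with the interior parametrization \eqref{geometry}.

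The paper's argument is simpler and sidesteps both obstacles you flag. Apply \eqref{propest} once, at the fixed tuple $(1/p_1,1/p_2,-1/2)$; the starred exponent there is $2\alpha_1-1=2(1-\alpha_2)=2/(p_2)'$, so the hypothesis $\eps>2/(p_2)'$ gives, with $\zeta=\eps-2/(p_2)'>0$, the elementary bound $(\log)_*^{2/(p_2)'}\lesssim_\zeta(\log)^{\eps}$, absorbing the $(\cdot)_*$ correction. This already yields, for subindicator $f_2$,
\[
\|\mathrm{BHT}_{\vec b}(f_1,f_2)\|_{L^{2/3,\infty}(\log L)^{-\eps}}\lesssim C_{p_1,\eps}\|f_1\|_{p_1}\,|F_2|^{1/p_2}\big(\log(\e+1/|F_2|)\big)^{\eps},
\]
a genuine weak Lorentz--Orlicz bound with no major subset $F_3'$ remaining. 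Only now does one decompose $f_2$ on level sets and invoke the $\ell^{2/3}$-atomic description of $L^{2/3,\infty}(\log L)^{-\eps}$ (from \cite{CCM}) to pass to general $f_2$. Because the tuple is fixed and the GRWT statement has already been converted to a quasinorm inequality before decomposing, the uniformity-of-$F_3'$ issue never arises.
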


 Finally, Theorem \ref{propL1L2ag}   has as   corollaries   the following   bounds near $L^1 \times L^2$,   improving on the results of \cite[Section 4.2]{CGMS}. Notice that the $L^2$ component, unlike in \cite{CGMS}, has no Lorentz-Orlicz bumps. 
\begin{corollary} We have the bounds
\label{thL1L2lor}
\begin{align*}
&\mathrm{BHT}_{\vec b}:     L^{1,\frac23} (\log L)^{\frac23}(\R) \times L^2(\R)\to {L^{\frac23,\infty}(\log L)^{-1}(\R)}, \\
&
\mathrm{BHT}_{\vec b}:    L^{1} \log L \log\log L(\R) \times  L^2(\R)  \to {L^{\frac23,\infty}(\log L)^{-1}(\R)}.
\end{align*}
\end{corollary}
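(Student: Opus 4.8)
The plan is to obtain both bounds from Theorem~\ref{propL1L2ag} by a Calder\'on layer-cake argument (Yano-type extrapolation), exploiting that the corner estimate fails to hold at $L^1\times L^2$ only by a single logarithm. The key observation is that the defect $\log\!\big(\e+|F_3|/|F_1|\big)$ appearing in Theorem~\ref{propL1L2ag} translates, under the passage from the trilinear form to a weak Lorentz--Orlicz norm, into a clean multiplicative factor that can be absorbed by a logarithmic gain coming from the summation over the dyadic layers of $f_1$; the $L^2$ factor, already unrestricted in Theorem~\ref{propL1L2ag}, never has to be decomposed, which is exactly why it needs no Lorentz--Orlicz bump.

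\textbf{Step 1: reformulating the corner estimate.} Fix $f_2\in L^2(\R)$. Since $\tfrac23<1$, one has the standard description
$$
\|g\|_{L^{\frac23,\infty}(\log L)^{-1}(\R)}\ \approx\ \sup_{0<|F|<\infty}\ \inf_{\substack{F'\subset F\\ |F'|\geq |F|/2}}\ \frac{|F|^{\frac12}}{\log(\e+|F|)}\int_{F'}|g|\,\d x ,
$$
valid because at the endpoint exponent $\frac23<1$ one must pass to a major subset $F'$ of $F$ to make $\int_{F'}|g|$ finite. Writing $g=\mathrm{BHT}_{\vec b}(h,f_2)$, for which $\int_{F'}|g|=\sup_{|f_3|\leq\cic{1}_{F'}}|\Lambda_{\vec\beta}(h,f_2,f_3)|$, and choosing in the outer supremum the major subset $F'=F_3'$ supplied by Theorem~\ref{propL1L2ag} (with $F_3=F$), and then using the elementary bound $\sup_{0<|F|<\infty}\log(\e+|F|/|E|)/\log(\e+|F|)\lesssim\log(\e+1/|E|)$, we obtain: for every $E\subset\R$ of finite measure and every $h$ with $|h|\leq\cic{1}_{E}$,
$$
\big\|\mathrm{BHT}_{\vec b}(h,f_2)\big\|_{L^{\frac23,\infty}(\log L)^{-1}(\R)}\ \lesssim\ |E|\,\|f_2\|_2\,\log\!\big(\e+\tfrac{1}{|E|}\big).
$$

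\textbf{Step 2: layer-cake decomposition.} Set $\lambda_j:=f_1^\star(2^j)$ and decompose $f_1=\sum_{j\in\Z}f_1^{\{j\}}$ along its principal level sets, $f_1^{\{j\}}:=f_1\,\cic{1}_{\{\lambda_{j+1}<|f_1|\leq\lambda_j\}}$, so that $|f_1^{\{j\}}|\leq\lambda_j\cic{1}_{E_j}$ with $|E_j|\leq 2^{j+1}$. Applying the display of Step~1 to $h=\lambda_j^{-1}f_1^{\{j\}}$, $E=E_j$, and using that $x\mapsto x\log(\e+1/x)$ is increasing, gives
$$
\big\|\mathrm{BHT}_{\vec b}(f_1^{\{j\}},f_2)\big\|_{L^{\frac23,\infty}(\log L)^{-1}(\R)}\ \lesssim\ b_j\,\|f_2\|_2,\qquad b_j:=2^{j}\,f_1^\star(2^j)\,\log\!\big(\e+2^{-j}\big).
$$
Because $\mathrm{BHT}_{\vec b}(f_1,f_2)=\sum_j\mathrm{BHT}_{\vec b}(f_1^{\{j\}},f_2)$, the $p$-triangle structure of $L^{\frac23,\infty}$ (with $p=\tfrac23<1$, and unaffected by the fixed weight $(\log L)^{-1}$) lets one aggregate these layer bounds; the index in which the aggregation is lossless is precisely the one matching the target source space. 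Comparing $\sum_j b_j^{2/3}$ with the integral of $(t\log(\e+1/t)f_1^\star(t))^{2/3}\,\d t/t$ over the dyadic blocks $[2^{j-1},2^{j})$, on which $f_1^\star\geq\lambda_j$, yields $\big(\sum_j b_j^{2/3}\big)^{3/2}\lesssim\|f_1\|_{L^{1,\frac23}(\log L)^{\frac23}(\R)}$, which is the first bound. The second bound is obtained by the same scheme, summing the layer contributions in the Orlicz rather than the Lorentz scale: the extra $\log\log$ factor is exactly the standard loss incurred when a near-$L^1$ level-set decomposition is aggregated in that scale (cf.\ the treatment of Section~4.2 of~\cite{CGMS}).

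\textbf{Main obstacle.} The delicate point is Step~2's recombination: one must show that the quasi-norm aggregation in $L^{\frac23,\infty}$ is sharp enough to retain the Lorentz index $\tfrac23$ (and, for the Orlicz statement, costs precisely one additional $\log\log$), and then calibrate the layer decomposition so that $\big(\sum_j b_j^{2/3}\big)^{3/2}$ matches the stated source norm exactly rather than a lossier one. This is most cleanly carried out either by a direct distributional estimate for $\mathrm{BHT}_{\vec b}(f_1,f_2)$ with balanced, $\lambda$-dependent stopping thresholds, or by feeding the sharper corner estimate of Step~1 into the Lorentz-space extrapolation of~\cite{CGMS}; the bookkeeping needed to land on exactly $L^{1,\frac23}(\log L)^{\frac23}$ and $L^1\log L\log\log L$ is where the real work lies, whereas the translation in Step~1 and the layer estimates in Step~2 are routine.
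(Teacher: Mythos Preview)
Your approach is correct and coincides with the paper's. The paper also derives the corollary from Theorem~\ref{propL1L2ag} via a layer-cake argument, first passing through the intermediate inequality
\[
\|\mathrm{BHT}_{\vec b}(f_1,f_2)\|_{L^{\frac23,\infty}(\log L)^{-1}(\R)} \lesssim \|f_1\|_1\,\|f_2\|_2\,\log\!\Big(\tfrac{\|f_1\|_\infty}{\|f_1\|_1}\Big),
\]
which is the unrestricted form of your Step~1 display, and then referring to \cite[Section 2]{DD2} for the decomposition you carry out in Step~2. One remark: what you call the ``$p$-triangle structure'' of $L^{\frac23,\infty}(\log L)^{-1}$ in Step~2 is precisely the $\ell^{\frac23}$-atomic decomposition of this space established in \cite{CCM}, which the paper invokes explicitly; since $L^{\frac23,\infty}$ alone does not satisfy a genuine $\frac23$-triangle inequality, it is worth naming this ingredient rather than treating it as folklore.
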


\section{Proofs of the Corollaries of Section \ref{sectintcor}}
\label{sect2new}

\subsection{Proof of Corollary \ref{interiorzero}} Recall that  $\vec \alpha =(1-\varrho, \frac12+\varrho-\delta, -\frac12+\delta)$.
In view of the equivalence  
 \begin{equation}
\label{rwtDEF} \|f\|_{p,\infty} \sim 
  \sup_{\substack{F \subset \R\\ |F| < \infty}} \inf_{\substack{F' \subset F \\ C|F'|\geq |F|}} |F|^{\frac{1}{p}-1}\big\l f, \exp(i \arg f(\cdot) )\cic{1}_{F'} \big\r,\end{equation}
  all the estimates of the corollary will be proved by showing that, for all $f_1$,$f_2$ as specified and for all $F_3 \subset \R$, there exists a major subset $F_3'$ of $F_3$ such that  
\begin{equation} \label{thetwist}
\sup_{|f_3| \leq \cic{1}_{F_3'}} |F_3|^{-\alpha_3} \big|\Lambda_{\vec\beta}(f_1,f_2,f_3)\big| 
\end{equation}
is bounded by the corresponding right hand side.

\subsubsection*{Interpolation preliminaries} Before the actual proofs, we derive three abstract off-diagonal weak-type interpolation lemmata which will be extensively relied upon.
Below, let $T$  be a sublinear operator on $\R$  mapping  Schwartz functions to locally integrable functions. We indicate by   $T^*$  the formal adjoint of $T$. What we  have in mind is the linear operator
$$
f_2 \mapsto T_{f_1}(f_2) := \mathrm{BHT}_{\vec b} (f_1,f_2),
$$
where $f_1$ is a fixed Schwartz function,
Observe that, by essential self-adjointness of $\mathrm{BHT}_{\vec b}$, we have that $(T_{f_1})^* (f_3) = \mathrm{BHT}_{\vec b'}(f_1,f_3)$, where $\vec b'$ shares the same degeneracy constant  $\Delta_{\vec \beta}$ associated to $\vec b$.

The first lemma is a variant of the usual off-diagonal Marcinkiewicz interpolation theorem, see e.g. \cite{LLY}.  We sketch the proof to emphasize the dependence of the constants.
\begin{lemma}
\label{interpolemma} Let there be given $$ \textstyle\frac12 \leq  p_0<p_1 \leq \infty, \quad \frac12\leq  q_0,q_1 \leq \infty , \, q_0,q_1 \neq 1,   \qquad \textstyle \frac{1}{p_0}-\frac{1}{p_1}= \frac{1}{q_0}-\frac{1}{q_1}= \mu>0.$$ Assume that  $T$ satisfies the bounds
\begin{equation} \label{interpolemmaass}
\|T g\|_{q_j,\infty} \leq K_j |G|^{\frac{1}{p_j}} \qquad \forall |g| \leq \cic{1}_G.
\end{equation}
Let $b_j=\min\{q_j,1\}$.
Then, for all $0<\vartheta<1$, 
$$
\|T g\|_{q(\vartheta),\infty} \leq C(\vartheta) \|g\|_{{p(\vartheta)}}, \qquad\textstyle \frac{1}{p(\vartheta)}= \frac{(1-\vartheta)}{p_0} + \frac{\vartheta}{p_1}, \quad \frac{1}{q(\vartheta)}= \frac{(1-\vartheta)}{q_0} + \frac{\vartheta}{q_1},
$$
where $$
C(\vartheta)= C \big(K_0\gamma_0 (\mu\vartheta)^{-\frac{1}{b_0}})^{1-\vartheta}   \big(K_1\gamma_1 (\mu(1-\vartheta))^{-\frac{1}{b_1}})^{\vartheta},  \qquad \gamma_j:= \frac{q_j}{|1-q_j|}, \,j=0,1. $$
\end{lemma}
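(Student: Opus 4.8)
The plan is to prove Lemma \ref{interpolemma} by the classical real-interpolation cutting argument, keeping careful track of how the constants depend on $\vartheta$, $\mu$, and the $q_j$'s. First I would fix $g$ with $\|g\|_{p(\vartheta)}=1$ and, for each height $\lambda>0$, split $g = g^\lambda + g_\lambda$ where $g^\lambda = g\cic{1}_{\{|g|>c\lambda\}}$ is the ``tall'' part and $g_\lambda$ is the bounded remainder, with the threshold $c=c(\lambda)$ to be optimized. The point is that $g^\lambda$ is well-adapted to the endpoint $p_0$ (it lies in $L^{p_0}$ with controlled norm, being the part where $|g|$ is large) and $g_\lambda$ is adapted to $p_1$. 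I will need to pass from the hypothesis \eqref{interpolemmaass}, which is phrased for subindicator functions $|g|\le\cic{1}_G$, to an estimate for genuine $L^{p_j}$ functions: this is done by the standard layer-cake decomposition $g^\lambda = \sum_m 2^m g_m$ with $|g_m|\lesssim \cic{1}_{G_m}$ on dyadic level sets, applying \eqref{interpolemmaass} to each $g_m$, and summing. Here the $\ell^{b_j}$ (quasi-)triangle inequality for $L^{q_j,\infty}$ enters — this is exactly where the factor $\gamma_j = q_j/|1-q_j|$ and the exponent $b_j=\min\{q_j,1\}$ come from, since $L^{q,\infty}$ satisfies a quasi-triangle inequality with constant comparable to $\gamma_j$ and is $b_j$-normable.

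Next I would estimate the distribution function of $Tg$ at level $\sim \lambda$ by
$$
|\{|Tg|>\lambda\}| \lesssim |\{|Tg^\lambda|>\lambda/2\}| + |\{|Tg_\lambda|>\lambda/2\}| \lesssim \lambda^{-q_0}\|Tg^\lambda\|_{q_0,\infty}^{q_0} + \lambda^{-q_1}\|Tg_\lambda\|_{q_1,\infty}^{q_1},
$$
then feed in the bounds just obtained for $g^\lambda$ and $g_\lambda$ in terms of $\|g^\lambda\|_{p_0}$ and $\|g_\lambda\|_{p_1}$, and finally integrate in $\lambda$ against $\lambda^{q(\vartheta)-1}\,d\lambda$ to recover $\|Tg\|_{q(\vartheta),\infty}$ (or rather bound a slightly larger weak-type quantity; one can equivalently just estimate $\sup_\lambda \lambda |\{|Tg|>\lambda\}|^{1/q(\vartheta)}$ directly by optimizing the threshold $c(\lambda)$ pointwise in $\lambda$, which avoids the integration entirely). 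Choosing the threshold $c(\lambda)$ to balance the two contributions produces the geometric-mean structure $K_0^{1-\vartheta}K_1^{\vartheta}$, and tracking the $\vartheta$-dependence of the geometric series / integral that arises gives the factors $(\mu\vartheta)^{-1/b_0}$ and $(\mu(1-\vartheta))^{-1/b_1}$: these are precisely the constants in the convergence of the relevant sums near the two endpoints.

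The main obstacle — or at least the part requiring the most care — is bookkeeping the blow-up of the constant as $\vartheta\to 0$ or $\vartheta\to 1$ and as $q_j\to 1$, since the whole purpose of stating the lemma this way (rather than citing \cite{LLY}) is to have these dependencies explicit for later use in Corollaries \ref{interiorzero}, \ref{weakstrong}. The $q_j\ne 1$ hypothesis is essential because $L^{1,\infty}$ fails to be normable and the relevant summation over dyadic levels diverges logarithmically there; the $\gamma_j$ factor records exactly how bad things get as $q_j\to 1$. I expect the actual inequalities to be routine, so the proof sketch in the paper will likely just indicate the splitting, invoke the quasi-norm properties of $L^{q,\infty}$ with the stated constants, and point to the optimization in $c(\lambda)$, deferring the elementary summations. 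One should also double-check the degenerate endpoint cases $p_1=\infty$ and/or $q_j=\infty$, where the ``bounded part'' $g_\lambda$ is simply estimated in $L^\infty$ and one of the two terms drops out; these cause no trouble but should be mentioned.
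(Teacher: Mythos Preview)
Your proposal is correct and follows essentially the same strategy as the paper's proof: upgrade the restricted-type hypothesis \eqref{interpolemmaass} to Lorentz bounds $\|Tg\|_{q_j,\infty}\lesssim \gamma_j K_j\|g\|_{p_j,b_j}$ via the $\ell^{b_j}$ quasi-triangle inequality in $L^{q_j,\infty}$, split $g$ into a tall and a short part, apply the two endpoint bounds to the respective pieces, and optimize the cut. The only cosmetic difference is that the paper first rearranges $g$ to be nonincreasing and cuts at the fixed \emph{position} $\delta t$ (with a single parameter $\delta$ optimized at the end) rather than at a height $c(\lambda)$ varying with $\lambda$; the resulting computations and constants are the same.
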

\begin{proof} By eventually replacing $p_1$ with $p((\vartheta+1)/2)$, we can assume that $p_1<\infty$.   
We preliminarily observe that the assumptions \eqref{interpolemmaass}, coupled with the $\ell^q$ triangle inequality on $L^{q,\infty}(\R)$ \cite[Section 3]{CCM}  upgrade to the bounds
$$
\|T g\|_{{q_j,\infty}(\R)} \lesssim  \gamma_j K_j \|g\|_{{p_j,b_j}}, \qquad j=0,1, 
$$
where $\|\cdot\|_{\pi,\mu}$ denotes the Lorentz quasinorm on the Lorentz space ${L^{\pi,\mu}(\R)}$. We begin the actual proof;  by  a rearrangement argument, we can assume that $g=g^\star$. Let $\delta>0$ be a parameter to be chosen later. For $t>0$, we define  $g^t(x)= g(x) \cic{1}_{g(x)>g( 	\delta t)}$, $g_t = g-g^t$. Using the above display, we see that
$$
t^{\frac1q} (T g^{2t})^\star (t) \lesssim  \gamma_0  K_0 t^{-(\frac{1}{p_0}-\frac{1}{p(\vartheta)})} \|g^{2t}\|_{{p_0,b_0}}, \quad t^{\frac1q} (T g_{2t})^\star (t) \lesssim \gamma_1 K_1 t^{-(\frac{1}{p_1}-\frac{1}{p(\vartheta)})} \|g_{2t}\|_{p_1,b_1}.
$$
The lemma then follows from the estimates
\begin{align*}&
\sup_{t>0} t^{-(\frac{1}{p_0}-\frac{1}{p(\vartheta)})} \|g^{t}\|_{p_0,b_0 } \lesssim \delta^{\frac{1}{p_0}-\frac{1}{p(\vartheta)}}	  (\mu\vartheta)^{-\frac{1}{b_0}}\|g\|_p, \\ &\sup_{t>0} t^{-(\frac{1}{p_1}-\frac{1}{p(\vartheta)})}  \|g_{t}\|_{ {p_1,b_1} }\lesssim  \delta^{\frac{1}{p_1}-\frac{1}{p(\vartheta)} } (\mu(1-\vartheta))^{-\frac{1}{b_1}}\|g\|_p,
\end{align*}
which are obtained by means of H\"older's inequality, and finally by optimizing $\delta$. \end{proof} 

We will also use a version which does not upgrade the type of the estimate. Notice that the constant in \eqref{weaktypeinterp} does not   blow up as $	\vartheta \to 0$ or $1$. The proof is simple and we omit it.
\begin{lemma} \label{weakintlemma}
 Let there be given $$ \textstyle 	\frac12 < p_0<p_1 \leq \infty, \quad \frac12 \leq  q_0,q_1 \leq \infty ,    \qquad \textstyle \frac{1}{p_0}-\frac{1}{p_1}= \frac{1}{q_0}-\frac{1}{q_1}= \mu>0.$$ Assume that  $T$ satisfies the bounds
$$
\|T g\|_{{q_j,\infty}} \leq K_j \|g\|_{p_j}.$$ 
Then, for all $0<\vartheta<1$, 
\begin{equation} \label{weaktypeinterp}
\|T g\|_{q(\vartheta),\infty} \lesssim  (K_0)^{1-\vartheta} (K_1)^\vartheta\|g\|_{{p(\vartheta)}}, \qquad\textstyle \frac{1}{p(\vartheta)}= \frac{(1-\vartheta)}{p_0} + \frac{\vartheta}{p_1}, \quad \frac{1}{q(\vartheta)}= \frac{(1-\vartheta)}{q_0} + \frac{\vartheta}{q_1}.
\end{equation}
\end{lemma}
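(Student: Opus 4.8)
The statement to be proved is Lemma \ref{weakintlemma}, a real-interpolation statement for sublinear operators with weak-type off-diagonal bounds that does \emph{not} upgrade the target to a stronger Lorentz exponent. My plan is to mimic the classical Marcinkiewicz argument already used in the proof of Lemma \ref{interpolemma}, but to track the constants only coarsely, since the whole point here is that the constant stays bounded as $\vartheta\to0$ or $\vartheta\to1$. First I would reduce, by a standard rearrangement argument, to the case $g=g^\star$ and $g\in L^{p(\vartheta)}(\R)$ with $\|g\|_{p(\vartheta)}=1$; by eventually replacing $p_1$ with $p((\vartheta+1)/2)$ (and adjusting $q_1$ accordingly) I may also assume $p_1<\infty$, so that both endpoint exponents are finite and the splitting below is meaningful.

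\textbf{Key steps.} Fix $t>0$ and a threshold parameter $\delta>0$ to be optimized. Split $g=g^t+g_t$ with $g^t=g\cic{1}_{\{g>g(\delta t)\}}$ the ``tall'' part and $g_t=g\cic{1}_{\{g\le g(\delta t)\}}$ the ``wide'' part, exactly as in the proof of Lemma \ref{interpolemma}. By sublinearity, $(Tg)^\star(2t)\le (Tg^t)^\star(t)+(Tg_t)^\star(t)$, so it suffices to bound $t^{1/q(\vartheta)}(Tg^t)^\star(t)$ and $t^{1/q(\vartheta)}(Tg_t)^\star(t)$. For the tall part I would apply the $p_0$-endpoint hypothesis $\|Tg^t\|_{q_0,\infty}\le K_0\|g^t\|_{p_0}$, which gives $(Tg^t)^\star(t)\le t^{-1/q_0}K_0\|g^t\|_{p_0}$; for the wide part I would apply the $p_1$-endpoint hypothesis analogously. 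Then one is left with purely one-dimensional estimates on $\|g^t\|_{p_0}$ and $\|g_t\|_{p_1}$ in terms of $\|g\|_{p(\vartheta)}$; H\"older's inequality on the level sets (together with $g=g^\star$) produces bounds of the form $\sup_{t>0}t^{-(1/p_0-1/p(\vartheta))}\|g^t\|_{p_0}\lesssim \delta^{1/p_0-1/p(\vartheta)}\|g\|_{p(\vartheta)}$ and $\sup_{t>0}t^{-(1/p_1-1/p(\vartheta))}\|g_t\|_{p_1}\lesssim \delta^{1/p_1-1/p(\vartheta)}\|g\|_{p(\vartheta)}$, with \emph{no} $\vartheta$-dependent blow-up on the right (this is the crucial difference from Lemma \ref{interpolemma}, where the same manipulation produced the factors $(\mu\vartheta)^{-1/b_0}$ because one was simultaneously integrating in $t$ to reach a Lorentz norm rather than just taking a supremum). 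Combining the four displays, using $\frac1{p_0}-\frac1{q_0}=\frac1{p_1}-\frac1{q_1}=\frac1{p(\vartheta)}-\frac1{q(\vartheta)}$, yields
$$
\sup_{t>0} t^{1/q(\vartheta)}(Tg)^\star(2t)\lesssim K_0\,\delta^{1/p_0-1/p(\vartheta)}+K_1\,\delta^{1/p_1-1/p(\vartheta)},
$$
and optimizing in $\delta>0$ (choosing $\delta$ so the two terms balance) gives the geometric-mean bound $\lesssim K_0^{1-\vartheta}K_1^\vartheta$, which is \eqref{weaktypeinterp}.

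\textbf{Main obstacle.} There is no deep obstacle here — this is why the paper says ``the proof is simple and we omit it.'' The only point requiring care is to carry out the tall/wide $L^{p_j}$ estimates so that the constants are genuinely independent of $\vartheta$: one must take suprema in $t$ rather than $L^r$ norms in $dt/t$, and one must note that the exponent differences $1/p_0-1/p(\vartheta)$ and $1/p_1-1/p(\vartheta)$ are both bounded away from $0$ and $\infty$ uniformly in $\vartheta\in(0,1)$ precisely because $p_0,p_1$ are fixed and $p_0<p(\vartheta)<p_1$ — so the optimization in $\delta$ loses only an absolute constant. The restriction $p_0>1/2$ is what guarantees the relevant H\"older exponents are admissible; the hypotheses $q_0,q_1\ne1$ are \emph{not} needed here (unlike in Lemma \ref{interpolemma}) because we never invoke the $\ell^q$-quasi-triangle inequality on $L^{q,\infty}$. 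I would state this briefly and leave the routine verification to the reader, consistent with the paper's stated intention to omit the proof.
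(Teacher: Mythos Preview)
Your argument is correct and is the natural one; the paper omits the proof entirely (``the proof is simple and we omit it''), so there is nothing to compare against. One small correction to your commentary: the exponent differences $1/p_0-1/p(\vartheta)=\mu\vartheta$ and $1/p(\vartheta)-1/p_1=\mu(1-\vartheta)$ are \emph{not} bounded away from $0$ uniformly in $\vartheta$ --- the reason the constant stays bounded is rather that your H\"older estimates $\|g^t\|_{p_0}\le(\delta t)^{\mu\vartheta}\|g\|_{p(\vartheta)}$ and $\|g_t\|_{p_1}\le(\delta t)^{-\mu(1-\vartheta)}\|g\|_{p(\vartheta)}$ hold with constant $1$ (since you are working with genuine $L^{p_j}$ norms, not the Lorentz norms $\|\cdot\|_{p_j,b_j}$ forced by the restricted-type hypothesis of Lemma \ref{interpolemma}), and the subsequent optimization $K_0\delta^{\mu\vartheta}+K_1\delta^{-\mu(1-\vartheta)}\ge 2K_0^{1-\vartheta}K_1^\vartheta$ at $\delta=(K_1/K_0)^{1/\mu}$ is exact regardless of $\vartheta$. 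Likewise, the hypothesis $p_0>\frac12$ is not actually used in your H\"older steps; it merely keeps the output exponent $q(\vartheta)\ge\frac12$ so that the final factor $2^{1/q(\vartheta)}$ from rescaling $t\mapsto 2t$ is an absolute constant.
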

The next lemma exploits the equivalence \eqref{rwtDEF}
to interpolate between the $L^{p,\infty}$ estimates of $T$ and $T^*$.
\begin{lemma} \label{uptuple}
Let $0<\alpha<1$, $-1< \beta<0$,  and assume that for all $G \subset \R$ of finite measure 
$$
\|T g\|_{\frac{1}{1-\beta},\infty}, \, \|T^* g\|_{\frac{1}{1-\beta},\infty}\leq K |G|^{\alpha} \qquad \forall |g| \leq \cic{1}_G.
$$  
Then, for all $0<t\leq \alpha$,  and for all $G \subset \R$ of finite measure, 
  we have the estimate
$$
\|T g\|_{\frac{1}{1-\beta -t},\infty}, \, \|T^* g\|_{\frac{1}{1-\beta -t},\infty} \lesssim  \frac{K}{|\beta|}  |G|^{\alpha-t} \qquad \forall |g| \leq \cic{1}_G.
$$ 
\end{lemma}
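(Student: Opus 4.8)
\textbf{Plan of proof for Lemma \ref{uptuple}.}
The strategy is a bootstrap: we already control $T$ and $T^*$ at one point $\tfrac{1}{1-\beta}$ (with the same bound $K|G|^\alpha$ on generalized restricted weak-type inputs), and we wish to upgrade to nearby points $\tfrac{1}{1-\beta-t}$ with $0<t\le\alpha$, paying only a factor $|\beta|^{-1}$. The key observation is that, via the characterization \eqref{rwtDEF}, a restricted weak-type bound $\|Tg\|_{\frac{1}{1-\beta},\infty}\le K|G|^\alpha$ for $|g|\le\cic 1_G$ is equivalent to the trilinear bound
\[
\big|\l Tg, \exp(i\arg h)\cic 1_{H'}\r\big| \lesssim K |G|^{\alpha}|H|^{\beta}
\]
for a suitable major subset $H'\subset H$; and the hypothesis that the \emph{same} estimate holds for $T^*$ lets us also move the restriction onto the $g$-variable. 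So at the level of the associated bilinear/trilinear form we have symmetric control at the tuple $(\alpha,\beta,\cdot)$, and we want to move to $(\alpha-t,\beta+t,\cdot)$.

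First I would fix $G$ and $g$ with $|g|\le\cic 1_G$, and $H$, and consider $\l Tg,\exp(i\arg h)\cic 1_{H'}\r$. Decompose $G$ (or rather the level structure of the test function paired with $Tg$) dyadically according to the size of the local averages, exactly as in a one-variable Marcinkiewicz argument: split $H$ into the region where one is "short" and the region where one is "long" relative to a threshold $\lambda$. On one piece apply the hypothesis for $T$ at exponent $\tfrac{1}{1-\beta}$, on the other apply the hypothesis for $T^*$ at exponent $\tfrac{1}{1-\beta}$ (this is where self-adjointness-type symmetry is used: the bound on $T^*$ is what gives the dual piece). Summing the geometric series in $\lambda$ produces the bound $|G|^{\alpha-t}$ with the series $\sum_{j}2^{-cjt}\sim t^{-1}$; since $t\le\alpha<1$ and we are moving toward the boundary of an interval of length comparable to $|\beta|$ (because $-1<\beta<0$ forces $t\le\alpha\le$ something controlled by $|\beta|$, or rather the relevant gap is $|\beta|$), the series is summable with constant $\lesssim |\beta|^{-1}$. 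Concretely, I expect the clean way is to invoke Lemma \ref{interpolemma} (or \ref{weakintlemma}) with the two endpoint data being the $T$-bound and the $T^*$-bound reinterpreted through \eqref{rwtDEF}; the $(\mu\vartheta)^{-1/b_0}$ and $(\mu(1-\vartheta))^{-1/b_1}$ factors there are precisely what yield the $|\beta|^{-1}$ loss once $\vartheta$ and $\mu$ are matched to $t$ and $|\beta|$.

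The main obstacle, and the point requiring care, is the bookkeeping of the major subsets: each application of \eqref{rwtDEF} introduces a major subset, and one must check that these can be chosen compatibly (the major subset for the final exponent $\tfrac{1}{1-\beta-t}$ should be extractable from the given data without circularity), so that the conclusion is again a genuine generalized restricted weak-type estimate. This is routine in the time-frequency literature (one takes intersections of finitely many major subsets, losing only absolute constants), but it must be stated carefully because the whole point of the lemma is to feed into Lemma \ref{uptuple}'s applications where $F_3'$ must be chosen independently of auxiliary parameters. The symmetry between $T$ and $T^*$ in both hypothesis and conclusion means the argument for $T^*$ is identical after interchanging the two, so no extra work is needed there. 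I would also double-check the endpoint constraint $0<t\le\alpha$ ensures the target exponent $\tfrac{1}{1-\beta-t}$ stays in $(\tfrac12,\infty)$ so that all the Lorentz-space machinery is applicable; this is immediate from $-1<\beta<0$ and $0<\alpha<1$.
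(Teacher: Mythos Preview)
Your plan does not lead to a proof, and the gap is not in the bookkeeping but in the central mechanism.

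The two hypotheses, rewritten via \eqref{rwtDEF} as bounds on the bilinear form $\langle Tg,f\rangle$, are \emph{not} two endpoints for a Marcinkiewicz-type interpolation of a single operator. The $T$-bound gives $|\langle Tg,f\rangle|\lesssim K|G|^\alpha|F|^\beta$ with the \emph{major subset on $F$}; the $T^*$-bound gives $|\langle Tg,f\rangle|\lesssim K|G|^\beta|F|^\alpha$ with the \emph{major subset on $G$}. Lemmata \ref{interpolemma} and \ref{weakintlemma} require two bounds on the \emph{same} operator $T$ at two genuinely different exponent pairs $(p_j,q_j)$, with the input always unrestricted (or always restricted); here you have one exponent pair and the asymmetry lies entirely in which variable carries the major-subset constraint. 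A dyadic splitting of the test set $H=F$ cannot exploit this asymmetry, because both bounds treat $F$ identically as a set with an exponent; what distinguishes them is the location of the major subset, and no level-set decomposition of $F$ moves that restriction from $G$ back to $F$.

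The paper's argument is instead an \emph{iterative exhaustion of $G$}. Fix $G$, $|g|\le\cic 1_G$, and $F$; assume $|G|>|F|$ (the other case being trivial from the $T$-hypothesis). Apply the $T^*$-bound once: it yields a major subset $H^{(1)}\subset G$ on which $|\langle g\cic 1_{H^{(1)}},T^*f\rangle|\lesssim K|F|^\alpha|G|^\beta$. Discard $H^{(1)}$, set $G^{(1)}=G\setminus H^{(1)}$ (so $|G^{(1)}|\le (1-C^{-1})|G|$), and repeat. After $\bar k\sim \log(|G|/|F|)$ steps the remainder $G^{(\bar k)}$ has $|G^{(\bar k)}|\le|F|$, and a single application of the $T$-bound produces the required major subset $F'\subset F$. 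Summing the contributions gives
\[
K|F|^{\alpha+\beta}+K|F|^\alpha\sum_{k=1}^{\bar k}|G^{(k-1)}|^{\beta}\lesssim K|F|^\alpha|G|^\beta\sum_{k}\e^{c|\beta|k}\lesssim \frac{K}{|\beta|}|F|^{\alpha+\beta},
\]
and since $|F|\le |G|$ and $\alpha-t\ge 0$ this is at most $\frac{K}{|\beta|}|G|^{\alpha-t}|F|^{\beta+t}$. The factor $|\beta|^{-1}$ arises because the geometric ratio is $\e^{c|\beta|}$, coming from the exponent $\beta$ on the \emph{shrinking} sets $G^{(k)}$; it has nothing to do with $t$. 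Your series $\sum_j 2^{-cjt}\sim t^{-1}$ would, even if it arose, give the wrong constant (one that blows up as $t\to 0$, which the lemma explicitly avoids).

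Finally, the ``routine'' handling of major subsets is precisely where the work lies: the whole point of the iteration is to convert the $G$-side major subsets supplied by $T^*$ into a single $F$-side major subset, and this cannot be done by intersecting finitely many subsets of $F$, because the $T^*$-hypothesis never produces any.
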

\begin{proof} By symmetry, it suffices to carry the proof for $T$.  Fixing $G \subset \R$ of finite measure, $|g| \leq \cic{1}_G$,  and using  \eqref{rwtDEF}, it suffices to show that for all $0<t\leq\alpha$, $F \subset \R$ with $|F|<\infty$, there exists a set $F'\subset F$ with $|F| \leq C|F'|$ such that, for all $|f| \leq \cic{1}_{F'}$
\begin{equation} \label{lemmasublin1}
|\l T g, f \r| \lesssim  K  |\beta|^{-1}  |G|^{\alpha}|F|^{\beta} \textstyle \big( \frac{|F|}{|G|}\big)^{t},  
\end{equation}
Since inequality \eqref{lemmasublin1} holds by assumption for $t=0$, with no need for $|\beta|^{-1}$,  there is nothing to prove if $|G| \leq  |F| $. Assume $|G| >|F|$ and let $n=\lceil \log|G|-\log |F| \rceil.$ We apply our assumption for $T^*$ instead, so that the roles of $F$ and $G$ are reversed, and, via \eqref{rwtDEF}, we obtain the existence of a set $H^{(1)} \subset G:=G^{(0)}$, with $|H^{(1)}| \geq C^{-1} |G|$, such that
$$
|\l  g \cic{1}_{H^{(1)}}, T^*f \r| \lesssim K |F|^{\alpha}|G|^{\beta} \qquad \forall |f | \leq \cic{1}_{F}.
$$
Iterating, we define a sequence  $G^{(k+1)}= G^{(k)}\backslash H^{(k+1)}$, stopping when $|G^{(\bar k)}| \leq |F|$.  
 Note that $|G^{(k)}| \leq \e^{-ck} |G|$, so that $\bar k \leq  n/c $, where $c=\log{C} - \log(C-1)$. Finally, the assumption provides a   set $F'\subset F$ with $|F'|\geq C^{-1}|F|$ such that 
 $$
 |\l  T(g\cic{1}_{G^{(\bar k)} }), f\cic{1}_{F'} \r| \lesssim K |G^{(\bar k)}|^\alpha|F|^{ \beta}  \leq K|F|^{\alpha +\beta} \qquad \forall |g| \leq 	\cic{1}_G.
 $$
 Observing that $G = H^{(1)} \cup \ldots H^{(\bar k-1)} \cup G^{(\bar k)} $ leads   to the estimate\begin{align*}
|\l  Tg, f\cic{1}_{F'} \r|& \leq  |\l  T(g\cic{1}_{G^{(\bar k)} }), f\cic{1}_{F'} \r|+\sum_{k=1}^{\bar k} |\l  g \cic{1}_{H^{(k)}}, T^*(f\cic{1}_{F'}) \r|    \lesssim    K  |F|^{\alpha+ \beta}+ K |F|^{\alpha } |G|^{\beta} \sum_{k=1}^{n/c} \e^{-\beta c k} \\ &  \lesssim  K(1+ c^{-1}|\beta|^{-1})|F|^{\alpha+\beta}  \lesssim K |\beta|^{-1} |G|^{\alpha-t}|F|^{\beta+t}.\end{align*}
which completes the proof of \eqref{lemmasublin1}, and in turn, of the lemma.
\end{proof}

 \begin{proof}[Proof of \eqref{introapproach}]  The proof is split into two cases.

 \emph{Case   $0< \delta  \leq \varrho/2$.}  The bound   \eqref{thetwist} $\lesssim |F_1|^{\alpha_1}|F_2|^{\alpha_2} |F_3|^{\alpha_3}$ when $ |F_3|\leq |F_1|$ follows from the (uniform) strong-type bounds \eqref{BHTnoform}   on the line segment $(1/4,a,3/4-a)$,  $0\leq a \leq 3/4$. When $|F_1|\leq |F_3| $ instead,
we apply \eqref{propest2} with tuple $\vec{a}:=(1-\varrho+\delta ,\alpha_2,-\frac12) \in \mathcal S_3$, which has $1-a_1 \geq \varrho /2,$ $1-a_2 \gtrsim 1$,  yielding a major subset $F_3'$ of $F_3$ such that, for all suitably restricted $f_j$,
$$\big|\Lambda_{\vec\beta}(f_1,f_2,f_3)\big| \lesssim  \left[\textstyle\big(\frac{|F_1|}{|F_3|}\big)^{\delta } \max\Big\{ \varrho  ^{-1},  \log \log \Big( \e^\e+ \textstyle\frac{|F_3|}{|F_1|}\Big) \Big\}    \right]  |F_1|^{\alpha_1}|F_2|^{\alpha_2} |F_3|^{\alpha_3}.$$
By optimizing $t \mapsto t^{\delta } \log \log(\e^\e + t^{-1}) $ on $t \in [0,1]$, one sees that the square bracketed term is bounded by $C\max\{\varrho^{-1},   | \log  \delta   | \}  $, as claimed in \eqref{introapproach}, concluding the proof of this case.

\emph{Case  $ \delta  >\varrho/2$.}  In this range  $\max\{\varrho^{-1},   | \log  \delta   | \} =\varrho^{-1} $. We fix $F_1$, $|f_1| \leq \cic{1}_{F_1}$ and define the linear operator $g\mapsto T_{f_1} (g):= \mathrm{BHT}_{\vec{b}}(f_1,g)$.
The previous case and essential selfadjointness yield
$$
\|T_{f_1} (g)\|_{q(\varrho/2),\infty}, \|(T_{f_1})^* (g)\|_{q(\varrho/2),\infty} \lesssim \varrho^{-1} |F_1|^{\alpha_1} |G|^{\frac12+\frac\varrho 2} \qquad \forall |g| \leq \cic{1}_{G},
$$
so that an application of  Lemma \ref{uptuple} with $t=\delta-\varrho/2$ entails
$$
\sup_{G, |g| \leq \cic{1}_G} |G|^{-\alpha_2} \|T_{f_1} (g)\|_{q(\delta),\infty} \lesssim \sup_{G, |g| \leq \cic{1}_G} |G|^{-(\frac12+\frac\varrho2)} \|T_{f_1} (g)\|_{q(\varrho/2),\infty} \lesssim   \varrho^{-1} |F_1|^{\alpha_1}, 
$$ which is what is required in \eqref{introapproach}.
\end{proof}
\begin{proof}[Proof of \eqref{introapproachone}]We fix $f_1 \in L^{\frac{1}{\alpha_1}}(\R)$ of  unit norm. The proof is again split into two cases.

\emph{ Case $\varrho \geq \delta$.} We show that for any given $F_2,F_3$ there is a major subset $F'_3$ of $F_3$ such that
\begin{equation} \label{introapproachonepf1}
\big|\Lambda_{\vec\beta}(f_1,f_2,f_3)\big| \lesssim  \varrho^{-1}\big(\max\big\{\varrho^{-1},\delta^{-1}\}\big)_*^{1-2\varrho}|F_2|^{\alpha_2} |F_3|^{\alpha_3} \qquad \forall |f_2| \leq \cic{1}_{F_2}, \,  |f_3| \leq \cic{1}_{F_3'},
\end{equation}
which implies \eqref{introapproachone} via the usual equivalence.
Assume first $|F_2| \geq |F_3|$. We apply Theorem \ref{thelinfversion}, to the obvious choice of  $f_1$, and to $f_2$ restricted to $F_2$, obtaining a major subset $F_3'$ of $F_3$ such that     
$$
\big|\Lambda_{\vec\beta}(f_1,f_2 ,f_3)\big| \lesssim   \textstyle \varrho^{-1}\big( \varrho^{-1} \big)_*^{1-2\varrho}  \|f_2\|_2|F_3|^{\frac12-\alpha_1} \leq \varrho^{-1}\big( \varrho^{-1} \big)_*^{1-2\varrho}  |F_2|^{\alpha_2} |F_3|^{\alpha_3}\Big( \frac{|F_3|}{|F_2|}\Big)^{\alpha_2-\frac12}, $$
which,  since $\alpha_2 \geq \frac12$, is stronger than \eqref{introapproachonepf1}. 
If instead $|F_3| \geq |F_2|$,  we apply estimate \eqref{propest} with tuple $\vec a=(\alpha_1, \alpha_2 + \delta ,-1/2) \in \mathcal S_3$, obtaining, for any pair of suitably restricted functions $f_2,f_3$,
$$
\big|\Lambda_{\vec\beta}(f_1,f_2,f_3)\big| \lesssim \textstyle\varrho^{-1} |F_2|^{\alpha_2}|F_3|^{\alpha_3}\left[
 \left(\max\Big\{\varrho^{-1},  \log  \big(  \textstyle\frac{|F_3|}{|F_2|}\big) \Big\}\right)_*^{1-2\varrho}\left(\frac{|F_2|}{|F_3|}\right)^{\delta} \right]; 
  $$
 noting that the term in square brackets is  $\lesssim (\max\{\varrho^{-1},\delta^{-1}\})_*^{1-2\varrho}$ leads to    \eqref{introapproachonepf1}.
 
 \emph{Case $\delta >\varrho$.} Analogously to what we did in the previous proof, we define the linear operator $g\mapsto T_{f_1} (g):= \mathrm{BHT}_{\vec{b}}(f_1,g)$.
Again by Theorem \ref{thelinfversion} (note that $\delta=\varrho$ corresponds to $\alpha_2=\frac12$)
$$
\|T_{f_1} (g)\|_{q(\varrho),\infty}, \|(T_{f_1})^* (g)\|_{q(\varrho),\infty} \lesssim \varrho^{-1}\big( \varrho^{-1} \big)_*^{1-2\varrho}  \|f_1\|_{\frac{1}{\alpha_1}}|G|^{\frac12} \qquad \forall |g| \leq \cic{1}_{G},
$$
and  Lemma \ref{uptuple} with $t=\delta-\varrho$ entails
$$
\sup_{G, |g| \leq \cic{1}_G} |G|^{-\alpha_2} \|T_{f_1} (g)\|_{q(\delta)} \lesssim \sup_{G, |g| \leq \cic{1}_G} |G|^{-\frac12} \|T_{f_1} (g)\|_{q(\varrho)} \lesssim   \varrho^{-1}\big( \varrho^{-1} \big)_*^{1-2\varrho}  \|f_1\|_{\frac{1}{\alpha_1}}, 
$$ which is what we had to prove in \eqref{introapproachone}.
\end{proof}
\begin{proof}[Proof of \eqref{introapproachtwo}] We separate two cases.

\emph{Case $\varrho\geq \delta$.}
We fix $f_2 \in L^{{1}/\alpha_2}(\R)$ of  unit norm and prove that for any given $F_1,F_3$ there is a major subset $F'_3$ of $F_3$ such that
\begin{equation} \label{introapproachtwopf1}
\big|\Lambda_{\vec\beta}(f_1,f_2,f_3)\big| \lesssim \textstyle\varrho^{-1} \Big(\frac{\varrho}{\delta}\Big)^{2\varrho}_* |F_1|^{\alpha_1} |F_3|^{\alpha_3} \qquad \forall |f_1| \leq \cic{1}_{F_1}, \,  |f_3| \leq \cic{1}_{F_3'}.
\end{equation}
As in the proof of \eqref{introapproach}, the case $ |F_3|\leq |F_1|$ follows from the known  strong-type bounds \eqref{BHTnoform}. When $|F_1|\leq |F_3| $, we apply estimate \eqref{propest} with tuple $\vec a=( \alpha_2, \alpha_1 +\delta   ,-1/2) \in \mathcal S_3$, and switching the roles of $f_1,f_2$. We obtain, for suitably restricted   $f_1,f_3$,
$$
\big|\Lambda_{\vec\beta}(f_1,f_2,f_3)\big| \lesssim \textstyle \varrho^{-1} |F_1|^{\alpha_1}|F_3|^{\alpha_3}\left[
 \left(   \log  \big(  \textstyle\frac{|F_3|}{|F_1|}\big)  \right)_*^{2(\varrho-\delta)}\left(\frac{|F_1|}{|F_3|}\right)^{\delta} \right]; 
  $$
 estimating the term in square brackets by $C (\frac{\varrho}{\delta})^{2\varrho}_*$ leads to    \eqref{introapproachtwopf1}, and finishes the proof.
 
 \emph{Case $\varrho\leq \delta$.} For this case, we fix $F_1$ and $|f_1|\leq \cic{1}_{F_1}$, and interpolate through Lemma \ref{weakintlemma} the estimates $$
\|T_{f_1} (g)\|_{q(\varrho),\infty} \lesssim  \varrho^{-1} |F|^{\alpha_1} \|g\|_2, \qquad  \|T_{f_1} (g)\|_{q(\varrho+1/2),\infty} \lesssim \varrho^{-1} |F|^{\alpha_1} \|g\|_\infty
$$
for the linear operator 
$g\mapsto T_{f_1} (g):= \mathrm{BHT}_{\vec{b}}(f_1,g)$, the first of which is obtained in the previous case, while the second can be read from \eqref{introapproach} when $\alpha_2=0$.
\end{proof}
\begin{proof}[Proof of \eqref{introapproachthree}] The proof is split into two cases, both relying on interpolation.

 \emph{Case $\varrho\leq \delta$.} For this case, we fix $f_1 \in L^{1/\alpha_1}(\R)$ and   interpolate through Lemma \ref{weakintlemma} the estimates $$
\|T_{f_1} (g)\|_{q(\varrho),\infty} \lesssim   \varrho^{-1} (\varrho^{-1})_*^{1-2\varrho} \|f_1\|_{\frac{1}{\alpha_1}} \|g\|_2, \;\, \|T_{f_1} (g)\|_{q(\varrho+1/2),\infty} \lesssim \varrho^{-1} (\varrho^{-1})_*^{1-2\varrho}  \|f_1\|_{\frac{1}{\alpha_1}} \|g\|_\infty
$$
for $ T_{f_1} (g):= \mathrm{BHT}_{\vec{b}}(f_1,g)$.
The first estimate above is exactly a reformulation of Theorem \ref{thelinfversion}, while the second one can be read from the case $\alpha_2=0$ of \eqref{introapproachone}.

 \emph{Case $\varrho>\delta$.} We fix $f_2 \in L^{1/\alpha_2}(\R)$. We learn from \eqref{introapproachtwo} that the linear operator $ T_{f_2} (g):= \mathrm{BHT}_{\vec{b}}(g,f_2)$ satisfies the estimates 
 $$\textstyle
\|T_{f_2} (g)\|_{q(\frac\delta 2),\infty} \lesssim \varrho^{-1} \Big(\frac{\varrho}{\delta}\Big)^{2\varrho}_*  \|f_2\|_{\frac{1}{\alpha_2}} |F_1|^{\alpha_1 +\frac\delta2}, \quad \|T_{f_2} (g)\|_{q(\frac{3\delta}{2}),\infty} \lesssim \varrho^{-1} \Big(\frac{\varrho}{\delta}\Big)^{2\varrho}_*  \|f_2\|_{\frac{1}{\alpha_2}} |F_1|^{\alpha_1 -\frac\delta2},$$
respectively corresponding to \eqref{introapproachtwo} for tuples $(1-\rho\pm \delta/2, \alpha_2, -\frac12 +\delta\mp \delta/2)$. We now  use Lemma \ref{interpolemma} on  $T$, with $\vartheta=\frac12$, $p_0=\frac{1}{\alpha_1+\delta/2}$, $p_1=\frac{1}{\alpha_1-\delta/2}$, $q_0=q(3\delta/2), $ $q_1=q( \delta/2)$, and observe that $\mu=\delta$, $\gamma_0,\gamma_1$ therein are uniformly bounded, and $ 1/(2b_0)+ 1/(2b_1)=1/q(\delta) $, which entails the estimate
 $$\textstyle
\|T_{f_2} (f_1)\|_{q(\delta),\infty} \lesssim \delta^{-\frac{1}{q(\delta)}}  \varrho^{-1} \Big(\frac{\varrho}{\delta}\Big)^{2\varrho}_*  \|f_1\|_{\frac{1}{\alpha_1}}   \|f_2\|_{\frac{1}{\alpha_2}} .$$
 This completes the proof.  
\end{proof}
 
 \subsection{Proof of Corollary \ref{weakstrong}} 
 In the proof, we will need the following more precise form of the interpolation result \cite[Lemma 3.11]{MTT} (see also \cite[Theorem 3.8]{ThWp}).
\begin{lemma}
\label{interp} 
Let $T$ be a bisublinear operator on $\R$ mapping pairs of Schwartz functions into measurable functions. Let $\vec \alpha=(\alpha_1,\alpha_2,\alpha_3) \in \mathrm{int} \,\mathcal H$ be  a H\"older tuple with $\alpha_3 =\min\{\alpha_j\} \leq 0$. Suppose that for a given $\eps>0$, $O_\eps(\vec \alpha):=\{\vec a : a_1+a_2+a_3=1,\,\max_{j}|\alpha_j-a_j| \leq \eps \} \subset \mathrm{int}\, \mathcal H$, and  there exists $K>0$ such that the estimate
\begin{equation}
\label{estunif}
\|T(f_1,f_2)\|_{\frac{1}{1-a_3},\infty} \leq K  \prod_{j=1}^2 |F_j|^{a_j} \qquad \forall\, |f_1| \leq \cic{1}_{F_1}, \, |f_2| \leq \cic{1}_{F_2}
\end{equation}
holds for all  tuples $\vec a \in O_\eps(\vec \alpha)$. Then,  \begin{align*}
& \|T(f_1,f_2)\|_{\frac{1}{1-\alpha_3}} \lesssim \eps^{-(1-\alpha_3)} K |F_1|^{\alpha_1} |F_2|^{\alpha_2},   \qquad \forall |f_1| \leq \cic{1}_{F_1}, \; |f_2| \leq \cic{1}_{F_2},  \\
& \|T(f_1,f_2)\|_{\frac{1}{1-\alpha_3}}  \lesssim  \eps^{-2(1-\alpha_3)} K \|f_1\|_{\frac{1}{\alpha_1}} \|f_2\|_{\frac{1}{\alpha_2}}.
\end{align*}
\end{lemma}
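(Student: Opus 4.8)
\textbf{Plan of proof for Lemma \ref{interp}.} The statement is a standard restricted-type-to-strong-type interpolation result; the plan is to combine a Marcinkiewicz-type argument with the characterization of $L^{p,\infty}$ by restricted duality in \eqref{rwtDEF}. First I would record the two conclusions as ``weak-type to strong-type upgrades'': the first asks to pass from the family of weak-type bounds \eqref{estunif} on the neighborhood $O_\eps(\vec\alpha)$ to a strong-type bound at the center tuple $\vec\alpha$ for subindicator inputs; the second asks to further lift both subindicator conditions to full $L^{1/\alpha_1}\times L^{1/\alpha_2}$ bounds. It is natural to prove the first and then obtain the second by applying the first (or rather a parametrized version of it) together with an atomic/layer-cake decomposition of $f_1$ and $f_2$, exactly as in the proof of \cite[Lemma 3.11]{MTT} or \cite[Theorem 3.8]{ThWp}.

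For the first estimate, fix $|f_j|\leq\cic{1}_{F_j}$ and note that by \eqref{rwtintro}-style duality \eqref{rwtDEF} it suffices to control $|\l T(f_1,f_2), g\r|$ for $g$ a subindicator function of a set $F_3$, after passing to a major subset $F_3'$. The idea is to freeze $F_1,F_2,F_3$ and optimize over the free tuple $\vec a\in O_\eps(\vec\alpha)$: writing $\vec a = \vec\alpha + \vec v$ with $\vec v$ in the two-dimensional plane $v_1+v_2+v_3=0$ and $\max_j|v_j|\leq\eps$, the bound \eqref{estunif} reads $|\l T(f_1,f_2),g\r| \lesssim K |F_1|^{\alpha_1}|F_2|^{\alpha_2}|F_3|^{1-(1-\alpha_3)} \cdot |F_1|^{v_1}|F_2|^{v_2}|F_3|^{-v_3}$. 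Choosing the sign and size of $\vec v$ to make $|F_1|^{v_1}|F_2|^{v_2}|F_3|^{-v_3}$ as small as possible over the allowed box yields a gain that is geometrically decaying in $\log(\text{ratios of the }|F_j|)$; summing a geometric series over dyadic ranges of these ratios produces the $L^{1/(1-\alpha_3)}$ (Lorentz $L^{q,q}$, hence $L^q$ since $q<\infty$ here as $\alpha_3<1$) norm with the claimed $\eps^{-(1-\alpha_3)}$ loss, the exponent coming from the number of dyadic scales one can afford before the perturbation box is exhausted, i.e.\ from $\eps^{-1}$ raised to the reciprocal Lorentz exponent $q=1/(1-\alpha_3)$. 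I would present this cleanly by appealing to an off-diagonal Marcinkiewicz lemma in the spirit of Lemma \ref{interpolemma}, feeding it the two endpoint tuples $\vec\alpha\pm(\eps,\dots)$ and tracking that $\mu\sim\eps$, so that the interpolation constant $C(\vartheta)\sim\eps^{-1/b}$ with $1/b$ summing to $1-\alpha_3$.

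For the second estimate I would fix $f_2$ subindicator first, so that $g\mapsto T(g,f_2)$ is a (sub)linear operator, apply the already-established restricted strong-type bound at $\vec\alpha$ and at nearby tuples in $O_{\eps/2}(\vec\alpha)$, and use the standard layer-cake decomposition $f_1=\sum_m 2^m f_1^{(m)}$ with $f_1^{(m)}$ essentially a subindicator of a level set of $|f_1|$; the two-sided perturbation in the first coordinate lets one sum the resulting geometric series in $m$, again at the cost of one more factor $\eps^{-(1-\alpha_3)}$, giving $\eps^{-2(1-\alpha_3)}$. Then repeat the same device for $f_2$. The bookkeeping must respect that the major subset $F_3'$ depends only on $F_1,F_2,F_3$ and not on the individual layers, which is automatic since at each stage we only invoke the restricted estimate for a fixed choice of sets.

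\textbf{Main obstacle.} The routine analytic content (Marcinkiewicz summation, layer-cake decomposition) is entirely standard; the only genuine care needed is the \emph{geometry of the perturbation}: one must verify that for \emph{every} choice of $F_1,F_2,F_3$ there is an admissible $\vec v$ in the $\eps$-box pointing in the direction that decreases $|F_1|^{v_1}|F_2|^{v_2}|F_3|^{-v_3}$ at the needed rate, uniformly, and that the induced power of $\eps$ in the summability of the geometric series is exactly $1-\alpha_3$ rather than something larger. This is where the hypothesis that the whole box $O_\eps(\vec\alpha)$ — not just a segment — lies in $\mathrm{int}\,\mathcal H$ is used, and keeping the constants independent of $\vec\alpha$ while the box shrinks is the part that requires the most attention.
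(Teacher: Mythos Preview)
Your proposal is correct and follows essentially the same route as the paper: both defer to the argument of \cite[Lemma 3.11]{MTT}, decomposing $f_1,f_2$ into dyadic pieces and exploiting the uniform restricted weak-type bound over the whole box $O_\eps(\vec\alpha)$ to gain geometric decay, then summing via the $q$-triangle inequality. The paper's sketch in fact does only the second estimate (after rearranging $f_1,f_2$ to be nonincreasing on $(0,\infty)$ and cutting into spatial dyadic blocks $f_j^{k_j}=f_j\cic{1}_{[2^{k_j},2^{k_j+1})}$), and remarks that the first is implicit.

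One small caveat on your constant tracking: the sequential plan ``lift $f_1$ first, then repeat the same device for $f_2$'' would naively cost three factors of $\eps^{-(1-\alpha_3)}$ (one for the weak-to-strong upgrade, one for each lifting), yielding $\eps^{-3(1-\alpha_3)}$ rather than the claimed $\eps^{-2(1-\alpha_3)}$. To hit the stated exponent you should decompose both functions \emph{simultaneously}, as the paper does: for each pair $(k_1,k_2)$ the weak-to-strong step costs $\eps^{-1}$ inside $\|\cdot\|_q^q$ and already carries the decay $2^{-\eps|k_1-k_2|}$, so the double sum over $(k_1,k_2)$ costs only one further $\eps^{-1}$, for a total of $\eps^{-2}$ in $\|\cdot\|_q^q$, i.e.\ $\eps^{-2(1-\alpha_3)}$ on the norm.
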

\begin{proof}
We sketch the proof of the second   estimate: the proof of  the first  estimate is implicit in the argument for the second one. Let $q=(1-\alpha_3)^{-1}$.
By rearrangement, we can assume that $f_1,f_2$ are nonnegative, supported on $(0,\infty)$ and nondecreasing. For $j=1,2$, $k_j \in \mathbb Z$, define $f_j^{k_j}= f_j \cic{1}_{[2^{k_j},2^{k_j+1})}$. Arguing like in \cite{MTT}, we exploit uniformity in $O_\eps(\vec \alpha)$ of \eqref{estunif} to obtain the estimate 
$$
 \|T(f_1^{k_1},f_2^{k_2})\|_{ q}^{q} \lesssim \eps^{-1}  K^{r}  2^{-\eps|k_1-k_2|} \big(\textstyle\prod_{j=1}^2 f_j(2^{k_j})2^{\alpha_j k_j}\big)^{q},
$$
the implicit constant being absolute. Since 
$$
\|T(f_1 ,f_2 )\|_{q}^{q} \leq \sum_{k_1,k_2 \in \mathbb Z}\|T(f_1^{k_1},f_2^{k_2})\|_{ q}^{q}$$
the  second estimate of the lemma follows by bounding the resulting sum  as in \cite{MTT}.
\end{proof}

 We first  prove Corollary \ref{weakstrong} for tuples     $\vec \alpha$ outside the reflexive Banach triangle, that is, with $q(\delta) \leq 1$. For such a tuple, referring to \eqref{geometry}, set $\eps=\min\{\varrho,\delta\}/2$. We read from  \eqref{introapproach} that condition  \eqref{estunif} of the interpolation Lemma \ref{interp} holds for all tuples  in $O_\eps(\vec \alpha)$ with constant $K \lesssim \max\{\varrho^{-1}, |\log \delta\}$, so that the estimates of  Corollary \ref{weakstrong} in this range  follow by  a straightforward application of the lemma.

We now deal with the case of $\vec \alpha$ inside the reflexive Banach triangle:  by symmetry and duality, we can restrict to proving  the case $\alpha_2\geq \alpha_3$. Note that, according to \eqref{geometry}, $\varrho \leq 2^{-5}$.
 We can then  write 
 $\vec \alpha$   as a convex combination of the tuple $\vec \omega=(1/2,1/6,1/3)$ and of a tuple $\vec \gamma $ with $\gamma_3 =\min \gamma_j =0$,  $1-\gamma_1 \geq \varrho/16$.  Therefore, estimates \eqref{rstronga}, \eqref{stronga} for $\vec \alpha$ follow from complex interpolation  of   the corresponding estimates for $\vec \gamma$, established in the previous step,  with   those, well-known, for $\vec \omega$. This concludes the proof of Corollary \ref{weakstrong}.

\subsection{Proofs of Corollaries  \ref{correstr} to  \ref{thL1L2lor}}
To prove Corollary \ref{correstr}, we observe that estimate \eqref{propest2} of Corollary \ref{rwtTH} can be rewritten as
$$
\frac{t^{\frac32}}{\log\log(\e^\e + t)} \big(\mathrm{BHT}_{\vec b}(\cic{1}_{F_1}, \cic{1}_{F_2}) \big)^\star (t) \lesssim \prod_{j=1}^2(p_j)'|F_j|^{\frac{1}{p_j}} \log \log \big( \e^\e + |F_j|^{-1}\big) \qquad \forall t >0.
$$
Then, the corollary follows from the above display by arguing along the   lines of \cite[Section 4.1]{CGMS}; we omit the details.

To obtain Corollary \ref{corfull} from Theorem \ref{prLpL2}, for each $ \eps = \frac{2}{(p_2)'}+\zeta>\frac{2}{(p_2)'}$ we  take $\alpha_1=\frac {1}{p_1},\alpha_2=\frac{1}{p_2}$,  in \eqref{propest} and estimate, using that   $1-\alpha_2=\frac{1}{(p_2)'}$,
\begin{align*}\big|\l\mathrm{BHT}_{\vec b}(f_1,f_2),f_3\r\big|=\big|\Lambda_{\vec \beta}(f_1,f_2,f_3)\big| & \lesssim \    C_{p_1}\|f_1\|_{p_1}|F_2|^{\frac{1}{p_2}} |F_3|^{-\frac12} \Big(\log  \big(     \textstyle \frac{|F_3|}{|F_2|} \big)\Big)_*^{\frac{2}{(p_2)'} } \\ &\lesssim \textstyle  \zeta^{-3}  C_{p_1}  \|f_1\|_{p_1}|F_2|^{\frac{1}{p_2}} |F_3|^{-\frac12} \Big( \log  \Big(  \e+   \textstyle \frac{|F_3|}{|F_2|} \Big)\Big)^{\eps };
\end{align*}
  we can take $C_{p_1}:= (p_1')^3(p_2)'$. Setting $C_{p_1,\eps}:=\zeta^{-3}C_{p_1}$,  this can be rearranged into
\begin{equation} \label{propestref}
\| \mathrm{BHT}_{\vec b}(f_1,f_2) \|_{L^{\frac23,\infty}(\log L)^{-\eps}(\R)} \lesssim \textstyle C_{p_1,\eps} \textstyle   \|f_1\|_{p_1} |F_2|^{\frac{1}{p_2}} \textstyle  \left( 
\log\big( \e + \frac{1}{|F_2|}\big)\right)^{\eps} \quad \forall \, |f_2| \leq	\cic{1}_{F_2}.
\end{equation}
Corollary \ref{corfull}    follows from \eqref{propestref} by recalling   that   (see \cite{CCM}) 
$$
\|f\|_{{L^{\frac23,\infty}(\log L)^{-\eps}(\R)}}\sim \inf\Big\{ \|\gamma_k\|_{\ell^{\frac23}}: f= \sum_{k} \gamma_k f_k, \;\|f_k\|_{{L^{\frac23,\infty}(\log L)^{-\eps}(\R)}} \leq 1\Big\},
$$
and subsequently performing the elementary  procedure described in \cite[Section 2]{DD2}. 

For the  details of the derivation of  Corollary \ref{thL1L2lor} from Theorem \ref{propL1L2ag},  we refer to \cite[Section 2]{DD2}.  We only mention that an intermediate\ step towards the second estimate is the strenghtening of Theorem \ref{propL1L2ag} $$ \|\mathrm{BHT}_{\vec b}(f_1,f_2)\|_{{L^{\frac23,\infty}(\log L)^{-1}(\R)}}
 \leq C_{\vec \beta}\|f_1\|_1 \|f_2\|_2\log \big(  \textstyle \frac{\|f_1\|_\infty}{\|f_1\|_1}\big).
 $$
The above inequality   follows from Theorem \ref{propL1L2ag} via,  for instance, the theory of \cite{CCM} (see also \cite[Theorem 3.3]{CGMS}).

\subsection*{Acknowledgements} The first author wants to express his gratitude to  the Hausdorff Center for Mathematics at the University of Bonn, Germany, and in particular to Diogo Oliveira e Silva,  for the kind hospitality during his May 2013 research visit, when this article was initiated.

\bibliography{endpointBHT-fourier}{}
\bibliographystyle{amsplain}
 \end{document}